\newtheorem{theorem}{Theorem}
\newtheorem{proposition}{Proposition}
\newtheorem{lemma}{Lemma}
\newtheorem{remark}{Remark}
\newcommand{\R}{\mathbb{R}}
\newcommand{\uu}{\underline{U}}
\newcommand{\zetau}{\underline{\zeta}}
\newcommand{\hu}{\underline{h}}
\newcommand{\vu}{\underline{V}}
\newcommand{\mt}{\mathcal{T}}
\newcommand{\tu}{\underline{\mathcal{T}}}
\newcommand{\hm}{h_{min}}
\newcommand{\ls}{\Lambda^s}
\definecolor{Gray}{gray}{0.9}
\begin{document}

\title[Moving bottom detection]{Optimal control approach for moving bottom detection in one-dimensional shallow waters by surface measurements}

\author[R. Lecaros]{R. Lecaros}
\address[R. Lecaros]{Departamento de Matem\'atica, Universidad T\'ecnica Federico Santa Mar\'ia,  Santiago, Chile.}
\email{rodrigo.lecaros@usm.cl}

\author[J. L\'opez-R\'ios]{J. L\'opez-R\'ios}
\address[J. L\'opez-R\'ios]{(Corresponding Author)Universidad Industrial de Santander, Escuela de Matemáticas, A.A. 678, Bucaramanga, Colombia}
\email{jclopezr@uis.edu.co}

\author[G. I. Montecinos]{G. I. Montecinos}
\address[G. I. Montecinos]{Department of Mathematical Engineering, Universidad de La Frontera, Temuco, Chile}
\email{gino.montecinos@ufrontera.cl}

\author[E. Zuazua]{E. Zuazua}
\address[E. Zuazua]{Chair in Dynamics, Control and Numerics / Alexander von Humboldt-Professorship \\
	Department of Data Science \\
	Friedrich-Alexander-Universit\"at Erlangen-N\"urnberg \\
	91058 Erlangen, Germany \\
	and \\
	Chair of Computational Mathematics \\
	Fundaci\'on Deusto \\
	Av. de las Universidades, 24 \\ 48007 Bilbao, Basque Country, Spain \\
	and \\
	Departamento de Matem\'aticas \\ Universidad Aut\'onoma de Madrid \\
	28049 Madrid, Spain}
\email{enrique.zuazua@fau.de}

\date{\today}

\thanks{R. Lecaros, J. L\'opez-R\'ios and G. Montecinos have been partially supported by the Math-Amsud project
CIPIF 22-MATH-01. R. Lecaros was partially supported by FONDECYT (Chile) Grant 1221892. J. L\'opez-R\'ios acknowledges support by Vicerrectoría de Investigación y Extensión of Universidad Industrial de Santander, project 3752.
E. Zuazua has been funded by the Alexander von Humboldt-Professorship program, the Transregio 154 Project ``Mathematical Modelling, Simulation and Optimization Using the Example of Gas Networks" of the DFG, the ModConFlex Marie Curie Action, HORIZON-MSCA-2021-$d$N-01, the COST Action MAT-DYN-NET, grants PID2020-112617GB-C22 and TED2021-131390B-I00 of MINECO (Spain), and by the Madrid Goverment -- UAM Agreement for the Excellence of the University Research Staff in the context of the V PRICIT (Regional Programme of Research and Technological Innovation).}

\keywords{Bottom detection; Shallow water models; Non-conservative Hyperbolic Systems; Finite Volume Methods.}

\begin{abstract}
We consider the Boussinesq-Peregrine (\ref{BP}) system as described by Lannes [Lannes, D. (2013). The water waves problem: mathematical analysis and asymptotics (Vol. 188). American Mathematical Soc.], within the shallow water regime, and study the inverse problem of determining the time and space variations of the channel bottom profile, from measurements of the wave profile and its velocity on the free surface. A well-posedness result within a Sobolev framework for (\ref{BP}), considering a time dependent bottom, is presented. Then, the inverse problem is reformulated as a nonlinear PDE-constrained optimization one. An existence result of the minimum, under constraints on the admissible set of bottoms, is presented. Moreover, an implementation of the gradient descent approach, via the adjoint method, is considered. For solving numerically both, the forward (\ref{BP}) and its adjoint system, we derive a universal and low-dissipation scheme, which contains non-conservative products. The scheme is based on the FORCE-$\alpha$ method proposed in [Toro, E. F., Saggiorato, B., Tokareva, S., and Hidalgo, A. (2020). Low-dissipation centred schemes for hyperbolic equations in conservative and non-conservative form. Journal of Computational Physics, 416, 109545]. Finally, we implement this methodology to recover three different bottom profiles; a smooth bottom, a discontinuous one, and a continuous profile with a large gradient. We compare with two classical discretizations for (\ref{BP}) and the adjoint system. These results corroborate the effectiveness of the proposed methodology to recover bottom profiles.
\end{abstract}

\subjclass[2020]{49K20; 76B15; 76B03; 65M08}
\maketitle

\section{Introduction}
The problem of wave generation and the forces that originate them comprise a wide area of study, involving modeling through fundamental laws, numerical resolution of equations and laboratory scale tests. In the particular case of water in the ocean, it is of interest to study those waves produced by displacements and changes in the seabed, a task that has been developed from many points of view in the classical literature \cite{constantin2011nonlinear,dutykh2007water,iguchi2011mathematical,Nersisyan01082015} and is currently a very active area of research \cite{beizel2012simulation,lecaros2020stability,qi2017numerical,shen2022interference}.

In this paper we investigate, theoretically and numerically, the effect of ocean bottom motions on waves. More precisely, within the context of shallow water models, we analyze the inverse problem of finding a single bottom, which depends on the time variable, and which generates a specific wave. These types of problems have been considered before, in the context of unidirectional shallow water approximating models \cite{baudouin2014determination,Nersisyan01082015} and in a broader sense, through wave generating devices (wave-makers) such as those considered in \cite{mottelet2000controllability,su2020stabilizability,su2021strong} for the water-waves equation.

The problem of generating waves through seafloor displacements is directly related to the generation of tsunamis by submarine earthquakes and plate displacements (see \cite{dutykh2007water,iguchi2011mathematical,nosov2001nonlinear}). Since it is important to quantify both types of displacements, vertical and horizontal, in this work we follow an approach that allows considering both of them, as reported in \cite{dutykh2012contribution,iguchi2011mathematical,nosov2009method}. Moreover, from the point of view of applications, there are man-made facilities for surfing away from the ocean \cite{webpage}, where a wave with specific characteristics is created by the unidirectional motion of an underwater rigid object, moving in a preset direction.

We put ourselves in the context of wave propagation in a shallow water channel, so that the dynamics correspond to the laws of conservation of momentum and mass \cite{lannes2013water}. In this general framework, the interaction between the rigid boundary, represented by the bottom of the channel, with the free surface can be well understood and characterized through terms representing a boundary condition or an external source in the Partial Differential Equation. In particular, we will use the shallow water approximation of the general system of water-waves, with a time-varying bottom, known as the Boussinesq-Peregrine (\ref{BP}) model with varying bottom, as derived in \cite{lannes2013water} (see details in section \ref{S2}). 

The main objective of this work is to provide a general theoretical framework, based on conservation laws, to analyze the inverse problem of detecting a seafloor moving in time. Moreover, we propose an efficient computational procedure to fully determine the variations of the channel (ocean) bottom in time and space.
For that, we chose to study the (\ref{BP}) system, which is general enough and contains the main characteristics of shallow water systems; it also includes terms corresponding to a time-varying bottom and in this context has not been studied before. We study three main aspects: first, the existence of solutions of the system in the framework of Sobolev spaces. Second, the theoretical identification of the bottom, as the solution to a minimization problem, involving the state equation and its adjoint system. Third, the implementation of a descent algorithm and a finite volume scheme discretization of the equation and its adjoint system.

Existence and uniqueness results for the (\ref{BP}) system have been obtained in Sobolev spaces \cite{israwi2011large,lannes2013water}, for a steady bottom. In particular, a classical energy approach has been employed in \cite{israwi2011large} applied to a symmetrizable hyperbolic system. We provide a well-posedness theorem for the local existence of solutions to (\ref{BP}), when the bottom is time-dependent, which has not been reported so far. On the extensive literature concerning well posedness for the general system of water-waves, we mention Nalimov  \cite{nalimov1974cauchy}, Yosihara \cite{yosihara1983capillary}, Craig \cite{craig1985existence}, S.Wu \cite{wu1997well,wu1999well} and Lannes \cite{lannes2005well}.

Optimal control problems on water-waves have been studied in \cite{fontelos2017bottom}, in the context of the inverse problem of bottom identification through surface measurements, where the authors addressed the identifiability and set the problem of finding a unique bottom minimizing the $L^2$ norm in the horizontal variable of the Dirichlet-Neumann operator. Concerning the (\ref{BP}) system and the bathymetry detection, we mention the work by Dutykh et al. \cite{Nersisyan01082015}, where the authors reduced the problem to the unidirectional wave propagation to get a Benjami-Bona-Mahony (BBM)-type equation and studied the optimization problem of generating the largest possible wave in the $L^2$ sense within a given bounded interval, by a constant velocity moving bottom.

Our numerical procedure to find the bottom profiles involves two hyperbolic systems: the system (\ref{BP}) for modeling the wave problem, and its adjoint which contains non-conservative products. We propose a numerical scheme, for both (\ref{BP}) and the adjoint system, which is universal in the sense that it applies for both conservative and non-conservative systems.  We consider two additional schemes aimed at evaluating how a classical combination of conservative and non-conservative schemes behaves for solving the problem.  We refer to them as reference schemes  since they are viable discretizations that users can employ. The first one consists of using the conservative scheme of Rusanov \cite{rusanov1961} (also called Local Lax-Friedrich flux)  for (\ref{BP}) and the finite difference discretization for the adjoint system. The second scheme consists of the conservative scheme of Rusanov for (\ref{BP}) and the non-conservative one, for the adjoint system.

A related formulation to the one we are studying is the controllability problem by a source, for the (\ref{BP}) system. Alazard et al. \cite{alazard2018control} have provided a local exact controllability result, obtained for the full water-waves system by controlling a localized portion of the free surface. In \cite{fontelos2023controllability}, the authors proved the interior exact controllability of the 2d-Euler's system by injection of jet fluids through a rigid boundary. We can also mention the works on KdV \cite{rosier2000exact}, BBM \cite{micu2001controllability,zhang2003unique}, and some Boussinesq--type systems \cite{micu2009control}, concerning some literature about controllability of asymptotic regimes and dispersive wave equations. Finally, even though the computation of the adjoint state in the optimal control problem approach follow classical ideas, it is worth to mention that this has not been done before in this context of moving bottoms.

The paper is organized as follows. In section \ref{S2}, we present the (\ref{BP}) system and the formulation of the optimal control problem. In section \ref{S4}, we set the optimal control problem and prove the existence of a minimum. In section \ref{S5}, a particular formulation for recovering a moving bottom is settled. In section \ref{S6}, the coupled formulation approach and the numerical method for solving the inverse problem, are presented. In section \ref{S7}, numerical results are reported. Finally, in section \ref{S8}, conclusions are drawn.



\section{The mathematical framework}\label{S2}

We consider an ideal incompressible fluid in a two dimensional domain, (see Figure \ref{Figure1}). We denote the horizontal independent variable by $x$ and the vertical variable by $y$, and assume that the line $y=1$ corresponds to the still water level and $H_0=1$ is a constant reference depth. Let $\zeta,b:[0,T]\times\R\rightarrow\R$ ($T>0$), be the surface and bottom parameterizations respectively. Let $\mu$ be a small parameter to be specified later and related to the shallow water regime and $\epsilon=O(\mu)$ which represents the small variations of the bottom and free surface, as explained in \cite{lannes2013water}. We assume that the total depth $h\equiv h(t,x)=1+\epsilon(\zeta-b)$, remains positive at all times $t$:
\begin{equation}
\label{ES34}
\exists h_{min}>0, \ \inf_{x\in\R}h\ge \hm,
\end{equation}
which is a necessary condition for the system (\ref{BP}) to be valid.

For a given bottom $b(t,x)$, we are going to consider, on $[0,T]\times\R$, the one dimensional Boussinesq-Peregrine system with moving bottom for $(\zeta,V)$
\begin{equation}
\label{BP}
\tag{BP}
\left\{
\begin{aligned}
&\zeta_t+(hV)_x=b_t, \\
&\left(1-\frac{\mu}{3h}\partial_x(h^3\partial_x\cdot)\right)V_t+\zeta_x+\epsilon VV_x=-\frac{\epsilon}{2}b_{ttx}, \\
&\zeta(0,\cdot)=\zeta_0(\cdot), \ V(0,\cdot)=V_0(\cdot), \ \text{in }\R,
\end{aligned}
\right.
\end{equation}
with $V:[0,T]\times\R\rightarrow\R$ being the total depth averaged velocity. From now on, this will be our model to understand the wave-bottom interaction. Recall that $(\zeta,V)$ are the unknown free surface elevation and velocity, respectively, and $b$ is a given function, representing the topography of the moving bottom. Finally, $\mu$ and $\epsilon$ are dimensionless, small parameters, representing the shallowness of the channel. 

\begin{figure}
\centering
\includegraphics[scale=0.9]{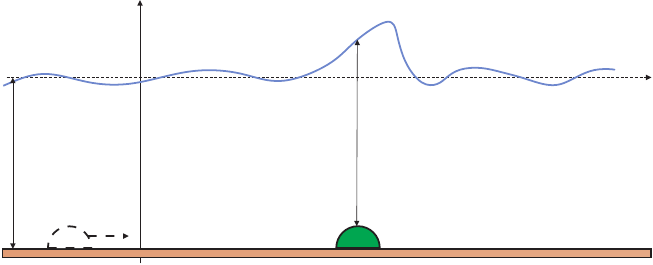}
\put(-230,83){$1$}
\put(-275,39){$H_0=1$}
\put(-65,88){$\zeta(t,x)$}
\put(-116,12){$b(t,x)$}
\put(-125,44){$h$}

\caption{Sketch of the physical domain and main notation}\label{Figure1}
\end{figure}

System \eqref{BP} can be derived from the general water waves equations under the shallow water assumption ($\mu\ll1$), assuming $\epsilon=O(\mu)$. See \cite[page 145]{lannes2013water} for a two dimensional version of (\ref{BP}), and \cite{lannes2009derivation}, where many of the shallow water models are deduced from the general water waves system.

We now turn our attention to the optimization problem for system (\ref{BP}). We assume that the bottom can change during a time interval over the entire domain; namely, the time-dependent bathymetry is given by $b=b(t,x)$. The detection of the bottom will be obtained as the solution to an optimization problem. That is, we will find, theoretically and numerically, the function $b$ to watch a prescribed wave and velocity $\overline{\zeta}$, $\overline{V}$, at some fixed time $T>0$ (see sections \ref{S4} and \ref{S5}). Mathematically, for fixed $\zeta_0$ and $V_0$, we minimize the functional
\begin{equation}
	J(b)=\frac{1}{2}\int_0^T|\zeta-\bar{\zeta}|_{L^2}^2dt+\frac{1}{2}\int_0^T|V-\bar{V}|_{L^2}^2dt,
\end{equation}
where $\bar{\zeta},\bar{V}\in L^2((0,T);L^2(\R))$ are given, $(\zeta,V)$ is a solution to \eqref{BP} in $C([0,T];H^s\times H^{s+1})$, and the Sobolev space $H^s=H^s(\R)$, $s\ge 3/2$ (see \ref{S3} for details). 

Note the explicit way the time derivatives of $b$ appear in (\ref{BP}), at the right-hand side in the $b_t,b_{ttx}$ terms. We start by formulating an existence and uniqueness theorem for (\ref{BP}), so that the regularity of the bottom be explicit. This is needed when formulating the problem of finding $b$ by the optimal control approach.

\begin{theorem}
	\label{Theorem Existencia0}
	Let $s>3/2$ and $(\zeta_0,V_0)\in \mathbf{X}^s$. Let $b\in W^{2,\infty}([0,\infty);H^{s+1})$ and assume \eqref{ES34} is valid. Then, there exists $T_{BP}>0$, uniformly bounded from below with respect to $\epsilon$, such that system \eqref{ES32} admits a unique solution $(\zeta,V)^T\in C([0,T_{BP}/\epsilon];\mathbf{X}^s)$ with initial condition $(\zeta_0,V_0)^T$.
\end{theorem}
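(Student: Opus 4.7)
The plan is to adapt the classical energy method of Israwi \cite{israwi2011large} for the steady-bottom Boussinesq--Peregrine system to the present moving-bottom case. The new features relative to the steady case are the source $b_t$ in the mass equation, the source $-\tfrac{\epsilon}{2}b_{ttx}$ in the momentum equation, and a time-dependent coefficient $h=1+\epsilon(\zeta-b)$ inside the dispersive operator $\mathcal{T}[h]:=1-\tfrac{\mu}{3h}\partial_x(h^3\partial_x\cdot)$. First I would check that, under \eqref{ES34}, the operator $\mathcal{T}[h]$ is coercive on $H^1$ since $(f,h\mathcal{T}[h]f)_{L^2}=\|h^{1/2}f\|_{L^2}^2+\tfrac{\mu}{3}\|h^{3/2}\partial_x f\|_{L^2}^2$, hence invertible on $H^\sigma$ for every $\sigma$, with $\mathcal{T}[h]^{-1}$ gaining two derivatives. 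This allows me to rewrite the momentum equation as $V_t=-\mathcal{T}[h]^{-1}\bigl(\zeta_x+\epsilon VV_x+\tfrac{\epsilon}{2}b_{ttx}\bigr)$ and justifies the asymmetric phase space $\mathbf{X}^s=H^s\times H^{s+1}$.

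Next I would cast $(\zeta,V)$ in quasi-linear form $\mathcal{M}[U]\,U_t+\mathcal{A}[U]\,\partial_x U=S[b]$ and symmetrize it with a matrix $\Sigma[U]$ following \cite{israwi2011large,lannes2013water}, so that $\Sigma\mathcal{M}$ is symmetric positive definite and $\Sigma\mathcal{A}$ is symmetric. The induced energy
\begin{equation*}
E^s(U)=\|\zeta\|_{H^s}^2+\|V\|_{H^s}^2+\tfrac{\mu}{3}\|h^{3/2}\partial_x V\|_{H^s}^2
\end{equation*}
is equivalent to the squared $\mathbf{X}^s$-norm with constants depending only on $\hm$ and $\|h\|_{W^{1,\infty}}$. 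Applying $\ls$ to the symmetrized system and pairing with $\Sigma\mathcal{M}\,\ls U$, Kato--Ponce commutator and Moser product estimates handle the quasi-linear contributions as in the steady case. Three new categories of terms require attention: (i) the contribution of $\partial_t(\Sigma\mathcal{M})$, which involves $h_t=\epsilon(\zeta_t-b_t)$ and therefore carries an $\epsilon$ factor; (ii) the source $b_t$ from the mass equation, which I would treat by integration by parts and absorbed into a modified energy subtracting $\int\zeta b\,dx$, so that only lower-order remainders involving $hVb_x$ survive; (iii) the source $-\tfrac{\epsilon}{2}\mathcal{T}[h]^{-1}b_{ttx}$ from the symmetrized momentum equation, bounded in $H^s$ using the two-derivative smoothing of $\mathcal{T}[h]^{-1}$ together with the hypothesis $b\in W^{2,\infty}([0,\infty);H^{s+1})$.

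Collecting these estimates should give a differential inequality of the form
\begin{equation*}
\tfrac{d}{dt}\widetilde{E}^s(U(t))\le \epsilon\,C\bigl(\widetilde{E}^s(U(t))\bigr)\widetilde{E}^s(U(t))+\epsilon\,C\bigl(\widetilde{E}^s(U(t)),\|b\|_{W^{2,\infty}_tH^{s+1}}\bigr),
\end{equation*}
for a modified functional $\widetilde{E}^s$ still equivalent to $\|U\|_{\mathbf{X}^s}^2$ up to lower-order remainders. A standard bootstrap then yields an existence time of the form $T_{BP}/\epsilon$, with $T_{BP}>0$ depending only on $E^s(U_0)$, $\hm$, and $\|b\|_{W^{2,\infty}([0,\infty);H^{s+1})}$, hence bounded below uniformly in $\epsilon$. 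I would make the existence rigorous through a Friedrichs regularization of the first-order fluxes: the mollified problem is a Banach-space ODE solvable by Picard iteration, the above a priori bound is uniform in the mollification parameter, and one passes to the limit by weak compactness together with a Bona--Smith continuous-dependence argument in order to recover the strong $C([0,T_{BP}/\epsilon];\mathbf{X}^s)$ regularity. Uniqueness follows from the analogous lower-order energy estimate applied to the difference of two solutions.

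The main obstacle is precisely the bookkeeping needed to make the right-hand side of the $\widetilde{E}^s$-inequality carry an overall $\epsilon$ prefactor, so that the existence time is genuinely of order $1/\epsilon$. Every contribution from $\partial_t\mathcal{T}[h]$ and $\partial_t\Sigma$ picks up an $\epsilon$ through $h-1=\epsilon(\zeta-b)$ and $h_t=\epsilon(\zeta_t-b_t)$; the $b_{ttx}$ source is already multiplied by $\epsilon/2$ in the equation; and the $b_t$ source, which a priori lacks an explicit $\epsilon$, has to be reorganized by integration by parts and absorbed into the modified energy so that only $\epsilon$-weighted remainders survive. The regularity assumption $b\in W^{2,\infty}([0,\infty);H^{s+1})$ is exactly what is required to place $b_{tt}$ in $H^{s+1}$ (hence $b_{ttx}$ in $H^{s}$) and to couple it with the two-derivative smoothing of $\mathcal{T}[h]^{-1}$ so as to keep the momentum equation closed in $H^{s+1}$.
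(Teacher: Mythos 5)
Your proposal is correct and follows essentially the same route as the paper: multiply the momentum equation by $h$ to obtain the symmetric operators $\mathbf{S}=\mathrm{diag}(1,\mt)$ and $\mathbf{A}$ of \eqref{ES33}, derive an $H^s$ energy estimate by applying $\ls$ and controlling the commutators via the invertibility and two-derivative smoothing of $\mt^{-1}$ (Lemma \ref{Lemma 2Prop T}), then conclude by a Picard iteration on the linearized problem \eqref{ES35} with regularized operators, uniqueness coming from the same estimate applied to differences. The only genuine divergence is your treatment of the $b_t$ source in the mass equation: the paper does not introduce a modified energy but bounds $(\ls b_t,\ls\zeta)$ directly, which is why its Gronwall inequality \eqref{ES39} carries the term $\tfrac{1}{\epsilon}|b_t|_{H^s}^2$ and the final bound \eqref{ES310} retains $\int_0^t|b_t|_{H^s}^2\,d\tau$ without an overall $\epsilon$ prefactor; your subtraction of $\int\zeta b$ is a refinement aimed at making every source term $\epsilon$-weighted, which would strengthen the uniformity of the bound over the long time interval $[0,T_{BP}/\epsilon]$ but is not what the paper does, nor is it needed for the statement as proved. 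Likewise, your Friedrichs mollification plus Bona--Smith argument is an equivalent substitute for the paper's regularization of $\mathbf{S}$, $\mathbf{A}$ and appeal to Cauchy--Lipschitz.
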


Here we have introduced the space $\mathbf{X}^s$, defined as
\[ \mathbf{X}^s=H^s(\R)\times H^{s+1}(\R),  \]
endowed with the norm
\[ \forall \mathbf{U}\in \mathbf{X}^s, \ |\mathbf{U}|_{\mathbf{X}^s}^2=|\zeta|_{H^s}^2+|V|_{H^s}^2+\mu|V_x|_{H^s}^2. \]

For the sake of clearness in the presentation, we leave the details of the proof for \ref{S3}.

\begin{remark}
When the water waves equations are nondimensionalized, a rough analysis of their linearization around the rest state is performed. This shows the relevance of various dimensionless parameters, namely, the amplitude parameter $\epsilon$, the shallowness parameter $\mu$, and the topography parameter $\beta$ (see \cite{lannes2013water} for details). With the relevant physical dimensionless parameters introduced, asymptotic regimes are identified (the shallow water regime for instance) as conditions on these dimensionless parameters (e.g., $\mu<<1$ for the shallow water regime).

The asymptotic regime that corresponds to the system (\ref{BP}) is precisely
\[ \{(\epsilon,\beta\,\mu):0\le\mu\le\mu_0, \ 0\le\beta\le\epsilon\le\mu\}. \]
That is the reason why we decided to keep the two parameters $\mu$ and $\epsilon$ in the formulation of (\ref{BP}) above.
\end{remark}

\section{The optimal control approach}
\label{S4}

Following the explanations in the previous section, the process of minimizing the functional $J$ will allow us to design the bottom $b(t,x)$, given a specific surface profile and its velocity, $(\overline{\zeta},\overline{V})$. Therefore we will address two practical questions. The first is tsunami detection with surface measurements, where we also include the possibility of horizontal and vertical displacements caused by plate motions and underwater earthquakes \cite{dutykh2007water}. Second, the design of moving underwater structures to generate specific waves for an entertainment purpose \cite{Nersisyan01082015}. With this in mind, we present the following mathematical formulation, and the sufficient conditions for a bottom $b$ to exist, minimizing functional $J$.

The first thing we need is an existence and uniqueness of solutions theorem, for system (\ref{BP}). Because the proof strategy is classical and follows arguments similar to those presented in \cite{israwi2011large,lannes2013water} for the nonlinear shallow water and green-naghdi systems, respectively, we postpone the proof to an appendix and concentrate on the optimization problem.

We consider the space $\mathbf{X}_T^s:=C([0,T/\epsilon];\mathbf{X}^s)$ endowed with its canonical norm. Let $\mathcal{F}\subset W^{2,\infty}(0,T;H^{s+1})$ be a nonempty, closed and convex set. Given $d_0>\frac{1}{2}$, $s\ge d_0+1$ and $(\zeta_0,V_0)\in \mathbf{X}^s$, let the following minimization problem related to \eqref{BP}:
\begin{equation}
	\label{P}
	\tag{P}
	\left\{
	\begin{aligned}
		&\text{Find $(\zeta,V,b)\in \mathbf{X}^s\times\mathcal{F}$ such that the functional} \\
		& J(b)=\frac{1}{2}\int_0^T|\zeta-\bar{\zeta}|_{L^2}^2dt+\frac{1}{2}\int_0^T|V-\bar{V}|_{L^2}^2dt \\
		&\text{is minimized, subject to } \ (\zeta,V,b) \ \text{satisfies } \eqref{BP}.
	\end{aligned}
	\right.
\end{equation}
Here $(\bar{\zeta},\bar{V})\in L^2(0,T;L^2)\times L^2(0,T;L^2)$ represents the desired state.


The set of optimal solutions of \eqref{P} is defined by
\[ \mathcal{S}_{ad}=\{\omega=(\zeta,V,b)\in X^s\times\mathcal{F}: \omega \text{ is a solution of }\eqref{BP}\}. \]

Note that $b$ is not acting on the right--hand side of \eqref{BP} only (through $b_t$, $b_{ttx}$), but the spatial derivatives of the terms involving $h$. That is, $b$ acts on the source and coefficients.

Thanks to the remark \ref{r2} in the appendix, we can take $\epsilon$ sufficiently small, so that the existence time of the solutions is guaranteed in a time interval containing $[0,T]$. This is important when passing to the limit in a minimizing sequence for proving the existence of a minimum for the functional $J$, as the next theorem shows.

\begin{theorem}
\label{Theorem Existencia Minimo}
Let $(\zeta_0,V_0)^T\in \mathbf{X}^s$ with $s\ge 1+\delta$, $\delta>0$. Let $T>0$ such that $\mathcal{F}$ is bounded in $W^{2,\infty}(0,T;H^{s+1})$. Then the optimal control problem \eqref{P} has at least one global optimal solution $(\zeta^*,V^*,b^*)\in\mathcal{S}_{ad}$.
\end{theorem}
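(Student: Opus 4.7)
\smallskip

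\textbf{Proof plan.} The strategy is the direct method of the calculus of variations. First, since $J\ge 0$, set $m=\inf_{b\in\mathcal{F}} J(b)$ and pick a minimizing sequence $\{b_n\}\subset \mathcal{F}$ with $J(b_n)\to m$. By hypothesis $\mathcal{F}$ is bounded in $W^{2,\infty}(0,T;H^{s+1})$, and by Theorem \ref{Theorem Existencia0} together with Remark \ref{r2} (taking $\epsilon$ small enough so the time of existence covers $[0,T]$), each $b_n$ yields a unique solution $(\zeta_n,V_n)\in \mathbf{X}_T^s$ of \eqref{BP}, with a bound $|(\zeta_n,V_n)|_{\mathbf{X}_T^s}\le C$ depending only on the data and on the uniform $W^{2,\infty}(0,T;H^{s+1})$-bound for $\{b_n\}$ (and on $\hm$ via \eqref{ES34}). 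This provides all the uniform estimates needed to extract a candidate limit.

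Next, I would extract subsequences using weak-$*$ compactness. By Banach--Alaoglu, along a subsequence $b_n\overset{*}{\rightharpoonup} b^*$ in $W^{2,\infty}(0,T;H^{s+1})$; since $\mathcal{F}$ is convex and closed in this topology (closed convex sets are weakly-$*$ closed in duals of separable spaces, and by Mazur's lemma in the appropriate functional setting), $b^*\in\mathcal{F}$. Similarly, from the uniform bound in $\mathbf{X}_T^s$, along a further subsequence $(\zeta_n,V_n)\overset{*}{\rightharpoonup}(\zeta^*,V^*)$ in $L^\infty(0,T;\mathbf{X}^s)$. To pass to the limit in the nonlinear products $(h_n V_n)_x$, $V_n(V_n)_x$ and in the dispersive term $\frac{\mu}{3h_n}\partial_x(h_n^3\partial_x (V_n)_t)$, weak-$*$ convergence is not enough; I would upgrade to strong convergence in lower-order spaces via an Aubin--Lions-type argument, using the equation itself to bound the time derivatives $\partial_t \zeta_n,\partial_t V_n$ in $L^2(0,T;H^{s-1})$ (resp.\ $H^{s-2}$) and the compact embedding $H^s\hookrightarrow\hookrightarrow H^{s'}_{loc}$ for $s'<s$. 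Since $s\ge 1+\delta>3/2$, this gives $(\zeta_n,V_n)\to(\zeta^*,V^*)$ strongly in $C([0,T]; H^{s'}_{loc}\times H^{s'+1}_{loc})$ with $s'>1/2$, so Sobolev embeddings and the algebra property of $H^{s'}$ allow passage to the limit in all nonlinear terms, yielding that $(\zeta^*,V^*,b^*)$ satisfies \eqref{BP}.

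Finally I conclude by lower semicontinuity: the map $(\zeta,V)\mapsto \frac{1}{2}\int_0^T(|\zeta-\bar\zeta|_{L^2}^2+|V-\bar V|_{L^2}^2)\,dt$ is convex and continuous on $L^2(0,T;L^2\times L^2)$, hence weakly lower semicontinuous, so
\begin{equation*}
J(b^*)\le \liminf_{n\to\infty} J(b_n)=m,
\end{equation*}
and combined with $b^*\in\mathcal{F}$ and $J(b^*)\ge m$, this gives $J(b^*)=m$. Therefore $(\zeta^*,V^*,b^*)\in \mathcal{S}_{ad}$ is a global minimizer.

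\textbf{Main obstacle.} The delicate step is the limit passage in the dispersive operator $(1-\frac{\mu}{3h_n}\partial_x(h_n^3\partial_x\cdot))\partial_t V_n$ and in the coefficient $h_n=1+\epsilon(\zeta_n-b_n)$, because $h_n$ appears in denominators and the non-degeneracy condition \eqref{ES34} must be preserved in the limit. I would handle this by first showing that strong convergence of $h_n$ in $C([0,T]\times K)$ for compact $K$ combined with the uniform lower bound $h_n\ge \hm>0$ implies $1/h_n\to 1/h^*$ uniformly on compacts, and $h_n^3\to (h^*)^3$ strongly in $C([0,T]; H^{s'}_{loc})$ by the algebra property; then the weak convergence of $\partial_x\partial_t V_n$ combined with strong convergence of the coefficients permits passage to the limit in the distributional sense against test functions.
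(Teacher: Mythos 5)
Your proposal is correct in outline and shares the paper's overall skeleton (direct method: minimizing sequence, uniform bounds from the well-posedness theorem and the energy estimate \eqref{ES35_1}, weak/weak-$*$ extraction of a candidate $(\zeta^*,V^*,b^*)$, and weak lower semicontinuity of $J$ to close the argument), but the crucial step --- showing that the weak limit actually solves \eqref{BP} --- is handled by a genuinely different mechanism. The paper writes down the system satisfied by the difference $\mathbf{U}^{n,m}=\mathbf{U}^n-\mathbf{U}^m$ of two states in the minimizing sequence, multiplies by $\mathbf{U}^{n,m}$ in $L^2$, and reruns the Gronwall argument of Proposition \ref{Energy estimate} to conclude that the states form a Cauchy sequence in $H^1$, hence converge strongly and the limit satisfies the equation pointwise; moreover, per Remark \ref{remark1}, the paper discards the $O(\mu)$ dispersive term at this stage. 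You instead obtain strong convergence by an Aubin--Lions compactness argument, bounding $\partial_t\zeta_n,\partial_t V_n$ through the equation and using the local compactness of $H^s\hookrightarrow H^{s'}_{loc}$, and you keep the dispersive operator, passing to the limit via strong convergence of the coefficients $h_n$, $1/h_n$, $h_n^3$ against weakly convergent $\partial_x\partial_t V_n$. Each route has a payoff: the paper's Cauchy argument recycles machinery already proved in the appendix, but its right-hand side contains $b^{n,m}_t$, $b^{n,m}_{ttx}$ and $h^{n,m}$, so it tacitly needs some strong convergence of the controls $b^n$ beyond the weak convergence that has been extracted --- a point the paper does not address; your compactness route needs only the weak-$*$ convergence of $b^n$ (products of weakly and strongly convergent sequences pass to the limit distributionally) and is therefore more self-contained, at the price of invoking Aubin--Lions and a more careful discussion of the nonlocal operator. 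One shared soft spot: both you and the paper assert that the closed convex set $\mathcal{F}$ is closed under the weak-$*$ convergence used in $W^{2,\infty}(0,T;H^{s+1})$; Mazur's lemma gives weak closedness, but weak-$*$ closedness of a convex set in a dual space is a genuinely stronger property and deserves either an explicit hypothesis on $\mathcal{F}$ or a separate justification.
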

\begin{proof}
From Theorem \ref{Theorem Existencia} in the appendix, we deduce that $\mathcal{S}_{ad}\ne\emptyset$. Let $\{\omega^n\}_{n\in\mathbb{N}}=\{(\zeta^n,V^n,b^n)\}_{n\in\mathbb{N}}\subset\mathcal{S}_{ad}$ a minimizing sequence of $J$. Let $\mathbf{U}^n(t,x)$ be the corresponding solution of problem \eqref{BP} with bottom $b^n(t,x)$. From the definition of $J$ and the assumption $\mathcal{F}$ bounded in $W^{2,\infty}(0,T;H^{s+1})$, 
\[ \{b^n\} \text{ is bounded in } W^{2,\infty}(0,T;H^{s+1}). \]

We know $\mathcal{F}$ is weakly closed in $W^{2,\infty}(H^{s+1})$. Then, from Theorem \ref{Theorem Existencia} and the corresponding estimate as in \eqref{ES35_1}, there exists $\omega^*=(\zeta^*,V^*,b^*)\in\mathcal{S}_{ad}$ such that, for some subsequence of $\{\omega^n\}$, still denoted $\{\omega^n\}_{n\in\mathbb{N}}$, we have weak convergences in $\mathbf{X}^s\times W^{2,\infty}(0,T;H^{s+1})$.

If $\mathbf{U}^{n,m}:=\mathbf{U}^n-\mathbf{U}^m$ we have (see Remark \ref{remark1})
\begin{equation*}
	\begin{pmatrix}
		1 & 0 \\ 0 & \mt\cdot
	\end{pmatrix}
	\begin{pmatrix}
		\zeta^{n,m}_t \\ V^{n,m}_t
	\end{pmatrix}
	+
	\begin{pmatrix}
		\epsilon V^n & h^n \\ h^n & \mt(\epsilon V^n\cdot)
	\end{pmatrix}
	\begin{pmatrix}
		\zeta^{n,m}_x \\ V^{n,m}_x
	\end{pmatrix}
	=
	\begin{pmatrix}
		b^{n,m}_t+\epsilon b^{n,m}_xV^m+(\zeta_x^m-b_x^n)V^{n,m} \\ -\frac{\epsilon}{2}h^nb^{n,m}_{ttx}-\frac{\epsilon}{2}h^{n,m}b^m_{ttx}-\zeta_x^mh^{n,m}
	\end{pmatrix}.
\end{equation*}

Then, multiplying last equation in $L^2$ by $\mathbf{U}^{n,m}$, integrating by parts and applying the Gronwall inequality (as is done in the proof of Proposition \ref{Energy estimate} in the appendix), we obtain that $\mathbf{U}^{n,m}$ is a Cauchy sequence in $H^1$. Hence $\omega^*$ is solution of \eqref{BP}, pointwisely; that is, $\omega^*\in\mathcal{S}_{ad}$. Therefore,
\[ \lim_{n\to+\infty}J(\omega_n)=\inf_{\omega\in\mathcal{S}_{ad}}J(\omega_n)\le J(\omega^*). \]

On the other hand, since $J$ is lower semicontinuous on $\mathcal{S}_{ad}$, we have $J(\omega^*)\le\liminf_{n\to+\infty}J(\omega_n)$.
\end{proof}

Finally to implement the descent strategy, let us rewrite \eqref{BP} as
\begin{equation*}
	\left\{
	\begin{aligned}
		&r_t+(hV)_x=0, \\
		& V_t-\frac{\mu}{3h}\partial_x(h^3V_{tx})+r_x+\epsilon VV_x=-b_x-\frac{\epsilon}{2}b_{ttx},
	\end{aligned}
	\right.
\end{equation*}
with $r=\zeta-b$, $h=1+\epsilon r$. Then, the adjoint system is given by (see \ref{a1})
\begin{equation*}
	\left\{
	\begin{aligned}
		&p_t+\epsilon Vp_x+q_x=\bar{\zeta}-\zeta, \\
		&q_t-\frac{\mu}{3}\partial_x(h^2q_{tx})+hp_x+\epsilon Vq_x=\bar{V}-V, \\
		&p(T)=0, \quad q(T)=0,
	\end{aligned}
	\right.
\end{equation*}
and the descent direction is
\[ -\nabla J(b)=-q_x-\frac{\epsilon}{2}q_{ttx}+(\bar{\zeta}-\zeta), \]
accordingly (see Theorem \ref{t3} in the Appendix).

\begin{remark}\label{remark1}
	Because we are in the shallow water regime and $\mu<<1$, let us neglect the term $\mu\partial_x(h^3\partial_xV_t)$.
	
\end{remark}

\section{Numerical reconstruction  of a time dependent bathymetry}\label{sec:test-case}\label{S5}

Now we will consider the problem of numerically reconstructing the bathymetry $b(t,x)$ from measurements of $\zeta$ and $V$ at the surface. We will do so in the shallow water regime described in the previous section, in which the term involving $V_{txx}$ is neglected (since $\mu<<1$). As it was described above, we are looking for a function $b$ such that
\begin{eqnarray}
	\label{eq:dJ}
	\begin{array}{c}
		\displaystyle
		J(b) = \frac{1}{ 2} \int_{0}^T |\zeta - \bar{\zeta}|_{L^2}^2 + \frac{1}{ 2} \int_{0}^T|V - \bar{V}|_{L^2}^2 \;
	\end{array}
\end{eqnarray}
is minimized, where $\zeta,V$ are constrained to the system
\begin{eqnarray}
	\label{eq:direct-system}
	\begin{array}{c}
		\partial_t 
		\left[
		\begin{array}{c}
			r  \\
			V
		\end{array}\right]
		+ \partial_x
		\left[
		\begin{array}{c}
			hV    \\
			r + \epsilon \frac{ V^2}{2}
		\end{array}\right]
		=
		\left[
		\begin{array}{c}
			0  \\
			- b_x - \frac{ \epsilon}{2} b_{ttx}
		\end{array}\right]
		\;
	\end{array},
\end{eqnarray}
with $ r = \zeta - b$, $h = 1 + \epsilon r$. 

In (\ref{eq:dJ}), $\bar{\zeta}(0,x)$ is a given function. $\bar{V}$ is obtained after solving (\ref{eq:direct-system}) over the interval  $[0,L]$ with appropriate boundary conditions and a given bathymetry $\bar{b}$ and initial condition $[r(0,x), V(0,x)]^T = [\bar{\zeta}(0,x)- \bar{b}(0,x), V_0]^T $ for $V_0$ a given initial velocity.  That is, we use synthetic data for assessing the methodology.

The problem of finding the bathymetry is stated as a PDE-constraint optimization problem which is studied through the adjoint method. That is, a minimizing sequence $\{ b^k \}$ is generated via
\begin{eqnarray}
	\begin{array}{c}
		b^{k+1} = b^k - \lambda_b \cdot \nabla J(b^k) \;,
	\end{array}
\end{eqnarray}
where $\lambda_b$ is a constant parameter and
\begin{eqnarray}
	\label{eq:NablaJ}
	\begin{array}{c}
		\nabla J = q_x + \frac{1}{2}  \epsilon q_{xtt} - (\bar{\zeta} - \zeta) \;,
	\end{array}
\end{eqnarray}
with $p$ and $q$ adjoint variables related through the adjoint system
\begin{eqnarray}
	\label{eq:dual-system}
	\begin{array}{c}
		\partial_t 
		\left[
		\begin{array}{c}
			p  \\
			q
		\end{array}\right]
		+
		\left[
		\begin{array}{cc}
			\epsilon V   & 1  \\
			h     & \epsilon V
		\end{array}\right]
		\partial_x 
		\left[
		\begin{array}{c}
			p  \\
			q
		\end{array}\right]
		=
		\left[
		\begin{array}{c}
			\bar{\zeta}-\zeta  \\
			\bar{V}-V
		\end{array}\right]
		\;,
	\end{array}
\end{eqnarray}
on $[0,L]$, endowed with $ p(T,x) = q(T,x) = 0$ and boundary conditions chosen consistently with that of (\ref{eq:direct-system}), namely transmissive or periodic boundary conditions.   Notice that, this system is solved back in time from $t = T$ up to $t = 0$.  Here $ \zeta = r + b^k \;,$  $r$ and $V$ satisfy the system
\begin{eqnarray}
	\begin{array}{c}
		\partial_t 
		\left[
		\begin{array}{c}
			r  \\
			V
		\end{array}\right]
		+ \partial_x
		\left[
		\begin{array}{c}
			hV    \\
			r + \epsilon \frac{V^2}{2}
		\end{array}\right]
		=
		\left[
		\begin{array}{c}
			0  \\
			- b^k_x - \frac{ \epsilon}{2} b^k_{ttx}
		\end{array}\right] 
		\;,
	\end{array}
\end{eqnarray}
with $[r(0,x),V(0,x)]^T = [\bar{\zeta}(0,x) - \bar{b}(0,x),V_0]^T $ on $[0, L]$ and transmissible boundary condition.

\section{Numerical discretization}\label{S6}
System (\ref{eq:direct-system}) can be expressed as
\begin{eqnarray}
	\label{eq:general-state}
	\begin{array}{c}
		\partial_t \mathbf{U} + \partial_x \mathbf{F}(\mathbf{U}) =  \mathbf{B}(x, \mathbf{U}) \;, 
		\\
		\mathbf{U} = \mathbf{U}_0(x) \;,
	\end{array}
\end{eqnarray}
where $\mathbf{U}_0(x)$ is the prescribed initial condition function. The system in its quasilinear version takes the form
\begin{eqnarray}
	\label{eq:general-state-quasi}
	\begin{array}{c}
		
		\partial_t \mathbf{U} +  \mathbf{A}(\mathbf{U}) \partial_x \mathbf{U} =  \mathbf{B}(x, \mathbf{U}) \;,

		\\
		\mathbf{U} = \mathbf{U}_0(x) \;,
	\end{array}
\end{eqnarray}
where $ \mathbf{A}(\mathbf{U} )  $  is the Jacobian matrix of $ \mathbf{F}(\mathbf{U})$ with respect to $\mathbf{U}$. 

The adjoint system (\ref{eq:dual-system}) can be written as
\begin{eqnarray}
	\label{eq:general-dual}
	\begin{array}{c}
		\partial_t \mathbf{P} +\mathbf{A}^T(\mathbf{U})  \partial_x  \mathbf{P} =  \mathbf{R}(x, \mathbf{U}, \mathbf{P}) \;,
		\\
		\mathbf{P}(T,x) = \mathbf{0} \;,
	\end{array}
\end{eqnarray}
where $\mathbf{P}$ is the adjoint state. Both systems can be written in a unified way, in the sense of \cite{montecinos2019numerical}, for the state $\mathbf{W} = [\mathbf{Q}, \mathbf{P}]^T$ given by
\begin{eqnarray}
	\label{eq:unified-system}
	\begin{array}{c}
		\partial_t \mathbf{W}
		+
		\mathbf{A_W}
		\partial_x \mathbf{W}
		= 
		\mathbf{S_W}
		\;,
	\end{array}
\end{eqnarray}
where
\begin{eqnarray}
	\label{eq:unified-system-matrix}
	\begin{array}{c}
		\mathbf{A_W} = 
		\left[
		\begin{array}{cc}
			\mathbf{A}(\mathbf{U}) & \mathbf{0} \\
			\mathbf{0} &   \mathbf{A}^T(\mathbf{U})
		\end{array}
		\right]
		
		\;, \ 
		\mathbf{S_W}
		= 
		\left[
		\begin{array}{c}
			\mathbf{B}(x, \mathbf{U})
			\\
			\mathbf{R}(x, \mathbf{U}, \mathbf{P})
		\end{array}
		\right]
		\;.
	\end{array}
\end{eqnarray}

We derive a numerical scheme for system (\ref{eq:unified-system}). In this type of applications, the numerical diffusion can increase due to small CFL coefficients which are used in the context of inverse problems, in contrast to the usual applications (direct problems) to conservation laws. In this paper, we explore the novel FORCE-$\alpha$ scheme reported in \cite{toro2020low} for first order of accuracy and extended to higher orders in \cite{montecinos2021universal}. This scheme has the advantage of improving the numerical diffusion. The numerical scheme has the form
\begin{eqnarray}
	\label{eq:one-step-unified-dual}
	\begin{array}{c}
		\mathbf{W}_i^{n+1} = \mathbf{W}_i^n - \omega \frac{\Delta t}{ \Delta x} ( \mathbf{D}_{i+\frac{1}{2} }^{-} +  \mathbf{D}_{ i-\frac{1}{2} }^{+}) + \omega \Delta t \mathbf{S_W}_i \;,
	\end{array}
\end{eqnarray}
where
\begin{eqnarray}
	\begin{array}{c}
		\mathbf{D}_{i+\frac{1}{2}}^{\pm} 
		=
		\omega \mathbf{A}^\pm_{i+\frac{1}{2}}
		
		\cdot (\mathbf{W}_{i+1}^n - \mathbf{W}_{i}^n ) \;,
	\end{array}
\end{eqnarray}
here $ \omega $ is a parameter that controls forward and backward propagation in time, $ \omega = 1$ and $\omega= -1$, respectively. The matrix $\mathbf{A}^\pm_{i+\frac{1}{2}}$ are given by
\begin{eqnarray}
	\begin{array}{c}
		\mathbf{A}^\pm_{i+\frac{1}{2}}
		= \frac{1}{2}\hat{\mathbf{A}}_{i+\frac{1}{2}} \pm \frac{1}{4} \frac{\alpha_F \Delta t}{\Delta x}(  \hat{\mathbf{A}}_{i+\frac{1}{2}}^2 + (\frac{ \Delta t}{\alpha \Delta t} )^2 \mathbf{I}   )  \;,
		
	\end{array}
\end{eqnarray}
where $\mathbf{I}$ is the identity matrix, and
\begin{eqnarray}
	\begin{array}{c}
		\hat{\mathbf{A}}_{i+\frac{1}{2}} = \int_{0}^{1} \mathbf{A_W}( \Phi(s,\mathbf{W}_i^n, \mathbf{W}_{i+1}^n)) ds  
		\approx \sum_{k=1}^{3} w_1 \cdot \hat{\mathbf{A}}_{i+\frac{1}{2}} = \int_{0}^{1} \mathbf{A_W}( \Phi(\xi_k,\mathbf{W}_i^n, \mathbf{W}_{i+1}^n))  \;,
	\end{array}
\end{eqnarray}
where  $  \Phi(s,\mathbf{W}_i^n, \mathbf{W}_{i+1}^n) = \mathbf{W}_{i}^{n} + (\omega s-\frac{(\omega-1)}{2}) ( \mathbf{W}_{i+1}^{n} - \mathbf{W}_{i}^{n})$.  The last term is approximated via the Gauss-Legendre quadrature rule characterized by  $ \xi_1 = \frac{1}{2} (1 - \sqrt{\frac{3}{5}})  $, $ \xi_2 = \frac{1}{2}  $, $ \xi_3 = \frac{1}{2} (1 + \sqrt{ \frac{3}{5} } )  $, $ w_1 = \frac{5}{18}  $, $ w_2 = \frac{8}{18}  $ and $ w_3 = \frac{5}{18}$. 
The source term is discretized as
\begin{eqnarray}
	\label{eq:source-term-unified}
	\begin{array}{c}
		\mathbf{S_W}_{i}
		= 
		\displaystyle
		\left[
		\begin{array}{c}
			\displaystyle
			0 \\
			\\
			
			\displaystyle
			-  b^n_{x,i} 
			
			- \frac{\epsilon}{2} \biggl(
			\frac{ ( b_{x,i}^{n+1}-2b_{x,i}^n+b_{x,i}^{n-1} )  }{\Delta t^2}
			\biggr)

			\\
			
			\zeta_i^n -  \bar{\zeta}(x_i,t^n) \\
			
			V_i^n-\bar{V}_i^n  
			
		\end{array}
		\right]
		\;,
	\end{array}
\end{eqnarray}

with 
\begin{eqnarray}
	\label{eq:bx}
	\begin{array}{c}
		b^n_{x,i} = \frac{ b_{i+1}^n - b_{i-1}^n  }{ 2 \Delta x} \;.
	\end{array}
\end{eqnarray}
Notice that for shallow water applications, the approximation of spatial derivatives, for a $b(t,x)$ known, can be replaced by $ b^n_{x,i} = \frac{ b(x_i-\frac{\Delta x}{2}, t^n) - b(x_i+\frac{\Delta x}{2}, t^n)  }{ \Delta x} \;.$ In this way the well-balanced property is ensured. However, in this work $b$ is only known in a discrete spatial and temporal location at each iteration, so we are limited to use (\ref{eq:source-term-unified}).

In (\ref{eq:one-step-unified-dual}), $\omega = 1$ evolves the system forward in time and thus the first components of $\mathbf{W}$ solve (\ref{eq:general-state}), whereas  $\omega = -1$ solves (\ref{eq:one-step-unified-dual}) backward in time, so the last components of $\mathbf{W}$ solve (\ref{eq:general-dual}).  For solving the system backwards, we set to zero the variables of $\mathbf{W}$ associated to the adjoint state and froze the values of $\mathbf{W}$ associated to $\mathbf{U}$, that is, we keep the values obtained in the forward evaluation. This is needed because the hyperbolic system is not reversible in time, that means wave patterns in backward evolution may be different from forward ones. Since in the forward evolution, variables associated to the adjoint system do not influence the state variables we also set them to be zero. At each iteration of the present strategy, we extract components $r$, $p$ and $q$ to form $\nabla J$ given by (\ref{eq:NablaJ}). We remark that the coupled formulation is only required for building the numerical scheme; in practice this still works as a classical solver.


Although the particular case of finding $b(t,x)$ is addressed, the framework is general enough for any source $b(t,x)$ as well.  As stated in \cite{montecinos2021universal}, this method is universal, since hyperbolic problems written in conservative and non-conservative form are solved with the scheme without any modification of the code. Since the present scheme is derived from the non-conservative {\bf FORCE-$\alpha$} method applied to both direct and adjoint systems through the {\bf C}ouple {\bf S}ystem {\bf F}ormulation, from now on, we referee this scheme {\bf FORCE-$\alpha$+CSF}.

\section{Numerical results}\label{S7}

In this section, we solve the following three test problems; smooth bottom profiles, discontinuous bottom profiles and smooth profiles with a large gradient.  In order to compare the current scheme, we use the following reference schemes. The first one, presented in \ref{sec:ref-scheme},  is characterized by the use of the conservative Rusanov finite volume method for solving (\ref{eq:direct-system}) and finite difference approximation for handling (\ref{eq:dual-system}), from now on called {\bf Rusanov+FD}. The second scheme, consists of the non-conservative FORCE-$\alpha$ scheme applied for solving (\ref{eq:direct-system}) only, and finite difference approximation, as in \ref{sec:ref-scheme}, for the adjoint system (\ref{eq:dual-system}), from now on {\bf FORCE-$\alpha$+FD}. The choice of these reference schemes attempts to reproduce the discretization that users usually implement as a first choice when performing this type of tests.

The reference schemes do not consider strategies based on other types of discretizations. For instance, those in \cite{lellouche1994boundary}, where finite difference methods for both (\ref{BP}) and adjoint systems are implemented. Neither those in \cite{ghidaglia2001numerical}, where  a semidiscrete  scheme is employed to solve (\ref{BP}), which consists of a conservative flux for spatial discretization \cite{Nersisyan01082015} and Runge-Kutta scheme for time discretization. They are used in combination with the Matlab Optimization Toolbox  {\it fmincon} for solving the constrained optimization problem. The comparison with some of these methods is material for a future work.

The first reference scheme must be sensitive to numerical diffusion due to small CFL coefficients, whereas the second one must control this by implementing the FORCE-$\alpha$ on the state system.

To illustrate the performance of the present scheme and reference ones, the sequence of time $t=0.25$, $t = 0.5$, $t=0.75$  and iterations $0,1,2,4,8,$ are depicted. In all simulations, we set $L=20$, $\bar{\zeta} = 1$, $\Delta t = 0.01$, $100$ cells, $\alpha_F = 2$, $V_0 = 1.5$, $\varepsilon = 0.001$, $b^0 = 0.01$ and $\lambda_b = 0.71$.  

\subsection{Smooth bottom profile}
This test aims at recovering the smooth bottom profile
\begin{eqnarray}
	\label{eq:b-smooth}
	\begin{array}{c}
		
		\bar{b}(t,x) = 0.1(1 +  t \cdot exp(  - (x-10- 2.5 t)^2) )\;.
	\end{array}
\end{eqnarray}
Systems (\ref{eq:direct-system}) and (\ref{eq:dual-system}) are solved with transmissible boundary conditions. Note that this profile consists of a soliton that moves to the right and the amplitude of the wave increases with the time.

Figure \ref{fig:b-for-iter-and-times:test-0-ConsRusanov} shows the results for the {\bf Rusanov+FD} scheme. We observe some oscillations in the first iterations. We note that they do not disappear but reduce as the iterations increase.  Figure \ref{fig:b-for-iter-and-times:test-0:ForceAlphaCons} shows the results for the {\bf FORCE-$\alpha$+FD} scheme. We still observe some oscillations on the first iterations, but they almost disappear as the iterations increase.  Figure \ref{fig:b-for-iter-and-times:test-0:NonCons-Force-alpha} shows the results for the {\bf FORCE-$\alpha$+CSF} scheme. We note that oscillations are reduced at the initial iterations and disappear as the iterations increase.  Figure \ref{fig:comp-error:test-0} shows the $L_\infty$ norm of $\nabla J$ against the number of iterations, to facilitate the visualization the plot is depicted in logarithmic scale. So, this measures the error between $b^k $ and $b^{k+1}$, which is the empirical convergence of the global algorithm. These results show that in order to obtain convergence, it is not only important how the state system is discretized, but also how the adjoint system is does so.  This test reveals that a low-dissipation scheme is beneficial in the PDE-constraint optimization context and smooth variables.   

\begin{figure}[h]
	\begin{center}                  
		\includegraphics[scale=0.45]{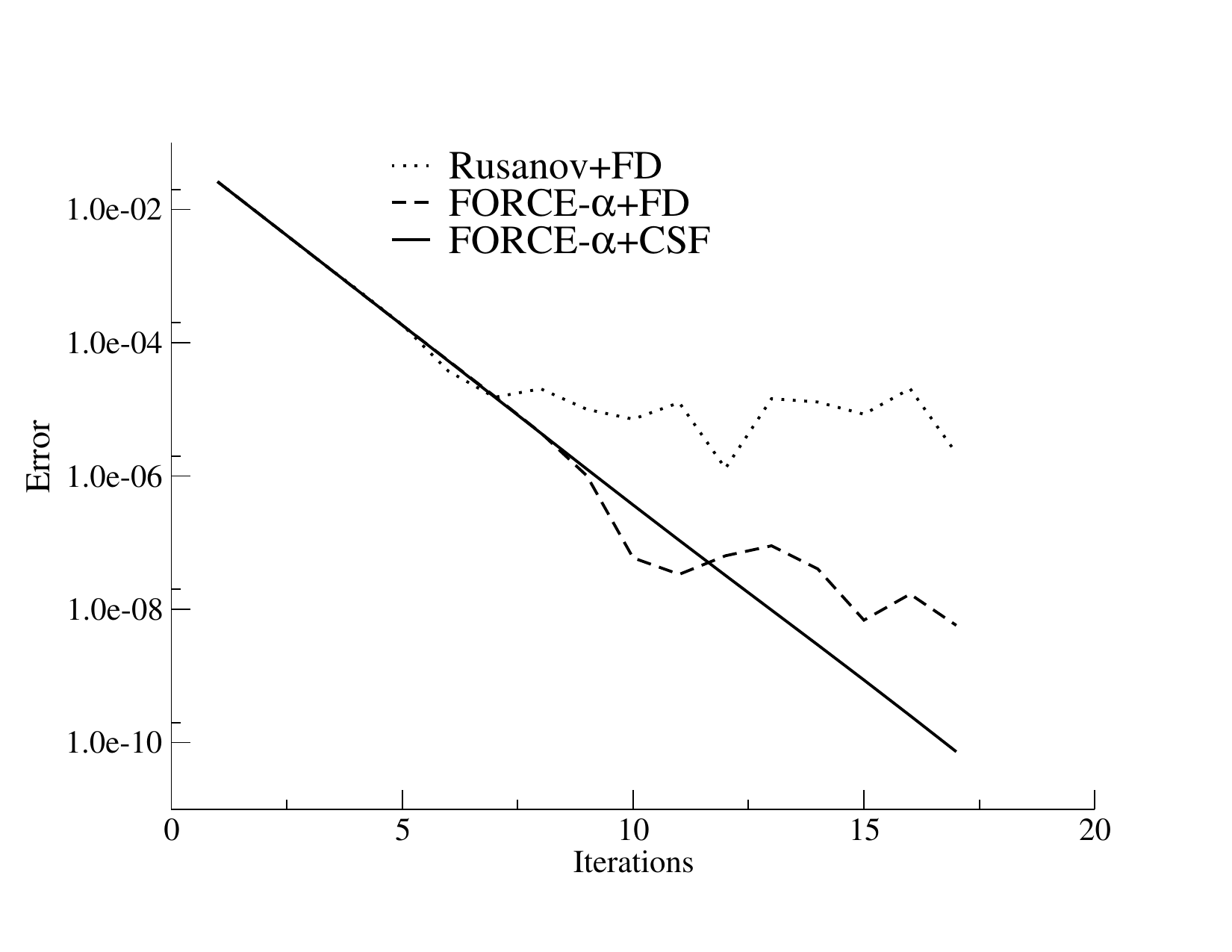}	
	\end{center}
	\caption{Smooth bottom profile (\ref{eq:b-smooth}): The $L_\infty$ norm of $\nabla J $ for $\varepsilon = 0.001$, $\lambda_b = 0.71$, $100$ cells at $t = 1$, $\alpha_F = 2$. 
		(Dot line) {\bf Rusanov+FD} scheme.
		(Dash line) {\bf FORCE-$\alpha$+FD} scheme. 
		(Full line) {\bf FORCE-$\alpha$+CSF} scheme.
	}\label{fig:comp-error:test-0}
\end{figure}


\begin{figure}   
	\centering
	\includegraphics[width=0.3\textwidth]{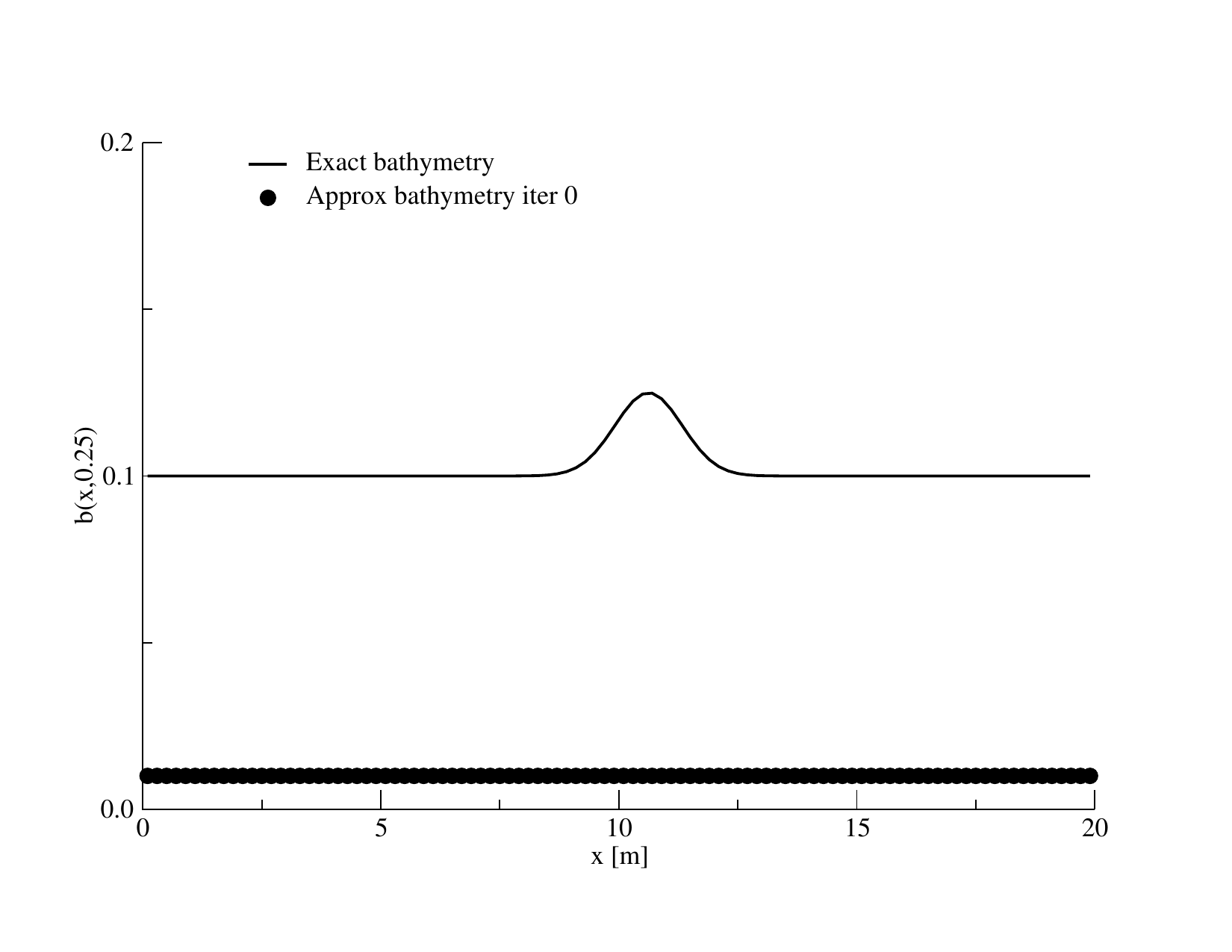} \quad
	\includegraphics[width=0.3\textwidth]{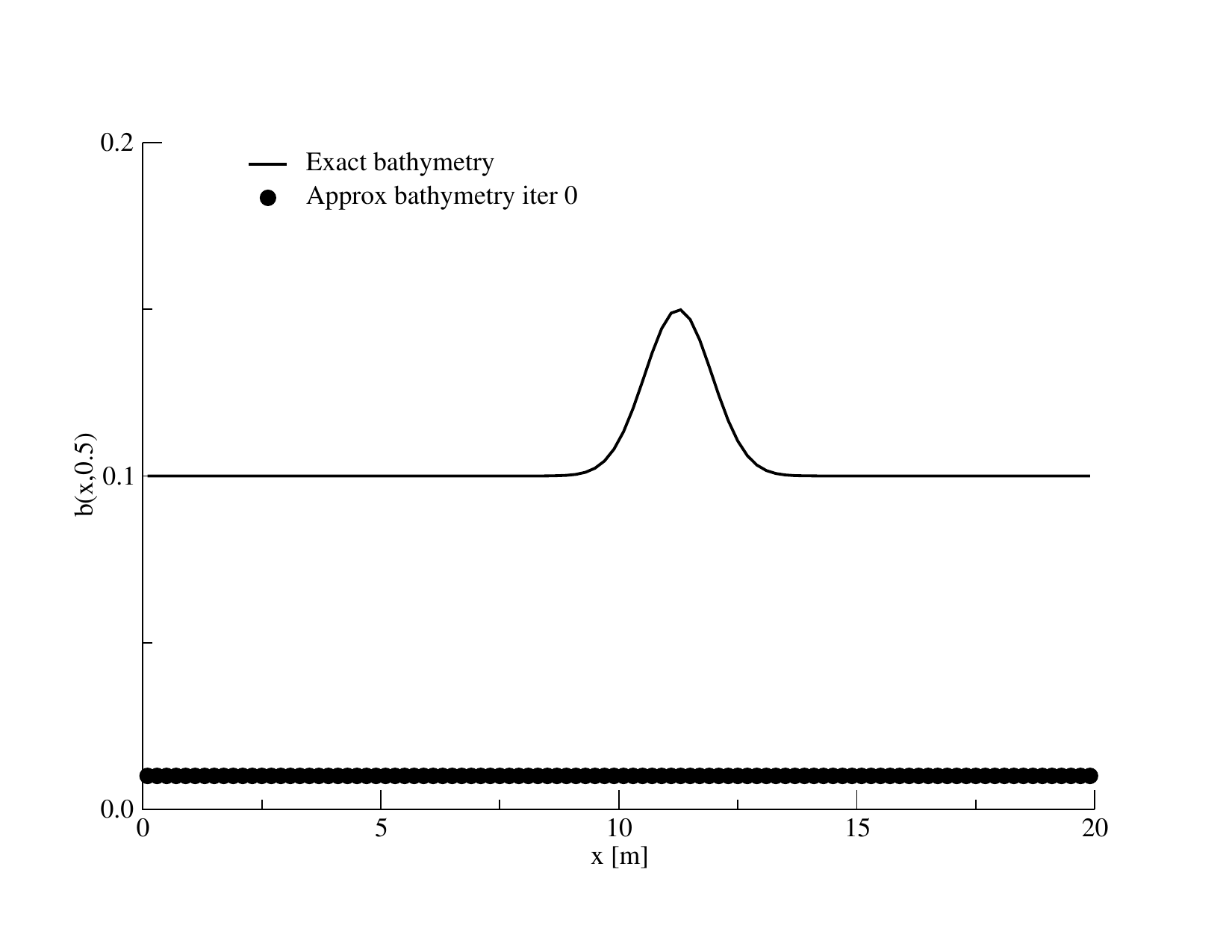} \quad
	\includegraphics[width=0.3\textwidth]{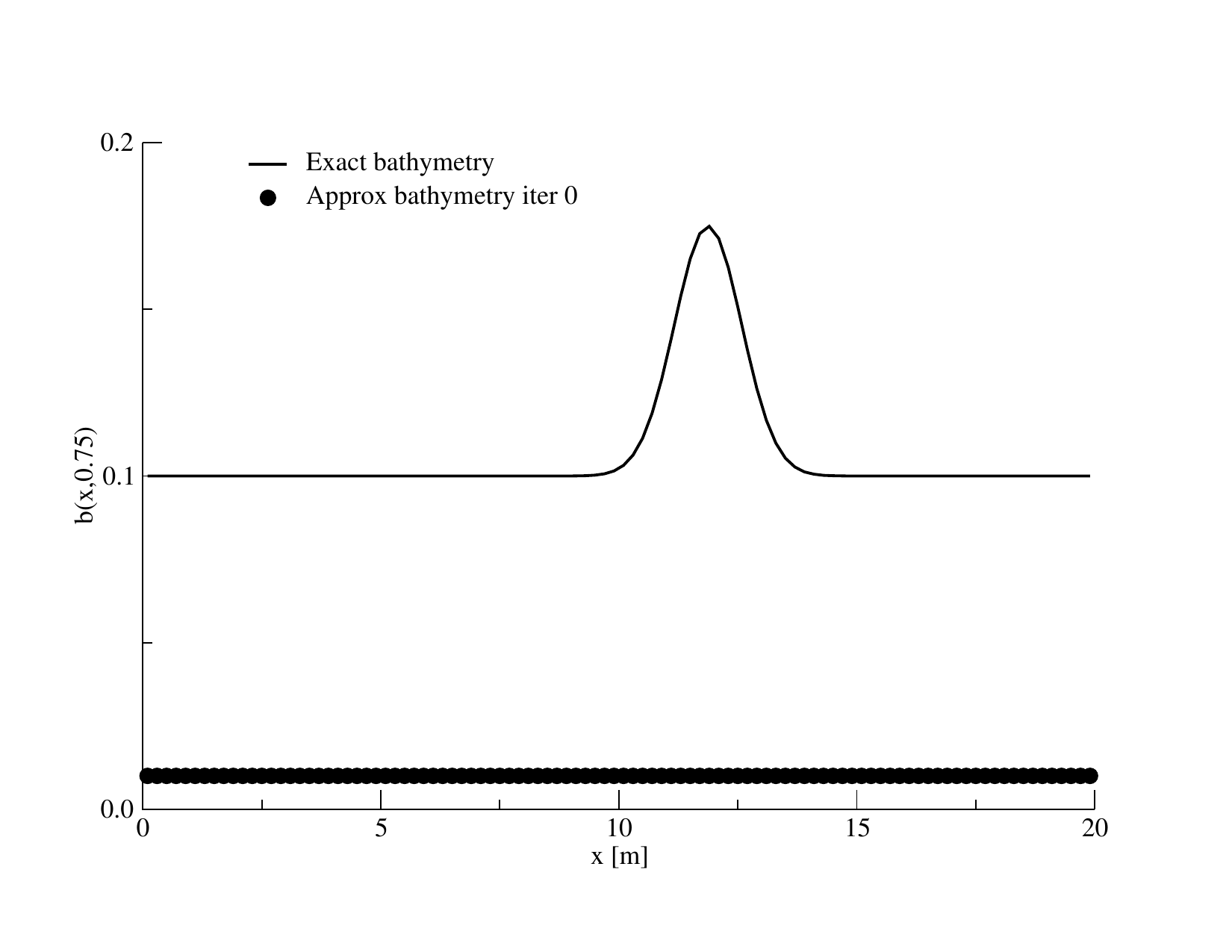} 
	\\
	
	\includegraphics[width=0.3\textwidth]{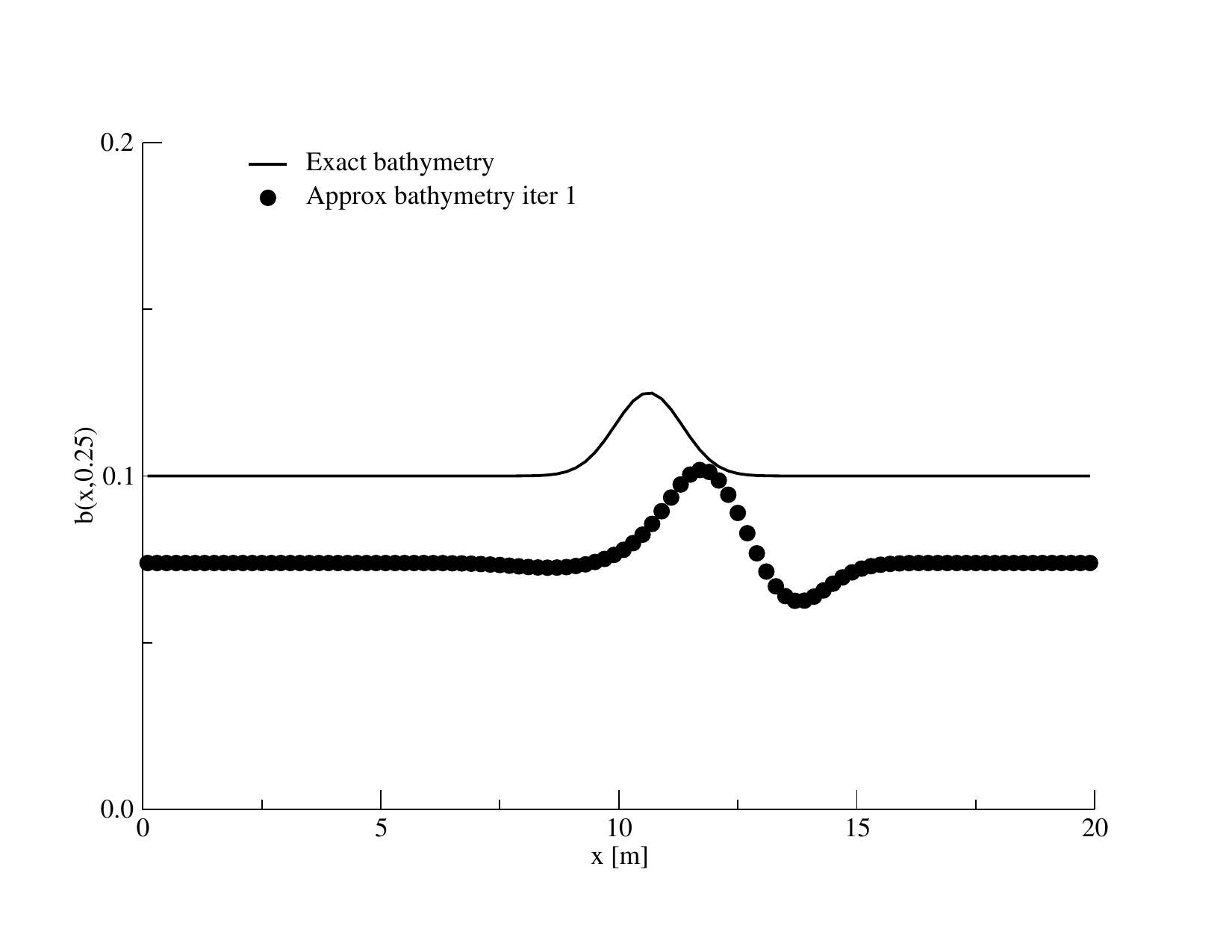} \quad
	\includegraphics[width=0.3\textwidth]{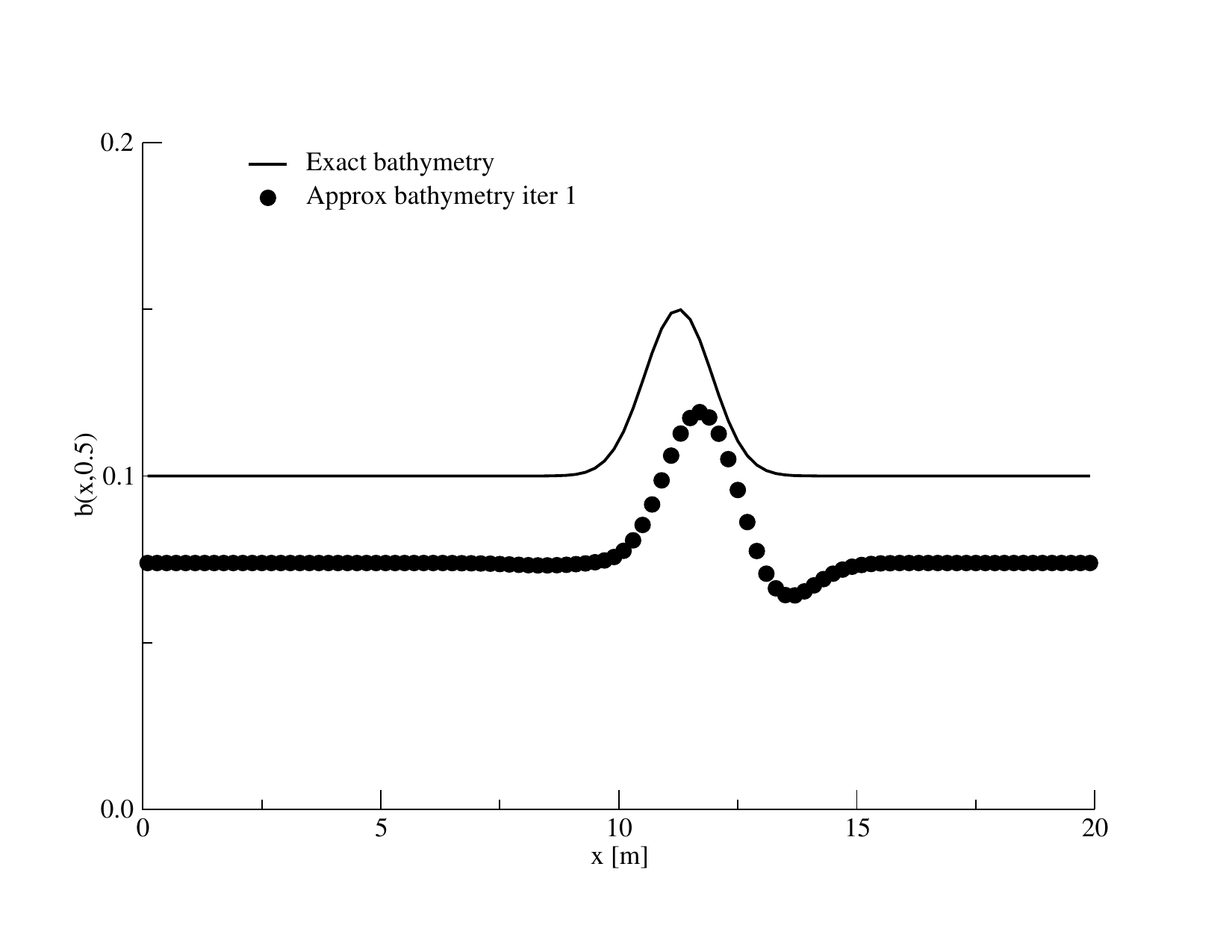} \quad
	\includegraphics[width=0.3\textwidth]{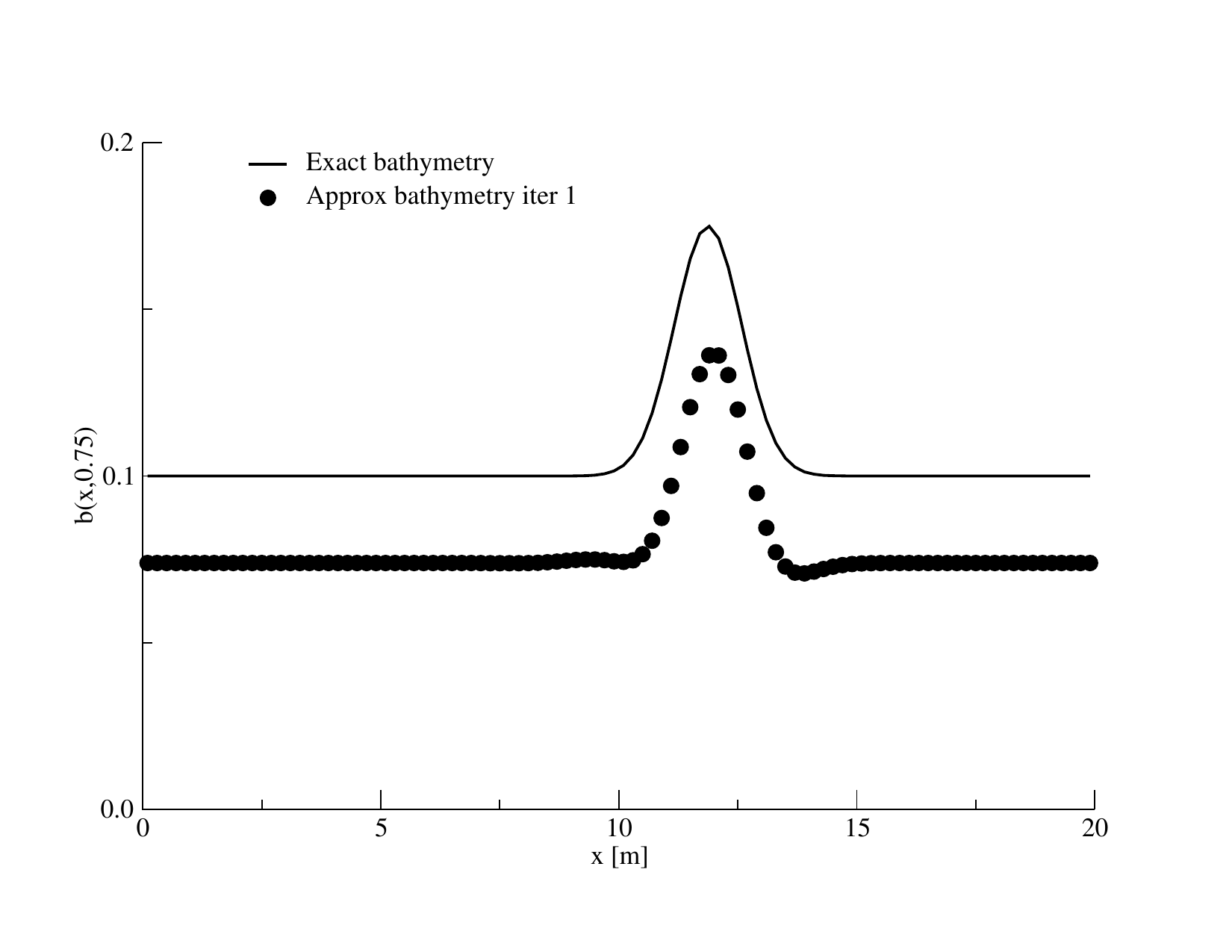} \\
	
	\includegraphics[width=0.3\textwidth]{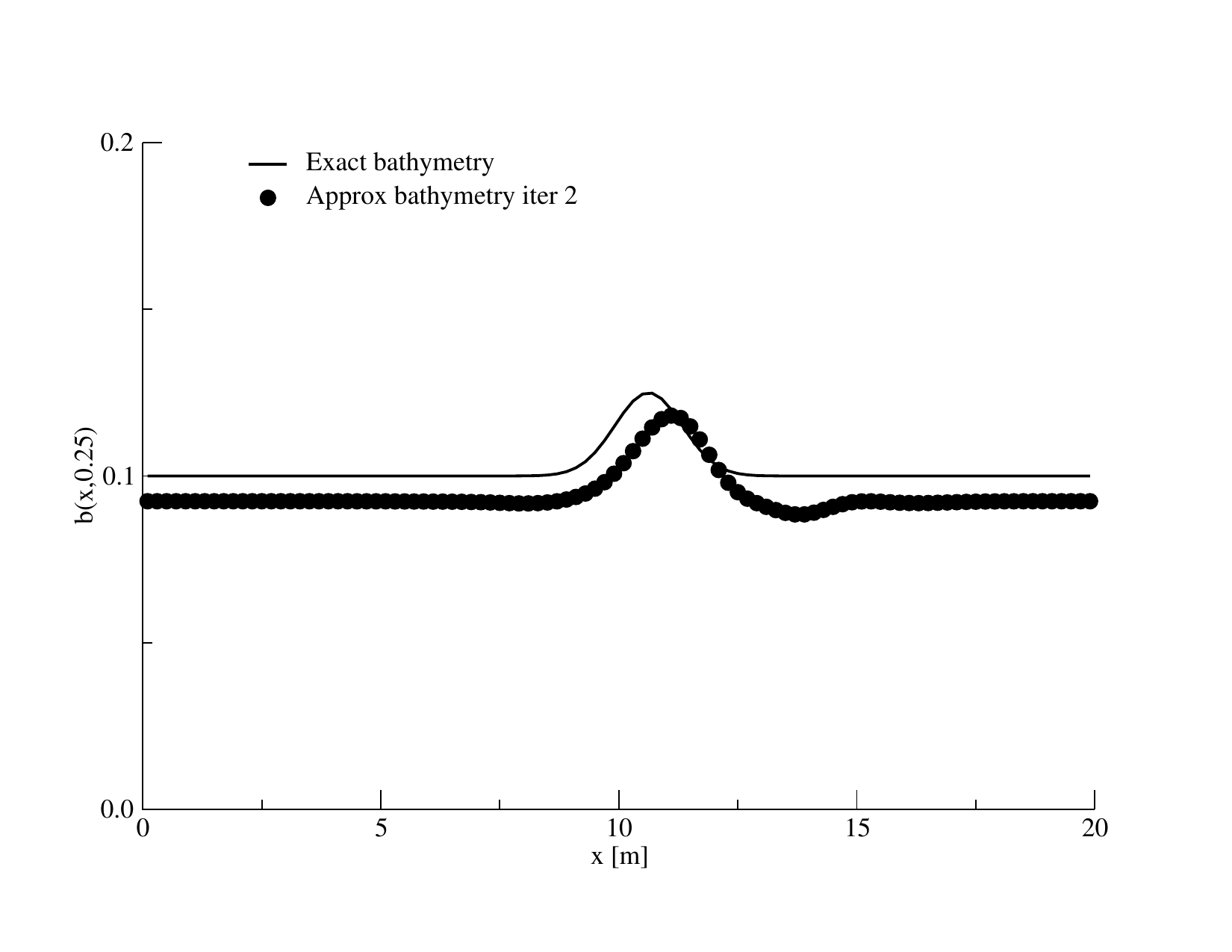} \quad
	\includegraphics[width=0.3\textwidth]{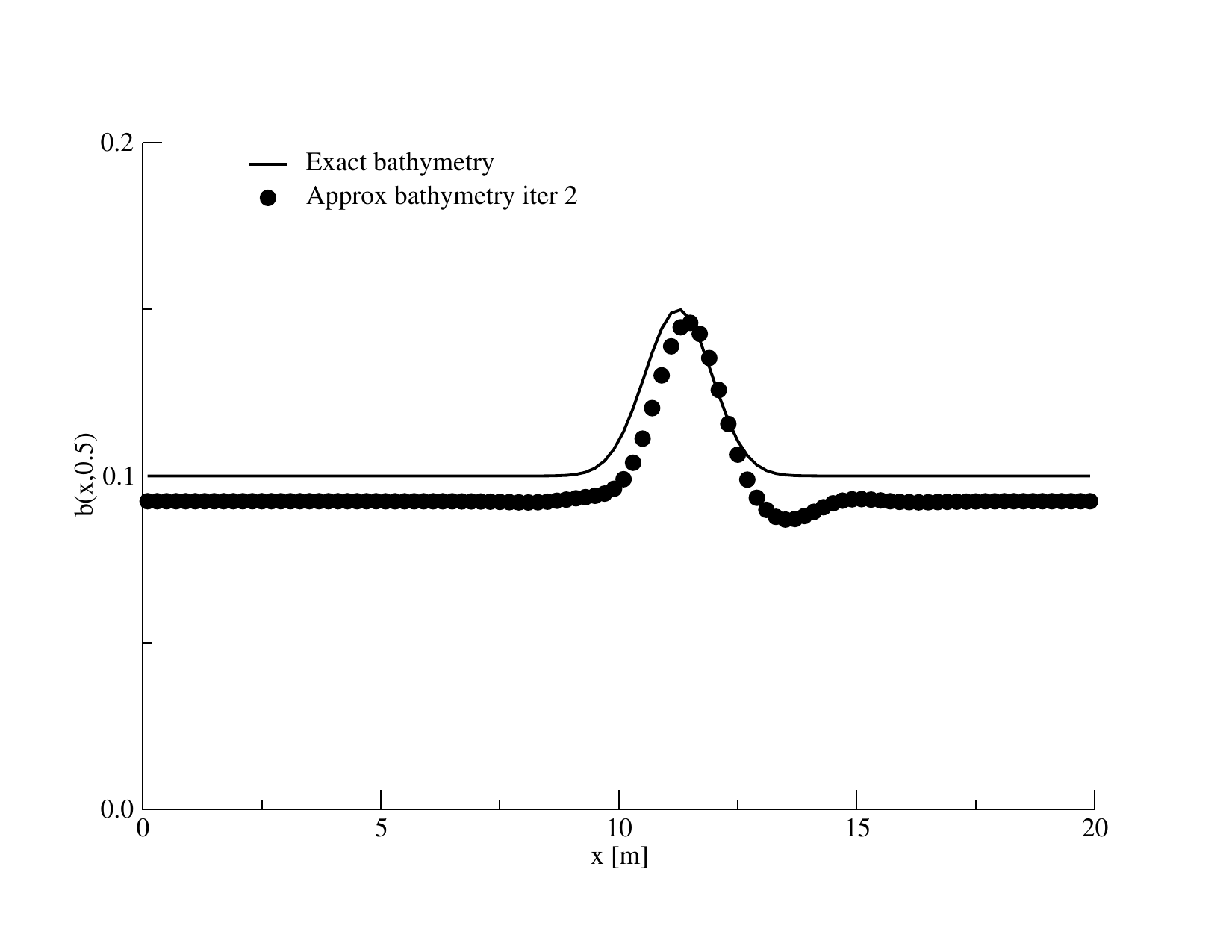} \quad
	\includegraphics[width=0.3\textwidth]{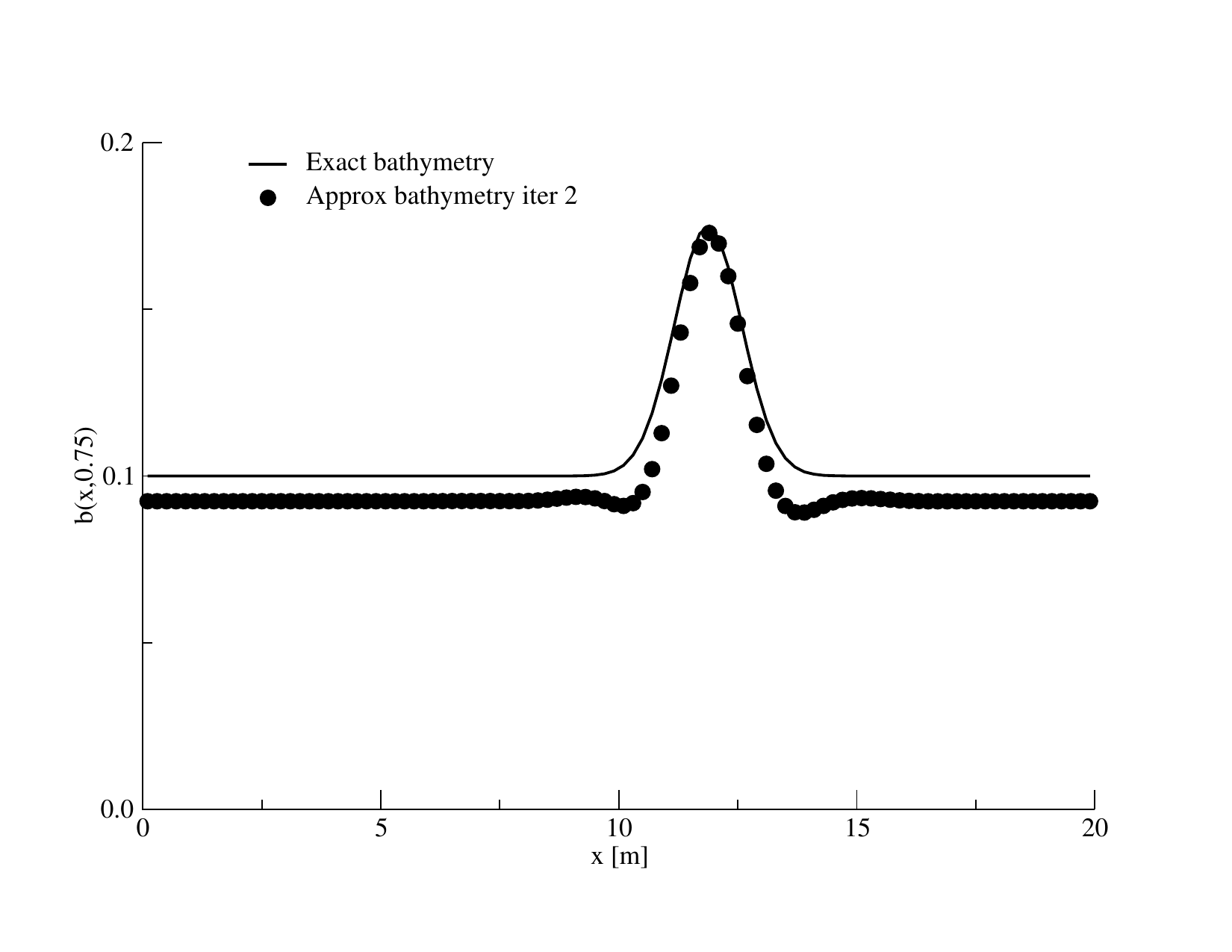} \\

	\includegraphics[width=0.3\textwidth]{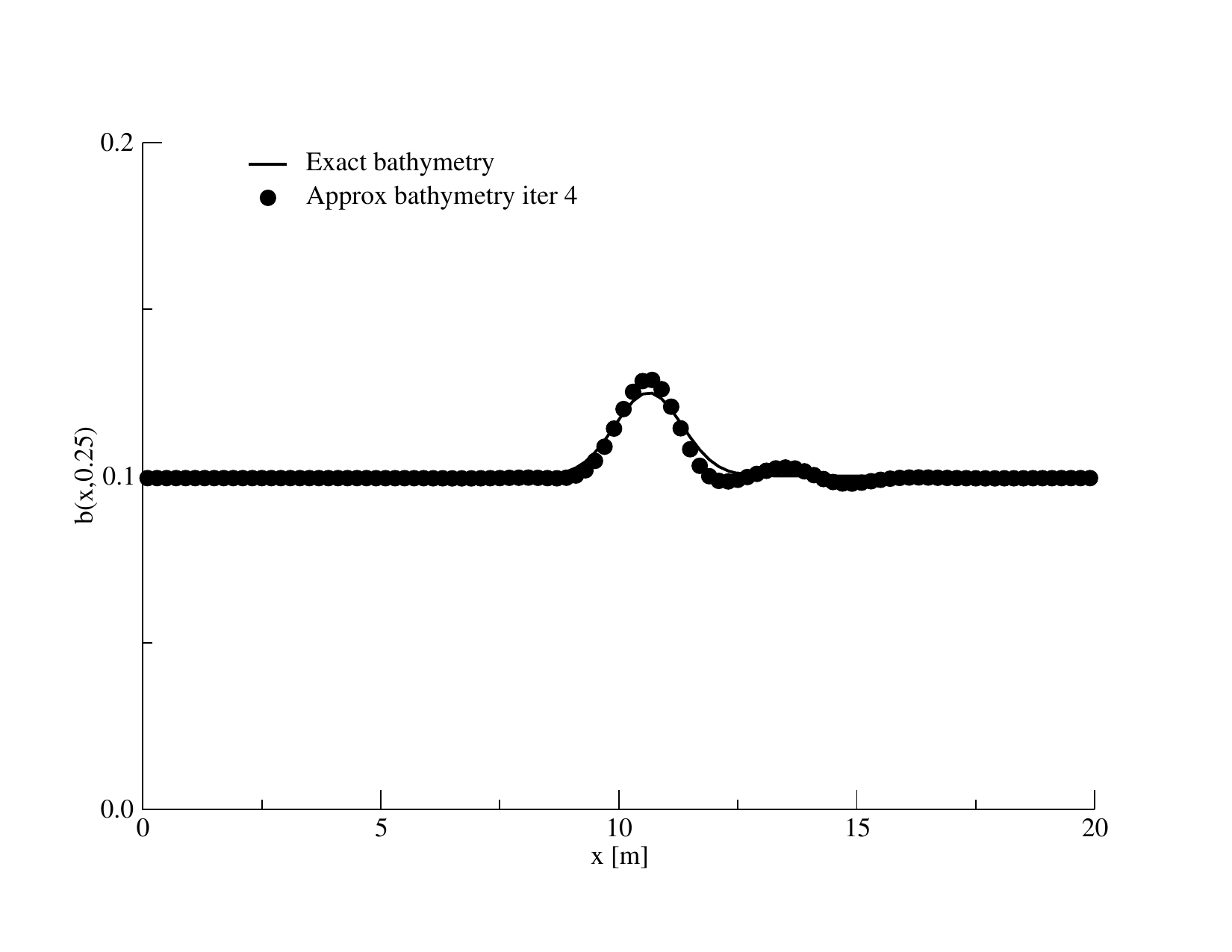} \quad
	\includegraphics[width=0.3\textwidth]{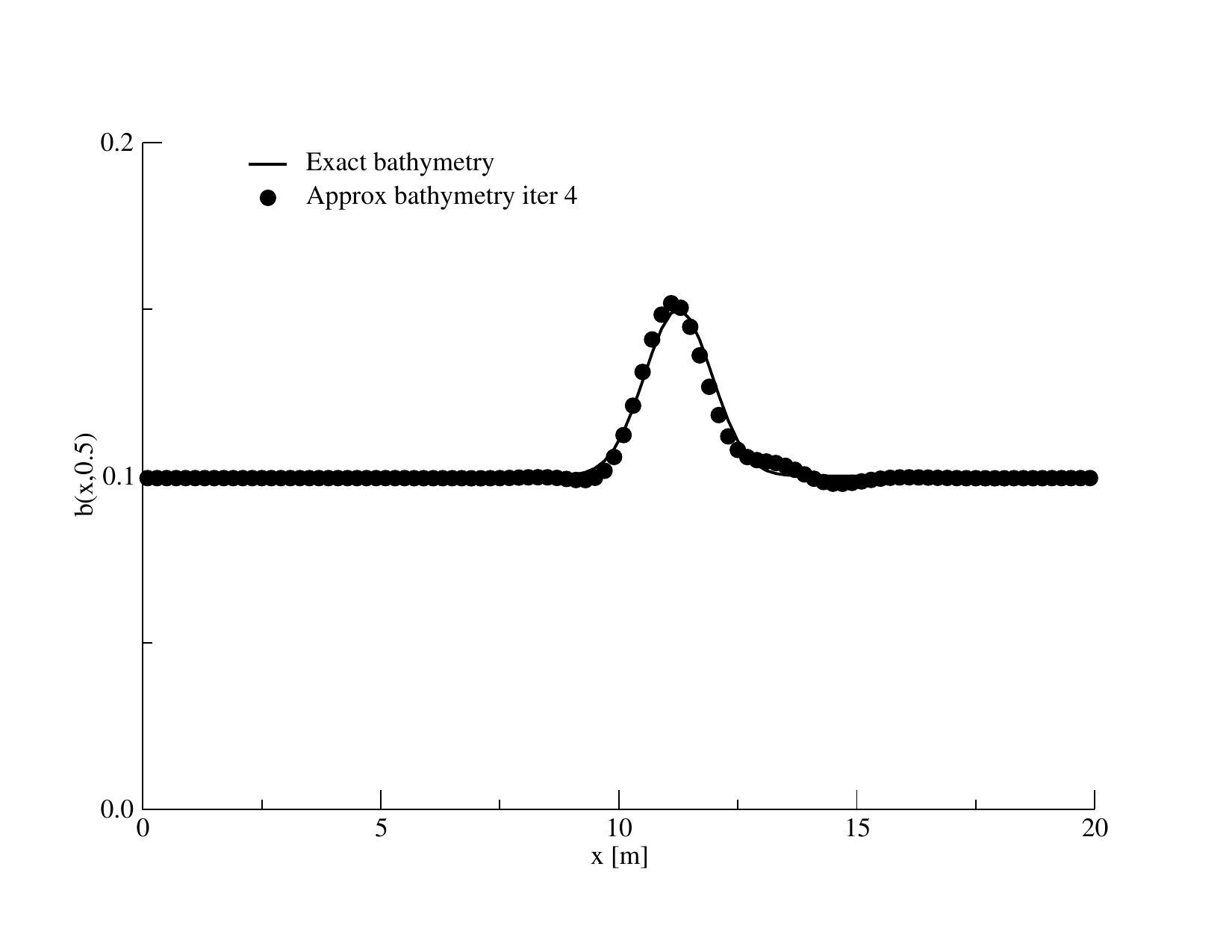} \quad
	\includegraphics[width=0.3\textwidth]{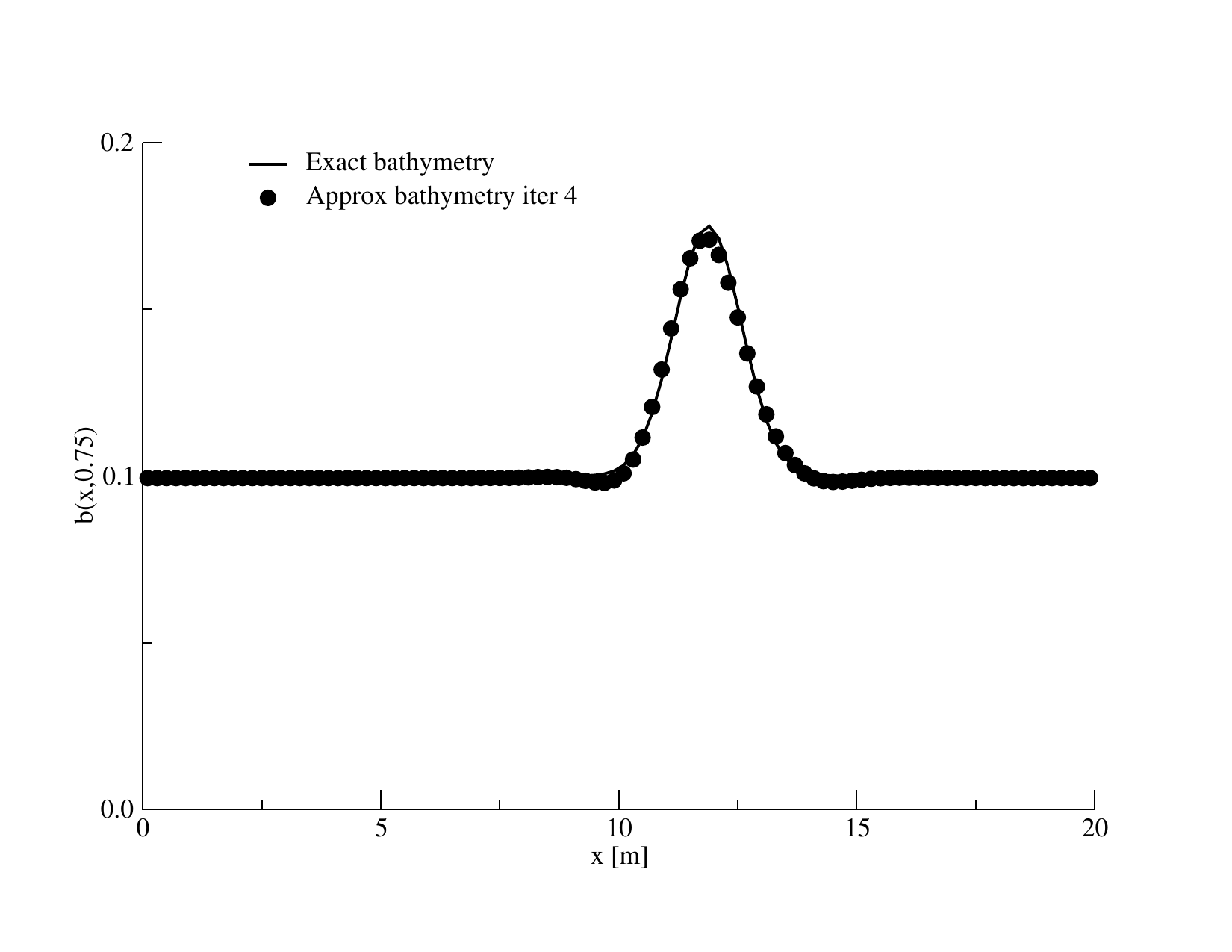} \\
	
	\includegraphics[width=0.3\textwidth]{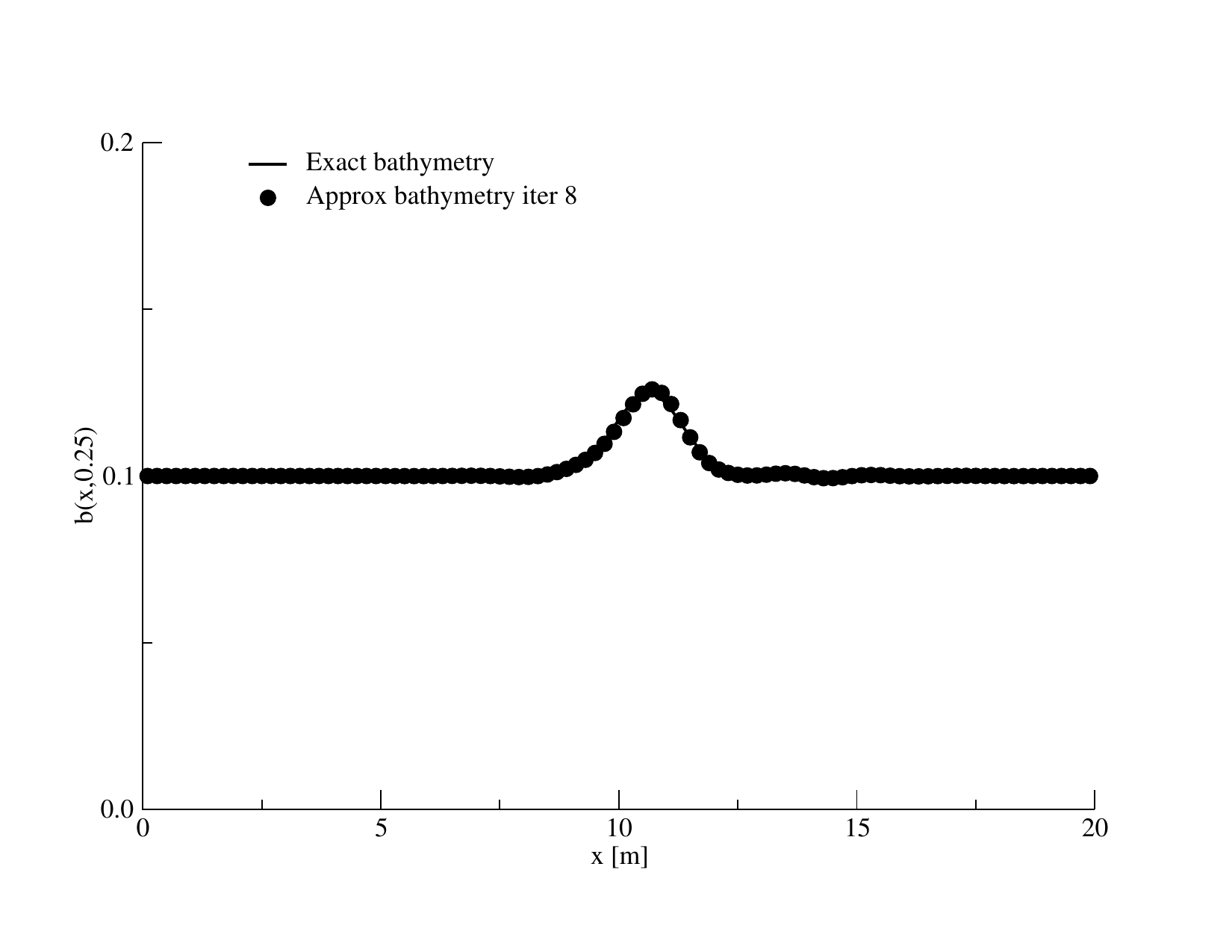} \quad
	\includegraphics[width=0.3\textwidth]{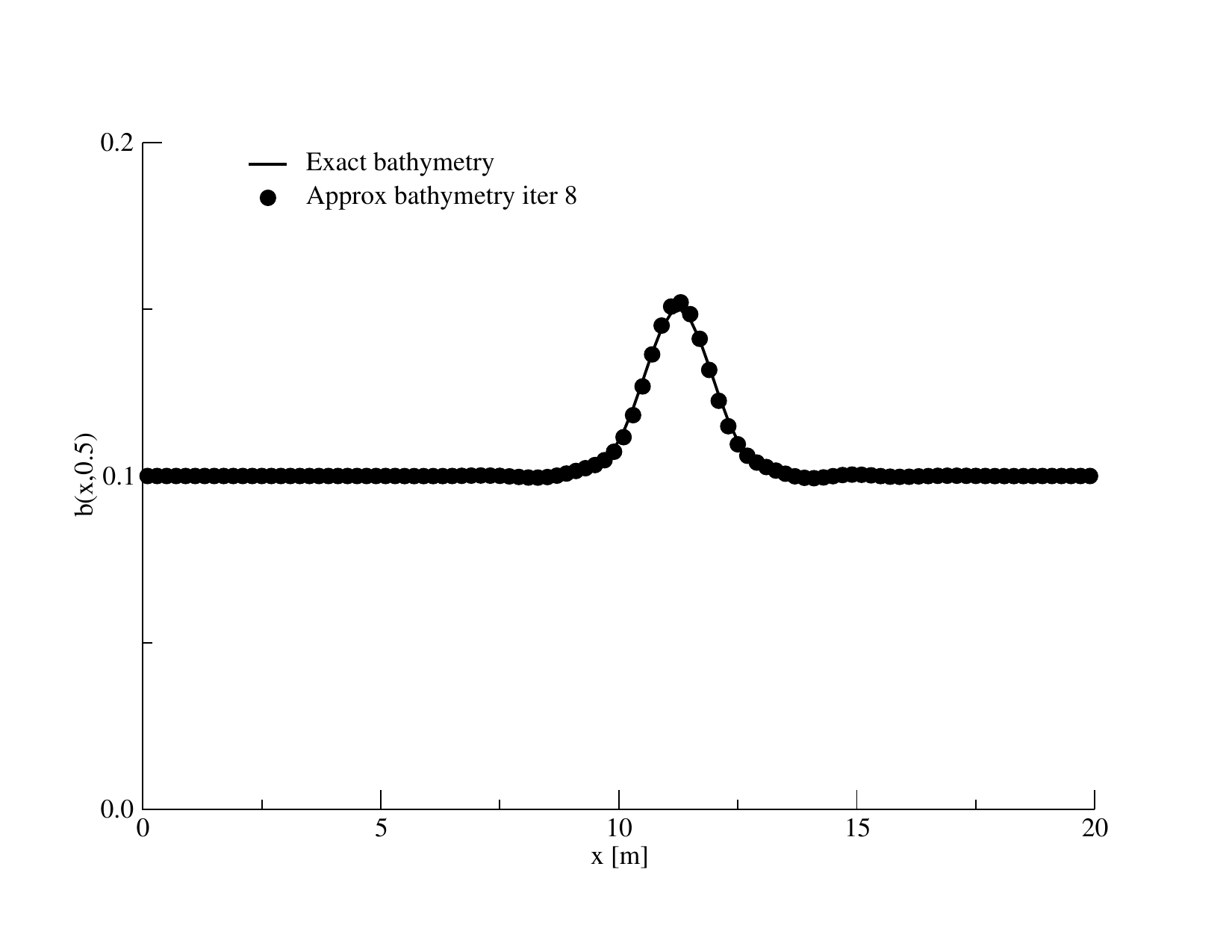} \quad
	\includegraphics[width=0.3\textwidth]{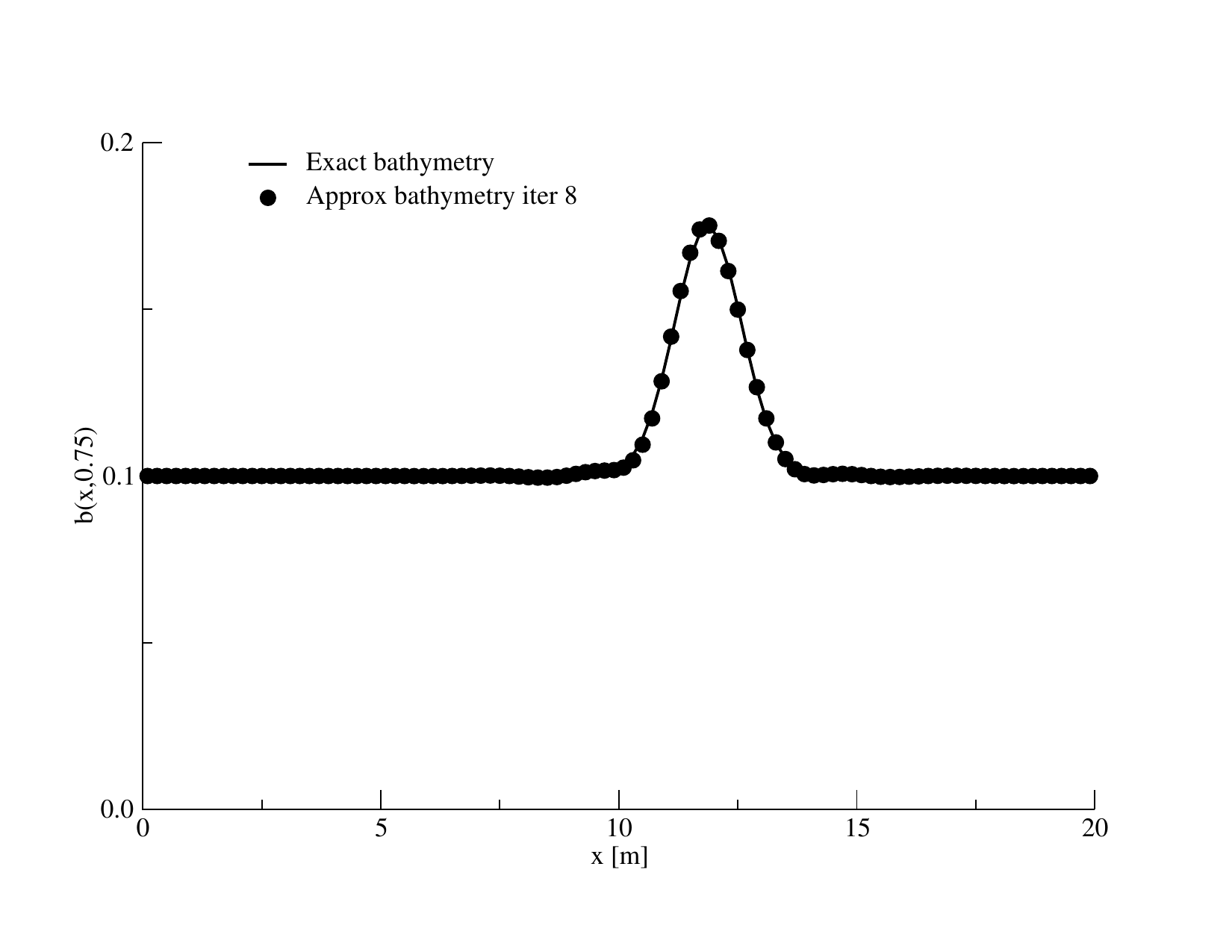} \\
	
	\caption{Smooth bottom profile (\ref{eq:b-smooth}): Result for the reconstruction procedure resulting from {\bf Rusanov+FD}. Parameters $\Delta t = 0.01$,  $\alpha_F = 2$, $\varepsilon = 0.001$, $\lambda_b = 0.71$, $100$ cells.
		{\bf Feft:} $t=0.25$, {\bf centered} $t=0.5$, {\bf right:} $t=0.75$.}
	\label{fig:b-for-iter-and-times:test-0-ConsRusanov}
\end{figure}


\begin{figure}   
	\centering
	\includegraphics[width=0.3\textwidth]{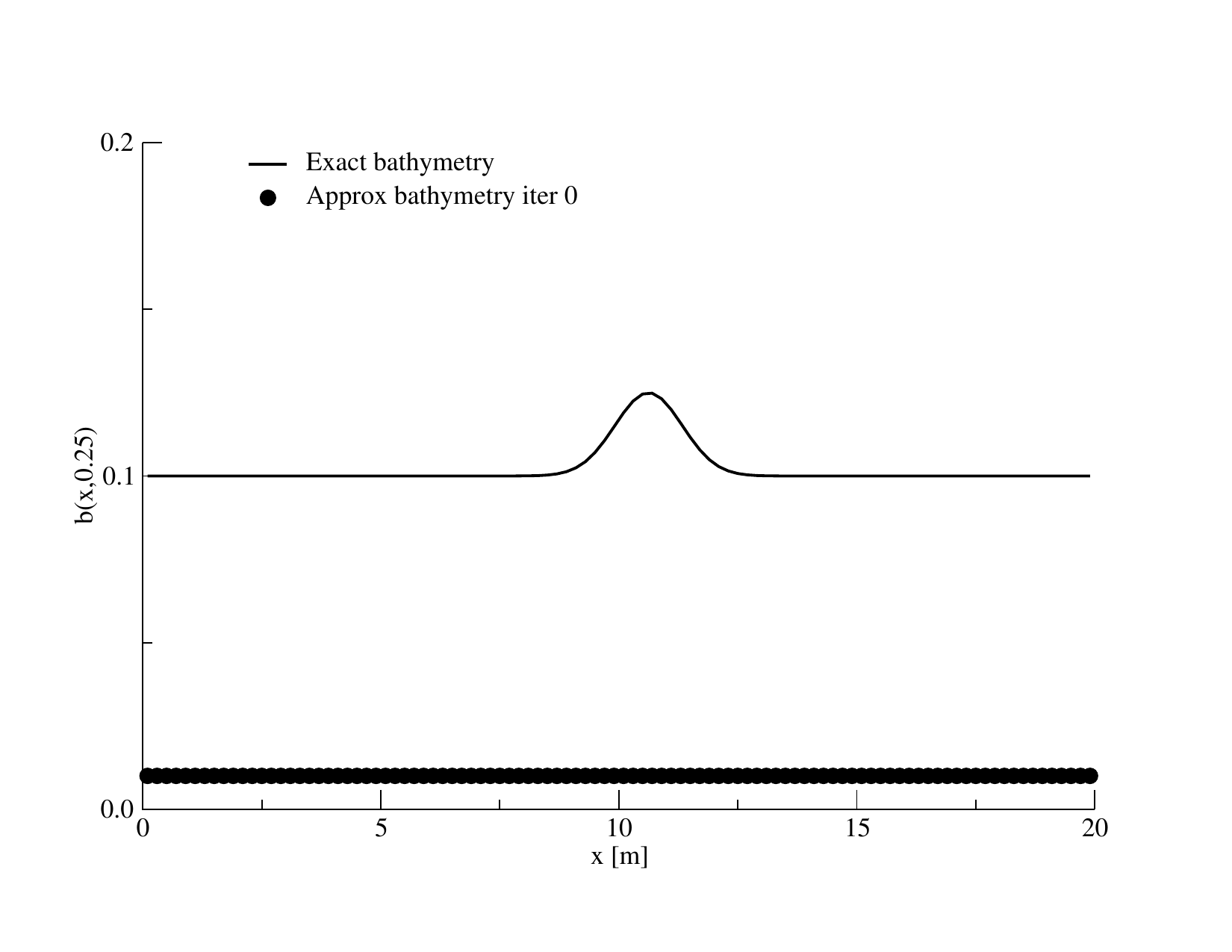} \quad
	\includegraphics[width=0.3\textwidth]{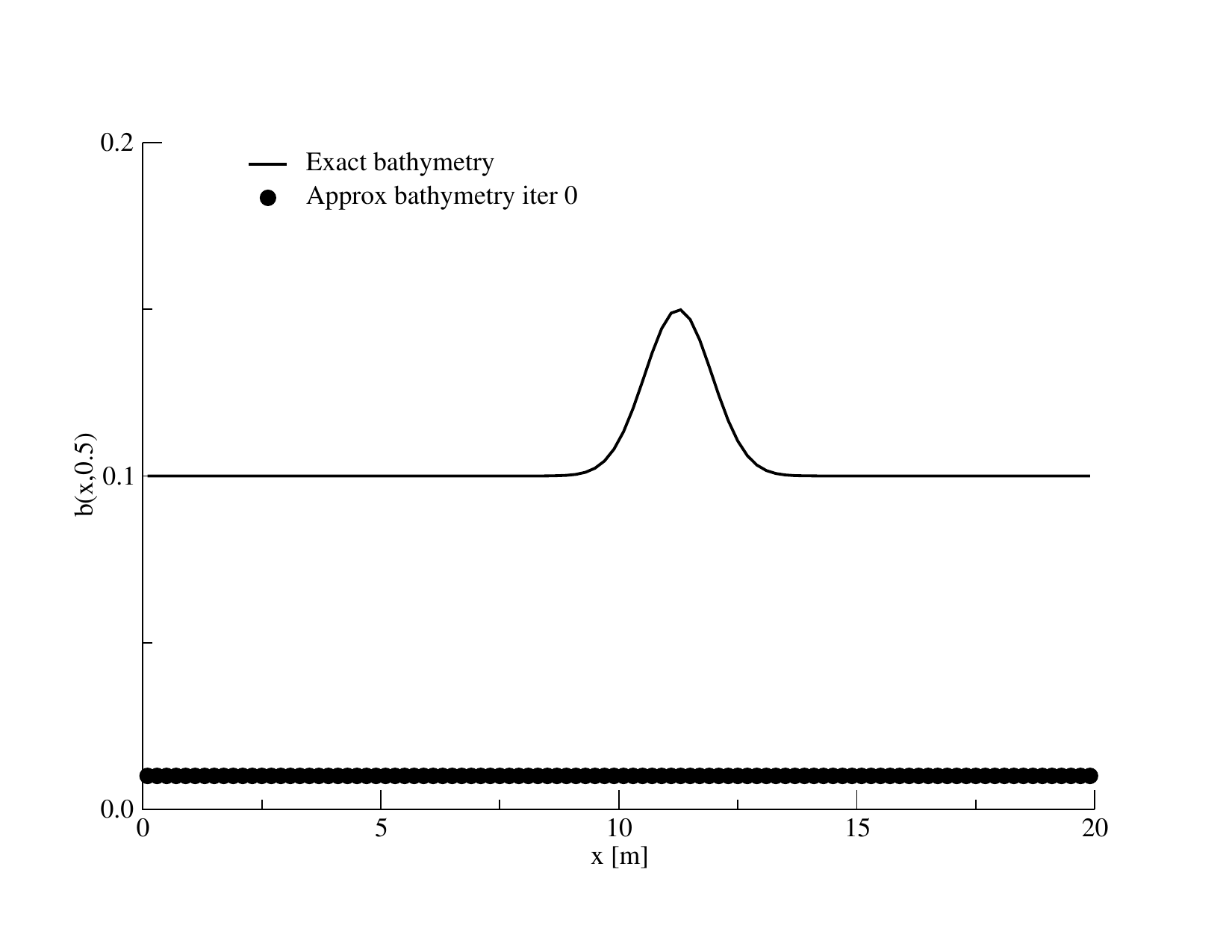} \quad
	\includegraphics[width=0.3\textwidth]{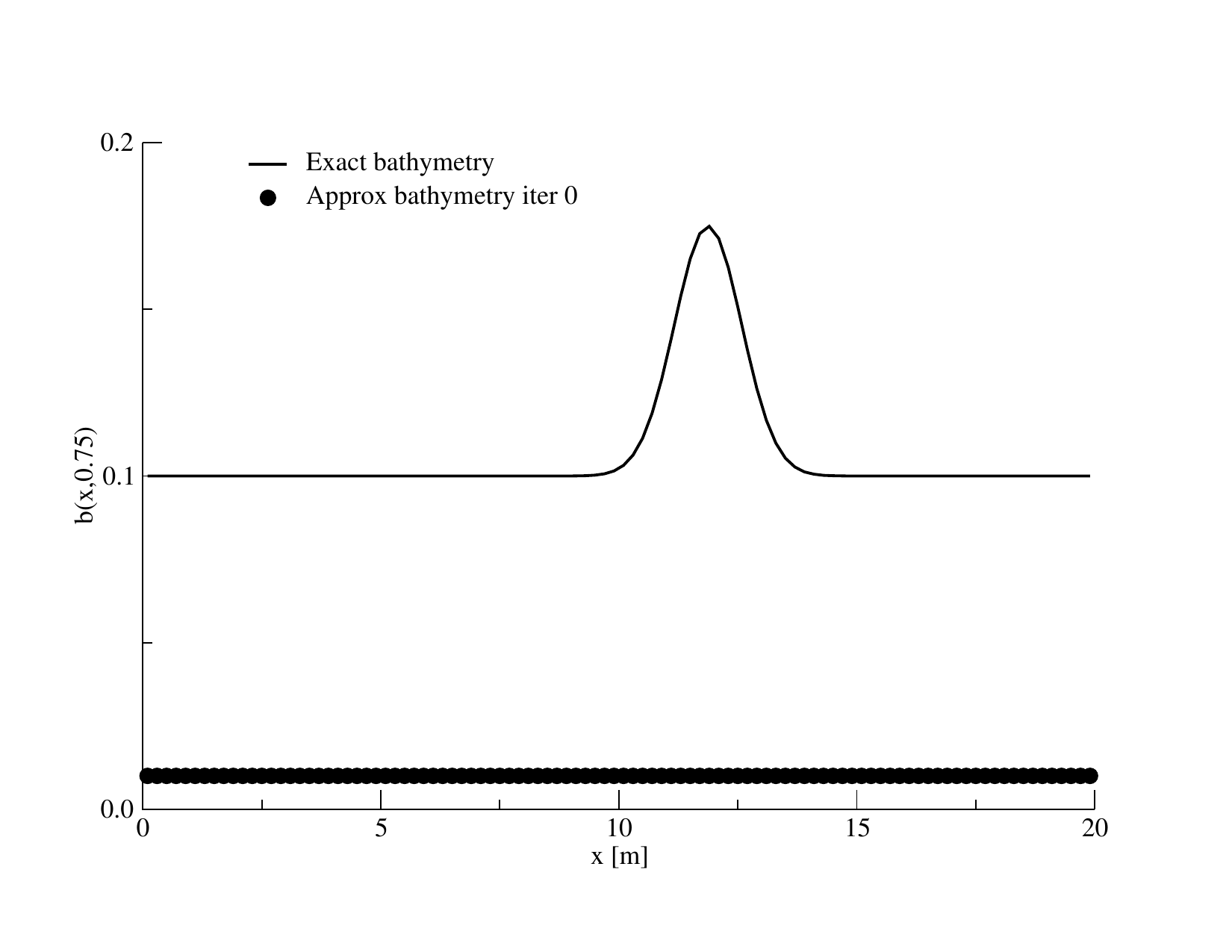} 
	\\
	
	\includegraphics[width=0.3\textwidth]{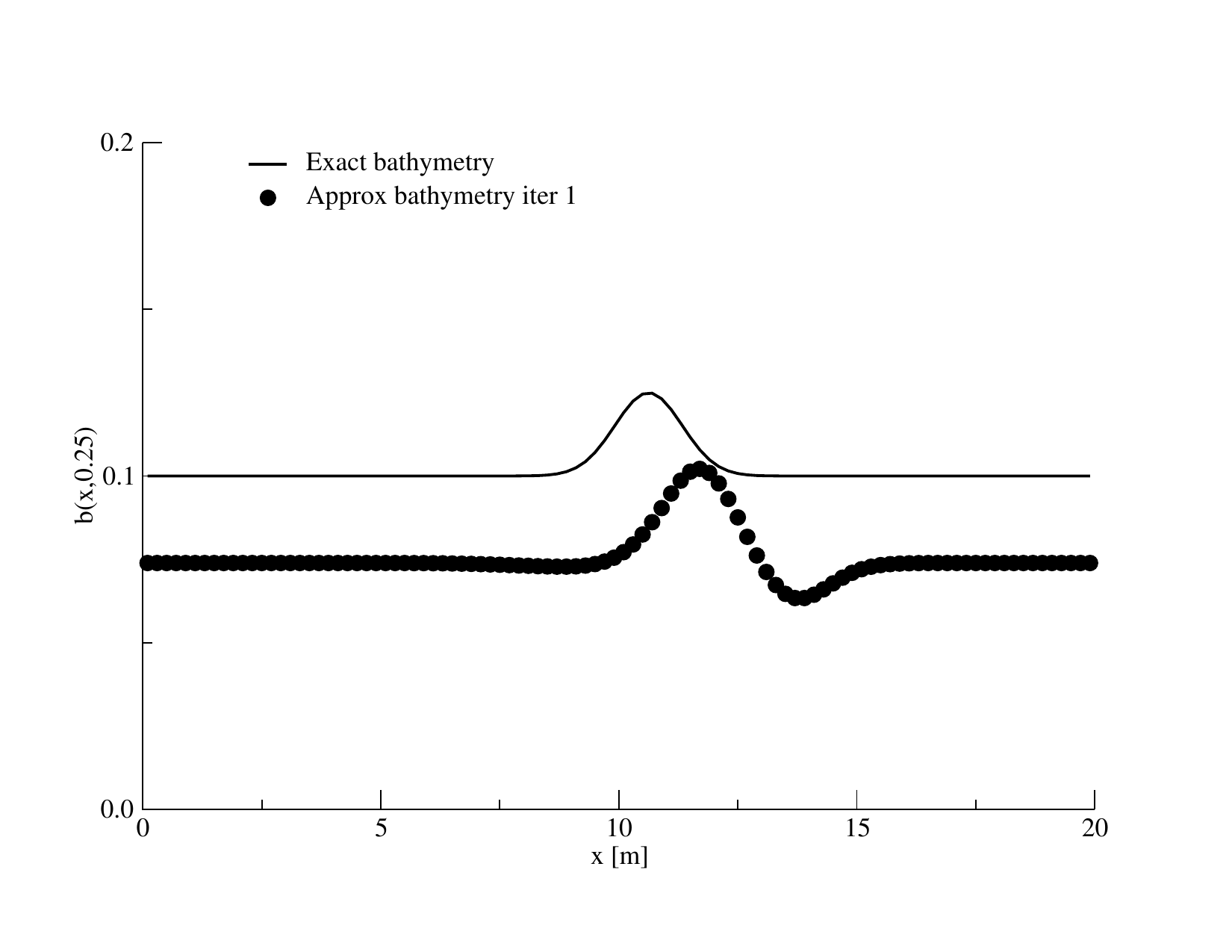} \quad
	\includegraphics[width=0.3\textwidth]{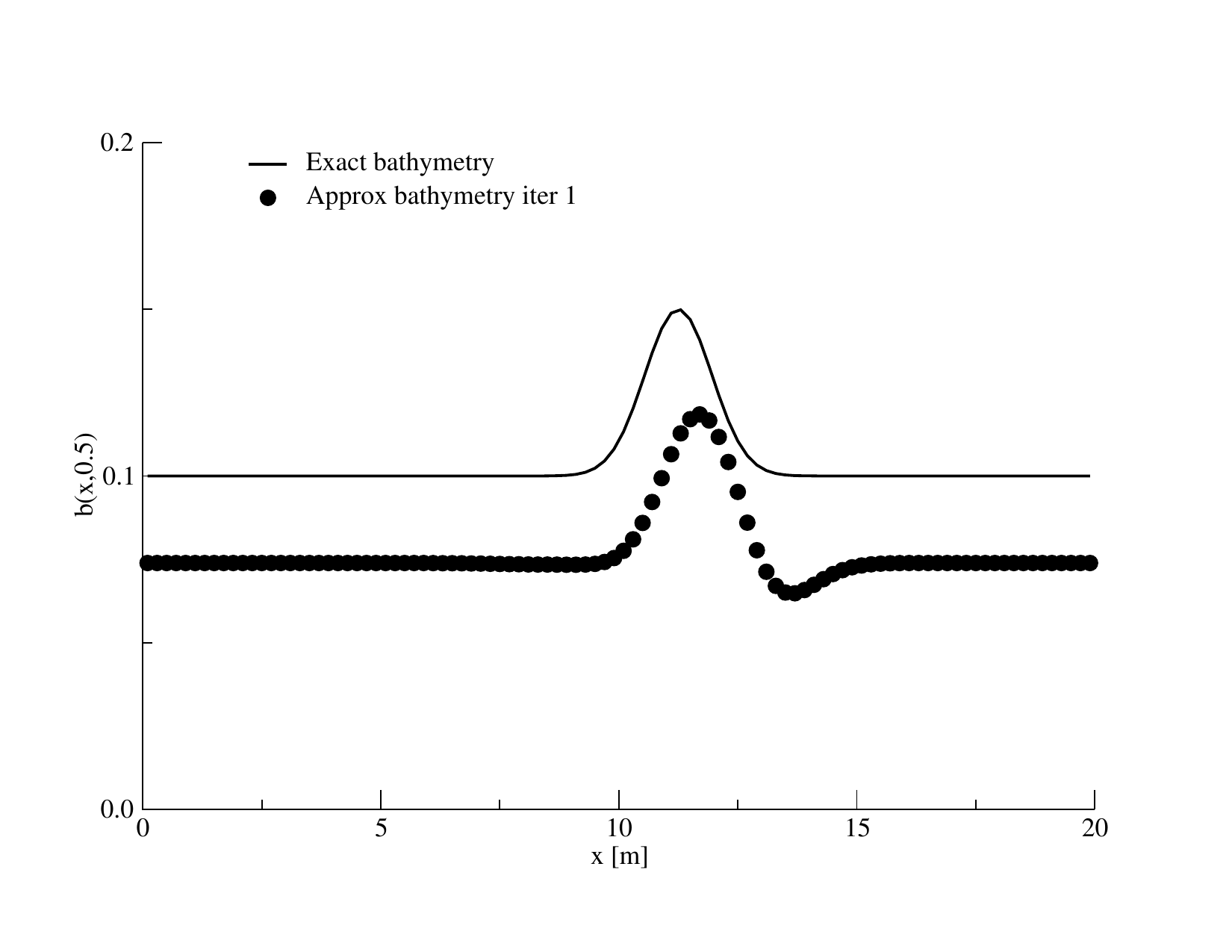} \quad
	\includegraphics[width=0.3\textwidth]{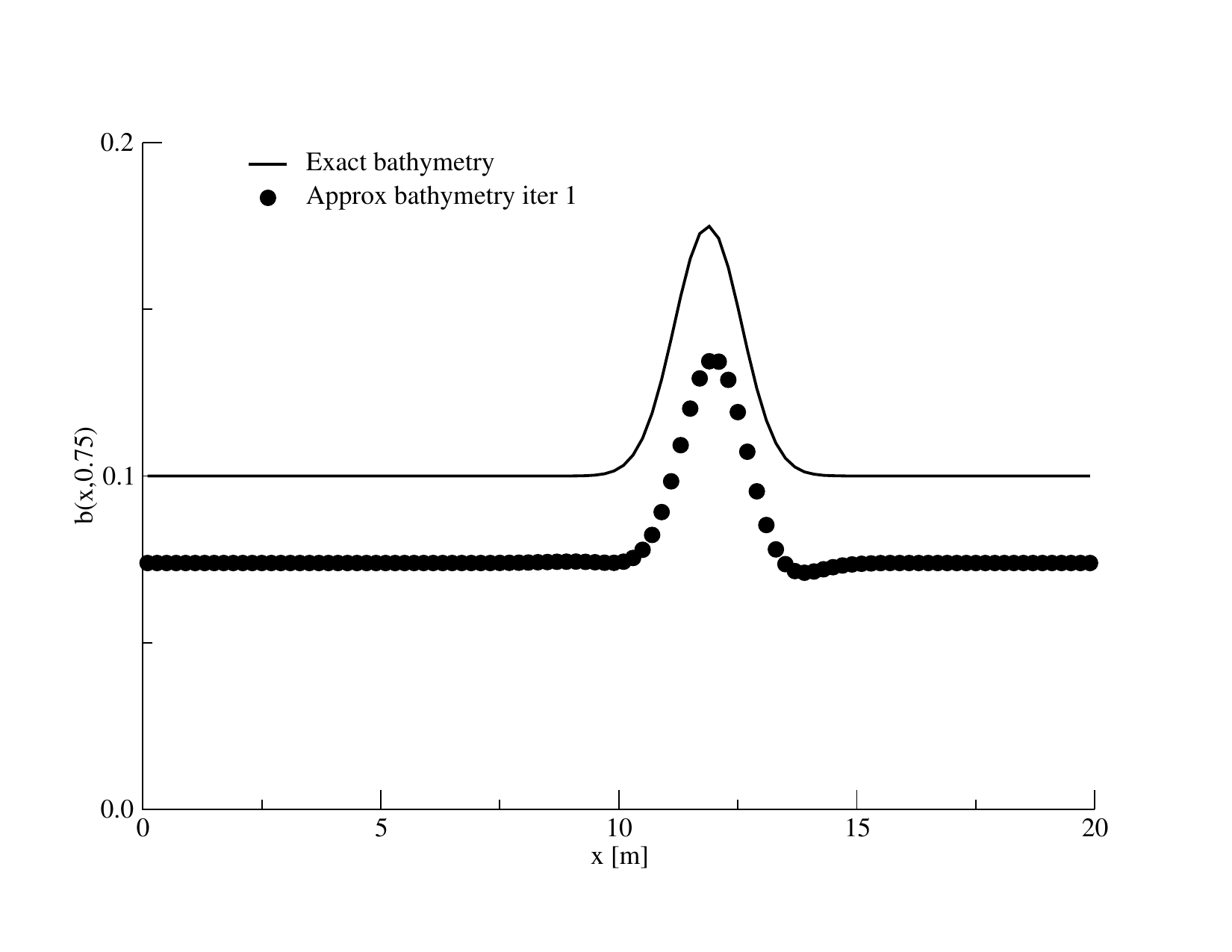} \\
	
	\includegraphics[width=0.3\textwidth]{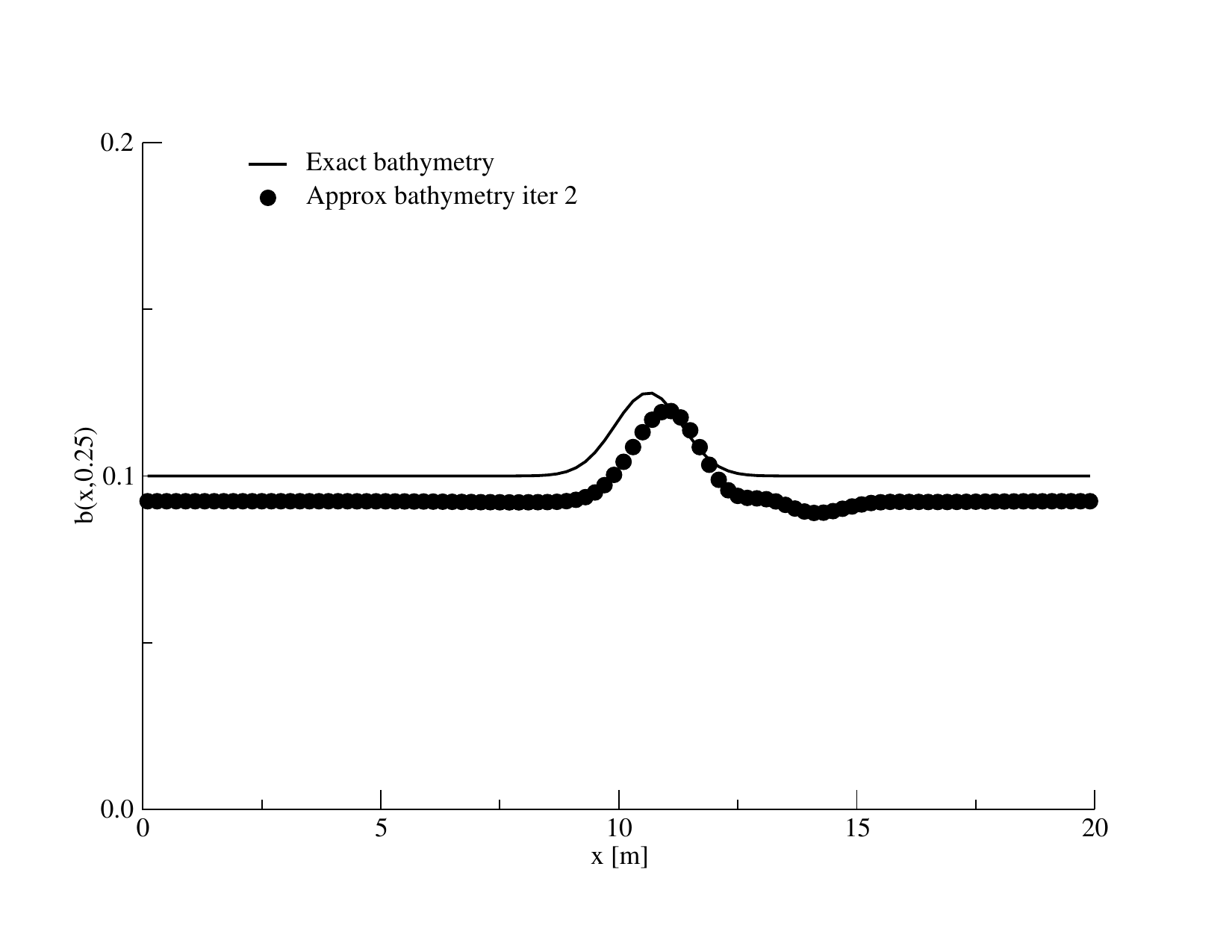} \quad
	\includegraphics[width=0.3\textwidth]{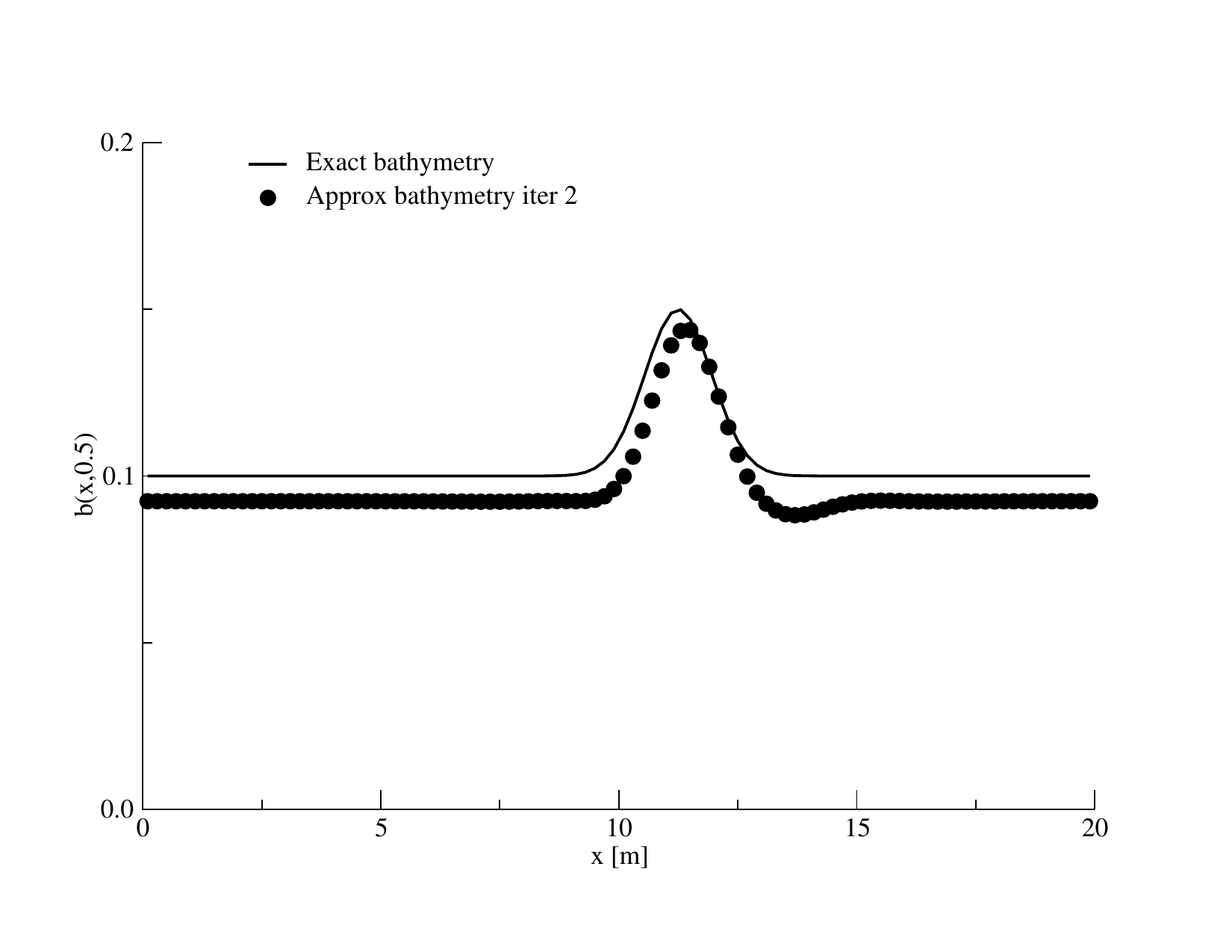} \quad
	\includegraphics[width=0.3\textwidth]{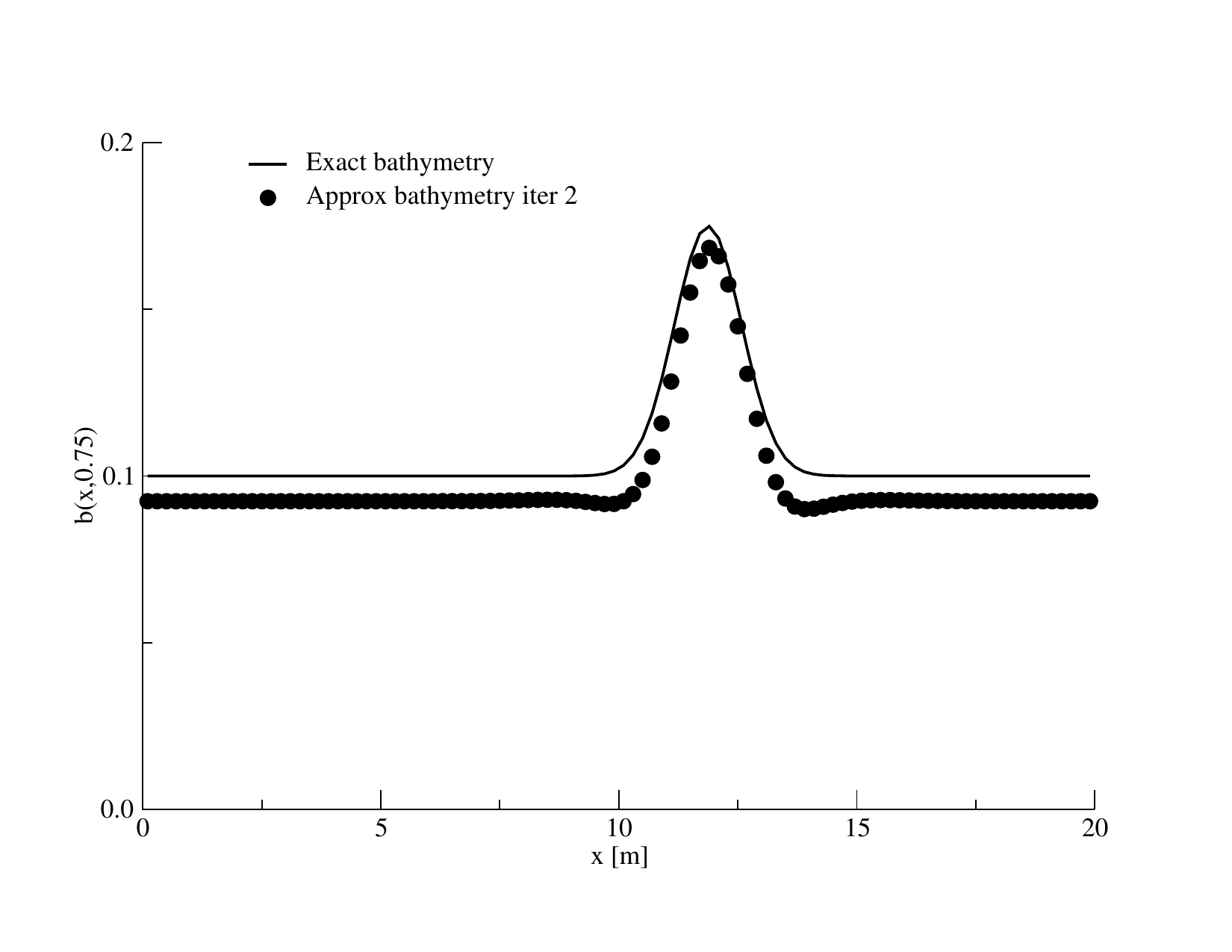} \\

	\includegraphics[width=0.3\textwidth]{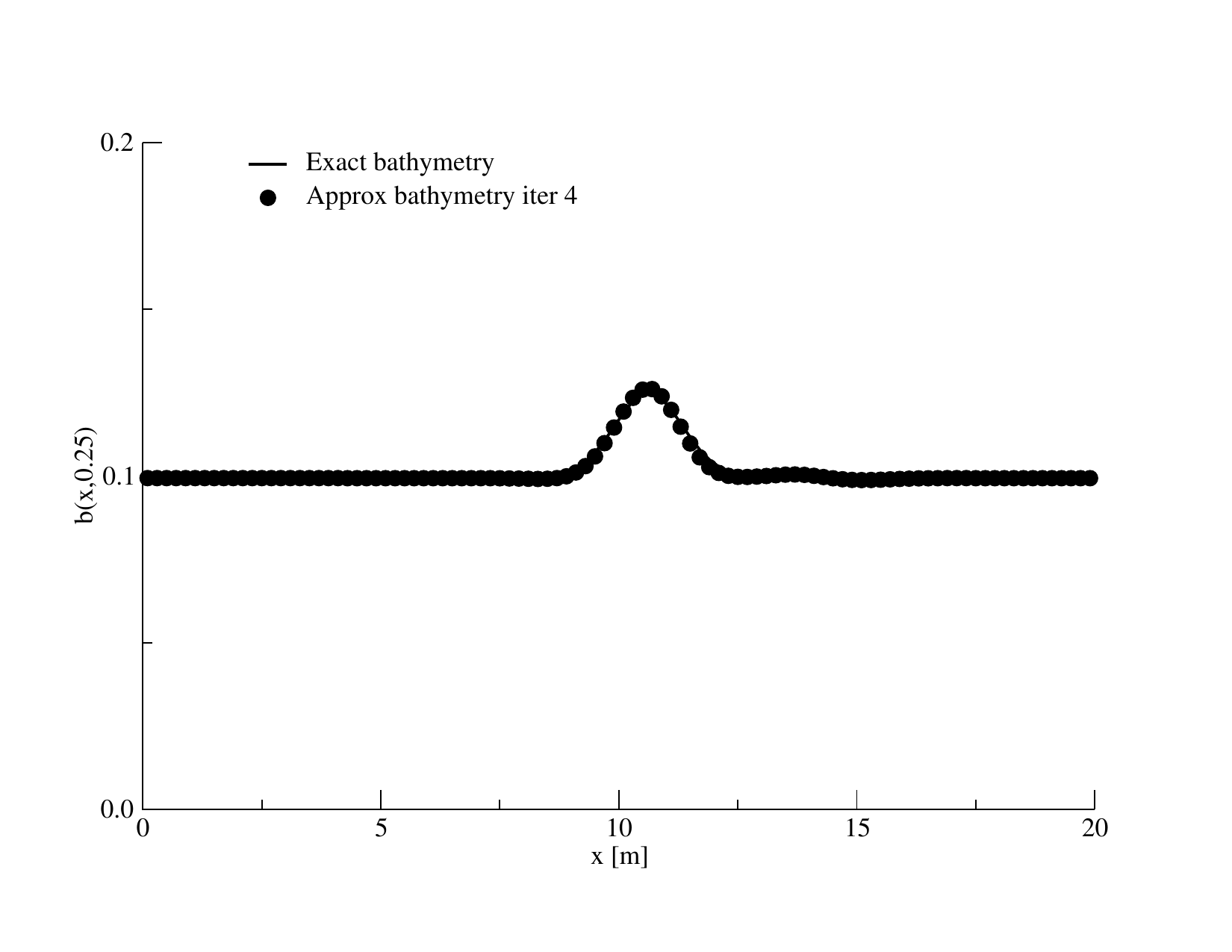} \quad
	\includegraphics[width=0.3\textwidth]{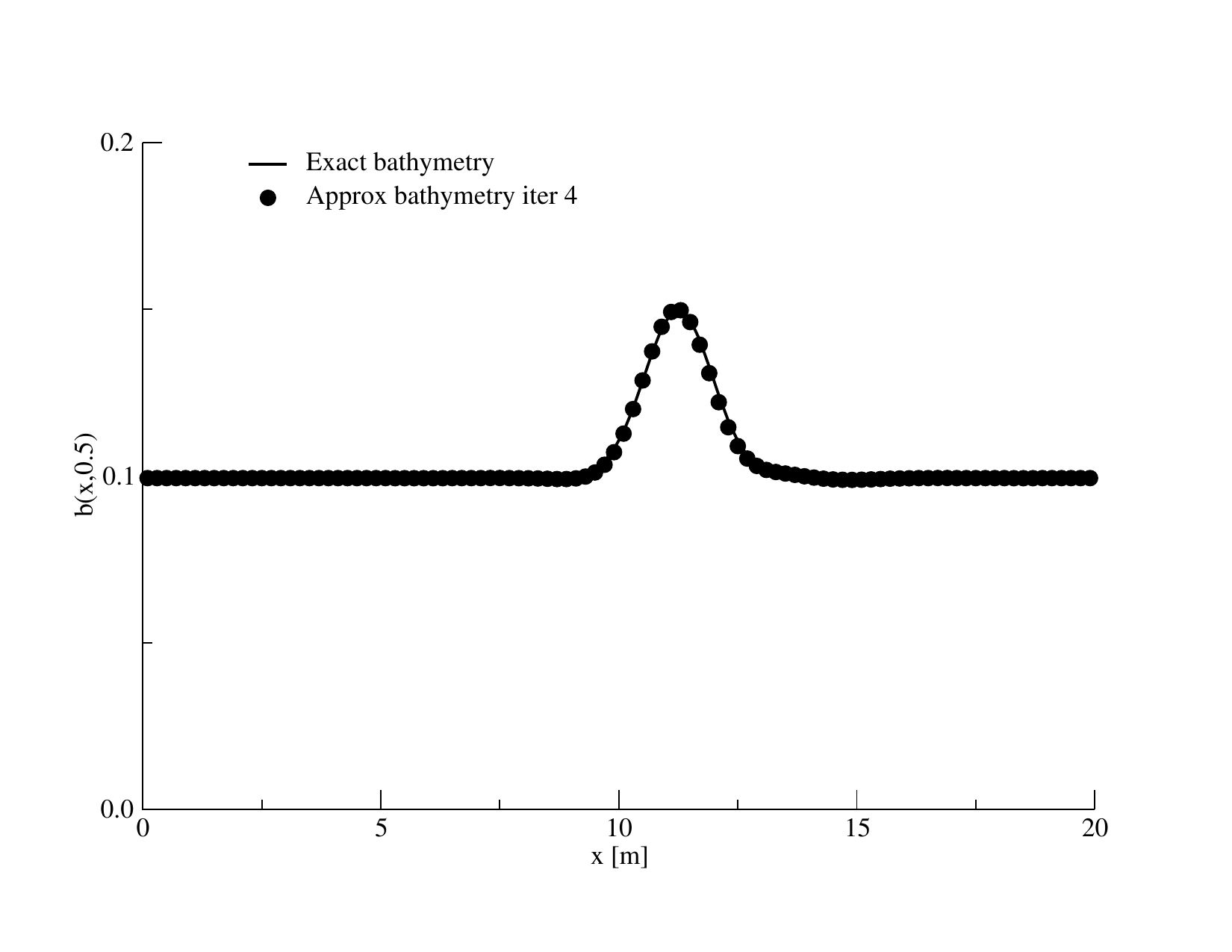} \quad
	\includegraphics[width=0.3\textwidth]{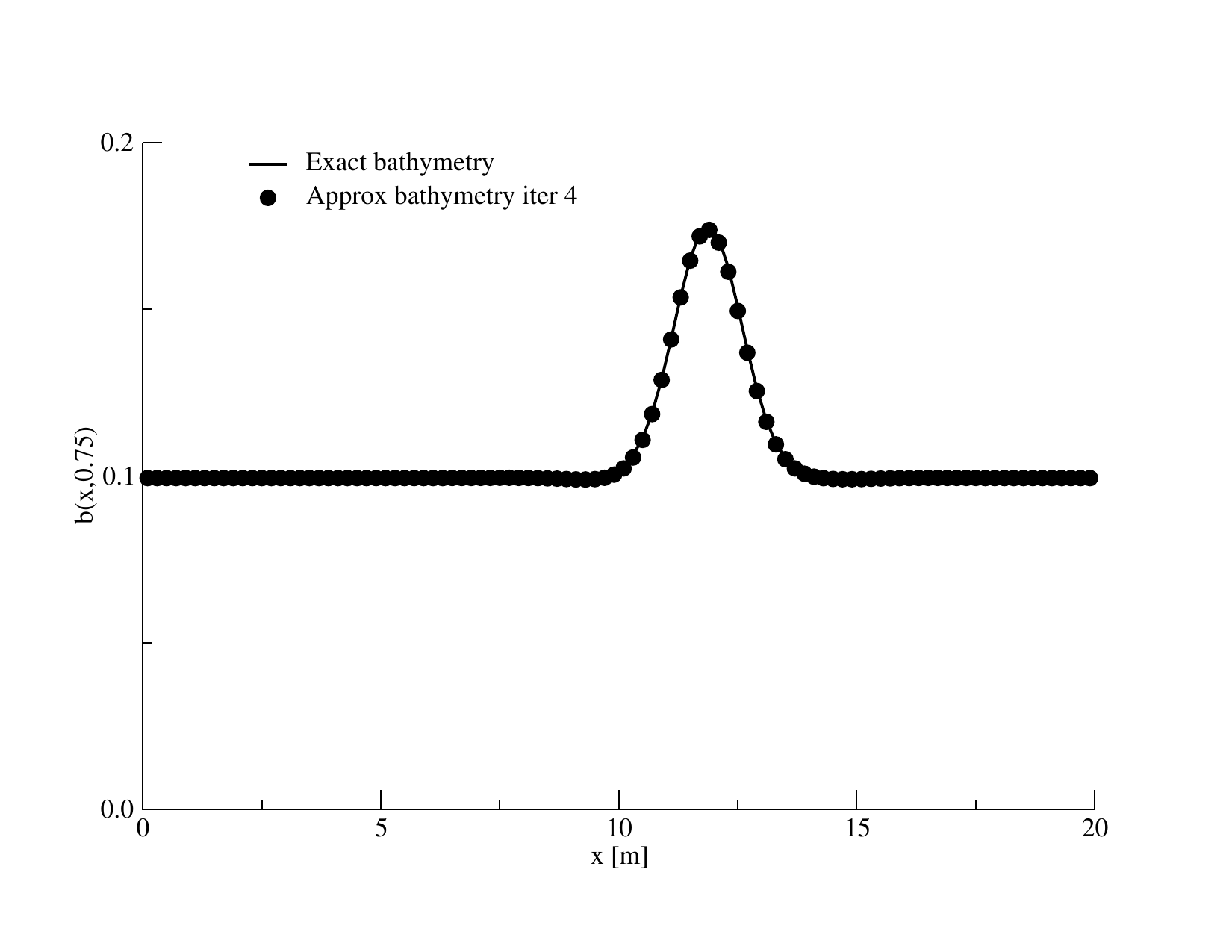} \\
	
	\includegraphics[width=0.3\textwidth]{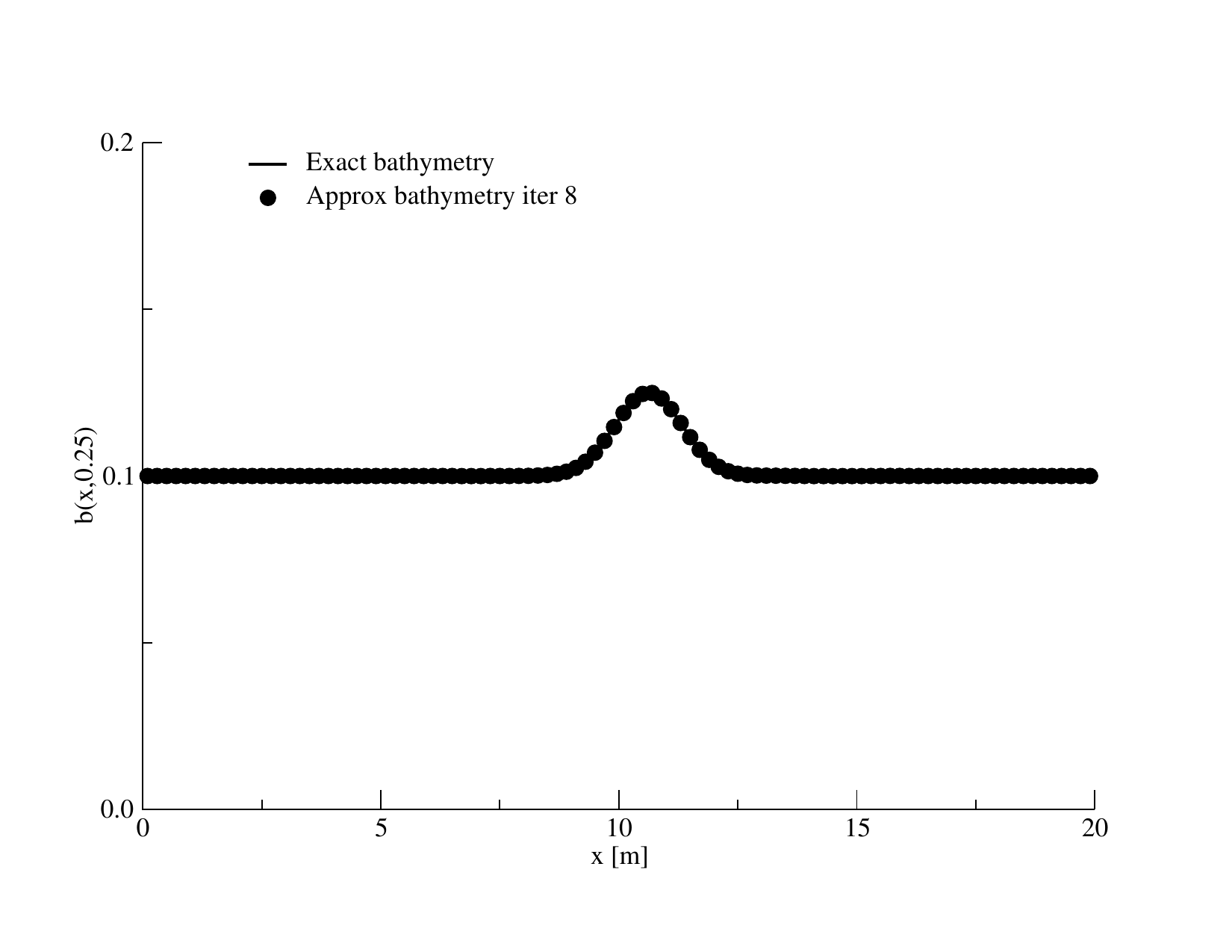} \quad
	\includegraphics[width=0.3\textwidth]{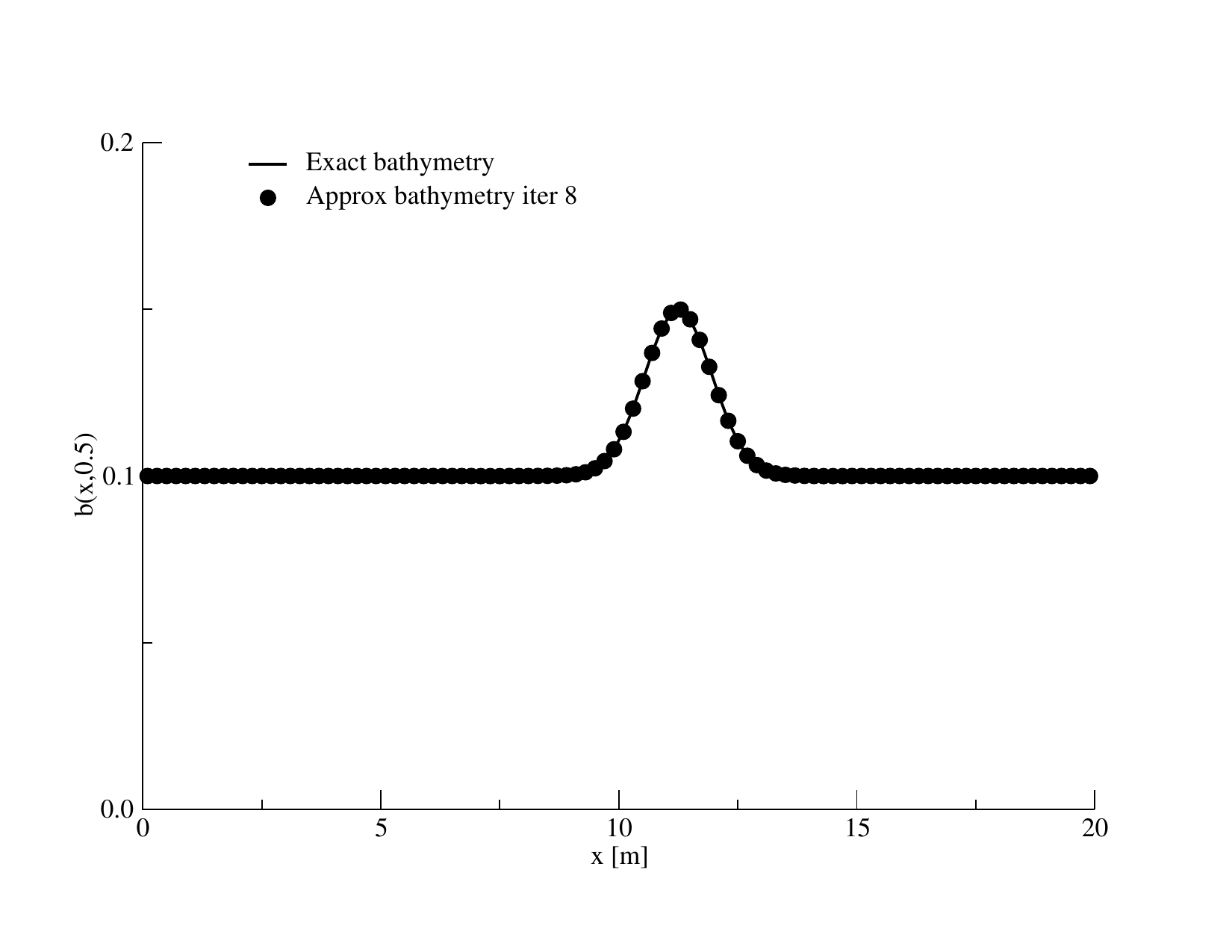} \quad
	\includegraphics[width=0.3\textwidth]{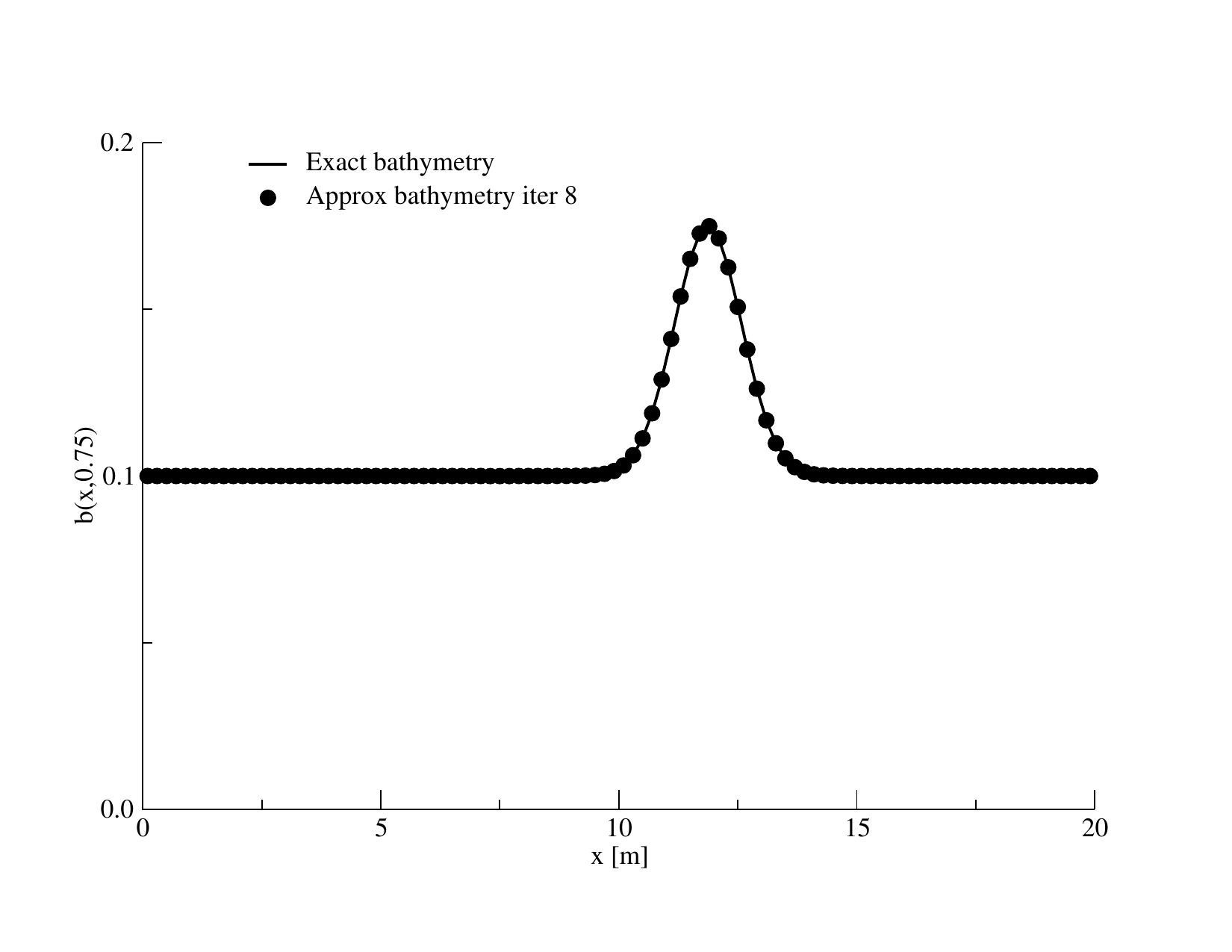} \\
	
	\caption{Smooth bottom profile (\ref{eq:b-smooth}): Result for the reconstruction procedure resulting from non-conservative {\bf FORCE-$\alpha$+FD}. Parameters $\Delta t = 0.01$,  $\alpha_F = 2$, $\varepsilon = 0.001$, $\lambda_b = 0.71$, $100$ cells.
		{\bf Feft:} $t=0.25$, {\bf centered} $t=0.5$, {\bf right:} $t=0.75$.}
	\label{fig:b-for-iter-and-times:test-0:ForceAlphaCons}
\end{figure}


\begin{figure}   
	\centering
	\includegraphics[width=0.3\textwidth]{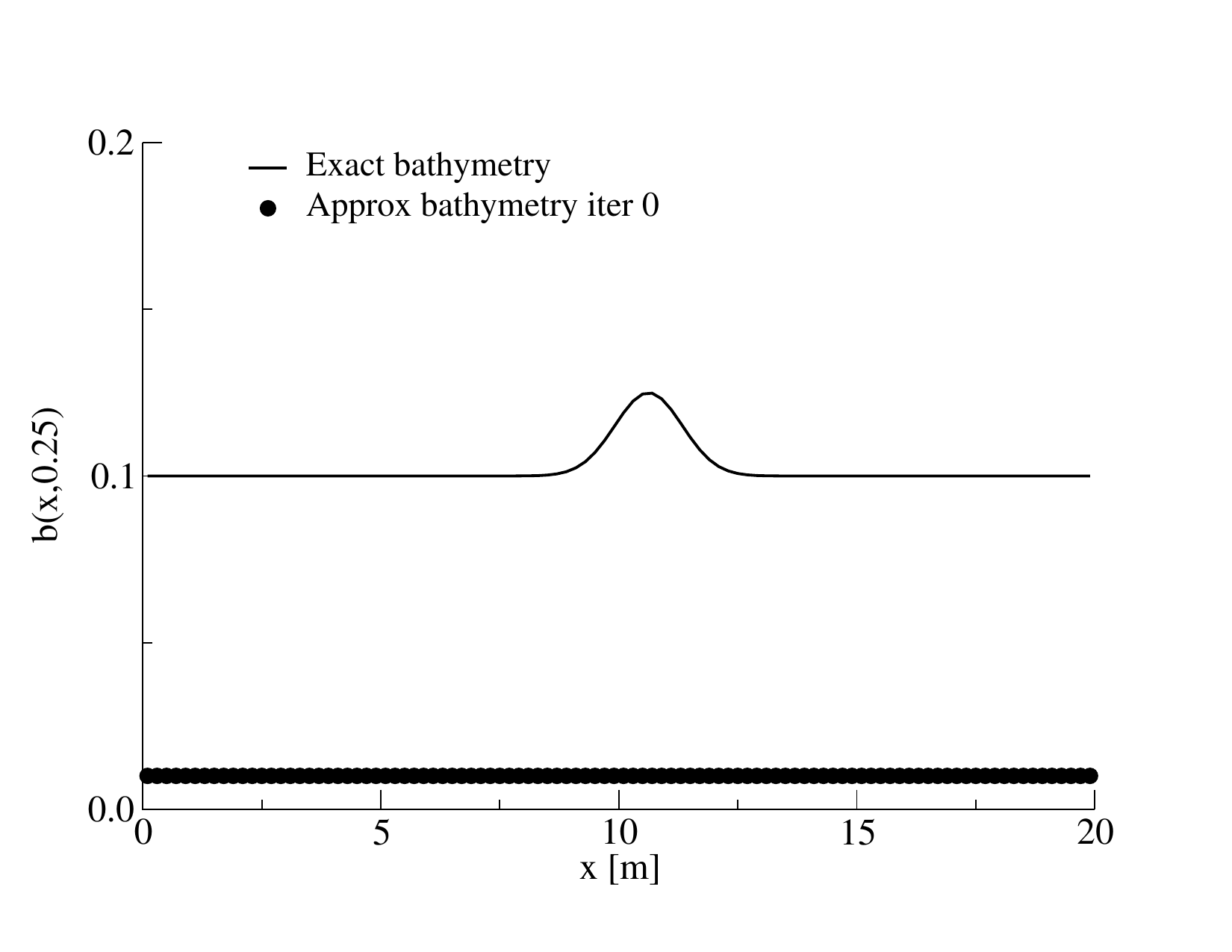} \quad
	\includegraphics[width=0.3\textwidth]{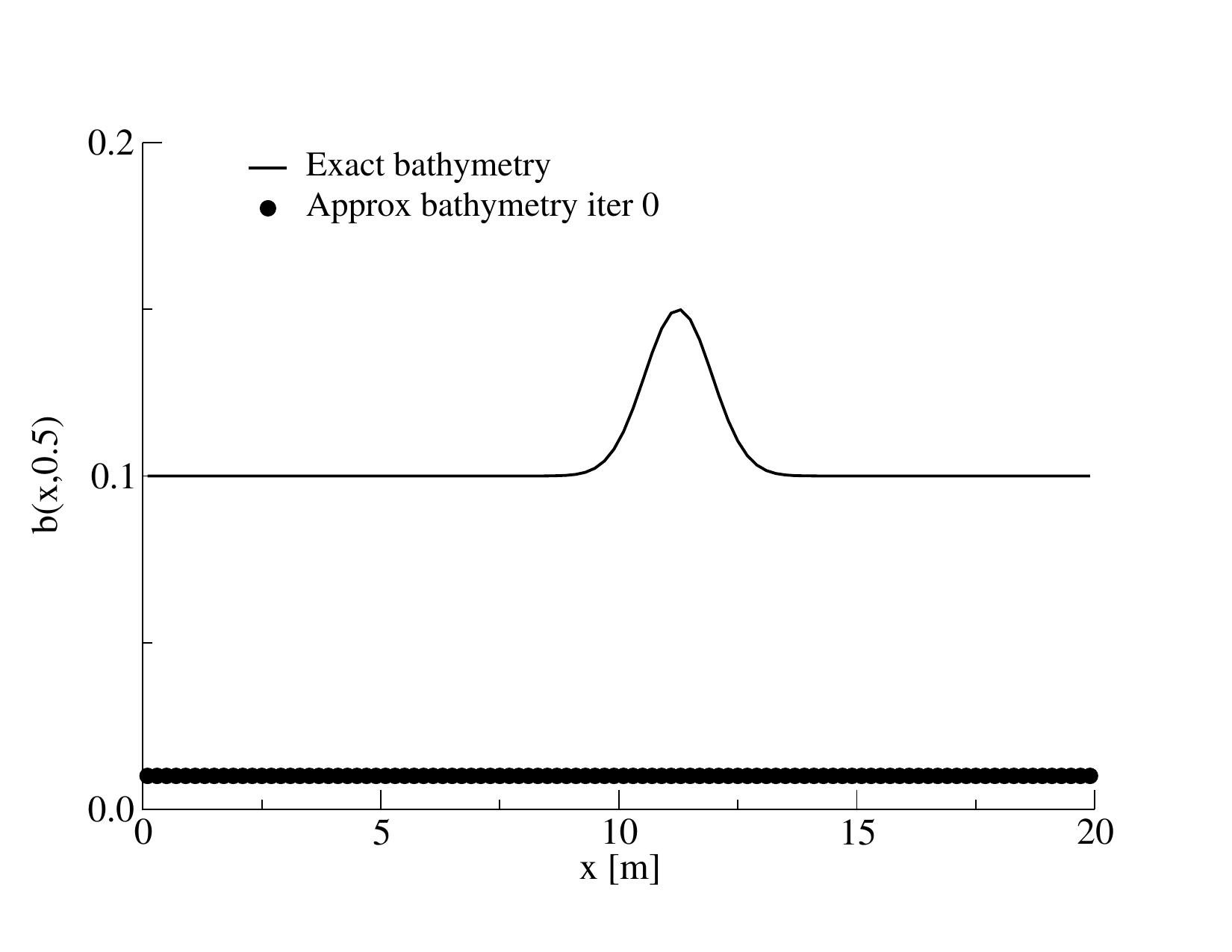} \quad
	\includegraphics[width=0.3\textwidth]{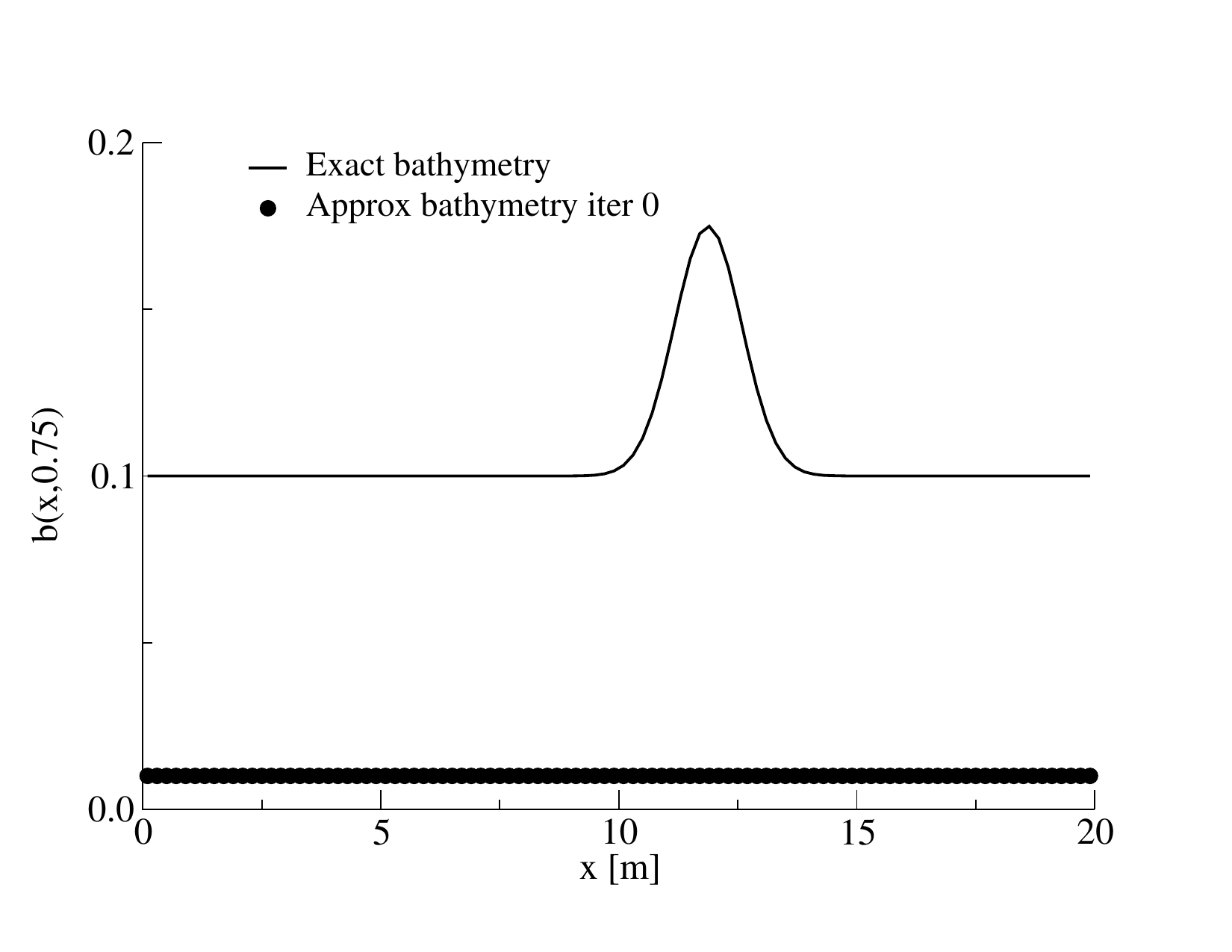} 
	\\
	
	\includegraphics[width=0.3\textwidth]{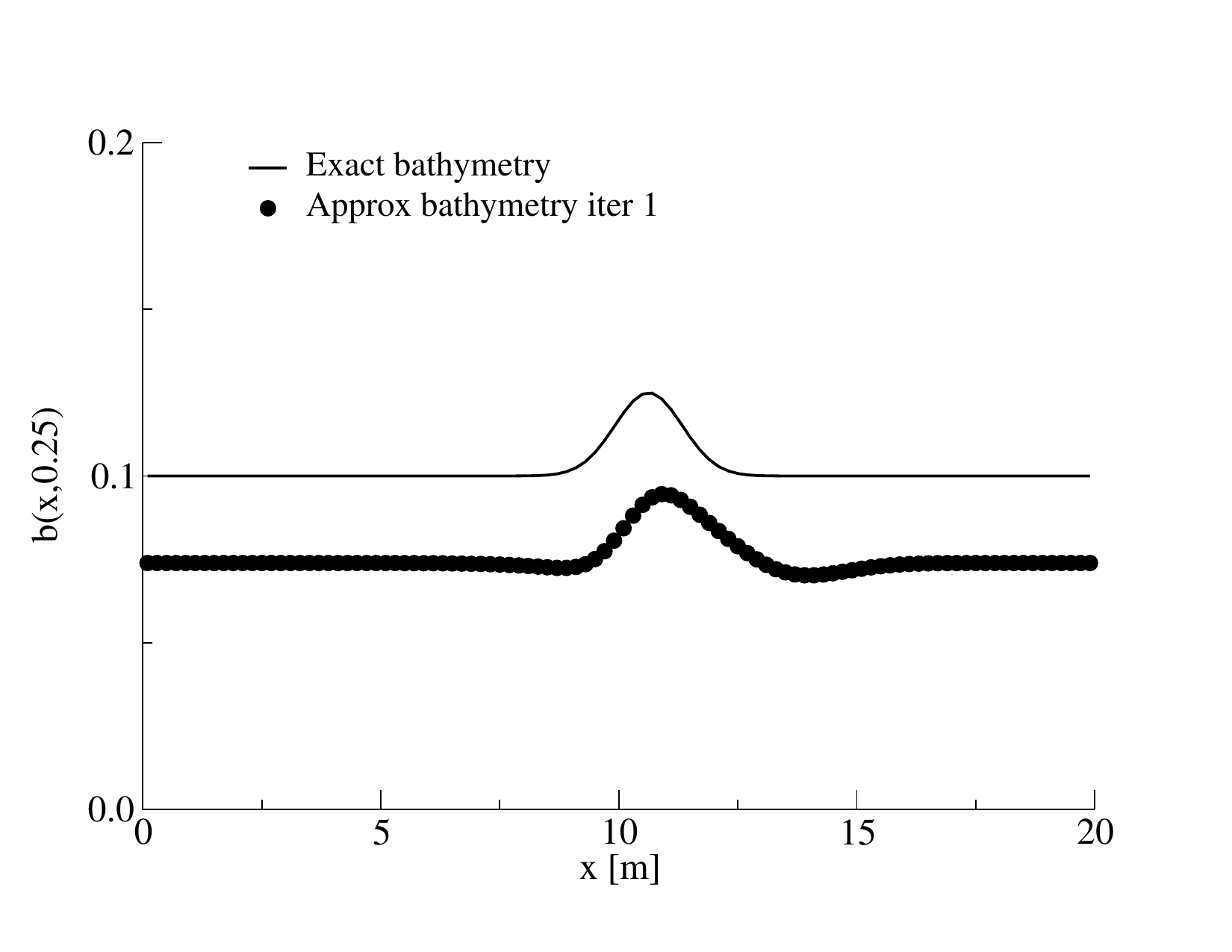} \quad
	\includegraphics[width=0.3\textwidth]{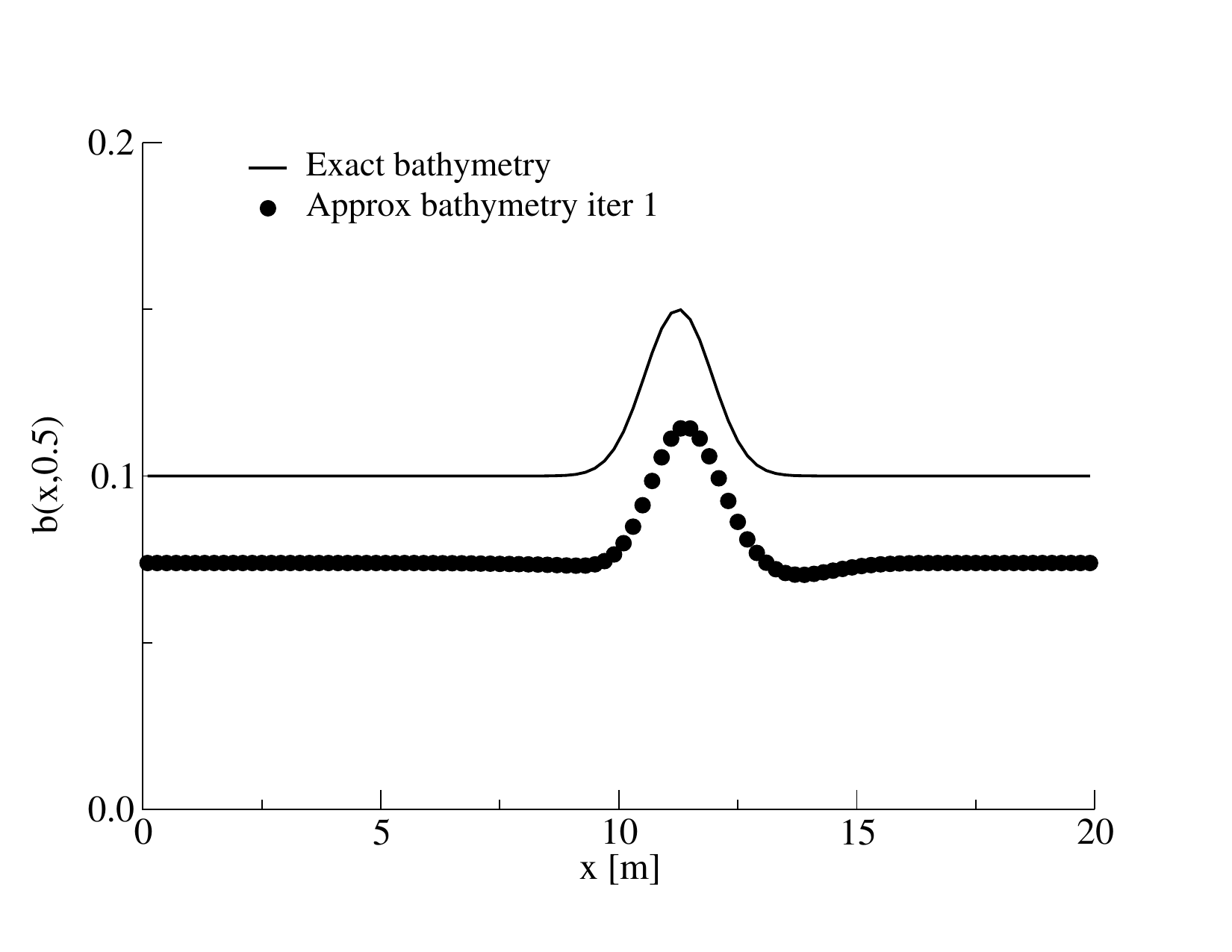} \quad
	\includegraphics[width=0.3\textwidth]{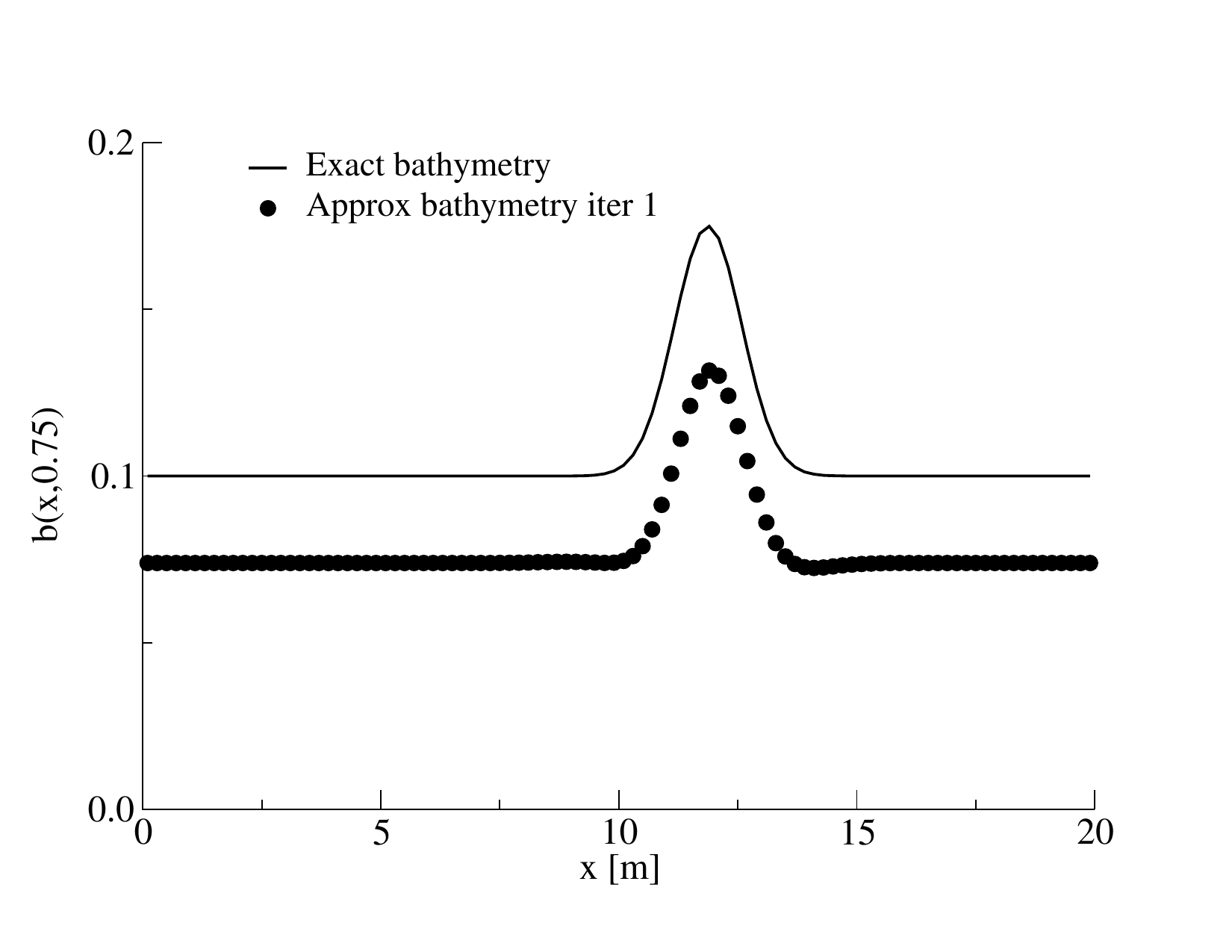} \\
	
	\includegraphics[width=0.3\textwidth]{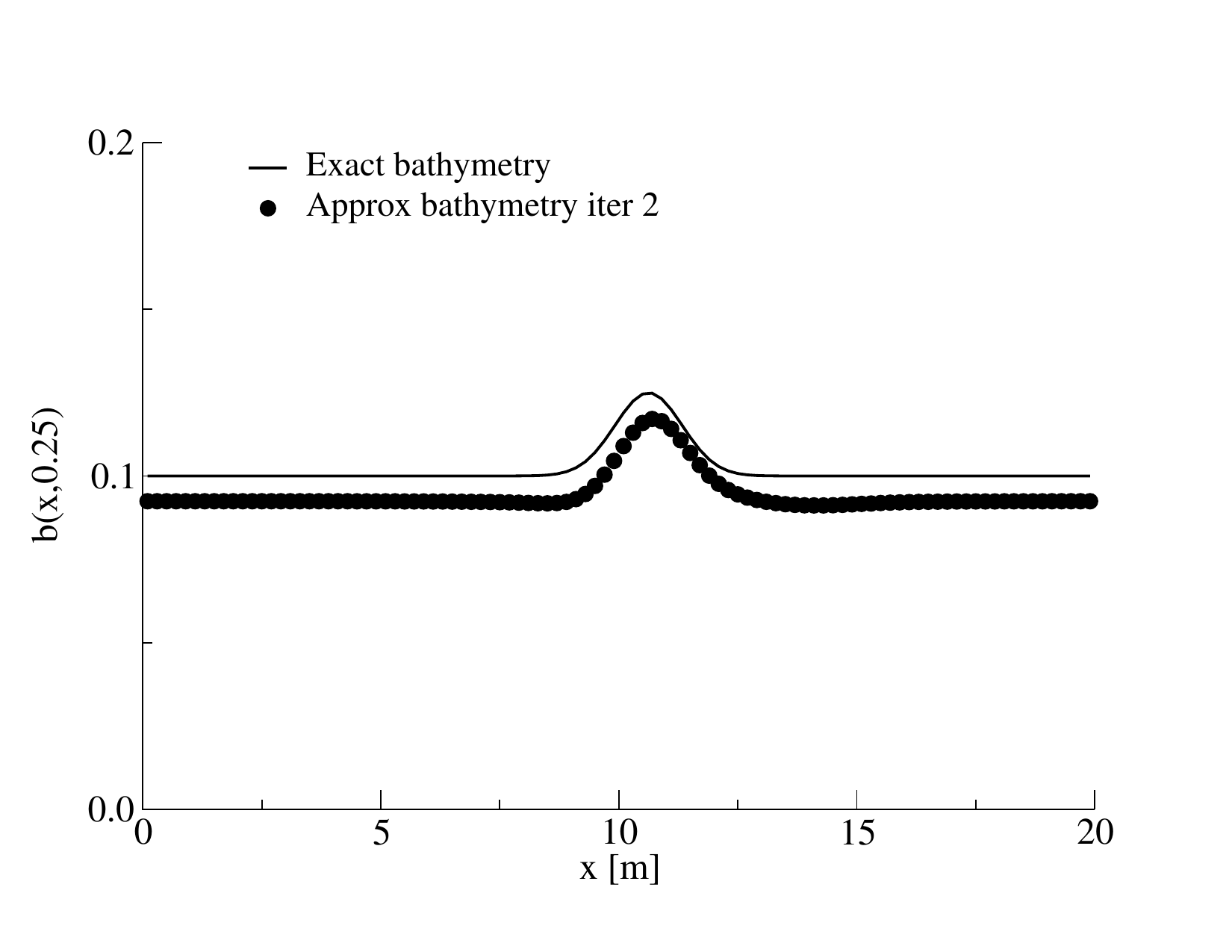} \quad
	\includegraphics[width=0.3\textwidth]{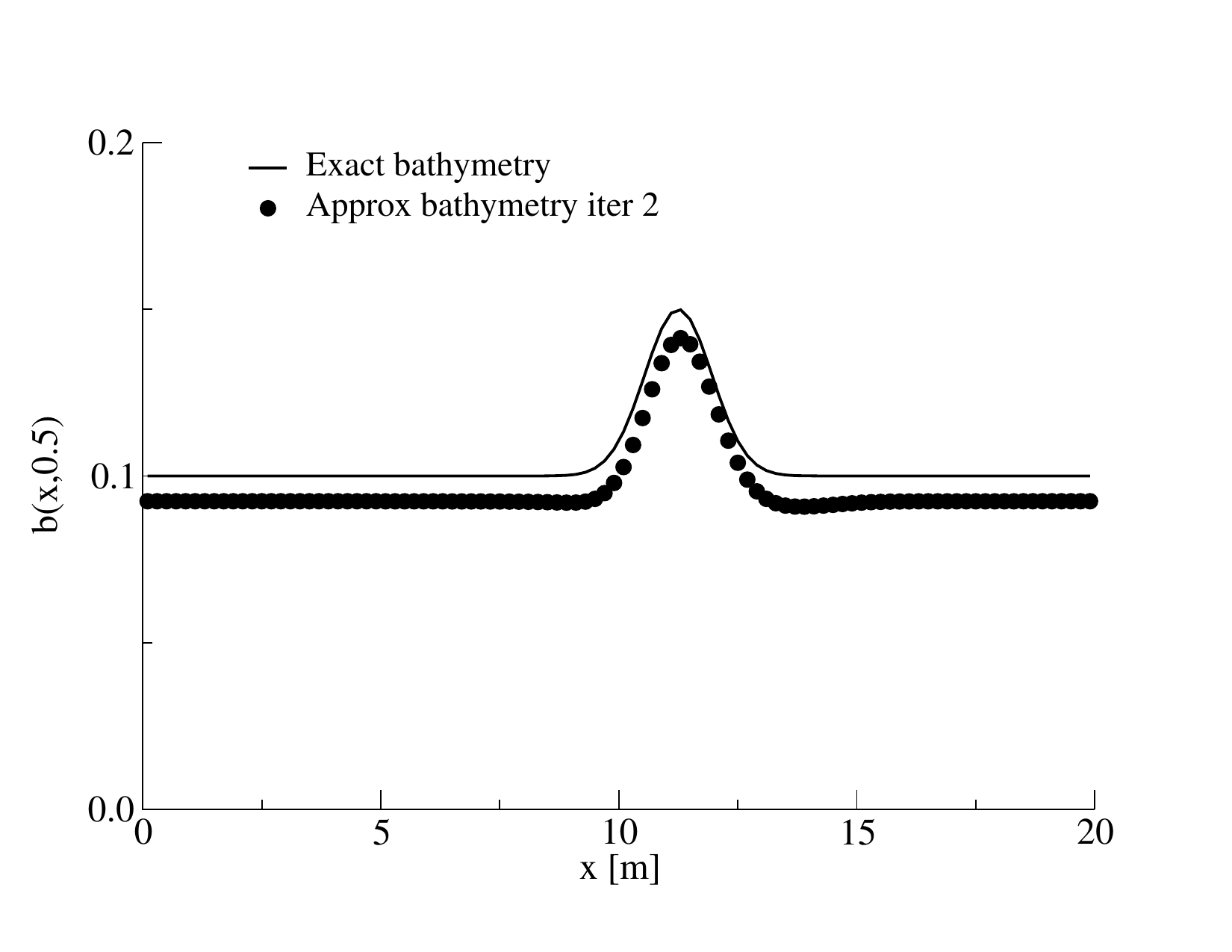} \quad
	\includegraphics[width=0.3\textwidth]{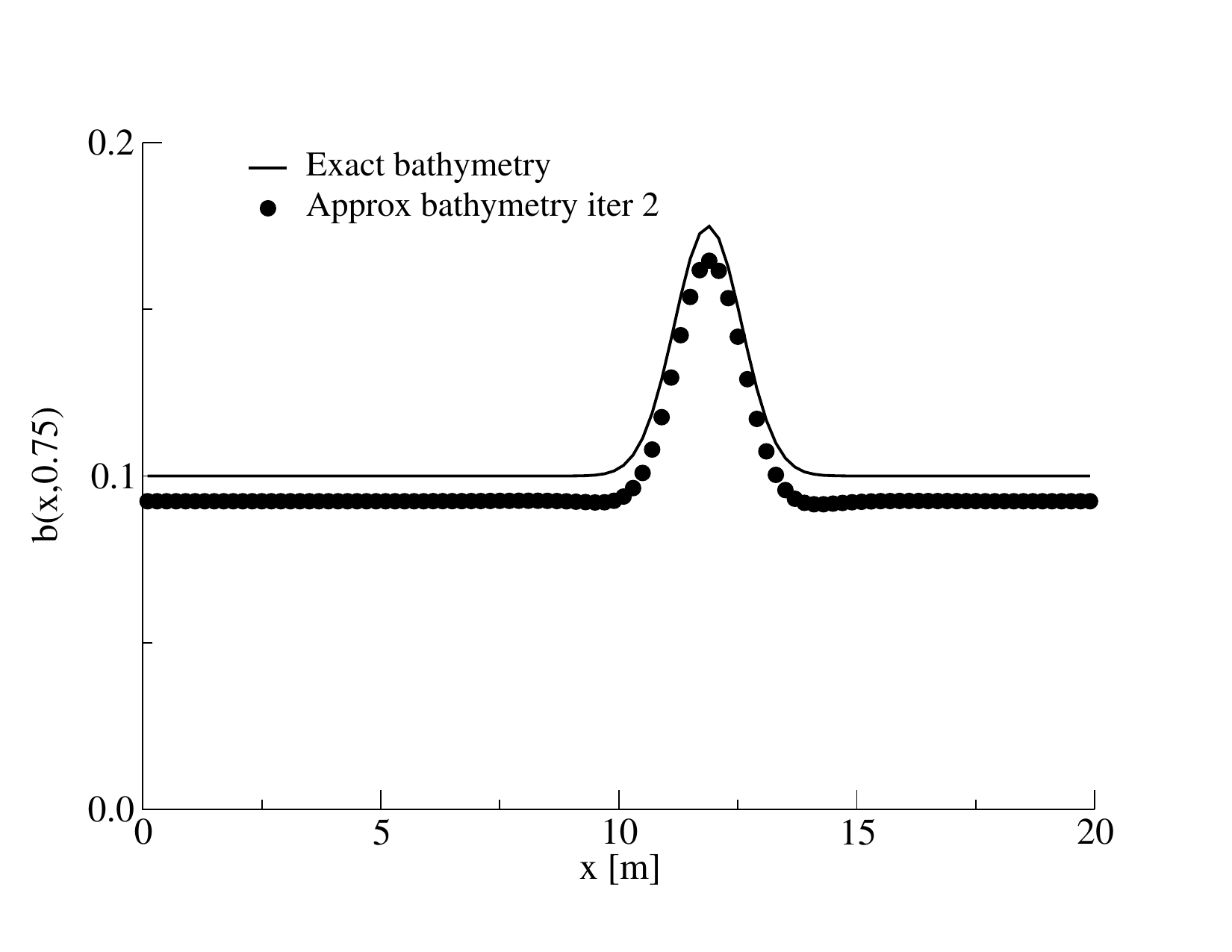} \\

	\includegraphics[width=0.3\textwidth]{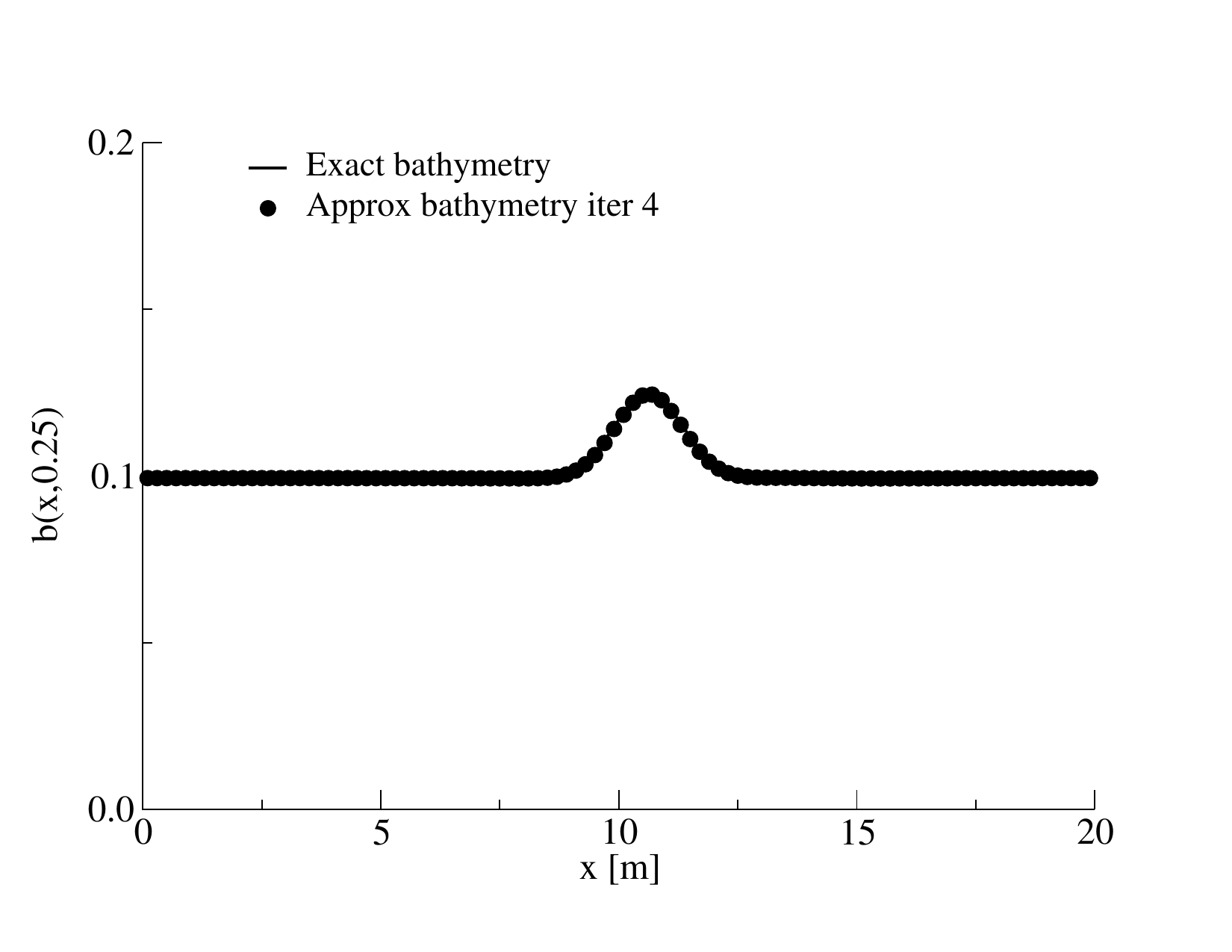} \quad
	\includegraphics[width=0.3\textwidth]{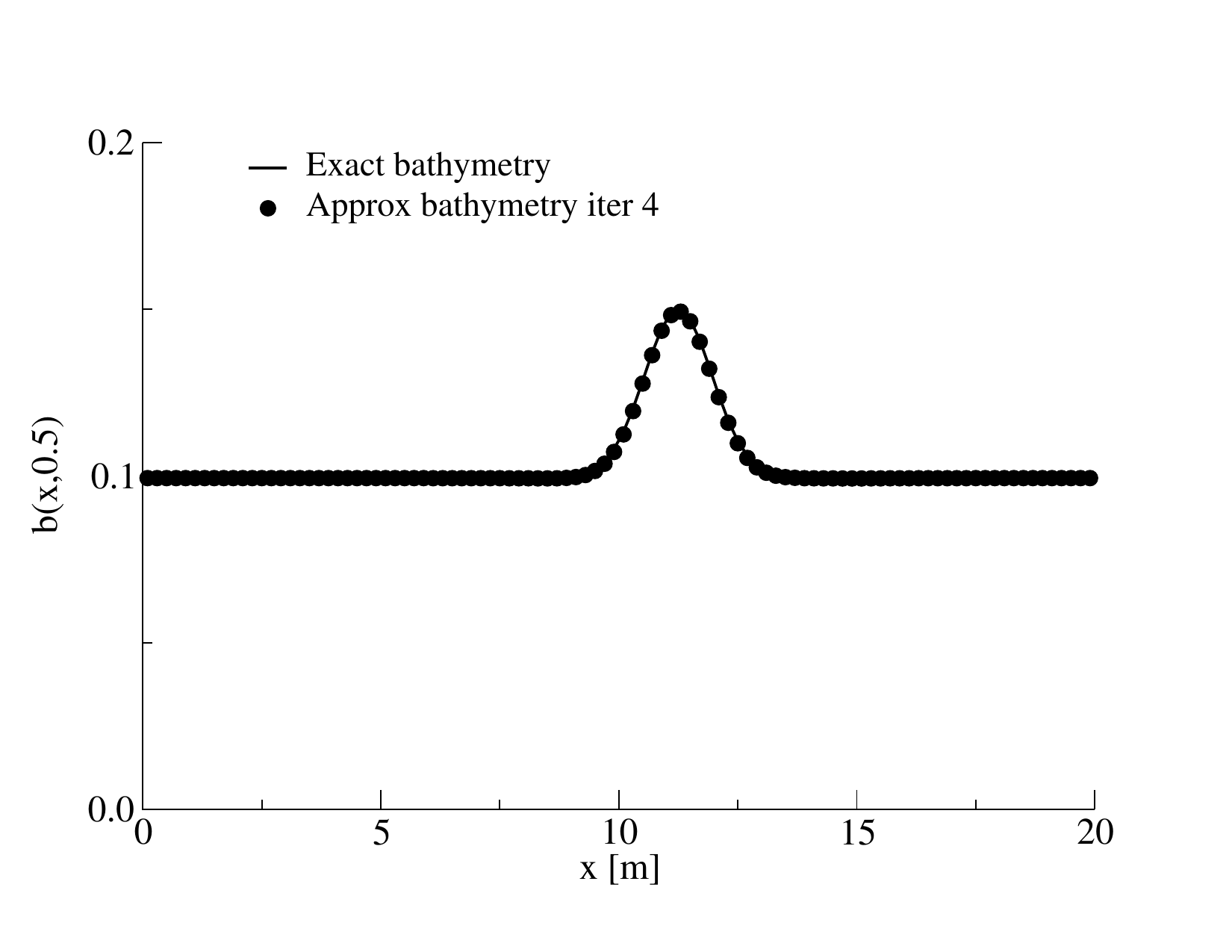} \quad
	\includegraphics[width=0.3\textwidth]{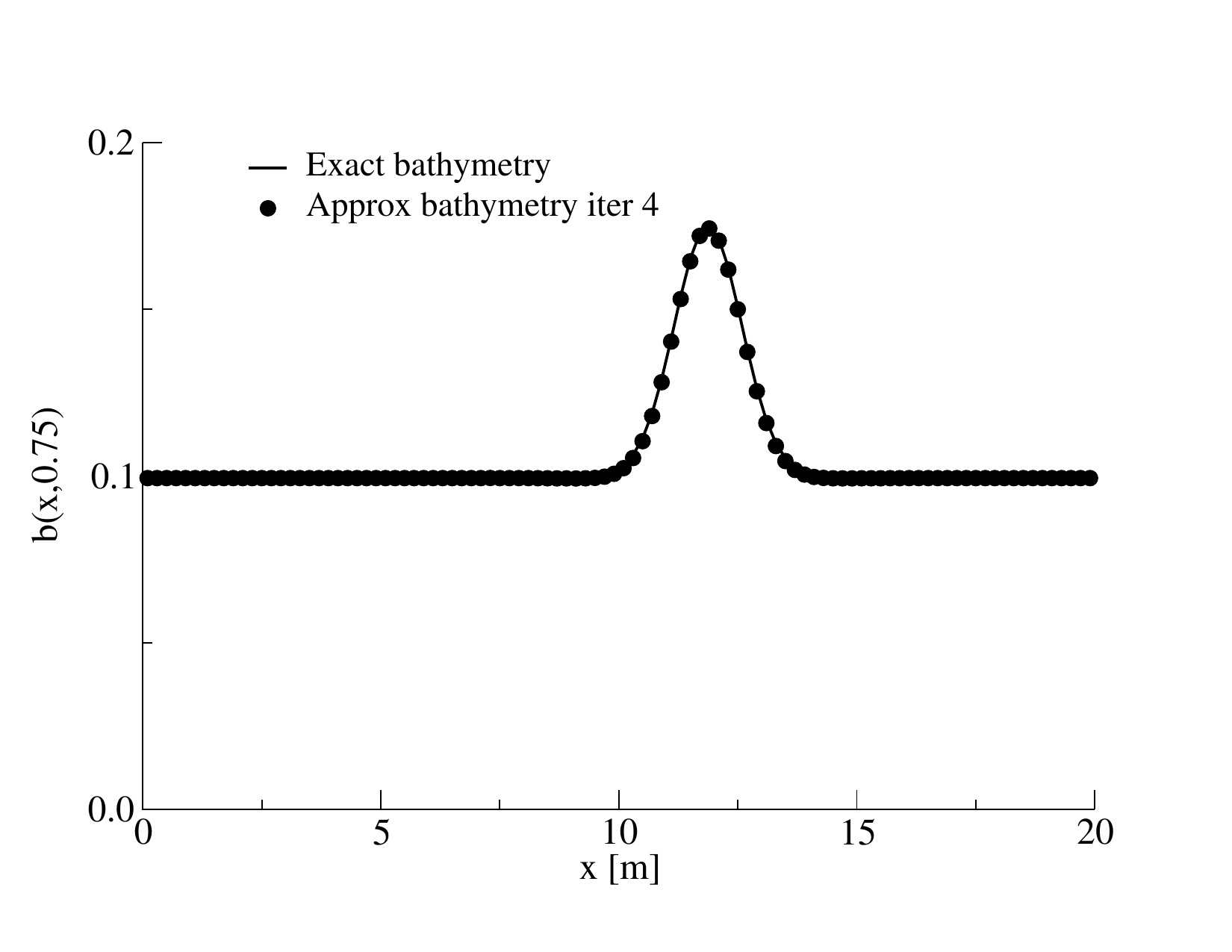} \\
	
	\includegraphics[width=0.3\textwidth]{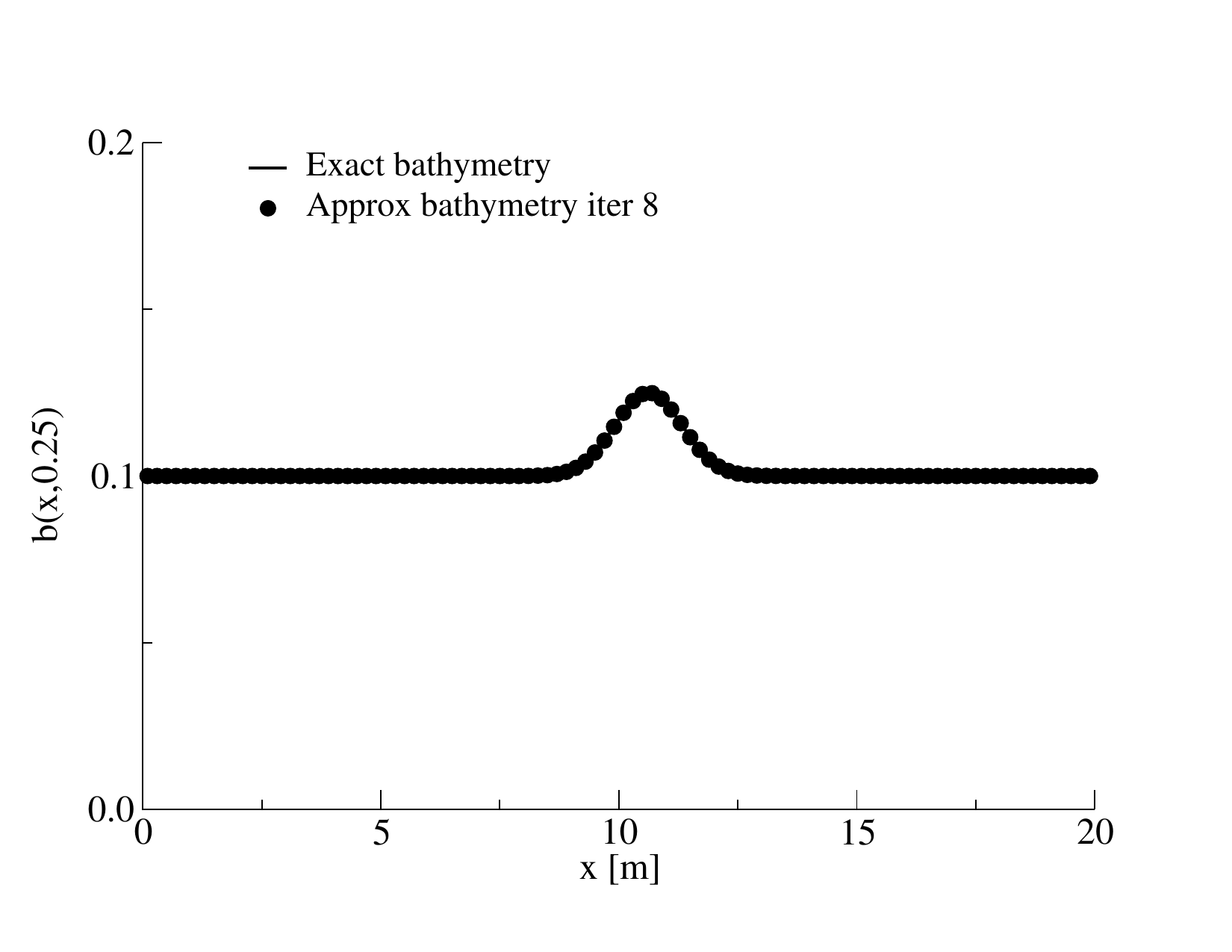} \quad
	\includegraphics[width=0.3\textwidth]{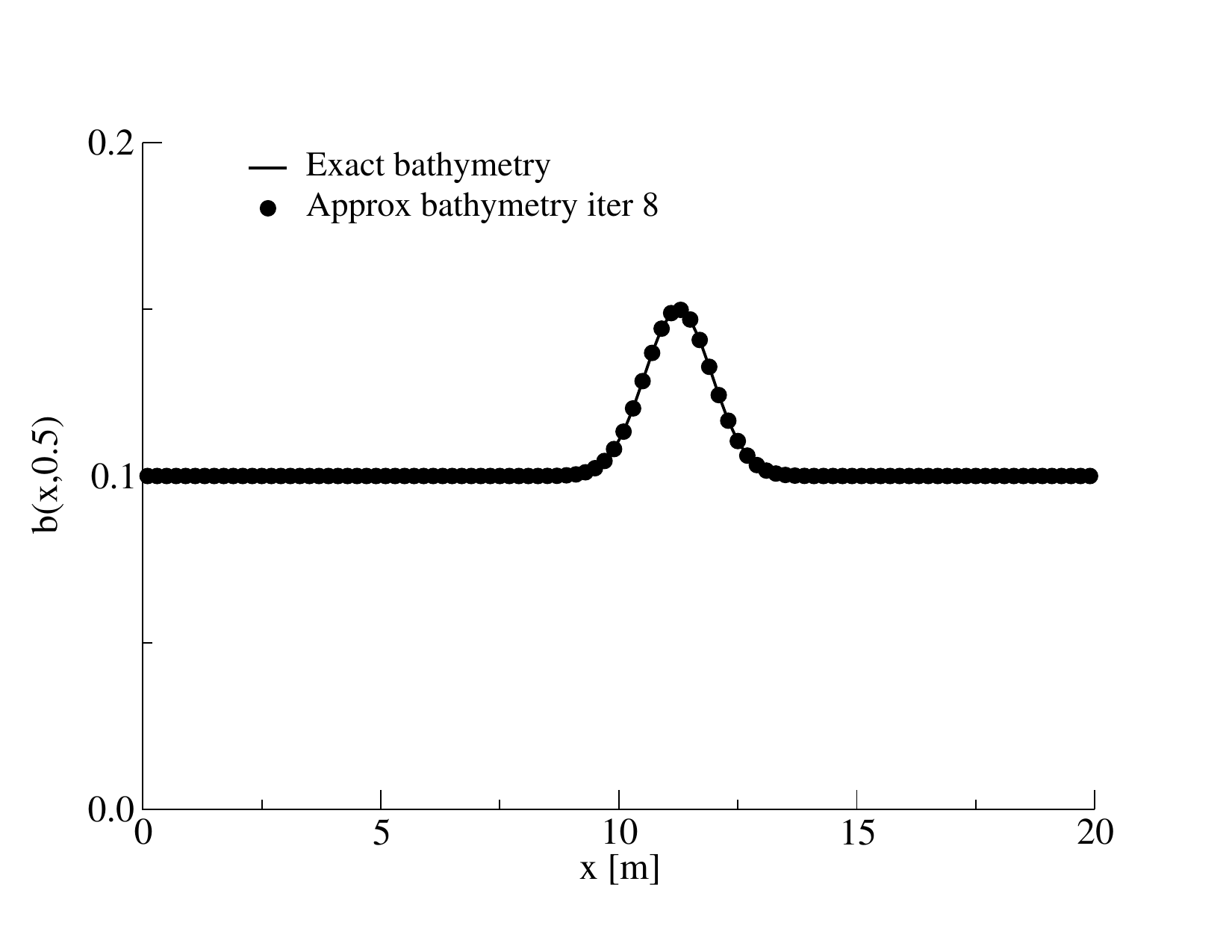} \quad
	\includegraphics[width=0.3\textwidth]{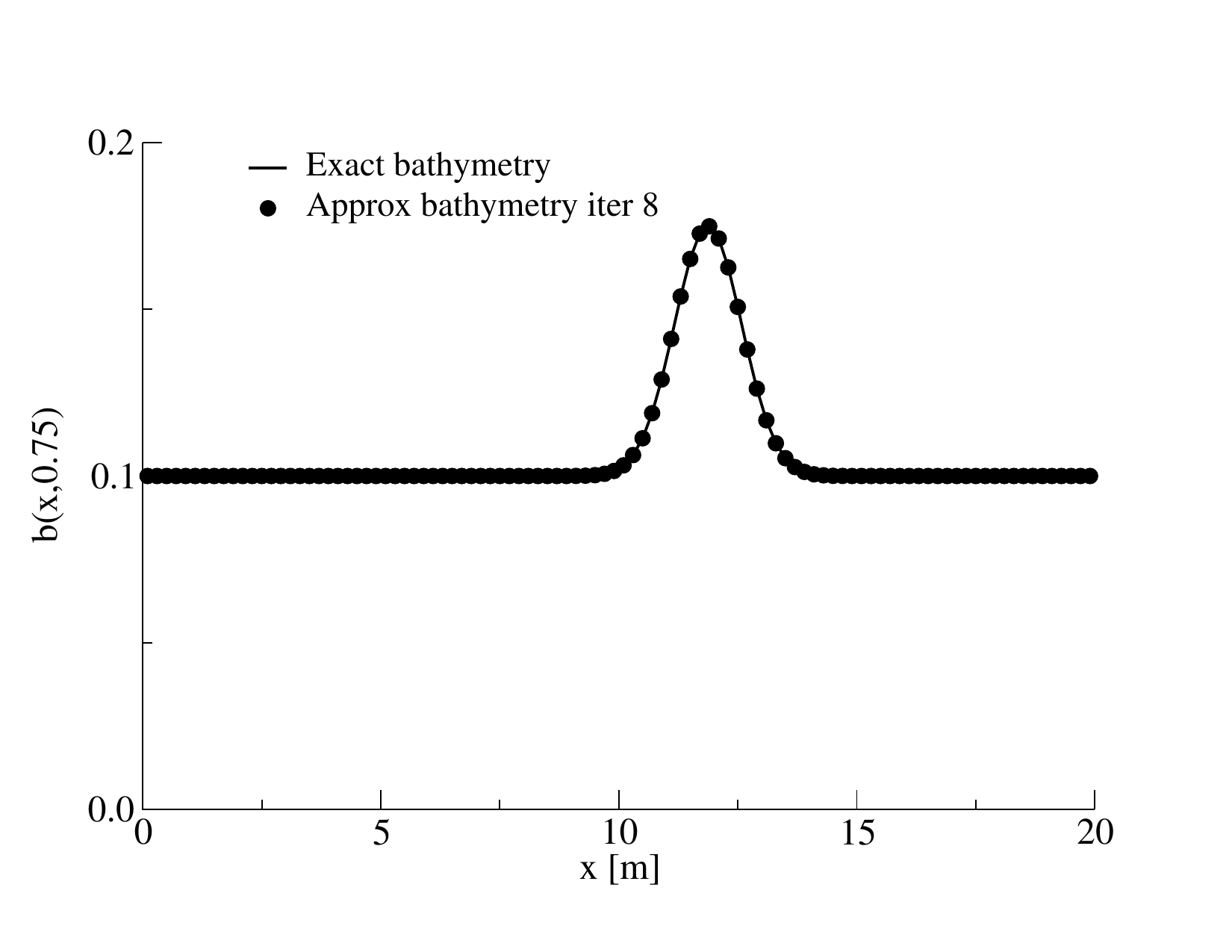} \\
	
	\caption{Smooth bottom profile (\ref{eq:b-smooth}): Result for the reconstruction procedure resulting from {\bf FORCE-$\alpha$+CSF}. Parameters $\Delta t = 0.01$,  $\alpha_F = 2$, $\varepsilon = 0.001$, $\lambda_b = 0.71$, $100$ cells.
		{\bf Feft:} $t=0.25$, {\bf centered} $t=0.5$, {\bf right:} $t=0.75$.}
	\label{fig:b-for-iter-and-times:test-0:NonCons-Force-alpha}
\end{figure}

\FloatBarrier

\subsection{Discontinuous bottom profile}

This test aims at recovering the discontinuous bottom profile

\begin{eqnarray}
	\label{eq:b-discontinuous}
	\begin{array}{c}
		
		\bar{b}(t,x) = \left\{
		\begin{array}{cc}
			\frac{1}{4}, & 5 < x< 7, \\
			0.3 t, & 7 < x< 10+4t, \\
			0.1,   & otherwise.
		\end{array}
		\right.
	\end{array}
\end{eqnarray}

Systems (\ref{eq:direct-system}) and (\ref{eq:dual-system}) are solved with transmissible boundary conditions.   This profile consists of two square waves, one is kept fix while the other increases in amplitude and moves on the right as the time advances.

Figure \ref{fig:b-for-iter-and-times:test-1-ConsRusanov} shows the results for the {\bf Rusanov+FD} scheme. Despite the locations of discontinuities are recovered, the right amplitude of the waves are not recovered. Furthermore, spurious oscillation appear in both the direct and inverse problems. This approach is not suitable for recovering this type of bottom profile.  It is well known that Rusanov scheme introduces large numerical diffusion when CFL coefficients are small. Figure \ref{fig:b-for-iter-and-times:test-1:ForceAlphaCons} shows the results for the {\bf FORCE-$\alpha$+FD}. It can be seen that the procedure recovers the correct profile. This highlights the fact that a low dissipation scheme for the state system is suitable for achieving convergence of the global procedure.  Figure \ref{fig:b-for-iter-and-times:test-1:NonCons-Force-alpha} shows the results for the {\bf FORCE-$\alpha$+CSF} scheme. We note that global convergence is achieved. We observe that already in the second iteration of the descent step procedure the scheme is able to recover the main features of the profile.  Figure \ref{fig:comp-error-test-1} shows the $L_\infty$ norm of $\nabla J$ against the number of iterations, to facilitate the visualization the plot is depicted in logarithmic scale. So, this measures the error between $b^k $ and $b^{k+1}$, which is the empirical convergence of the global algorithm.  These results show that to obtain convergence, not only becomes important the form in which the state system is discretized but also how the adjoint system does so.  As before, a low-dissipation scheme is beneficial in the PDE-constraint optimization context and discontinuous variables.

\begin{figure}[h]
	\begin{center}
		\includegraphics[scale=0.45]{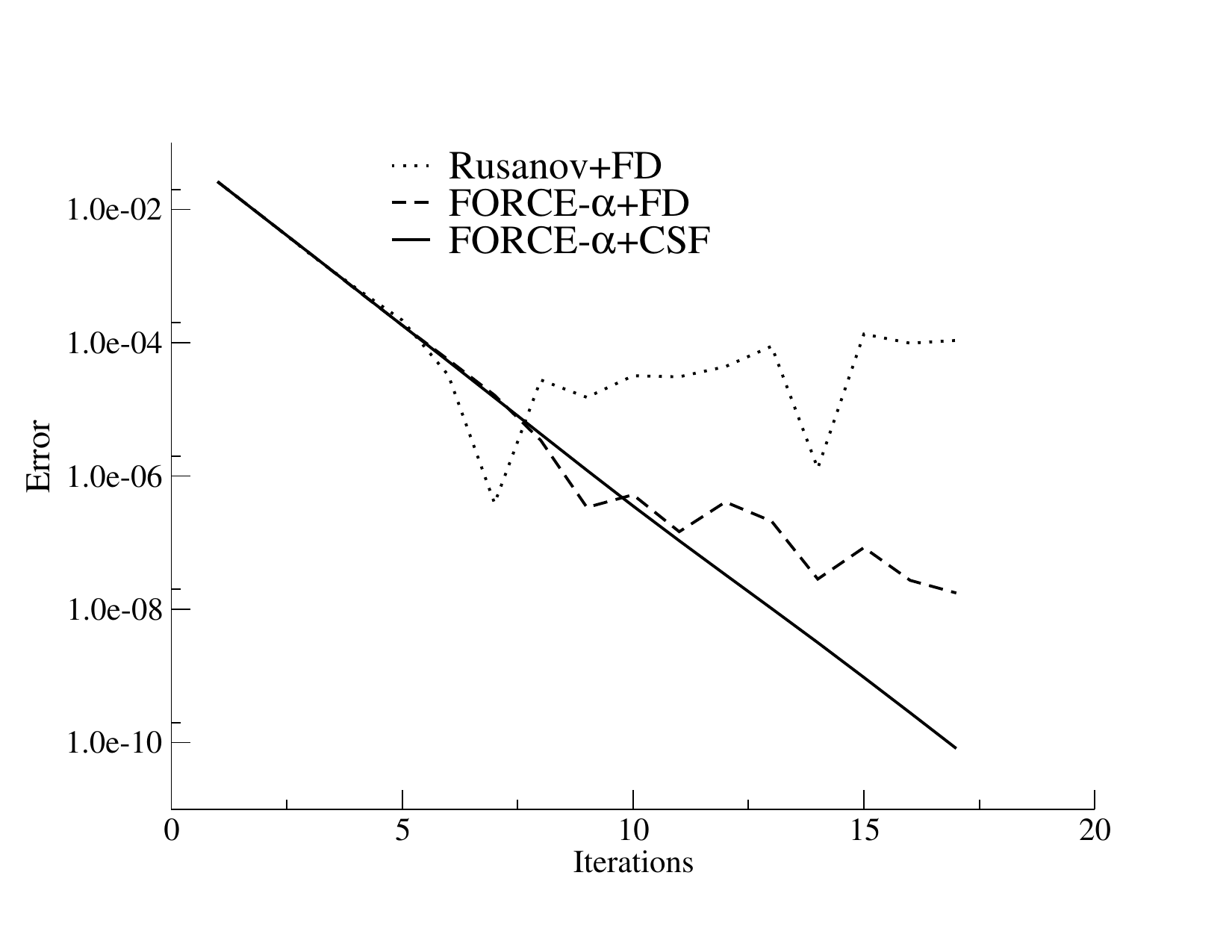}
	\end{center}
	\caption{Discontinuous bottom profile (\ref{eq:b-discontinuous}): The $L_\infty$ norm of $\nabla J $ for $\varepsilon = 0.001$, $\lambda_b = 0.71$, $100$ cells at $t = 1$, $\alpha_F = 2$. 
		(Dot line) {\bf Rusanov+FD} scheme.
		(Dash line) {\bf FORCE-$\alpha$+FD} scheme. 
		(Full line) {\bf FORCE-$\alpha$+CSF} scheme.
	}\label{fig:comp-error-test-1}
\end{figure}

\begin{figure}   
	\centering
	\includegraphics[width=0.3\textwidth]{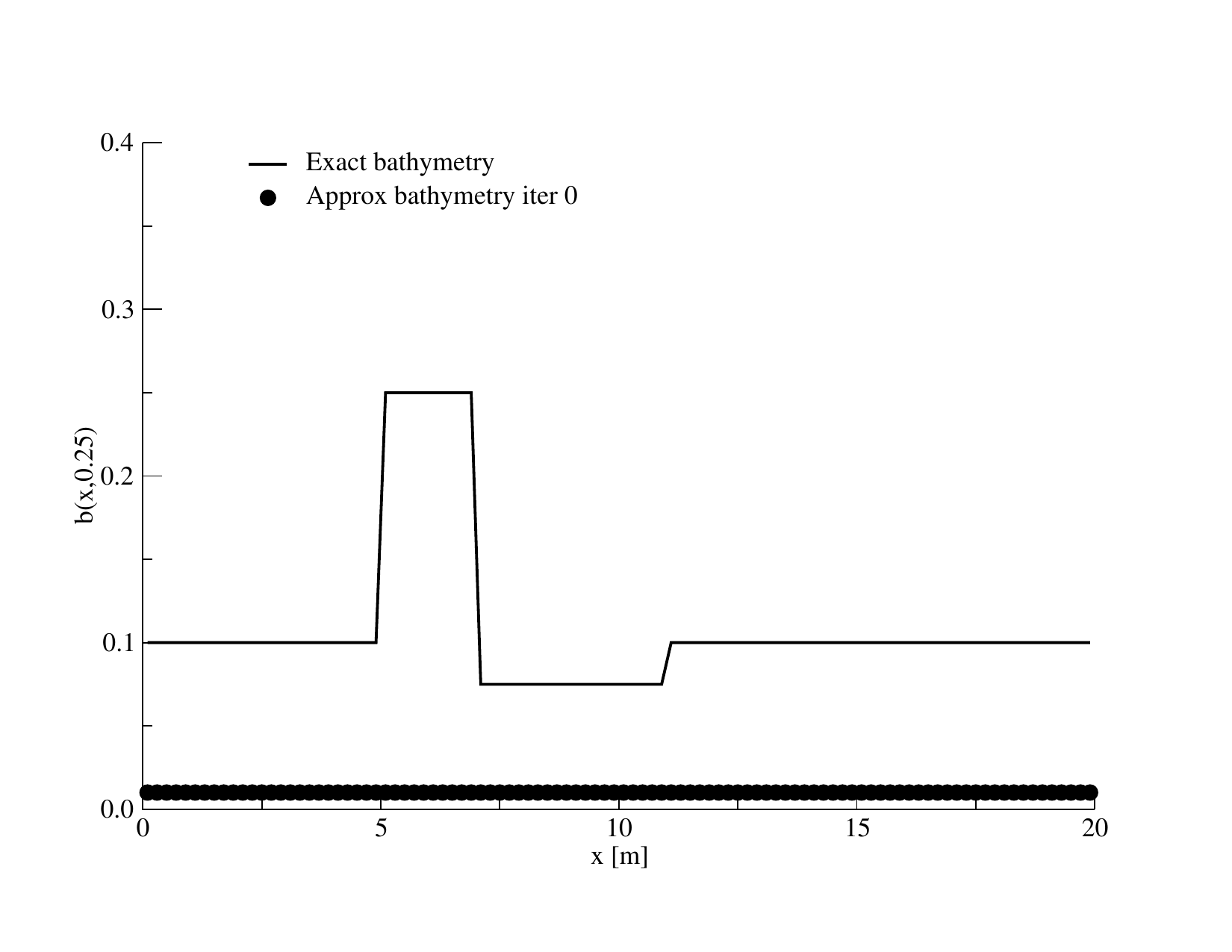} \quad
	\includegraphics[width=0.3\textwidth]{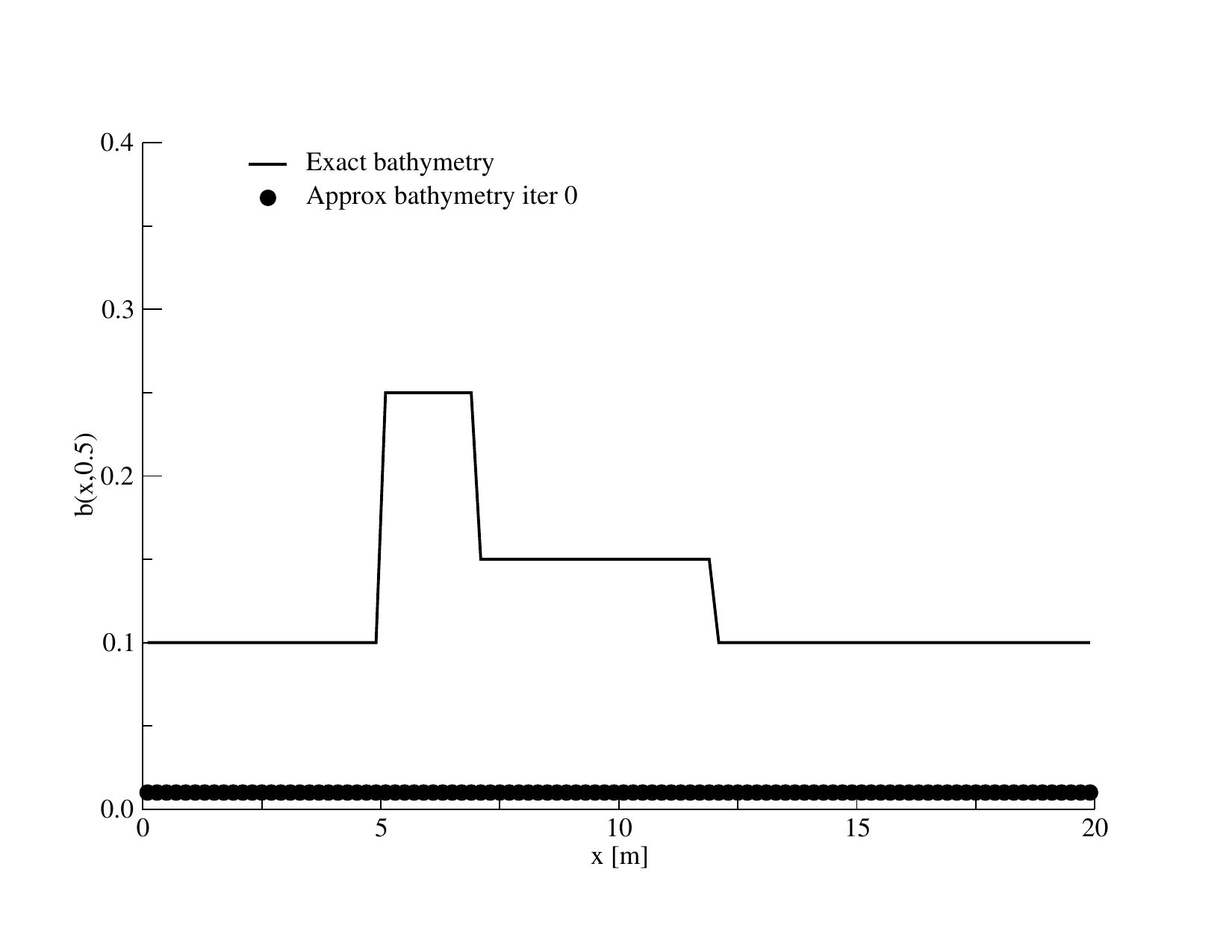} \quad
	\includegraphics[width=0.3\textwidth]{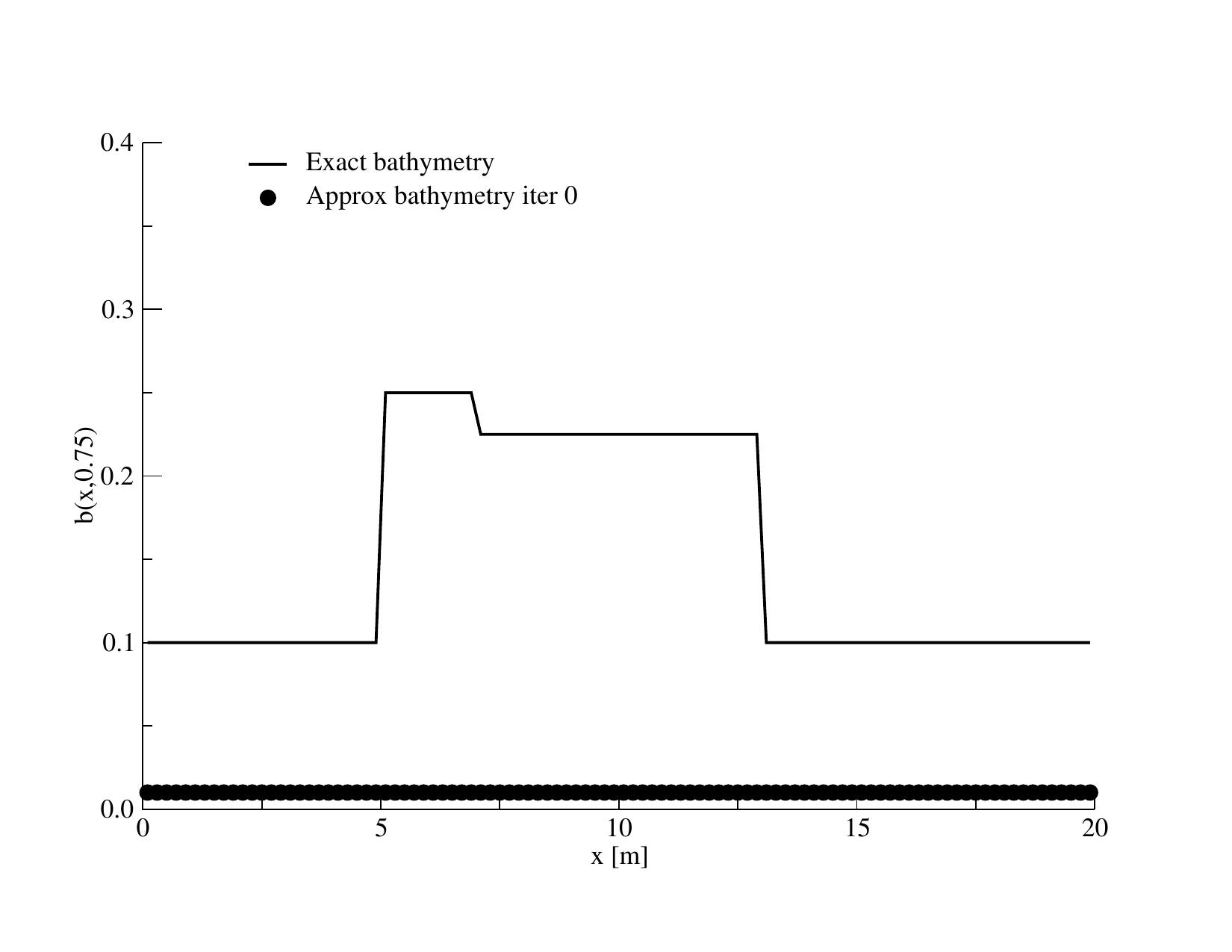} 
	\\
	
	\includegraphics[width=0.3\textwidth]{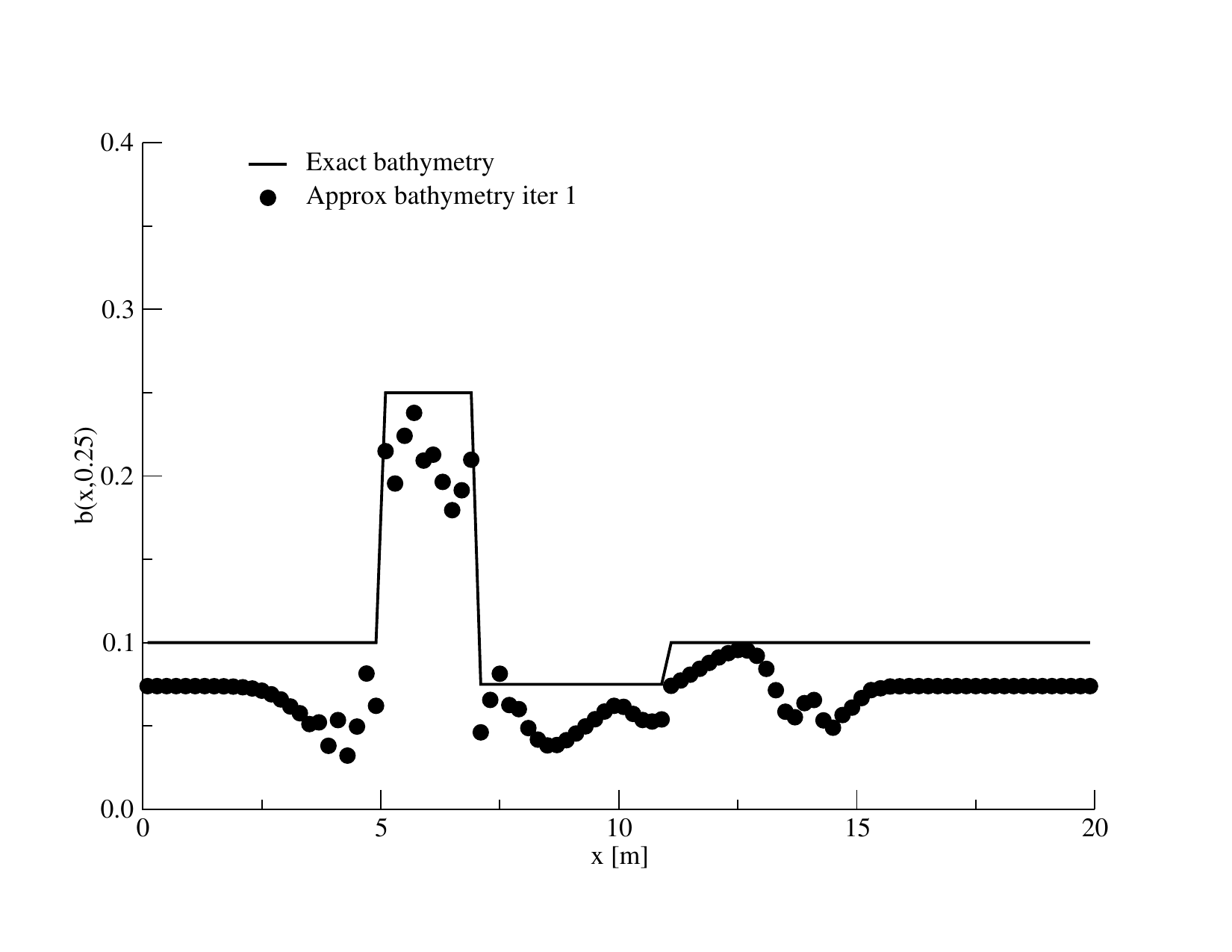} \quad
	\includegraphics[width=0.3\textwidth]{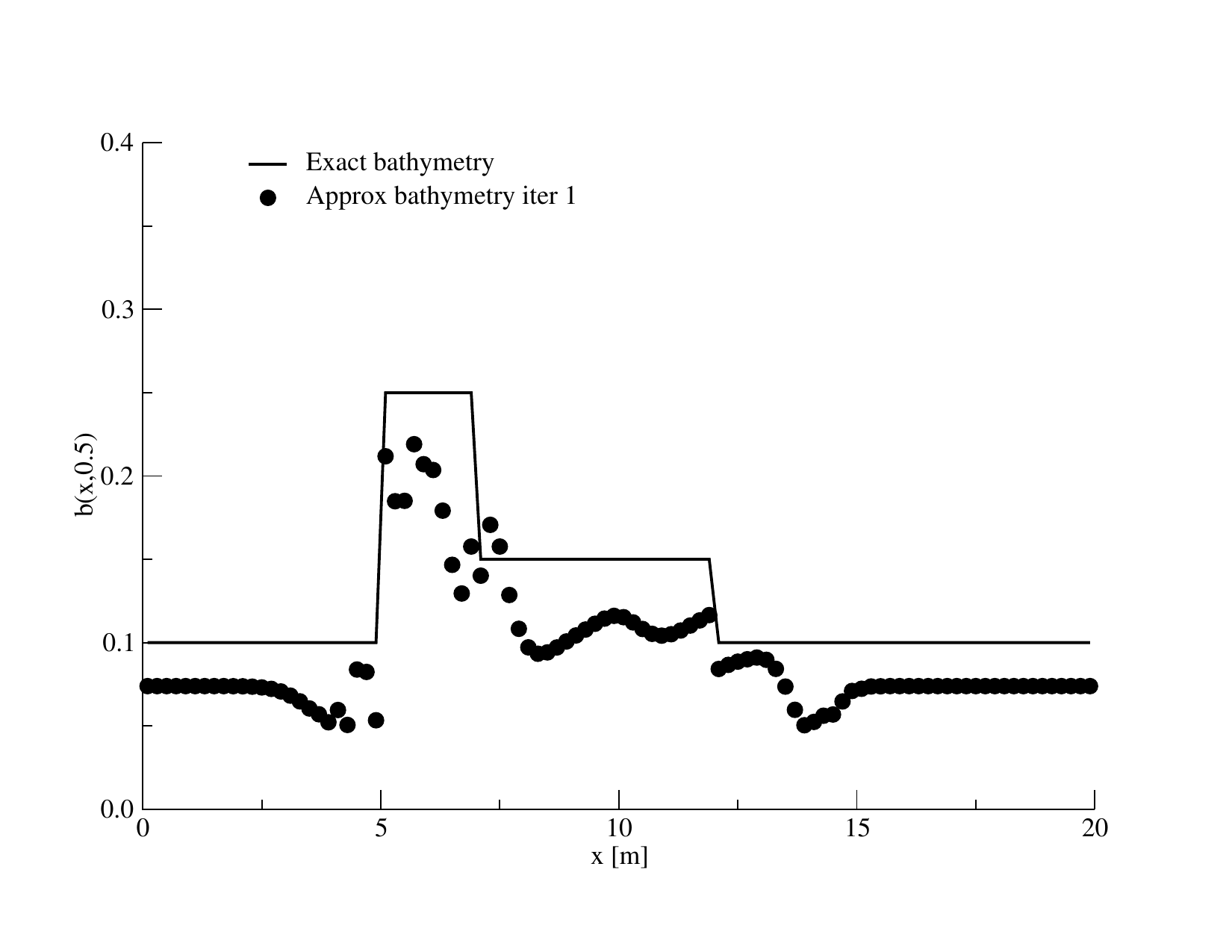} \quad
	\includegraphics[width=0.3\textwidth]{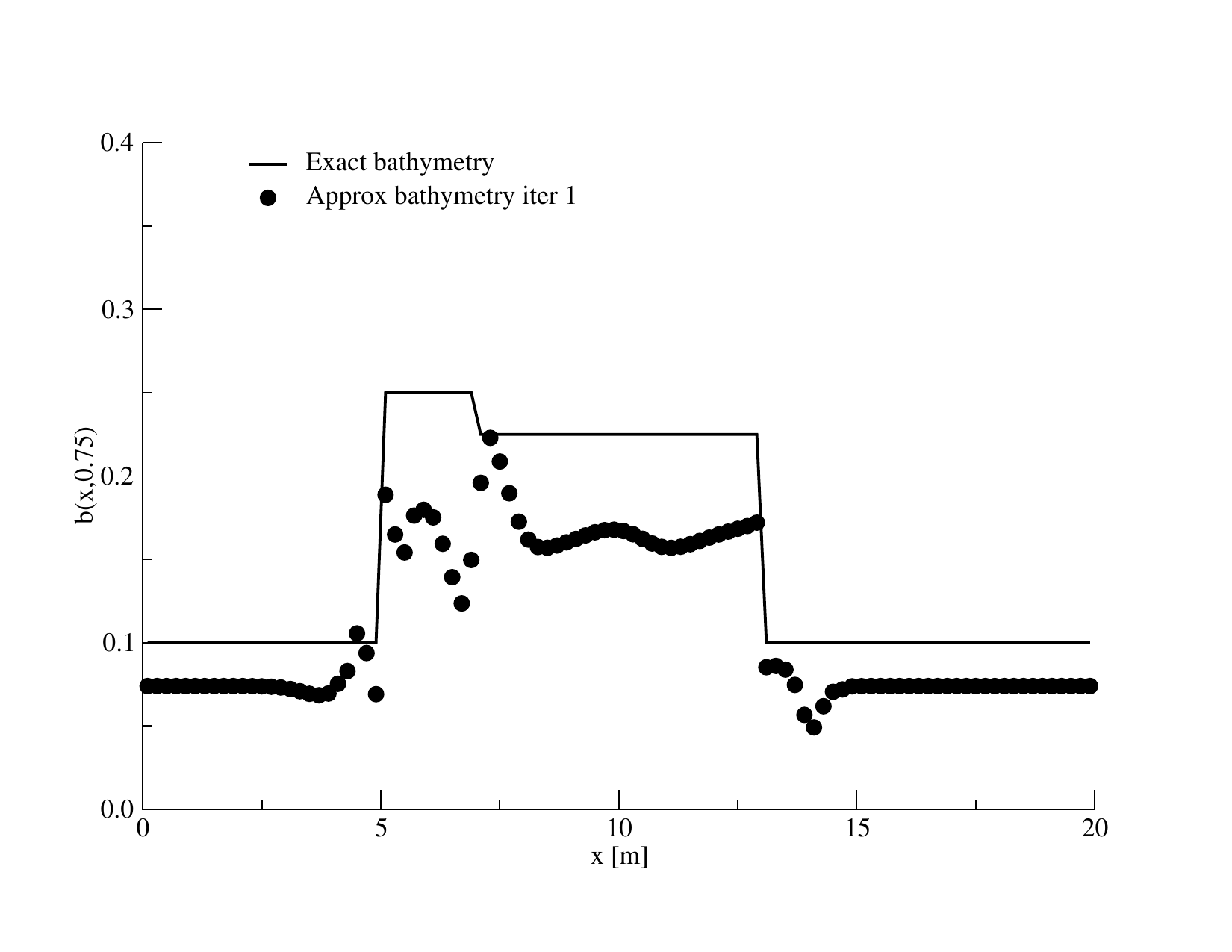} \\
	
	\includegraphics[width=0.3\textwidth]{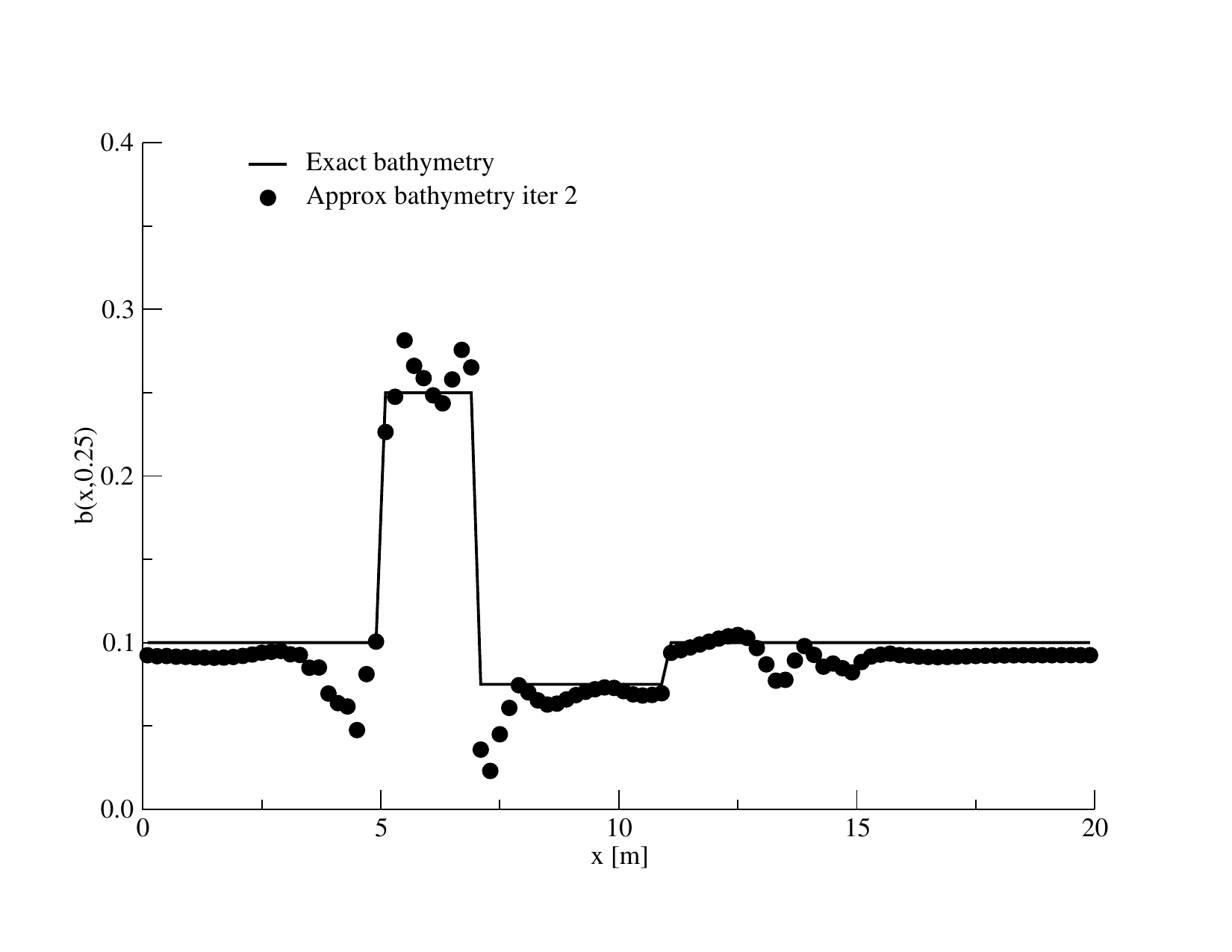} \quad
	\includegraphics[width=0.3\textwidth]{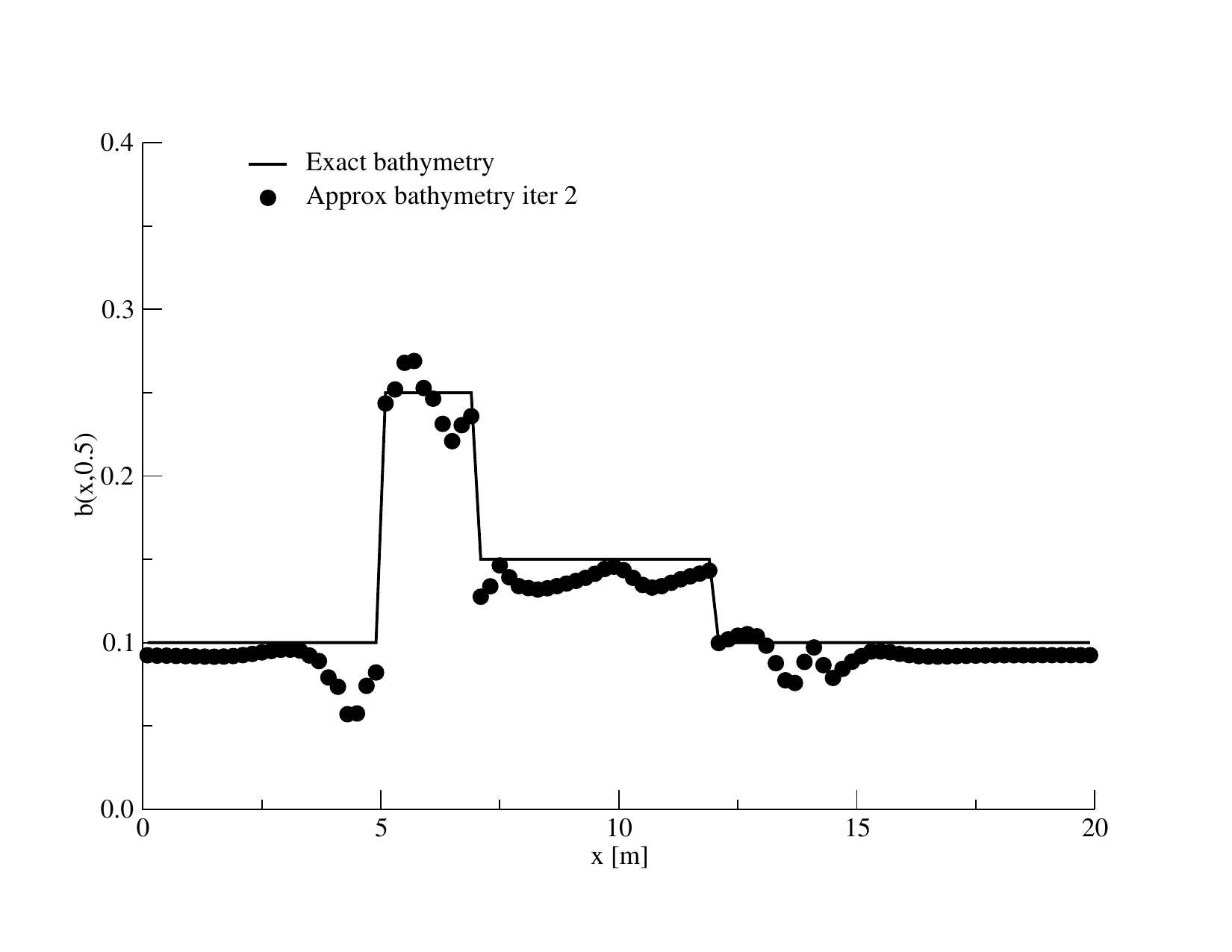} \quad
	\includegraphics[width=0.3\textwidth]{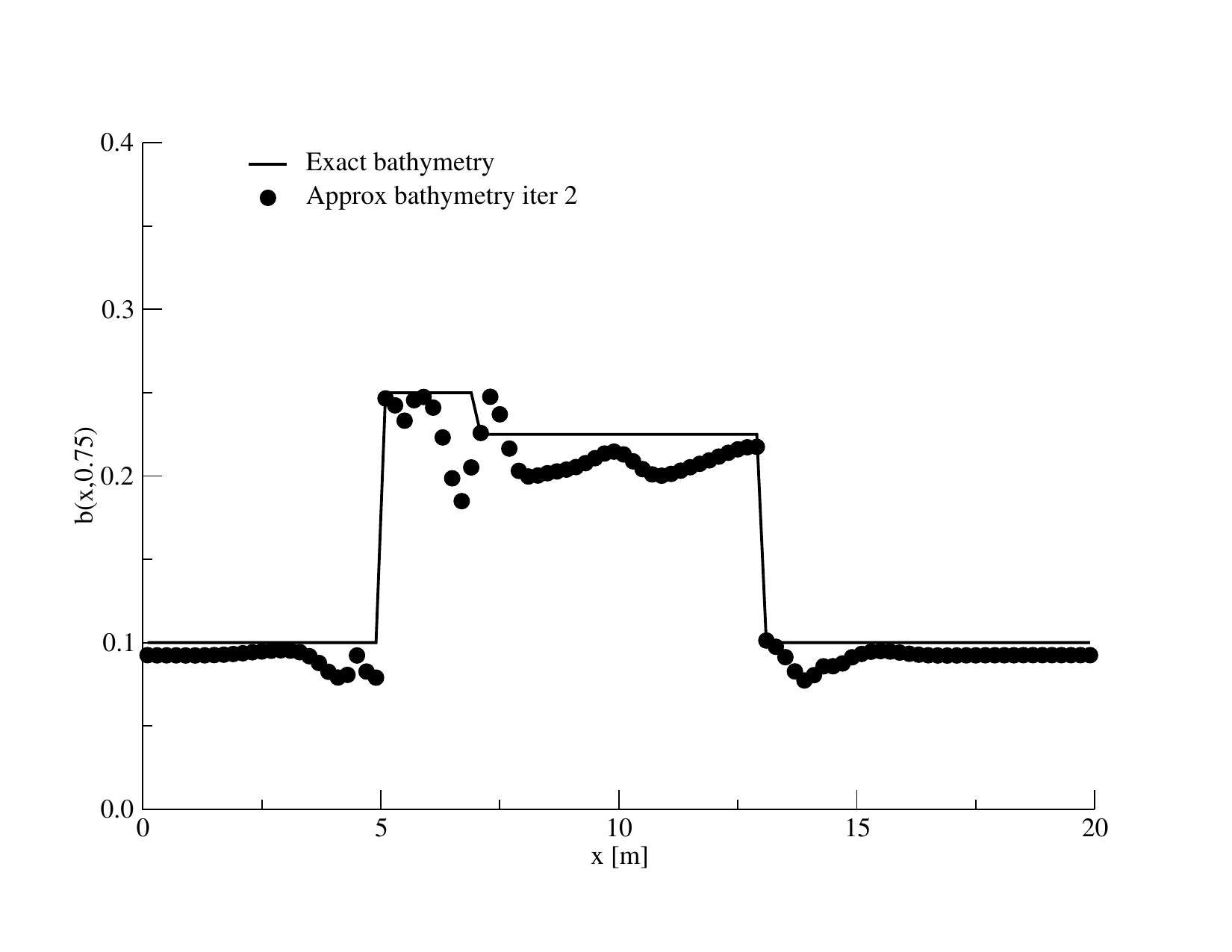} \\

	\includegraphics[width=0.3\textwidth]{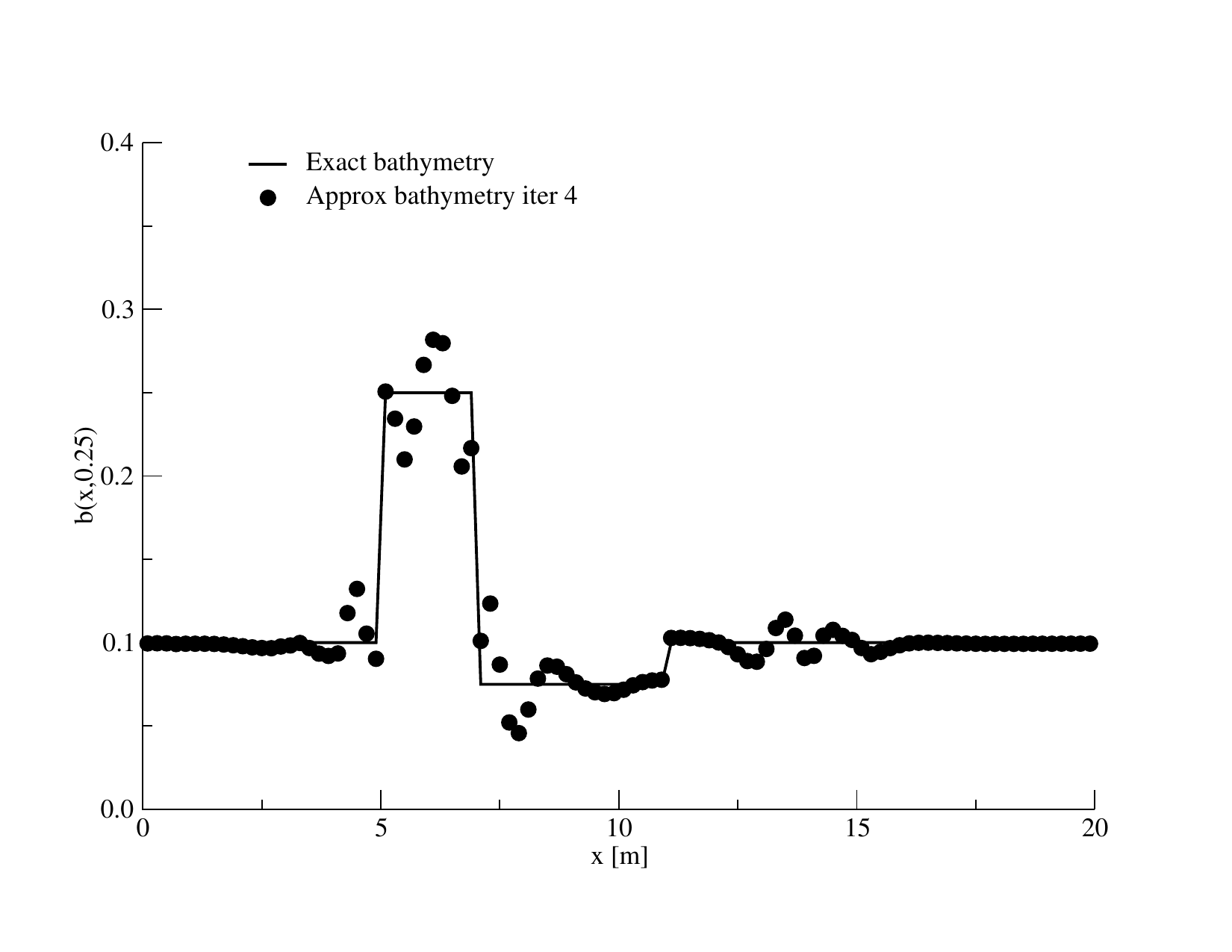} \quad
	\includegraphics[width=0.3\textwidth]{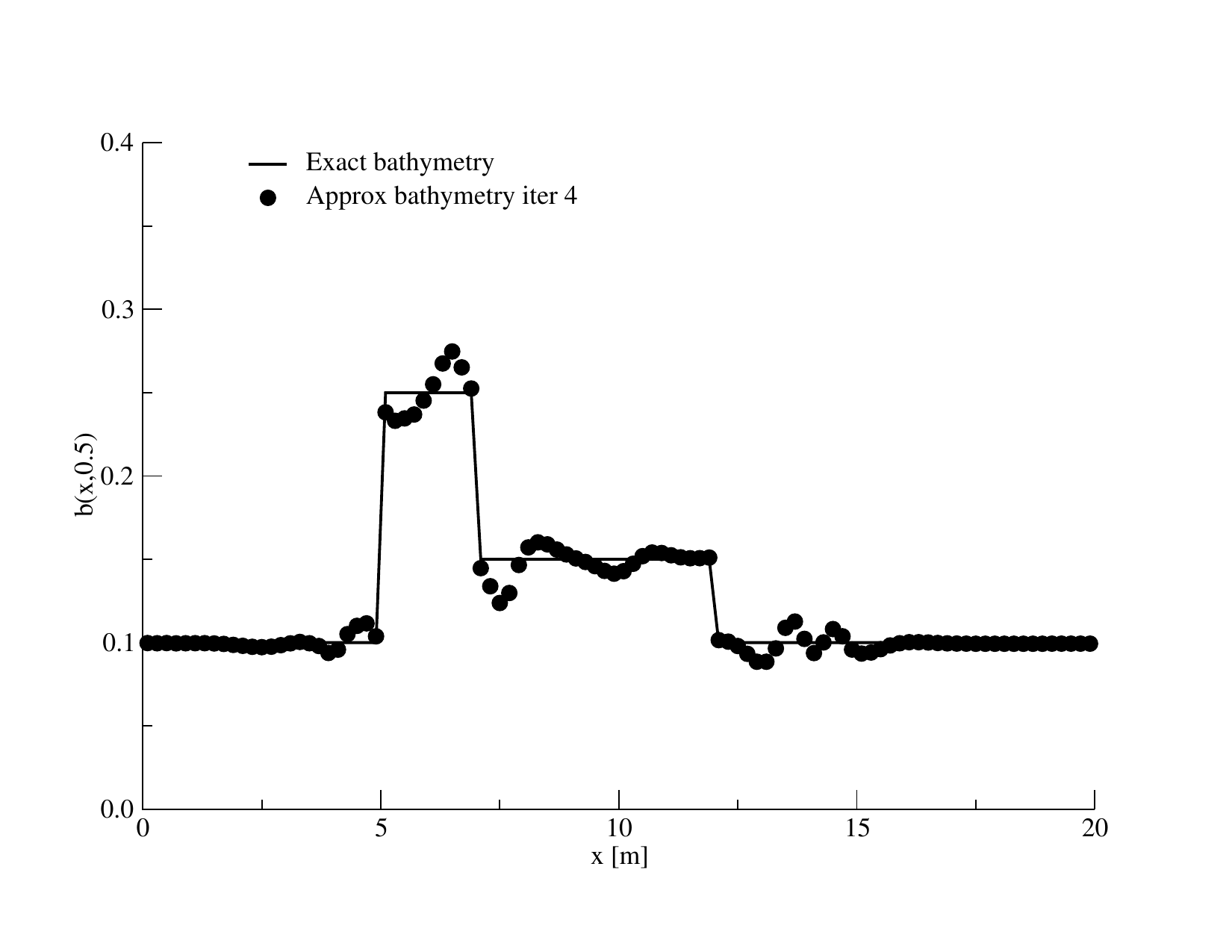} \quad
	\includegraphics[width=0.3\textwidth]{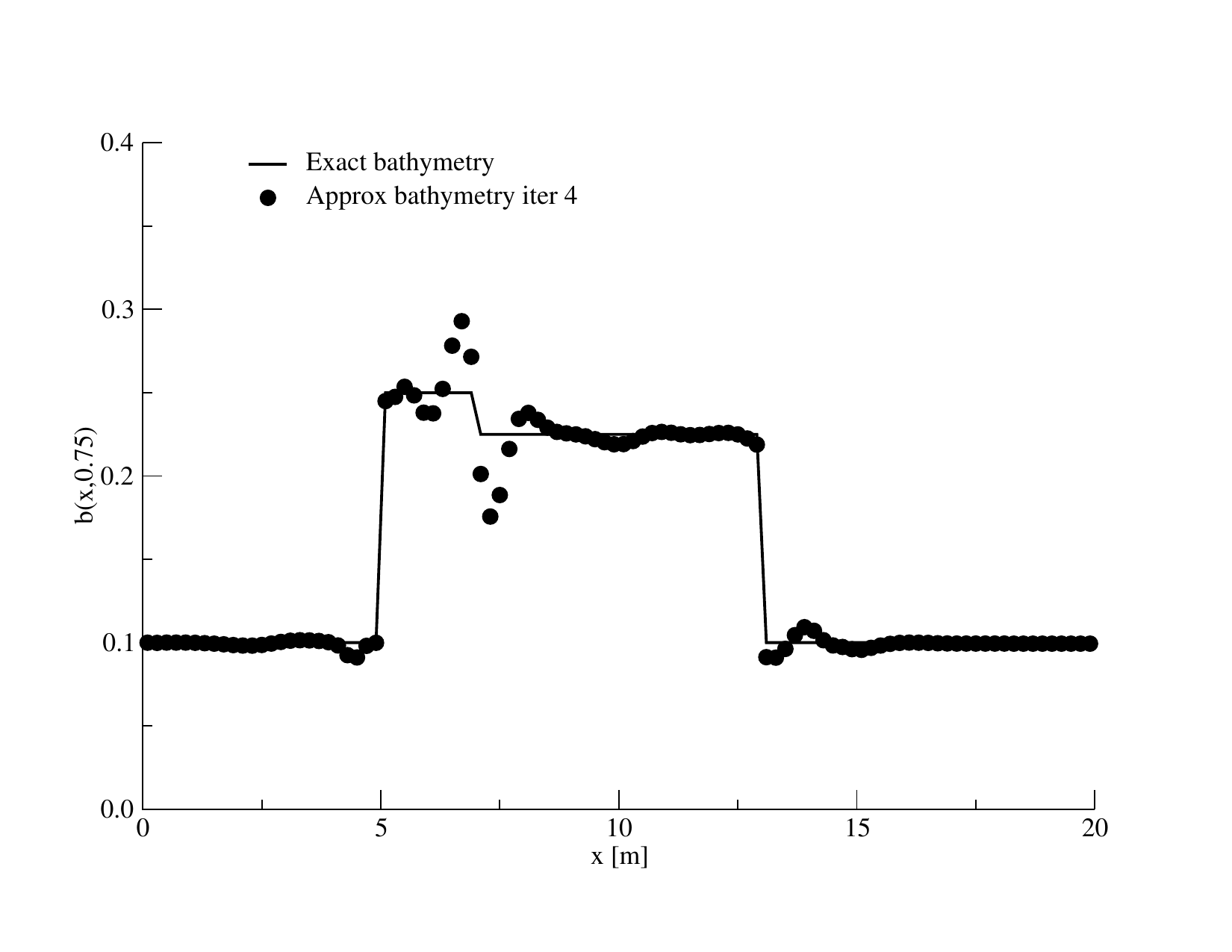} \\
	
	\includegraphics[width=0.3\textwidth]{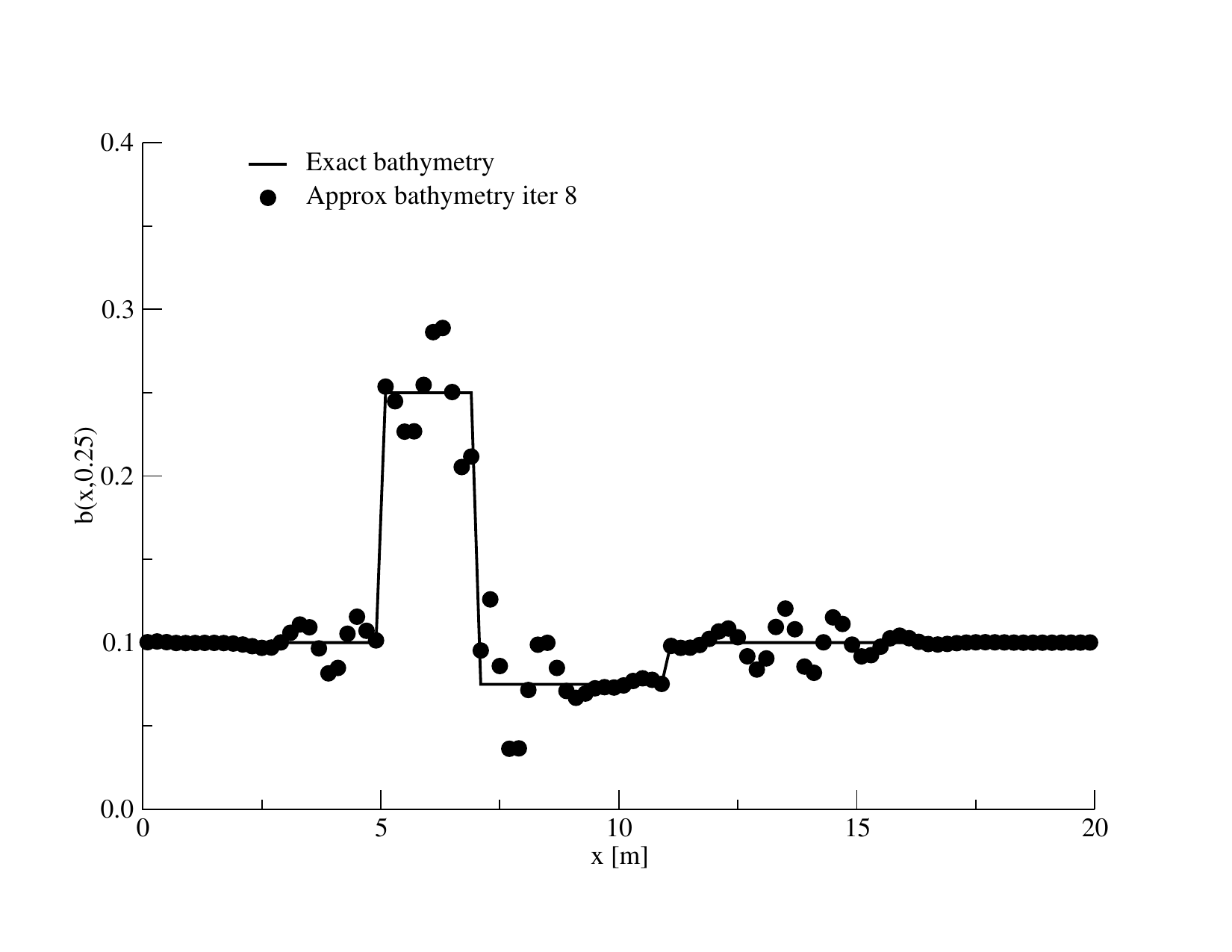} \quad
	\includegraphics[width=0.3\textwidth]{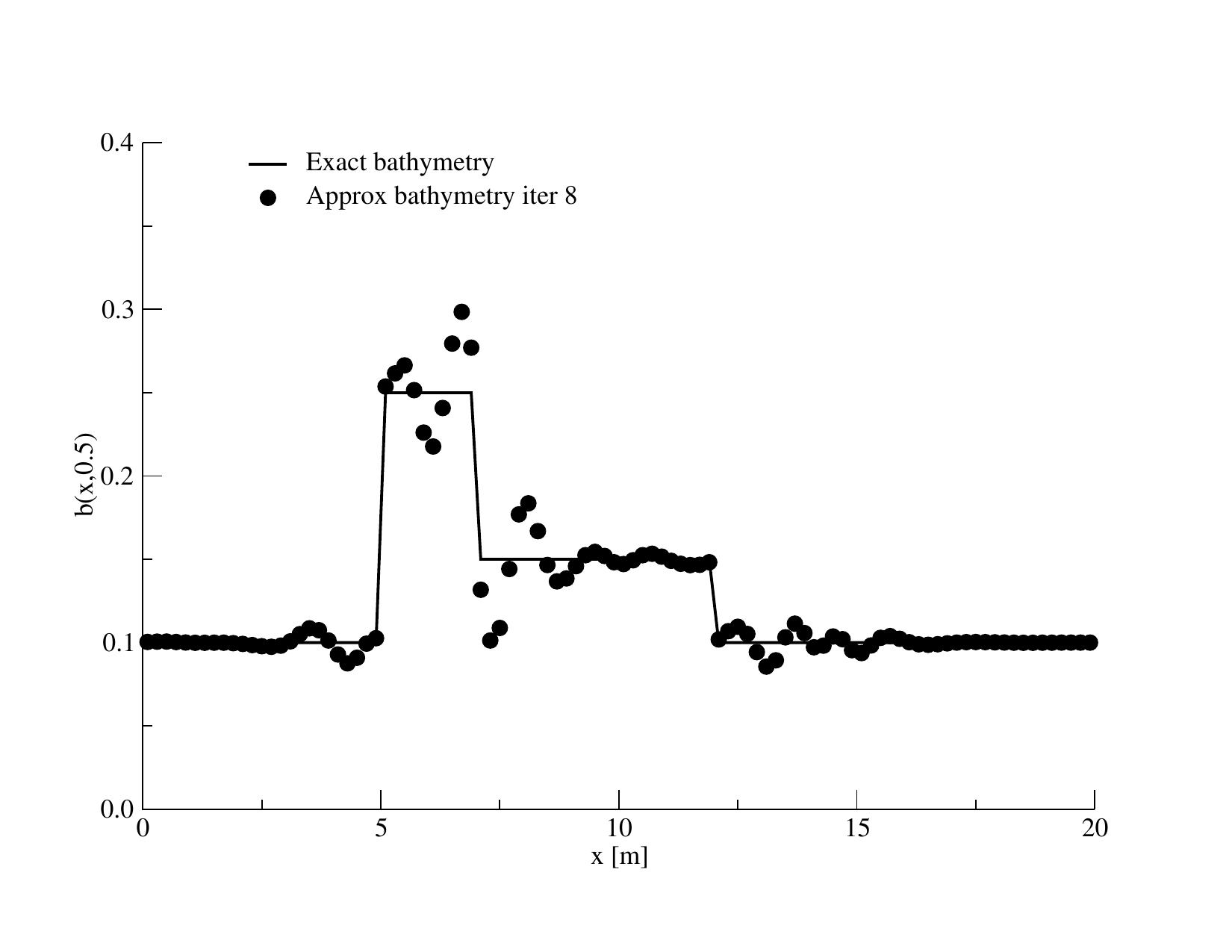} \quad
	\includegraphics[width=0.3\textwidth]{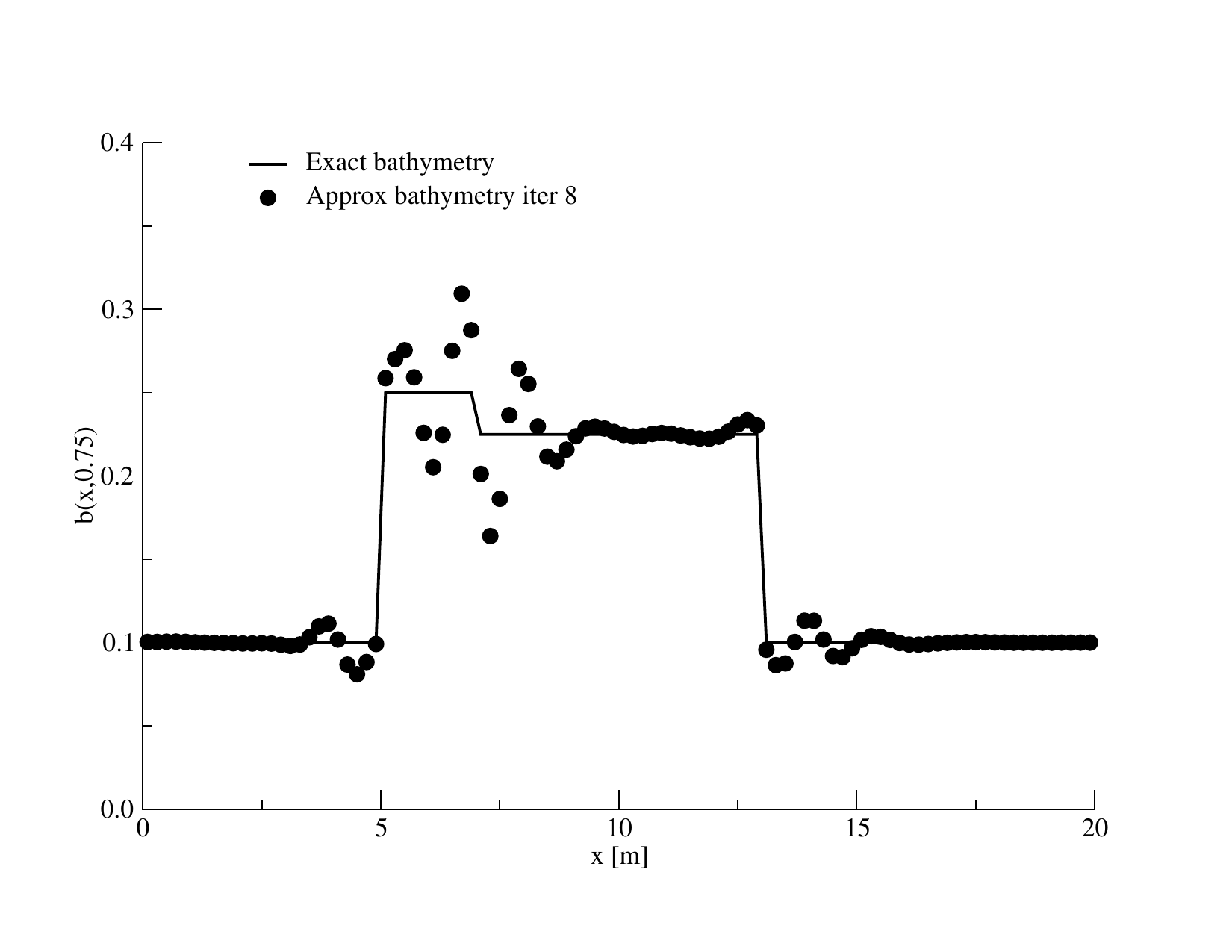} \\
	
	\caption{Discontinuous bottom profile (\ref{eq:b-discontinuous}): Result for the reconstruction procedure resulting from {\bf Rusanov+FD}. Parameters $\Delta t = 0.01$,  $\alpha_F = 2$, $\varepsilon = 0.001$, $\lambda_b = 0.71$, $100$ cells.
		{\bf Feft:} $t=0.25$, {\bf centered} $t=0.5$, {\bf right:} $t=0.75$.}
	\label{fig:b-for-iter-and-times:test-1-ConsRusanov}
\end{figure}
\FloatBarrier

\begin{figure}   
	\centering
	\includegraphics[width=0.3\textwidth]{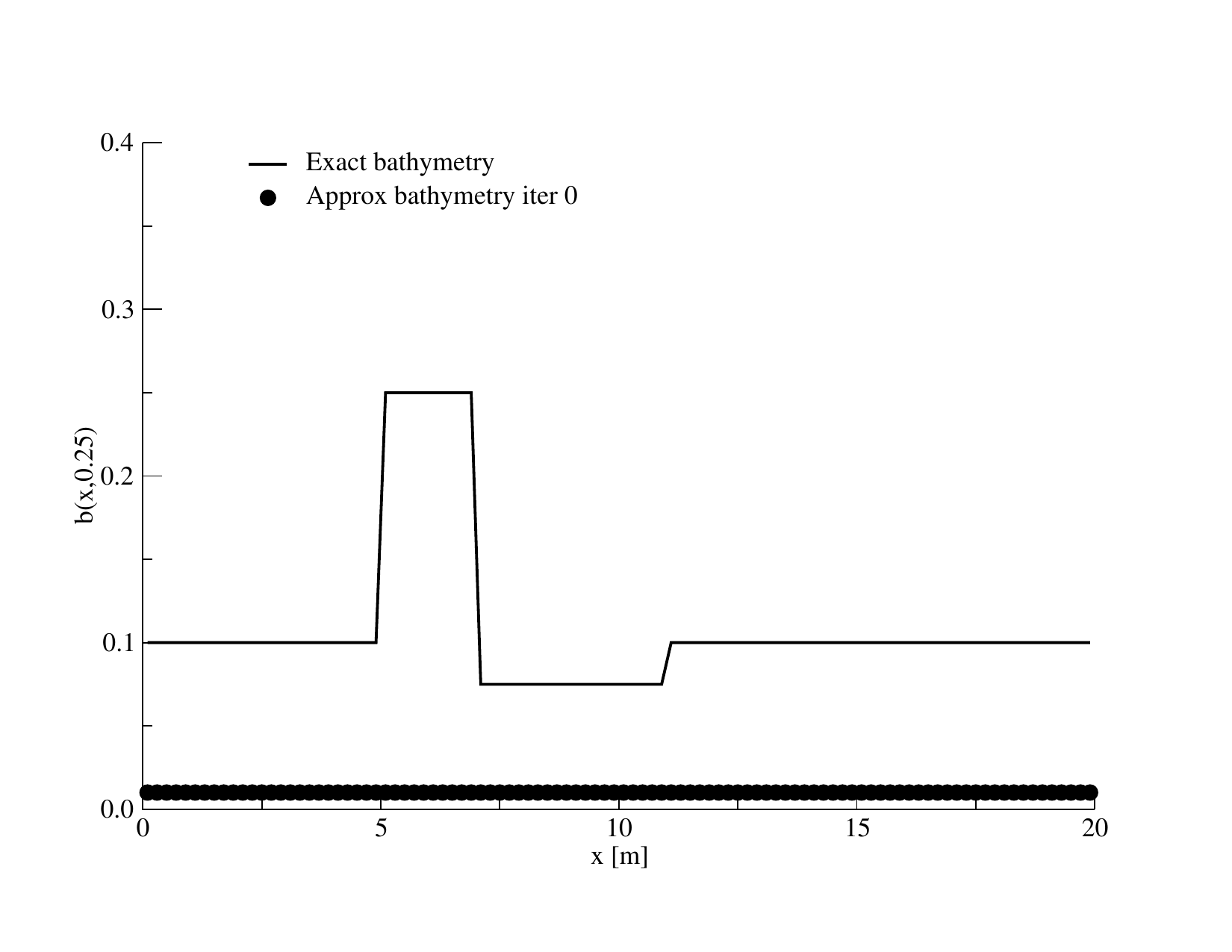} \quad
	\includegraphics[width=0.3\textwidth]{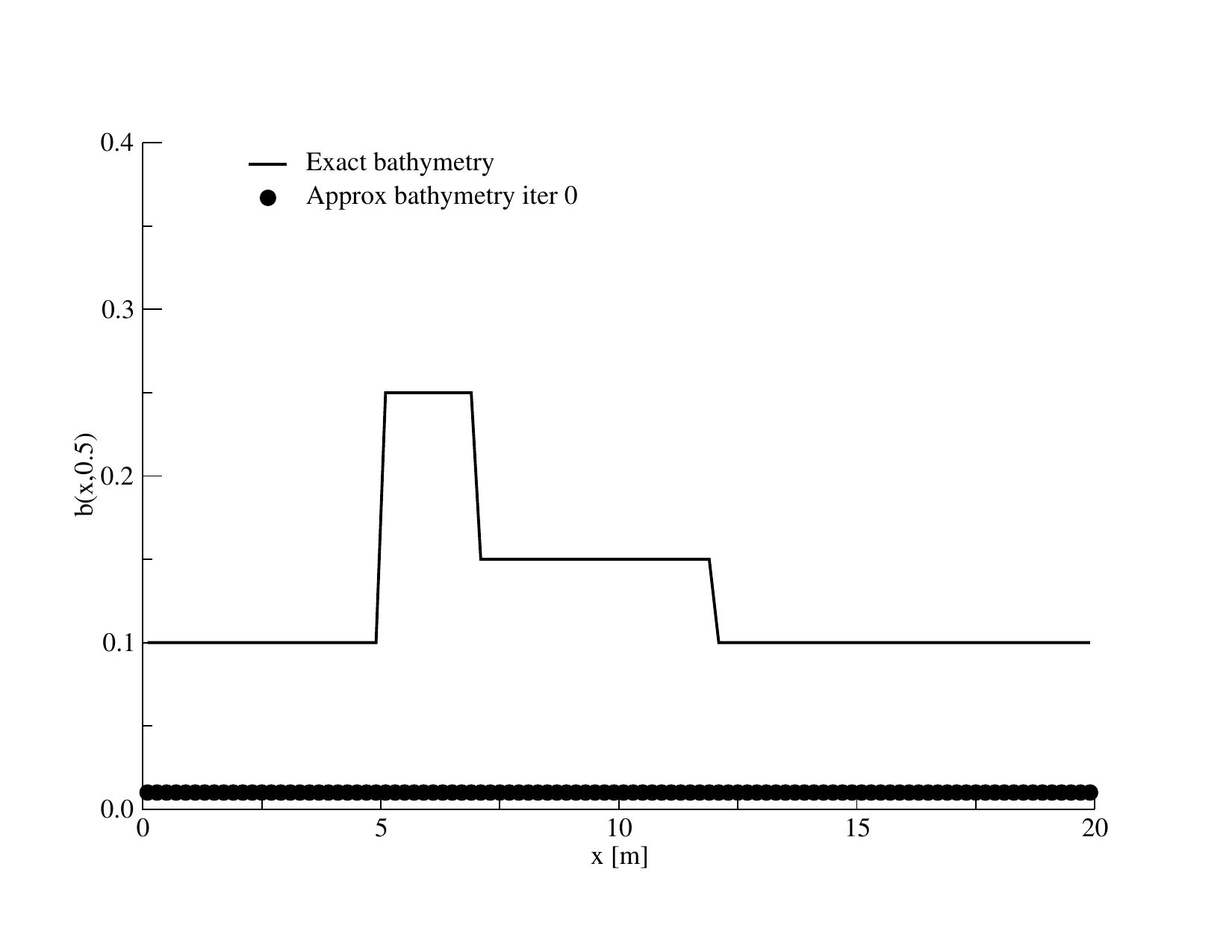} \quad
	\includegraphics[width=0.3\textwidth]{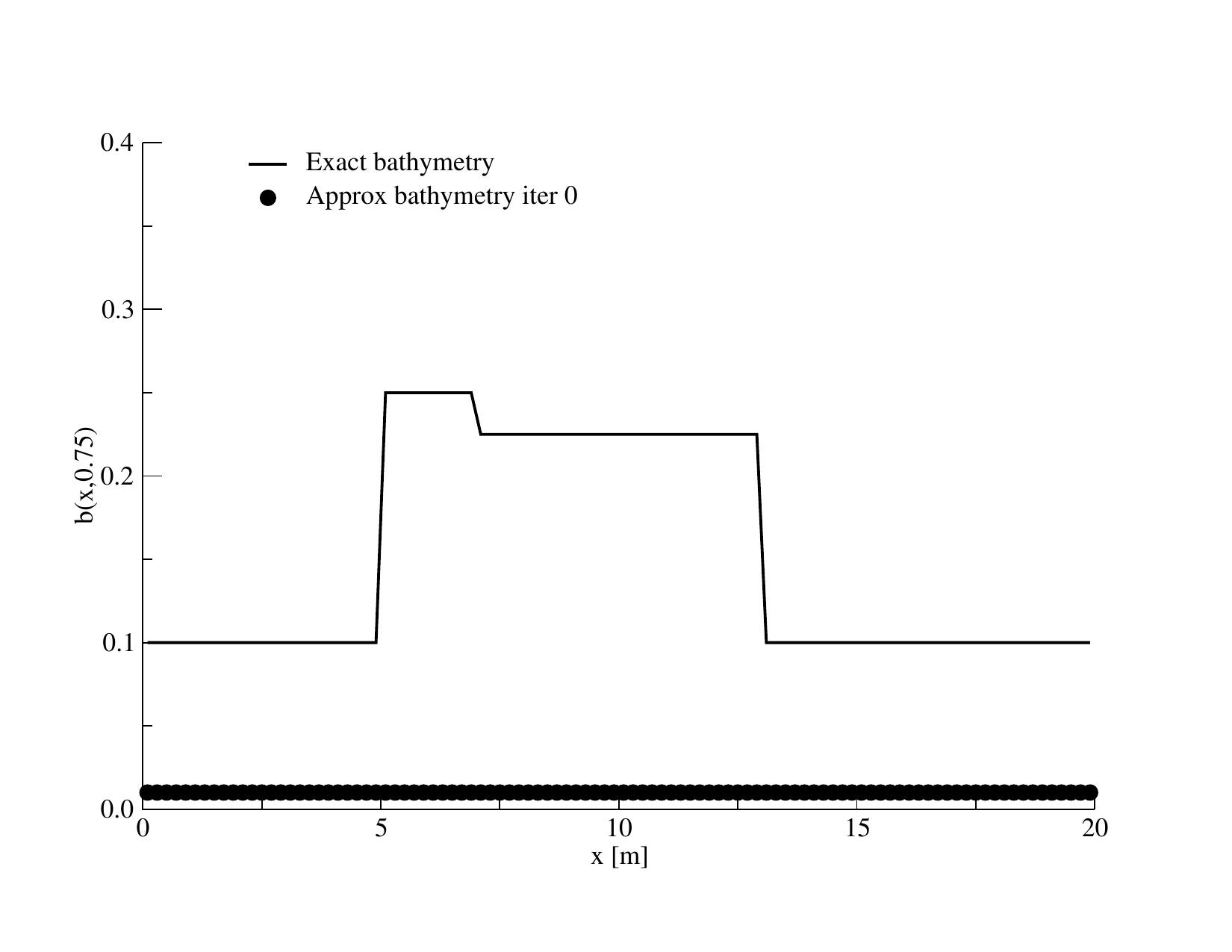} 
	\\
	
	\includegraphics[width=0.3\textwidth]{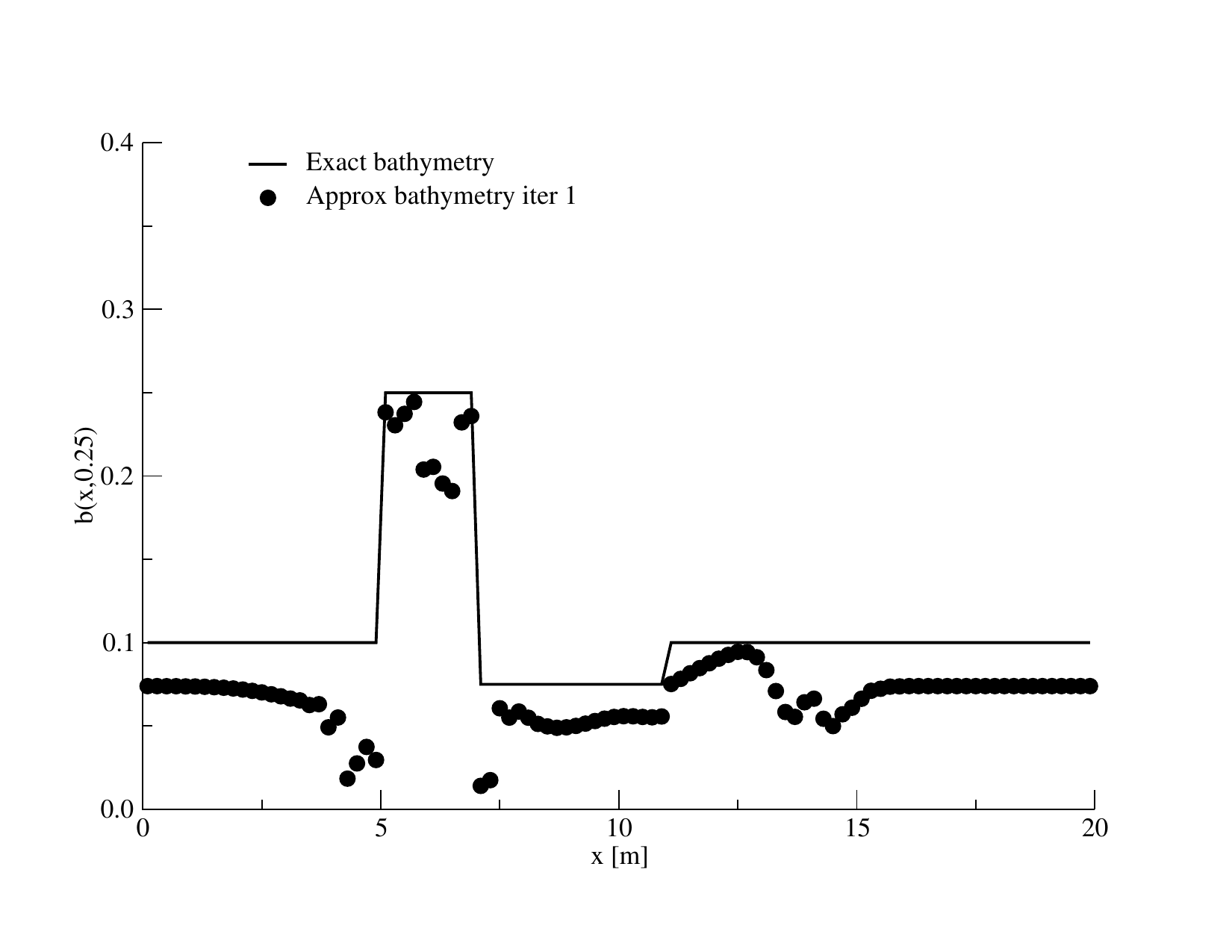} \quad
	\includegraphics[width=0.3\textwidth]{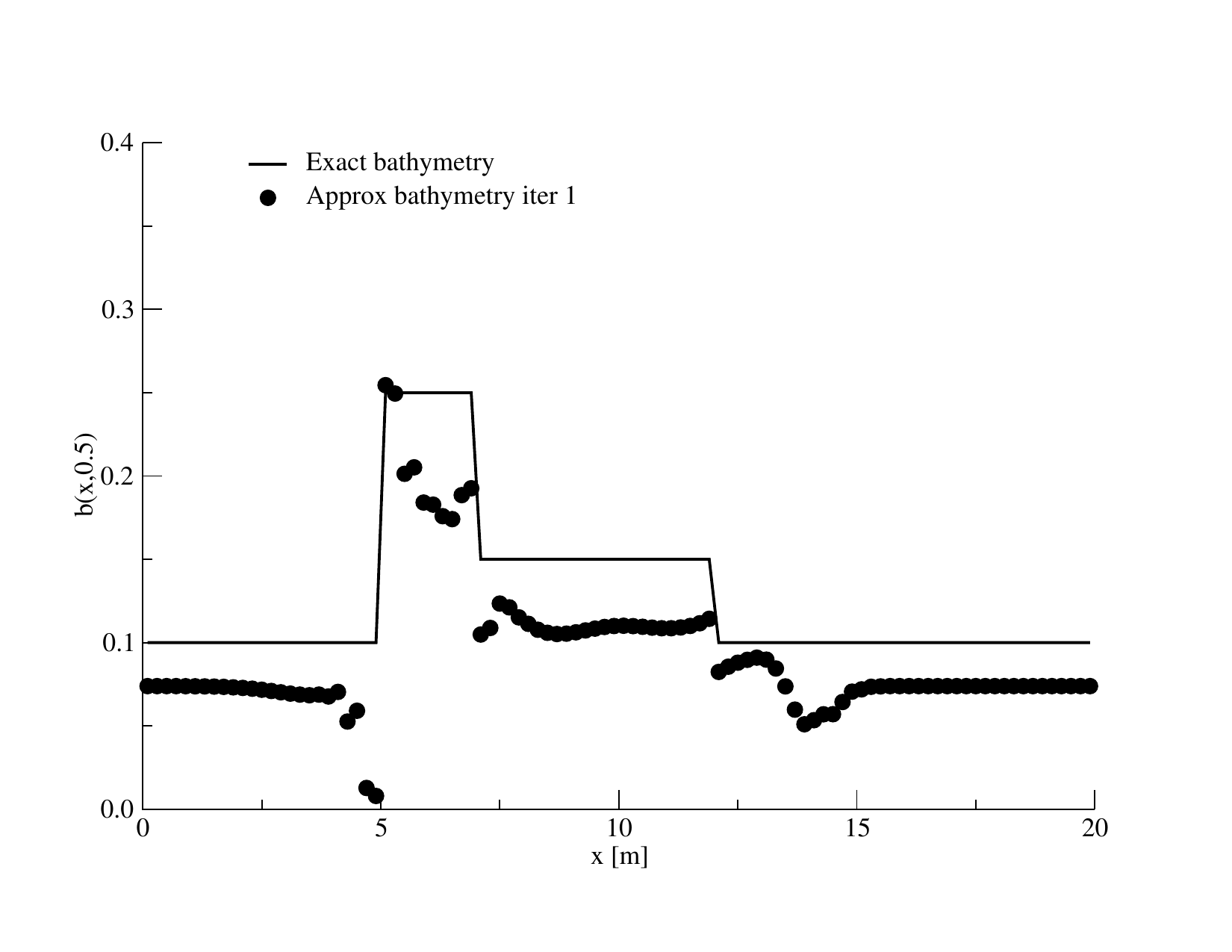} \quad
	\includegraphics[width=0.3\textwidth]{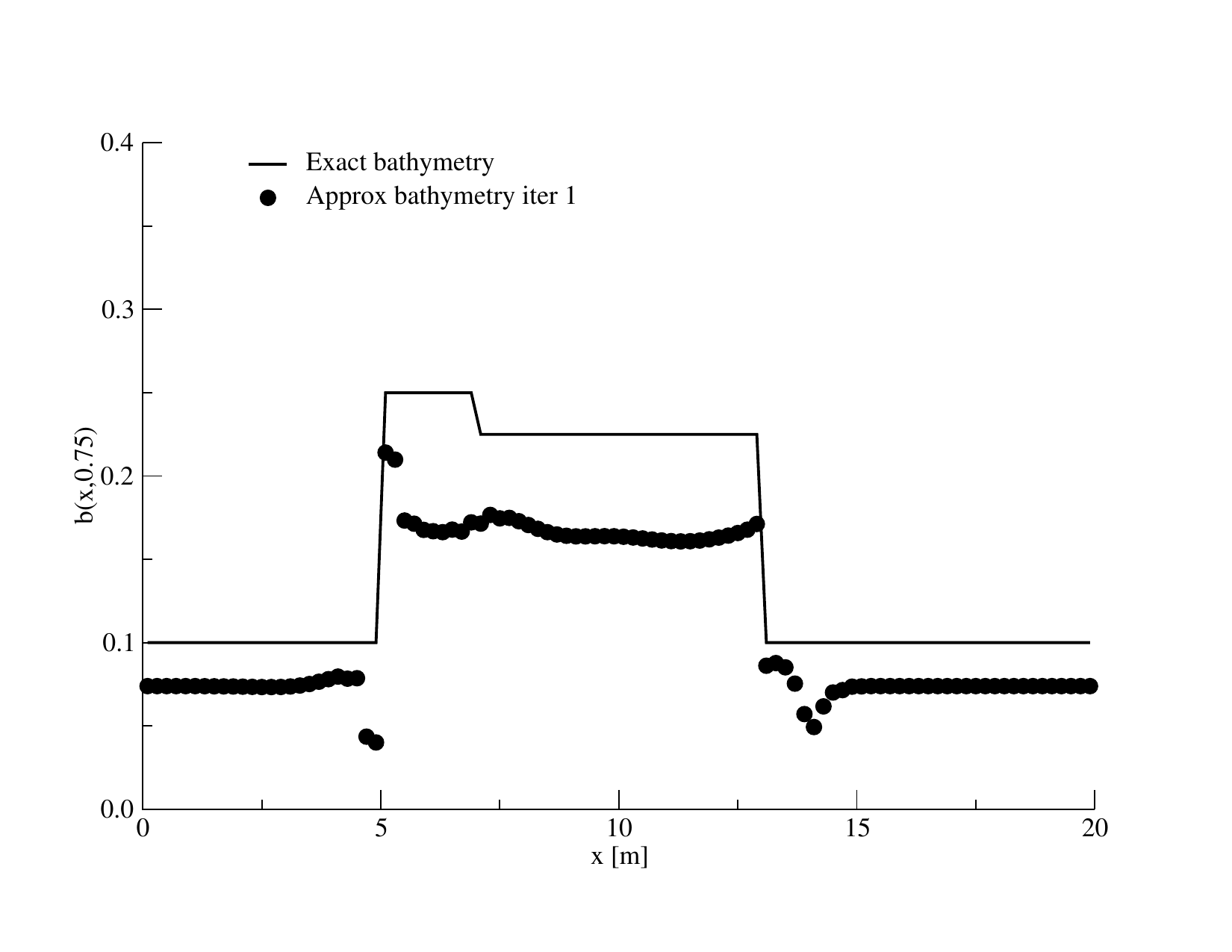} \\
	
	\includegraphics[width=0.3\textwidth]{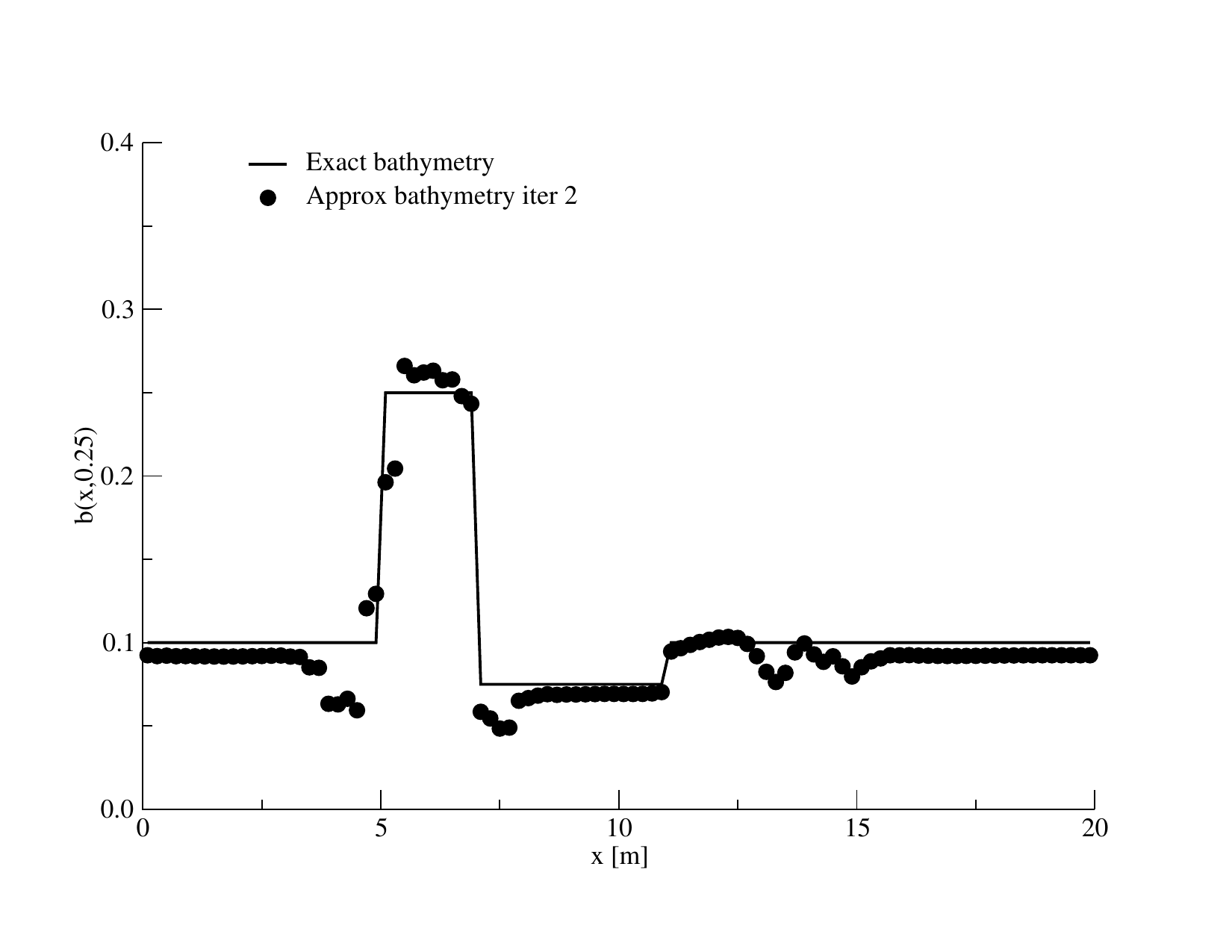} \quad
	\includegraphics[width=0.3\textwidth]{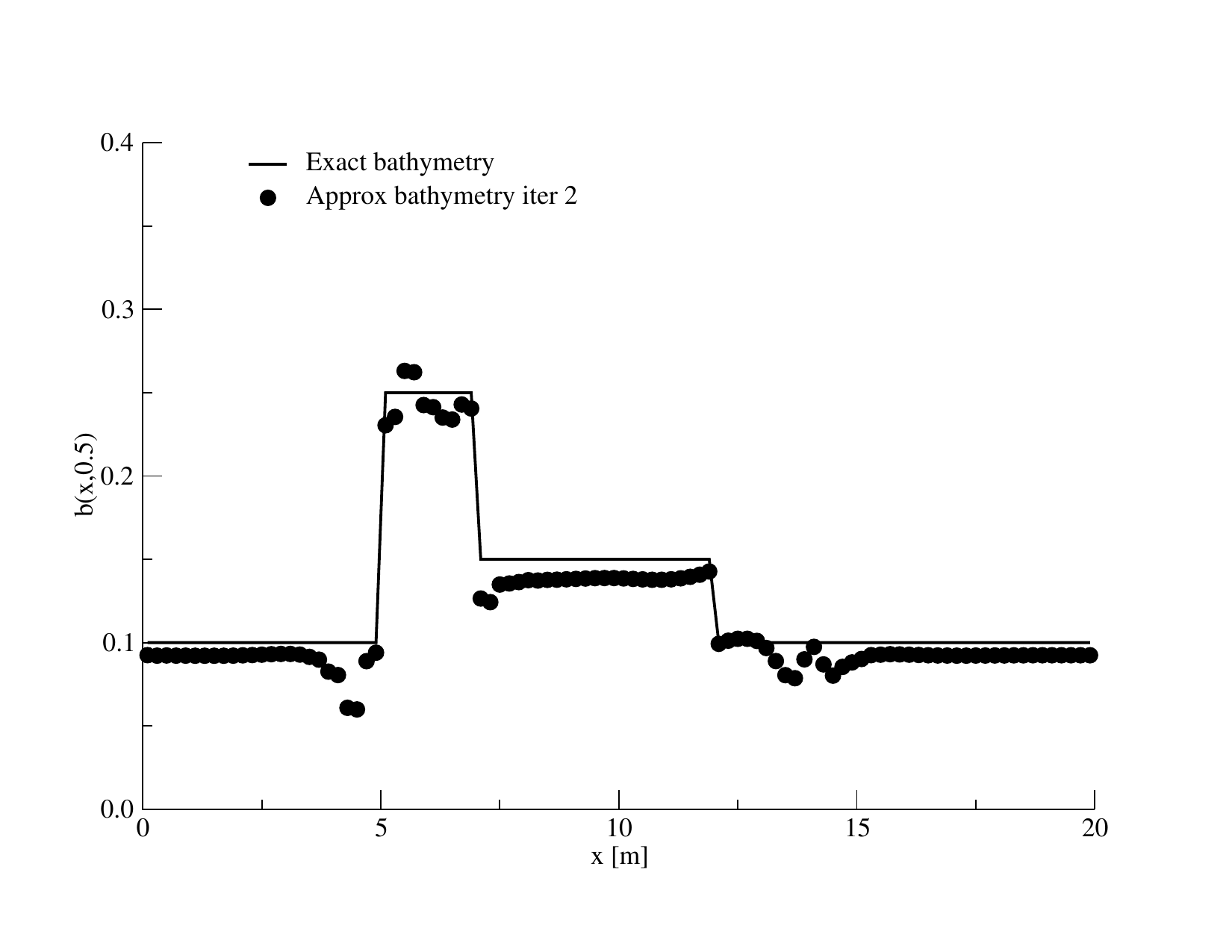} \quad
	\includegraphics[width=0.3\textwidth]{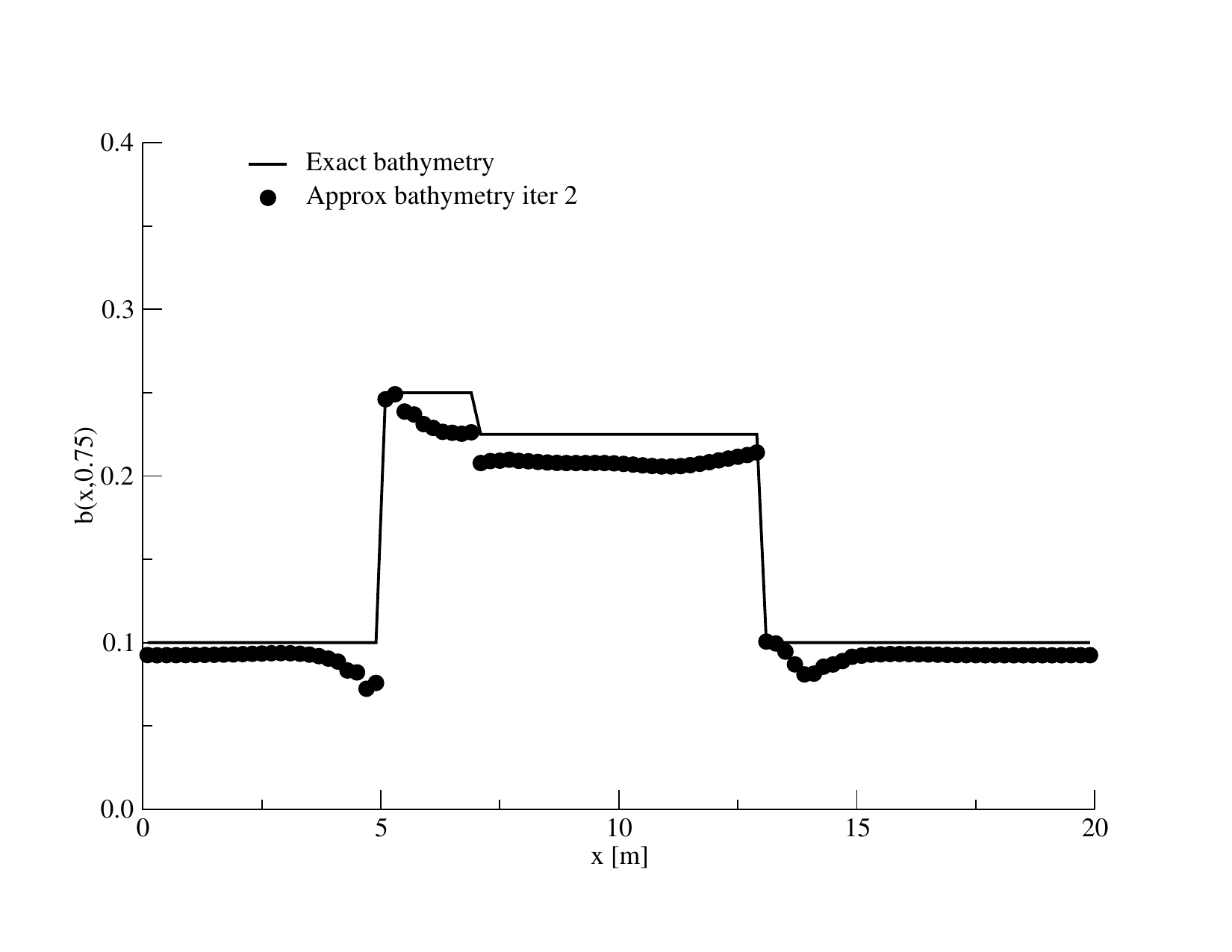} \\

	\includegraphics[width=0.3\textwidth]{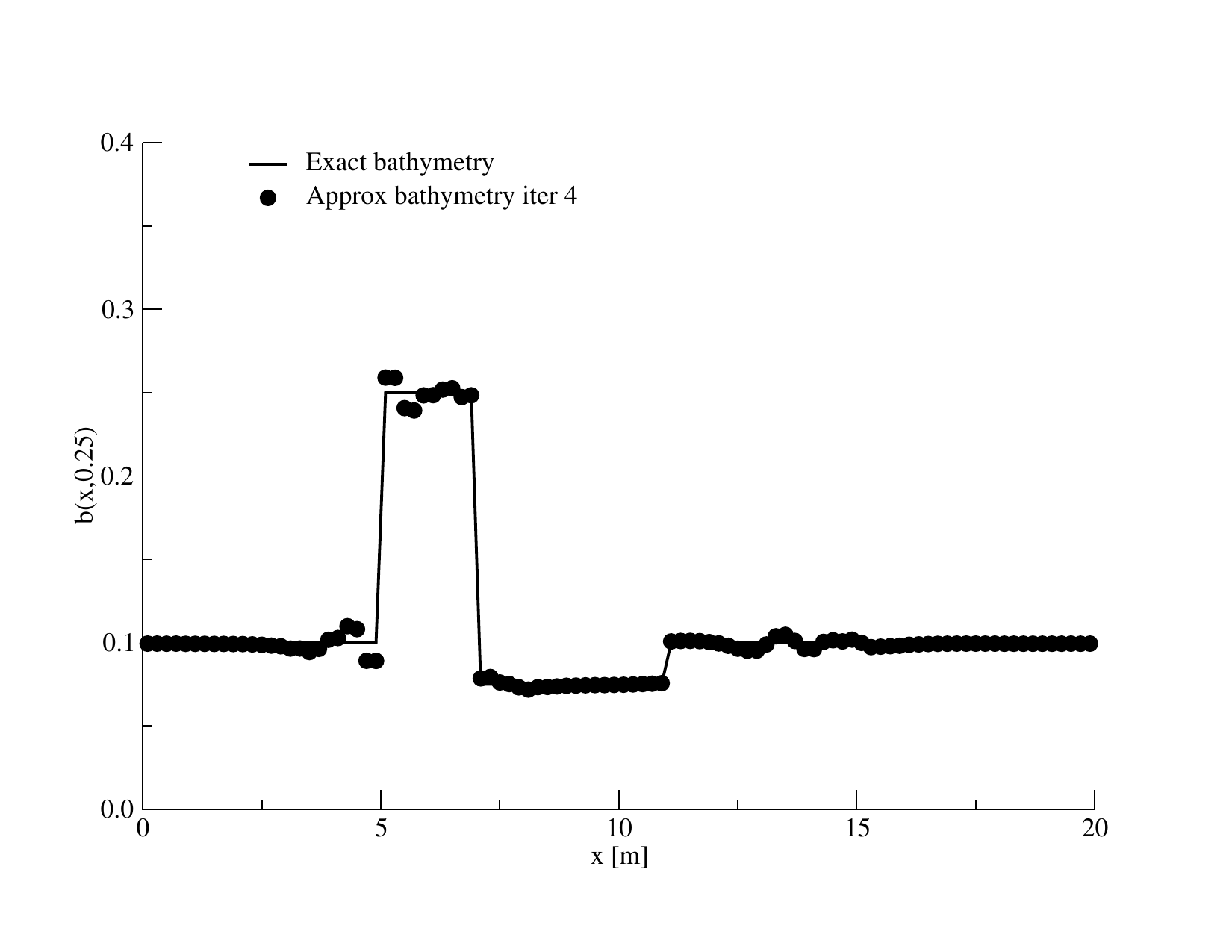} \quad
	\includegraphics[width=0.3\textwidth]{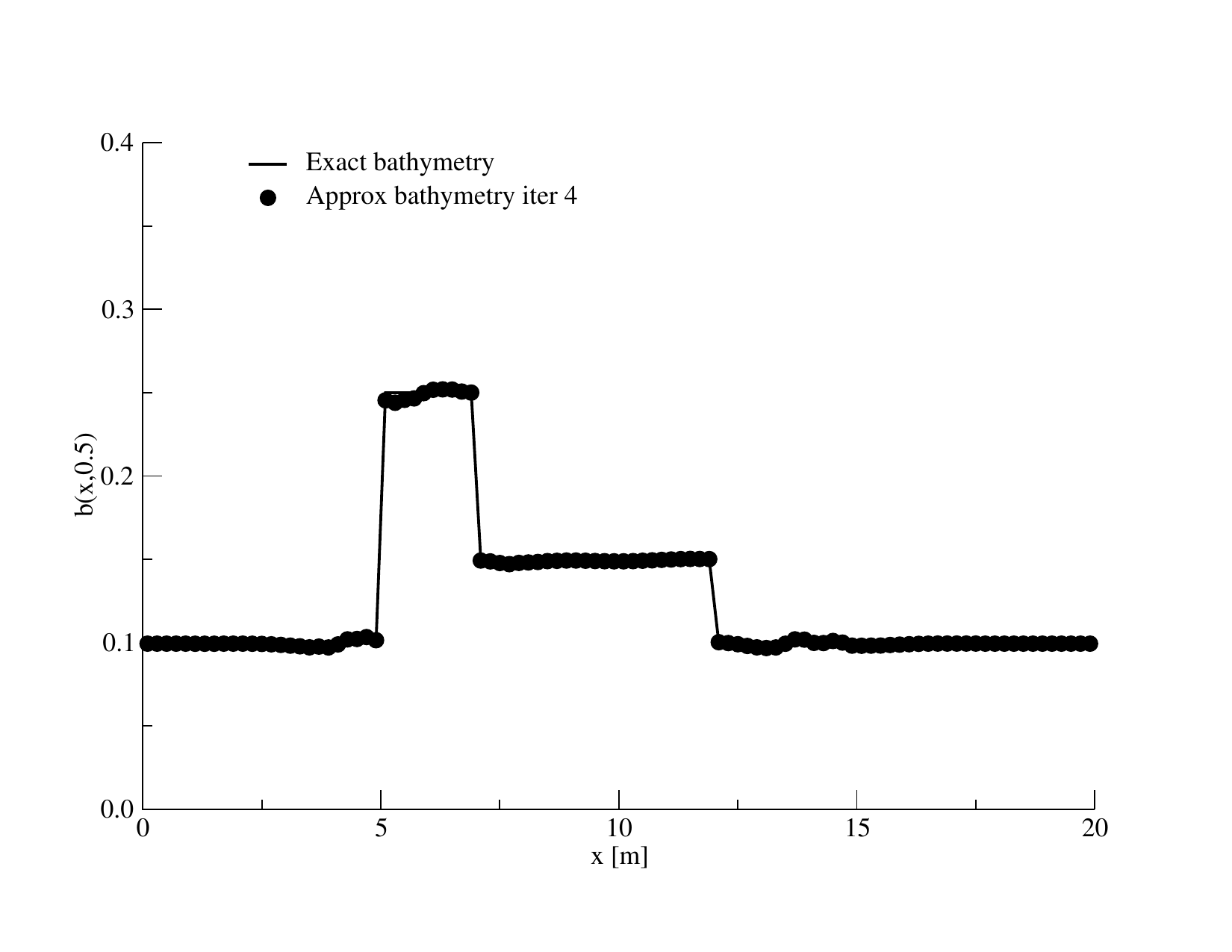} \quad
	\includegraphics[width=0.3\textwidth]{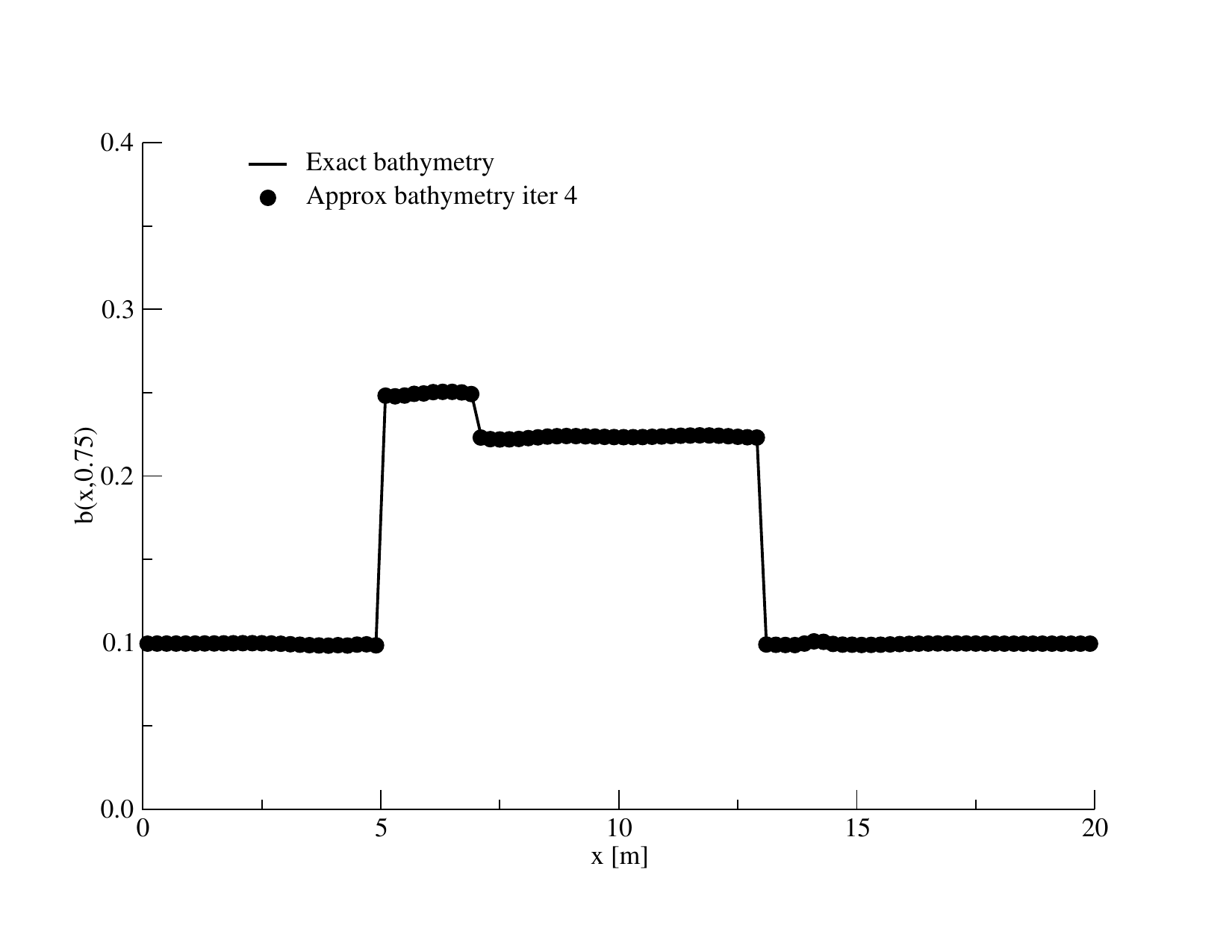} \\
	
	\includegraphics[width=0.3\textwidth]{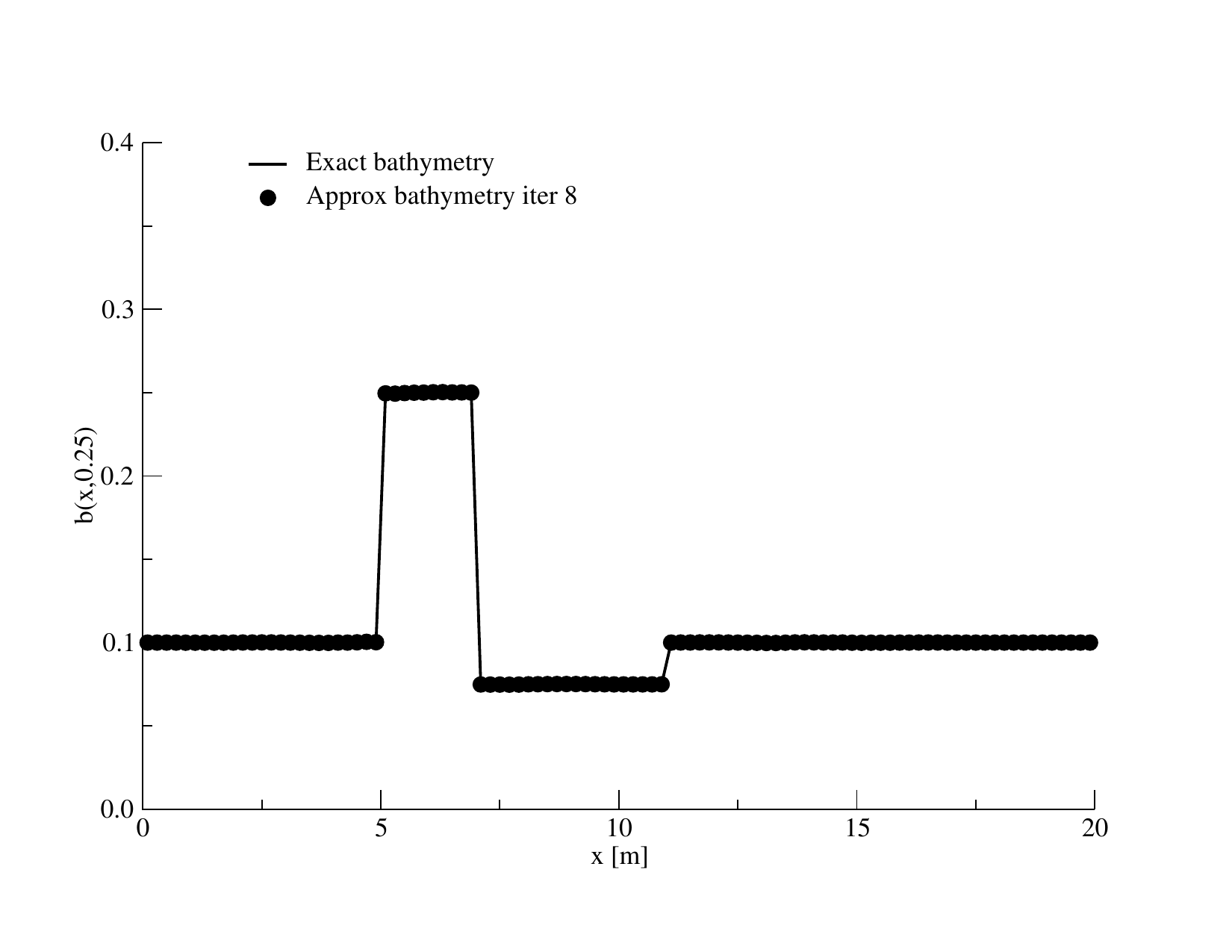} \quad
	\includegraphics[width=0.3\textwidth]{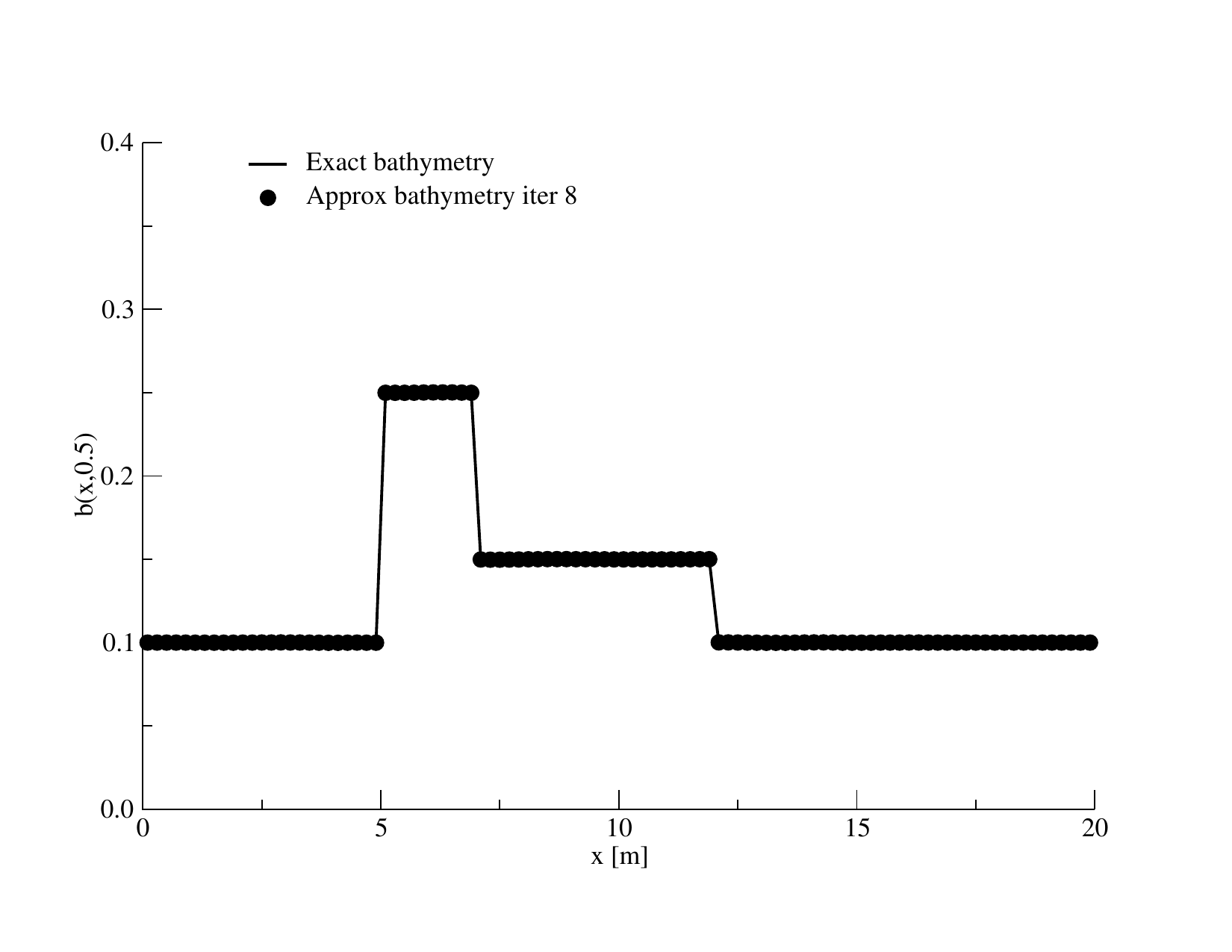} \quad
	\includegraphics[width=0.3\textwidth]{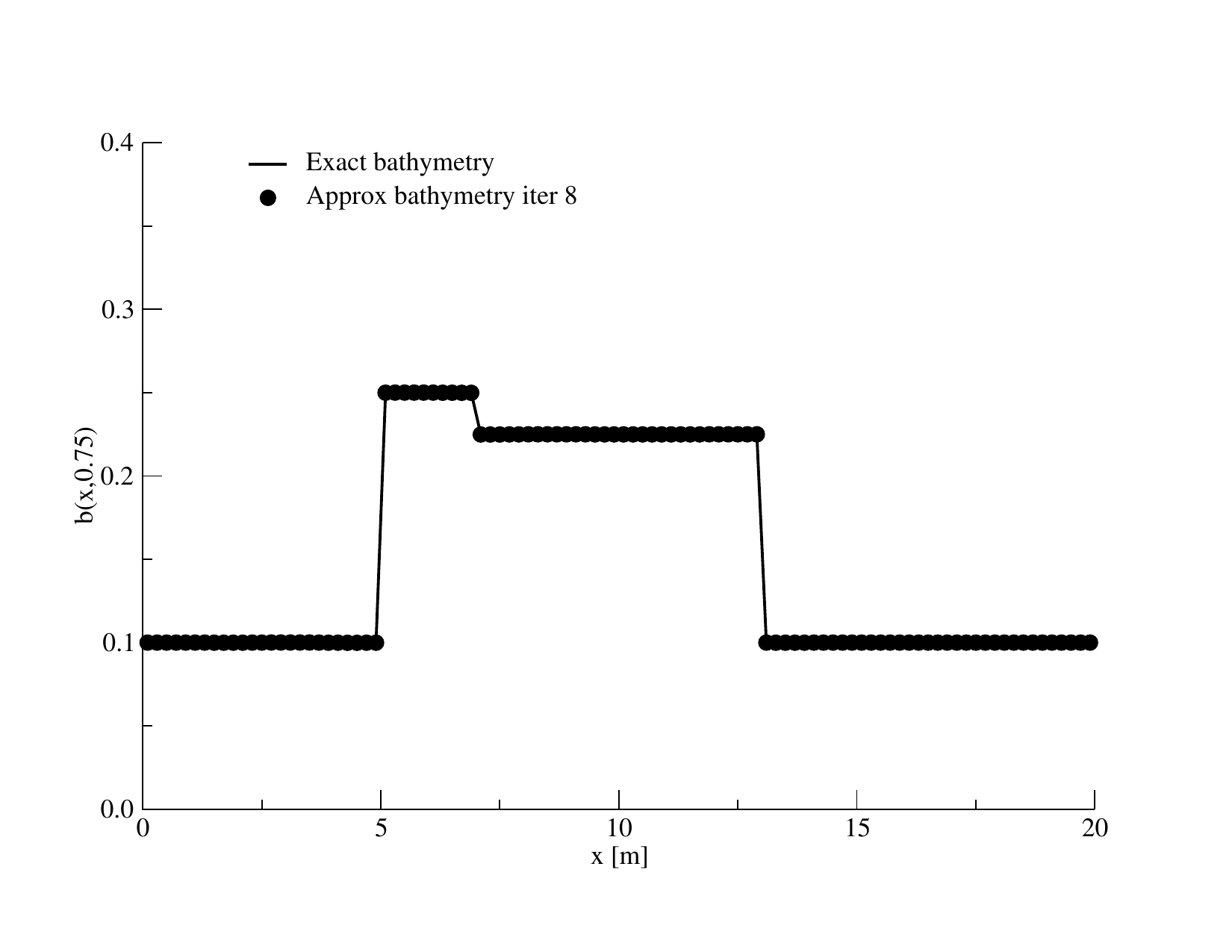} \\
	
	\caption{Discontinuous bottom profile (\ref{eq:b-discontinuous}): Result for the reconstruction procedure resulting from non-conservative {\bf FORCE-$\alpha$+FD}. Parameters $\Delta t = 0.01$,  $\alpha_F = 2$, $\varepsilon = 0.001$, $\lambda_b = 0.71$, $100$ cells.
		{\bf Feft:} $t=0.25$, {\bf centered} $t=0.5$, {\bf right:} $t=0.75$.}
	\label{fig:b-for-iter-and-times:test-1:ForceAlphaCons}
\end{figure}
\FloatBarrier

\begin{figure}   
	\centering
	\includegraphics[width=0.3\textwidth]{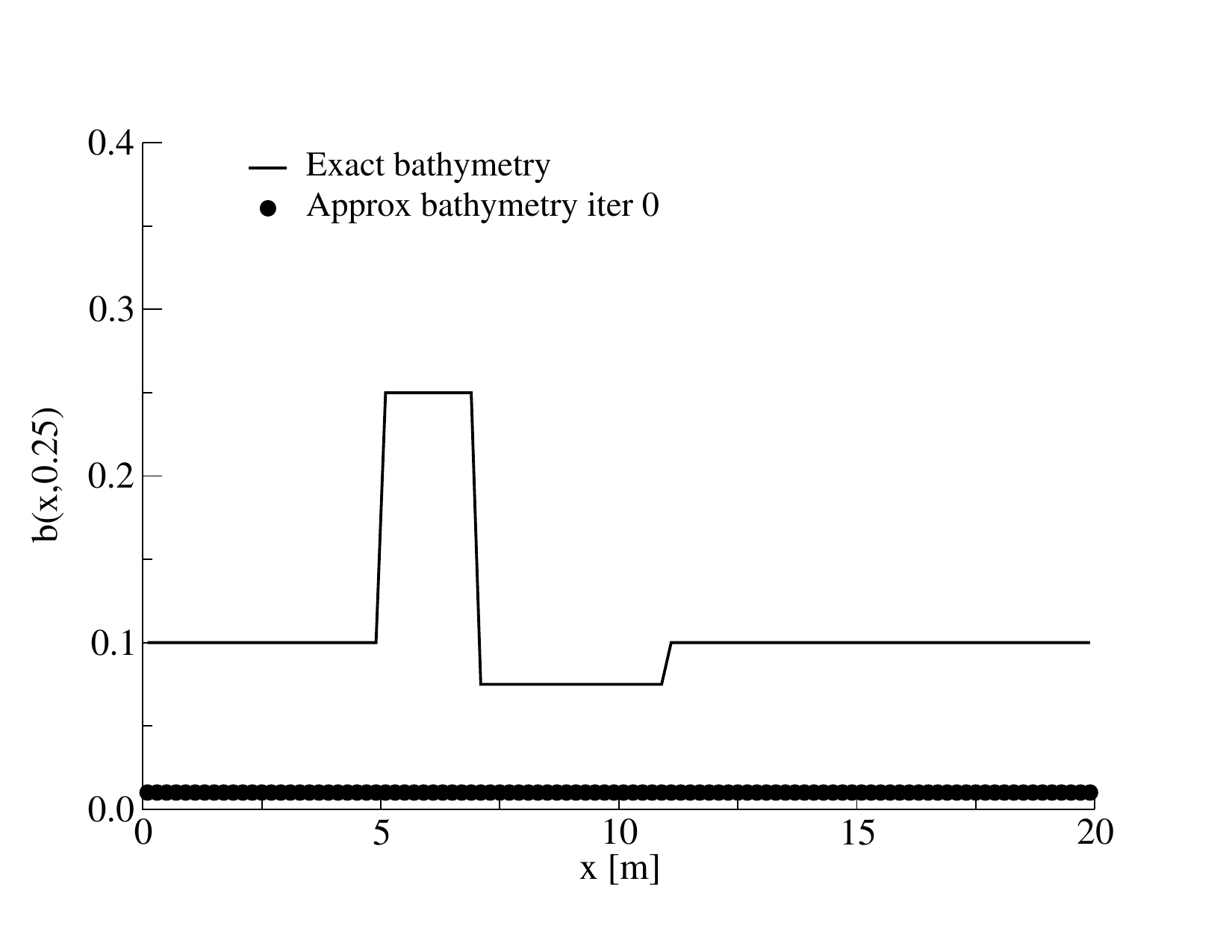} \quad
	\includegraphics[width=0.3\textwidth]{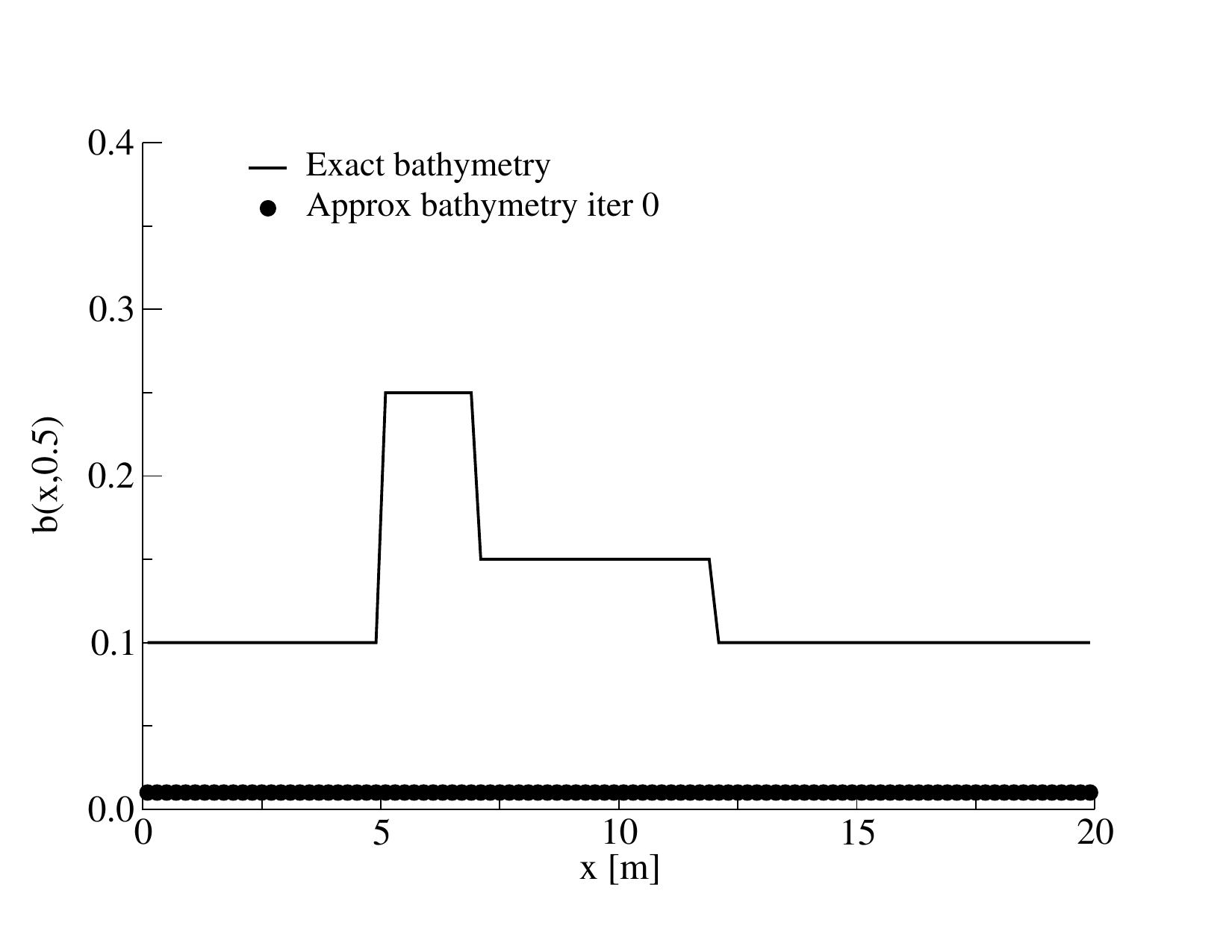} \quad
	\includegraphics[width=0.3\textwidth]{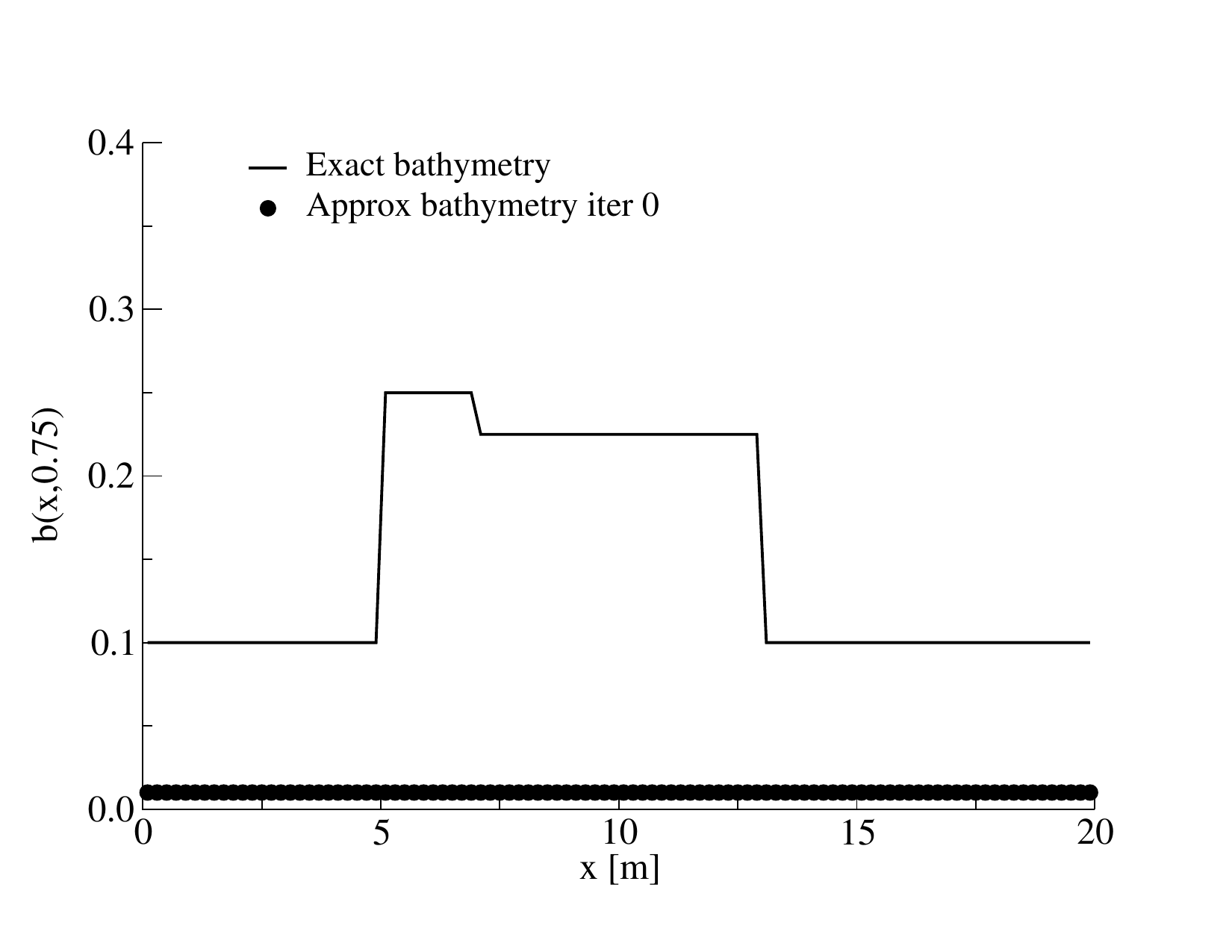} 
	\\
	
	\includegraphics[width=0.3\textwidth]{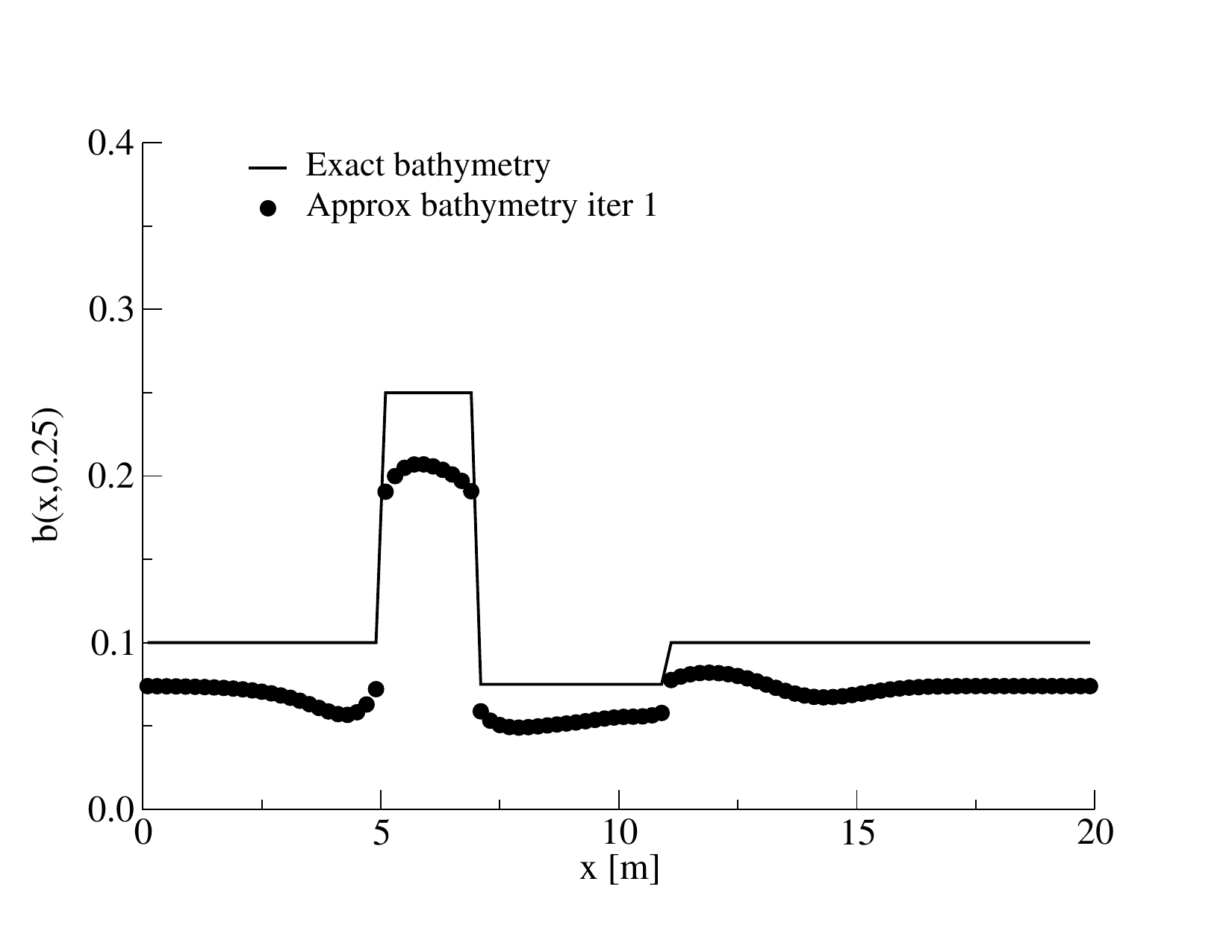} \quad
	\includegraphics[width=0.3\textwidth]{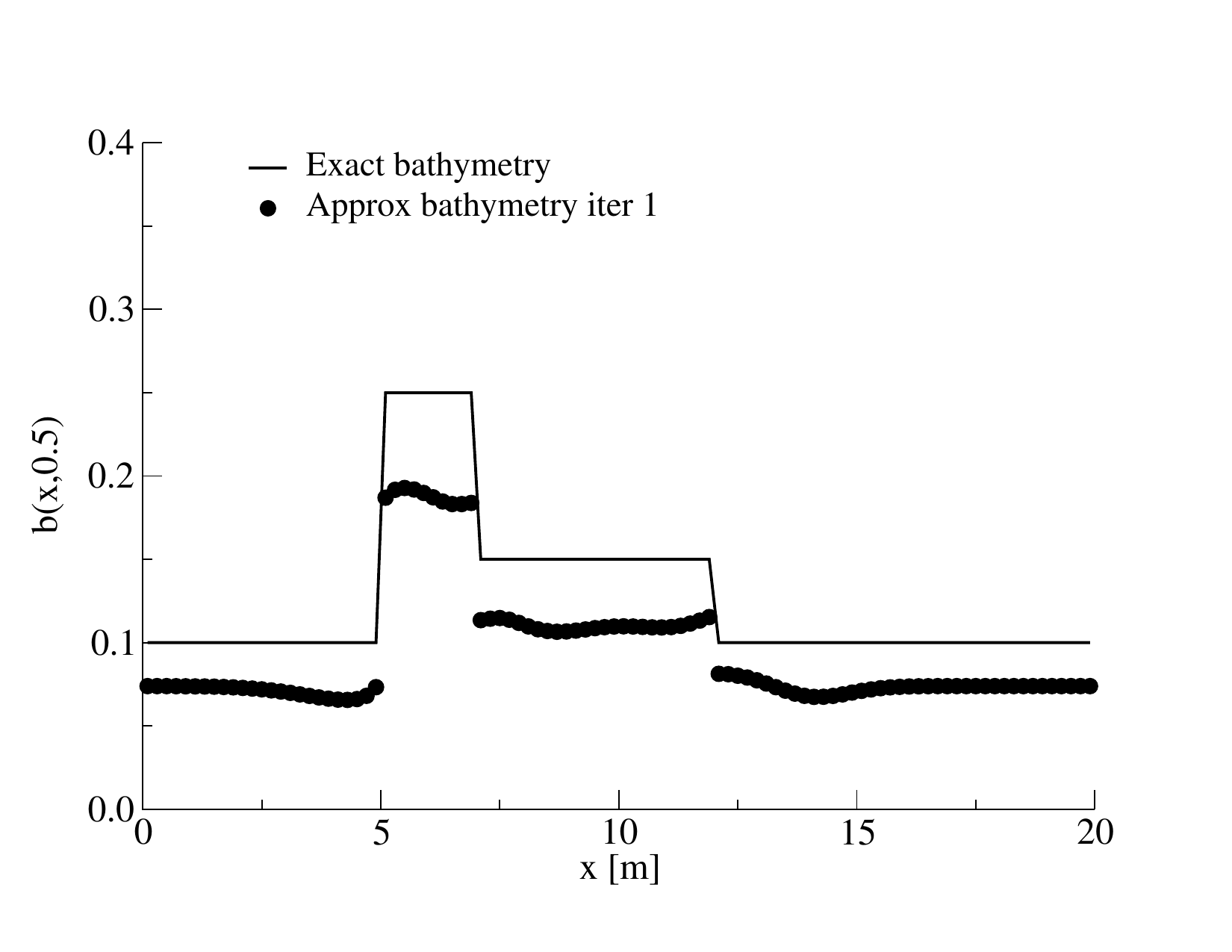} \quad
	\includegraphics[width=0.3\textwidth]{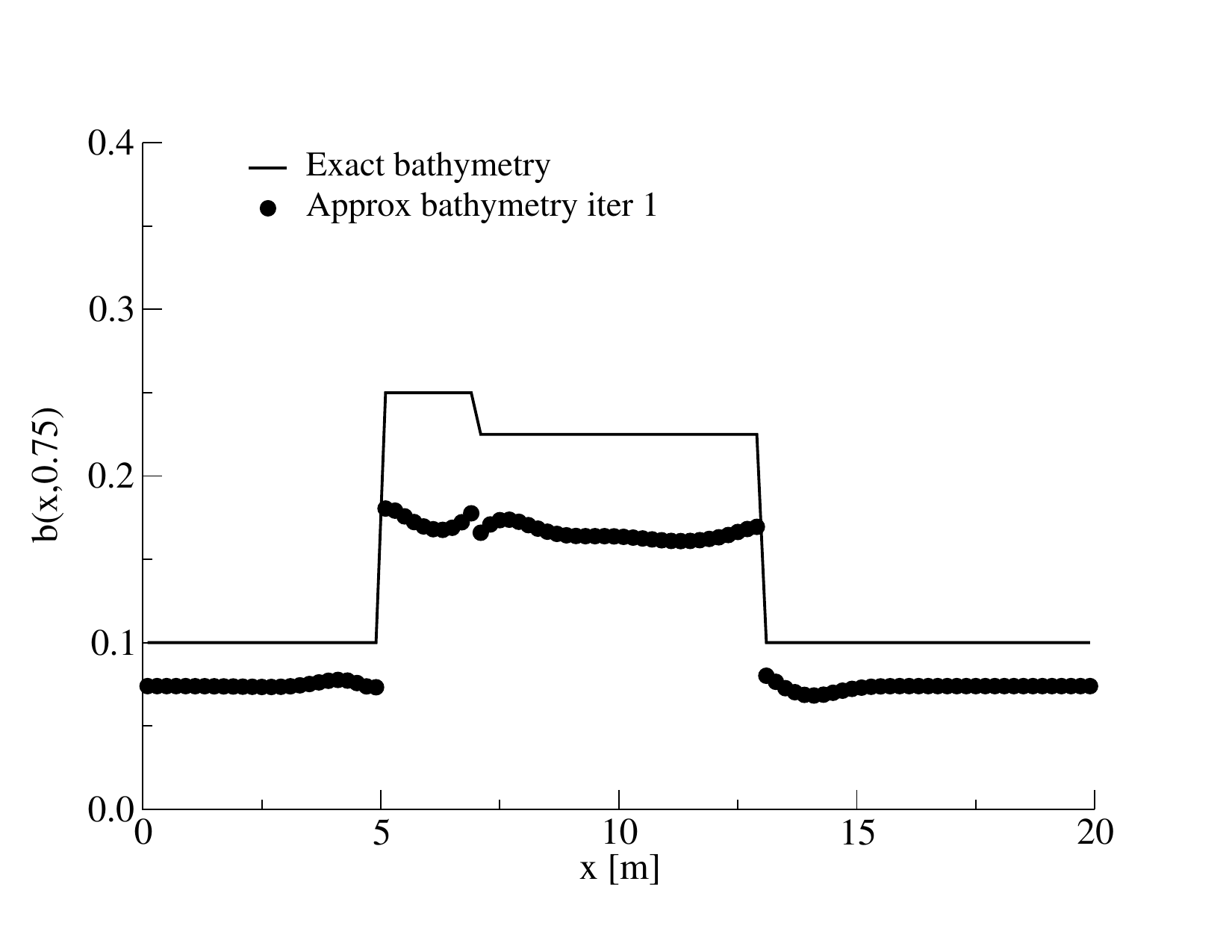} \\
	
	\includegraphics[width=0.3\textwidth]{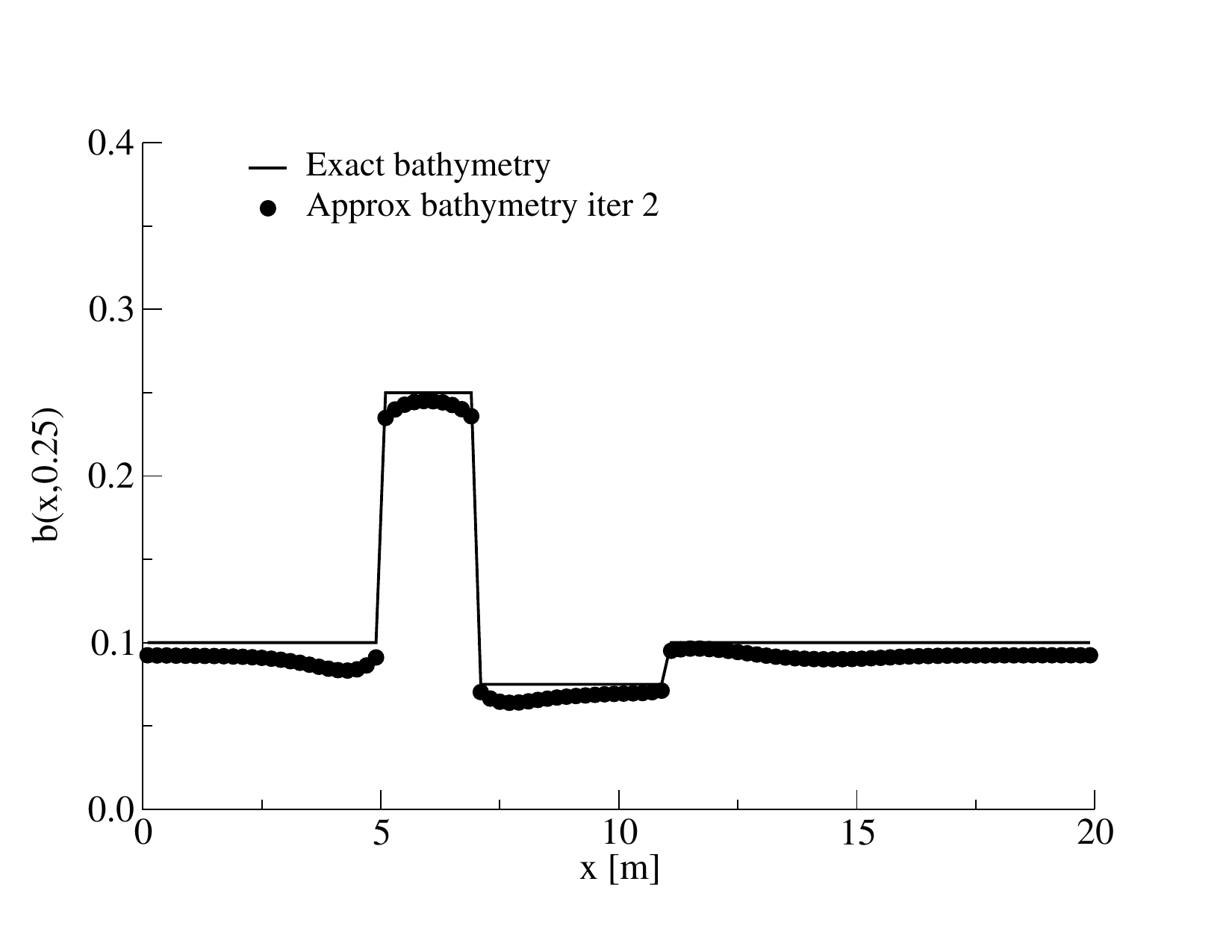} \quad
	\includegraphics[width=0.3\textwidth]{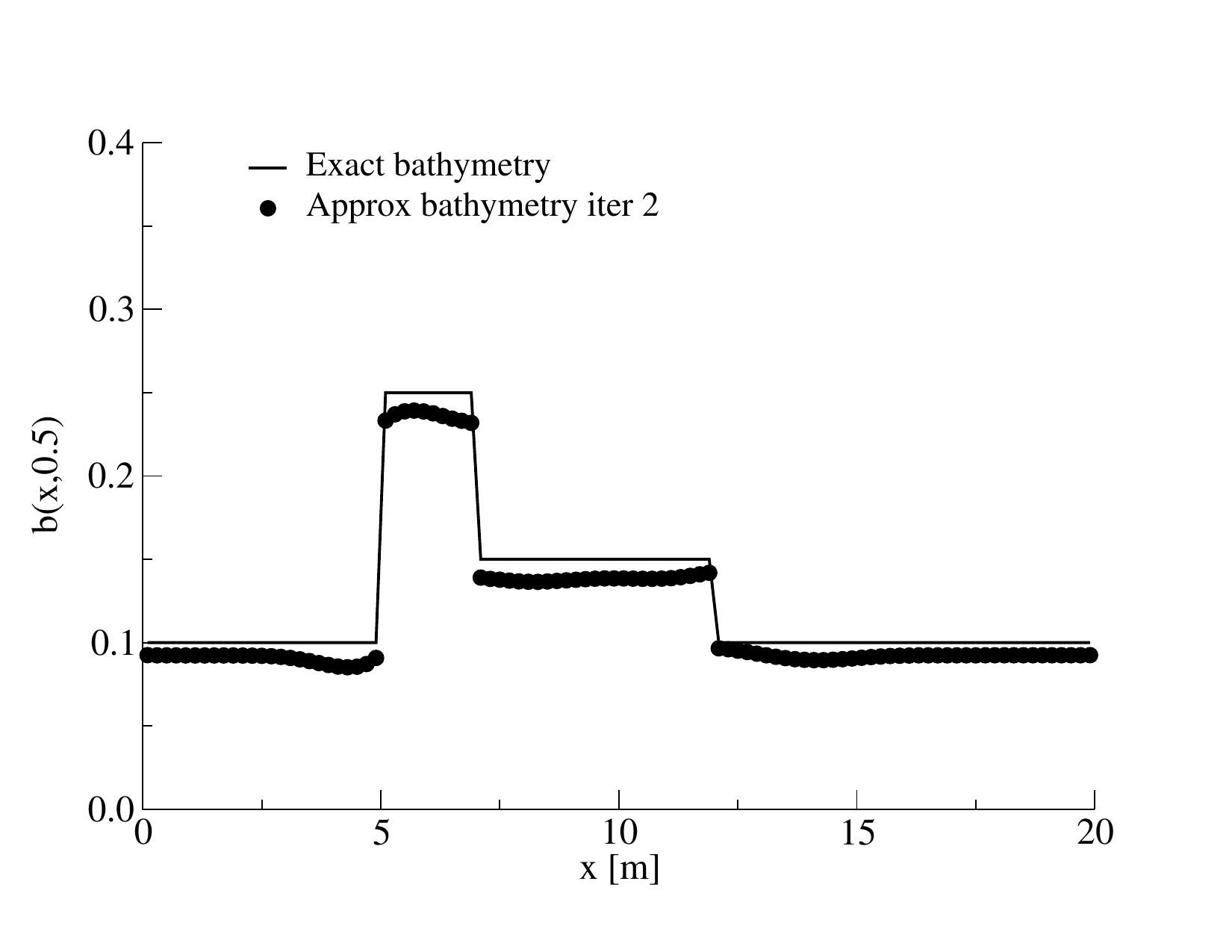} \quad
	\includegraphics[width=0.3\textwidth]{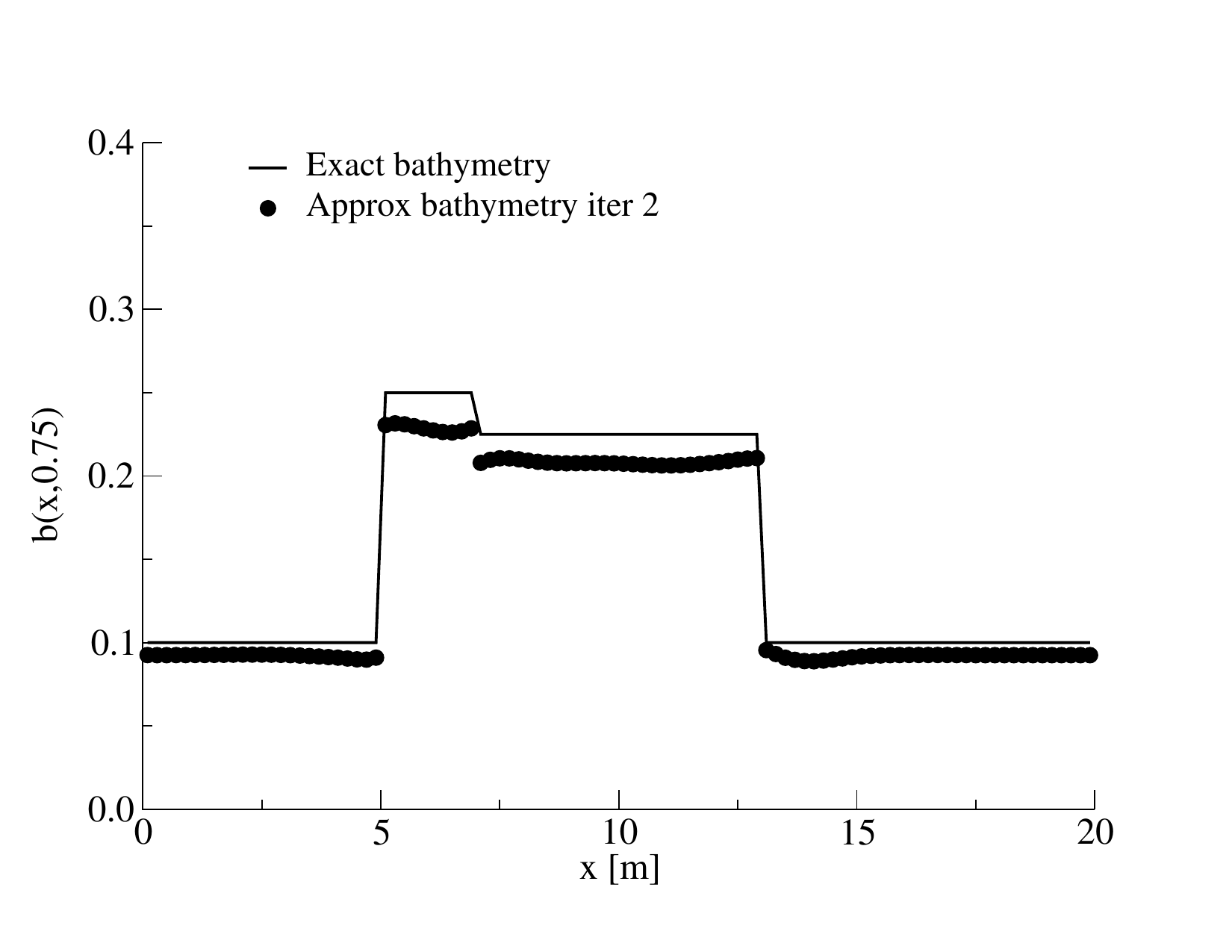} \\

	\includegraphics[width=0.3\textwidth]{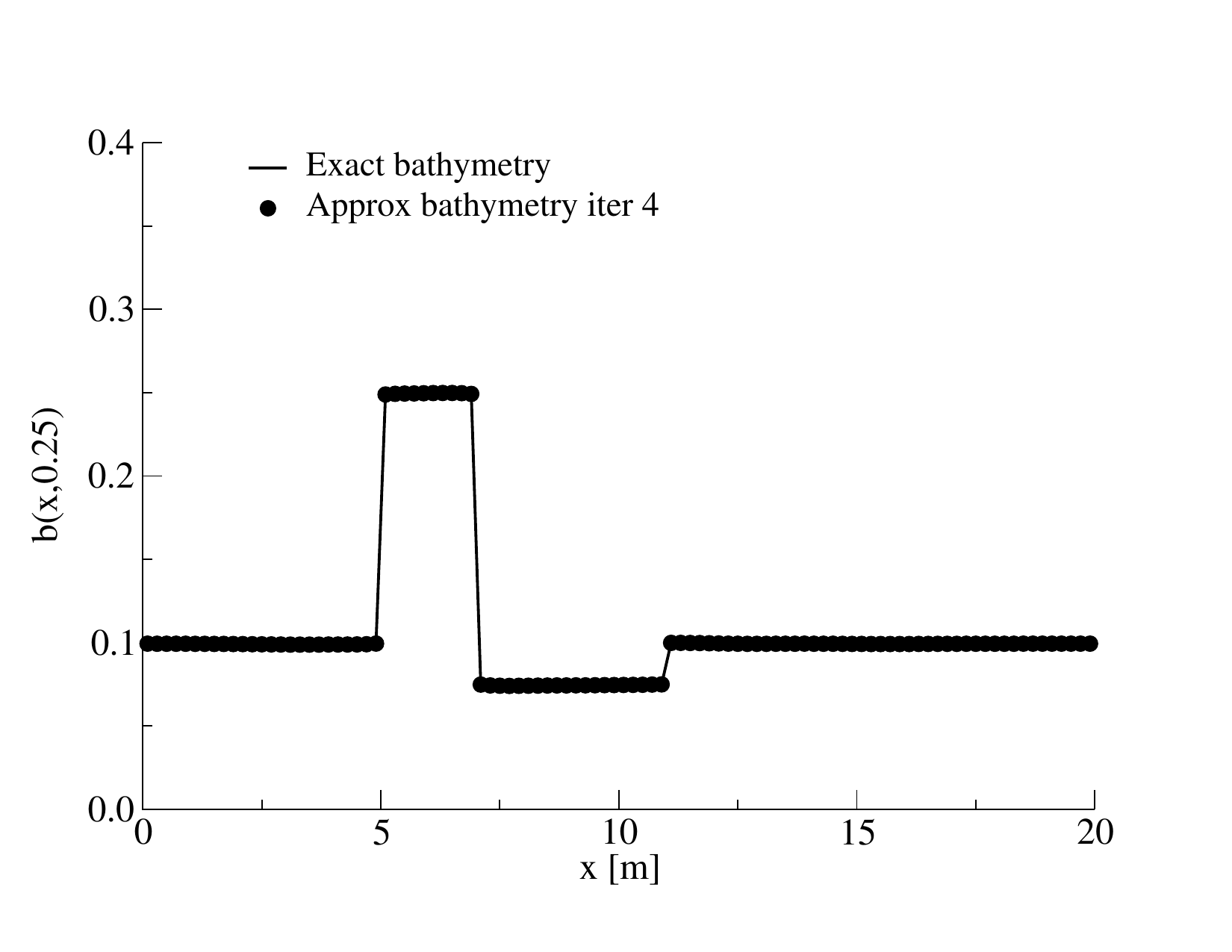} \quad
	\includegraphics[width=0.3\textwidth]{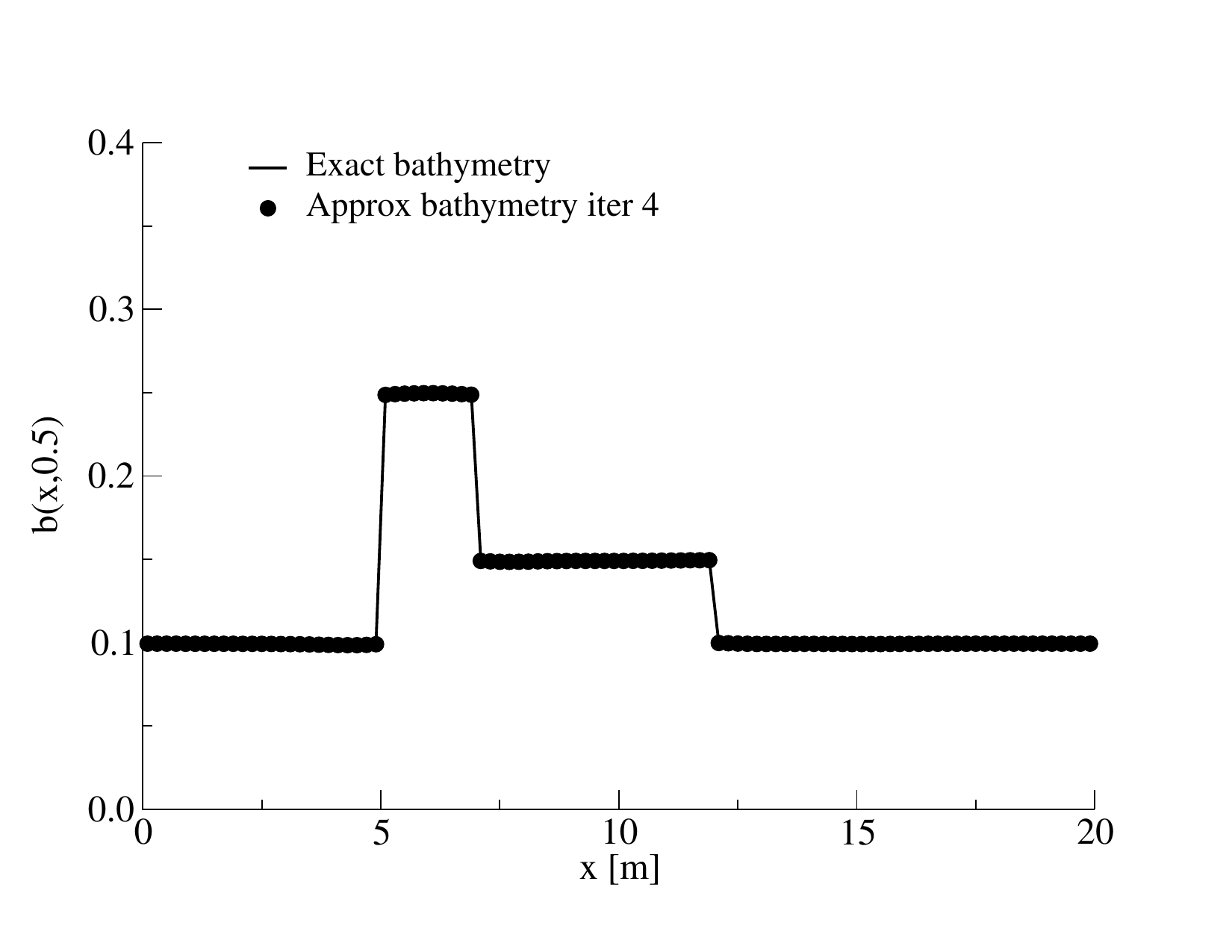} \quad
	\includegraphics[width=0.3\textwidth]{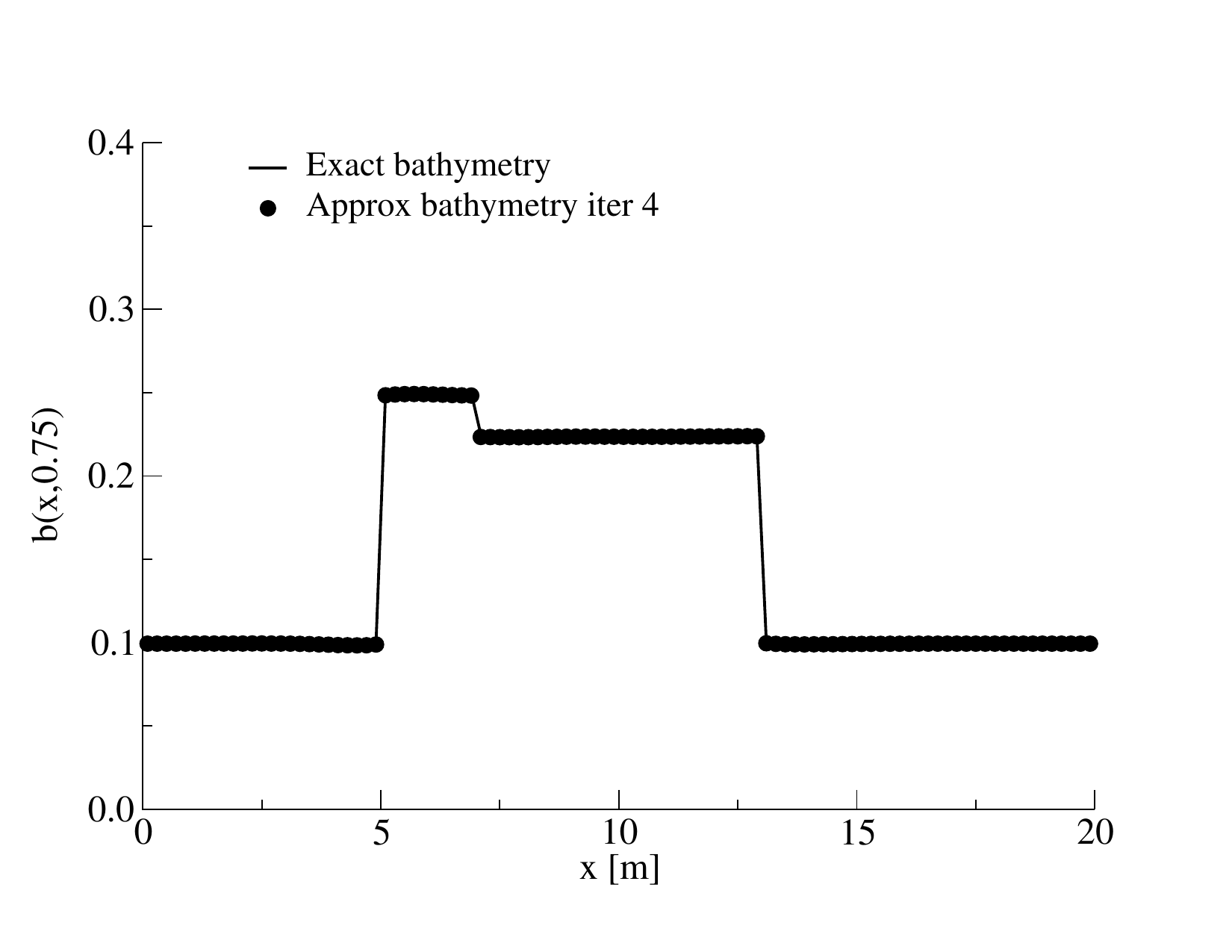} \\
	
	\includegraphics[width=0.3\textwidth]{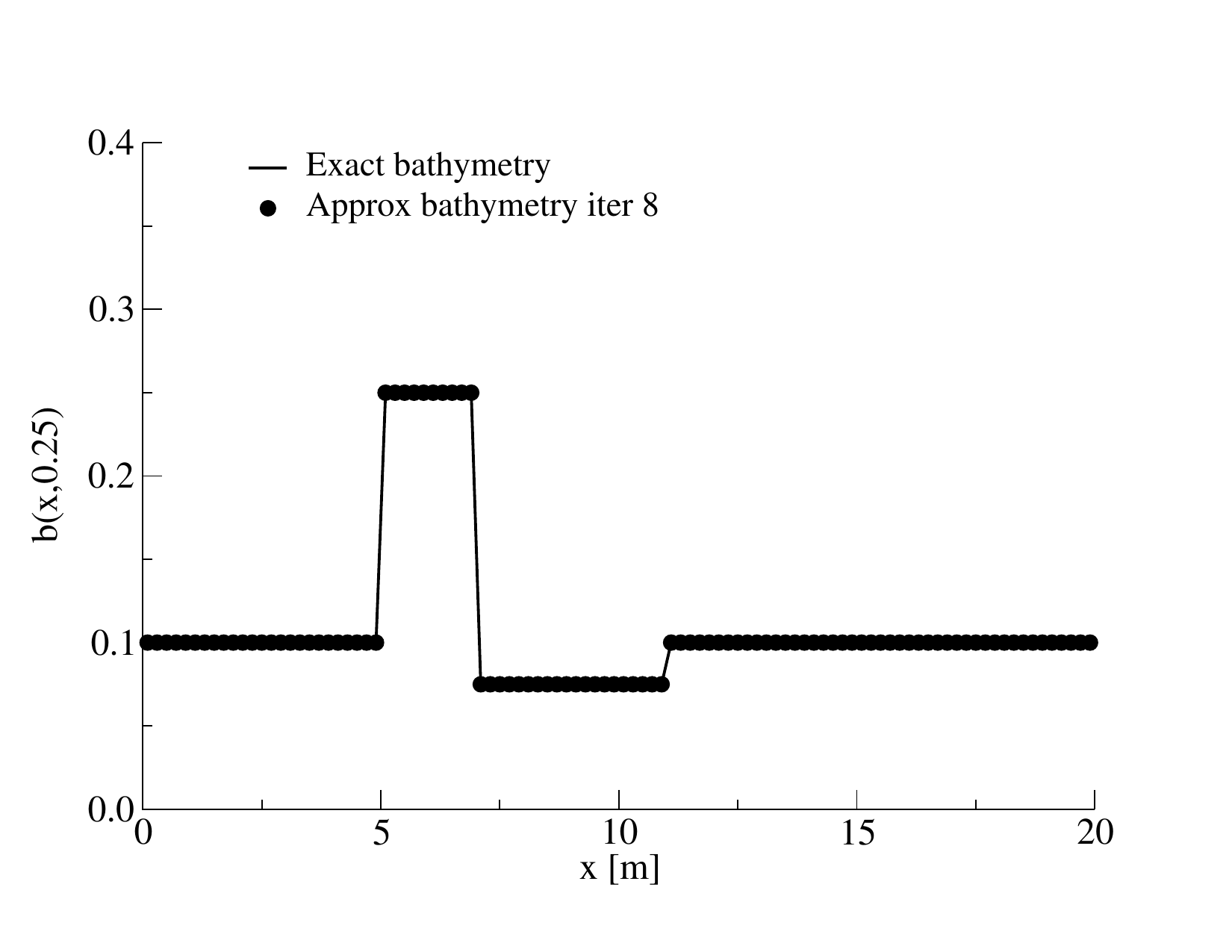} \quad
	\includegraphics[width=0.3\textwidth]{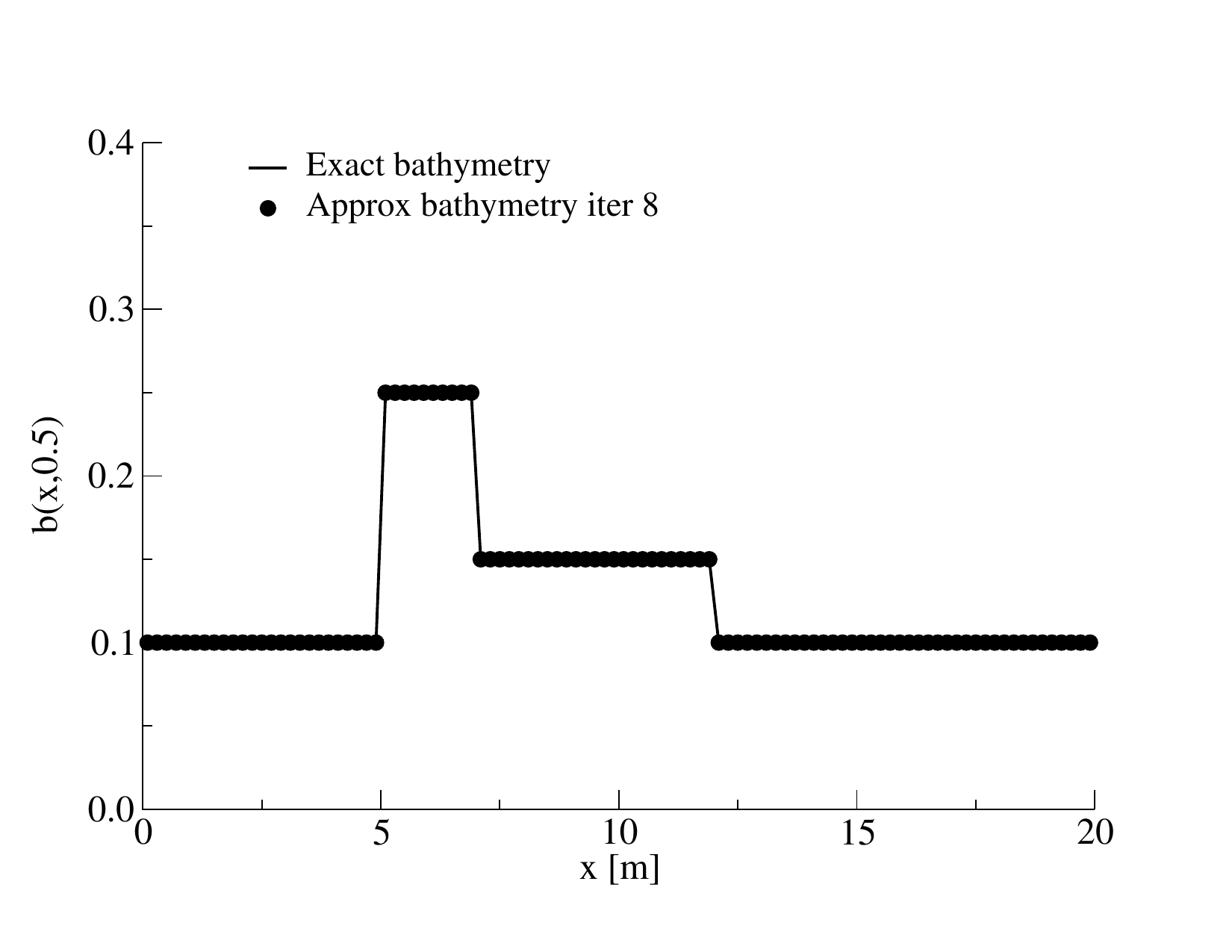} \quad
	\includegraphics[width=0.3\textwidth]{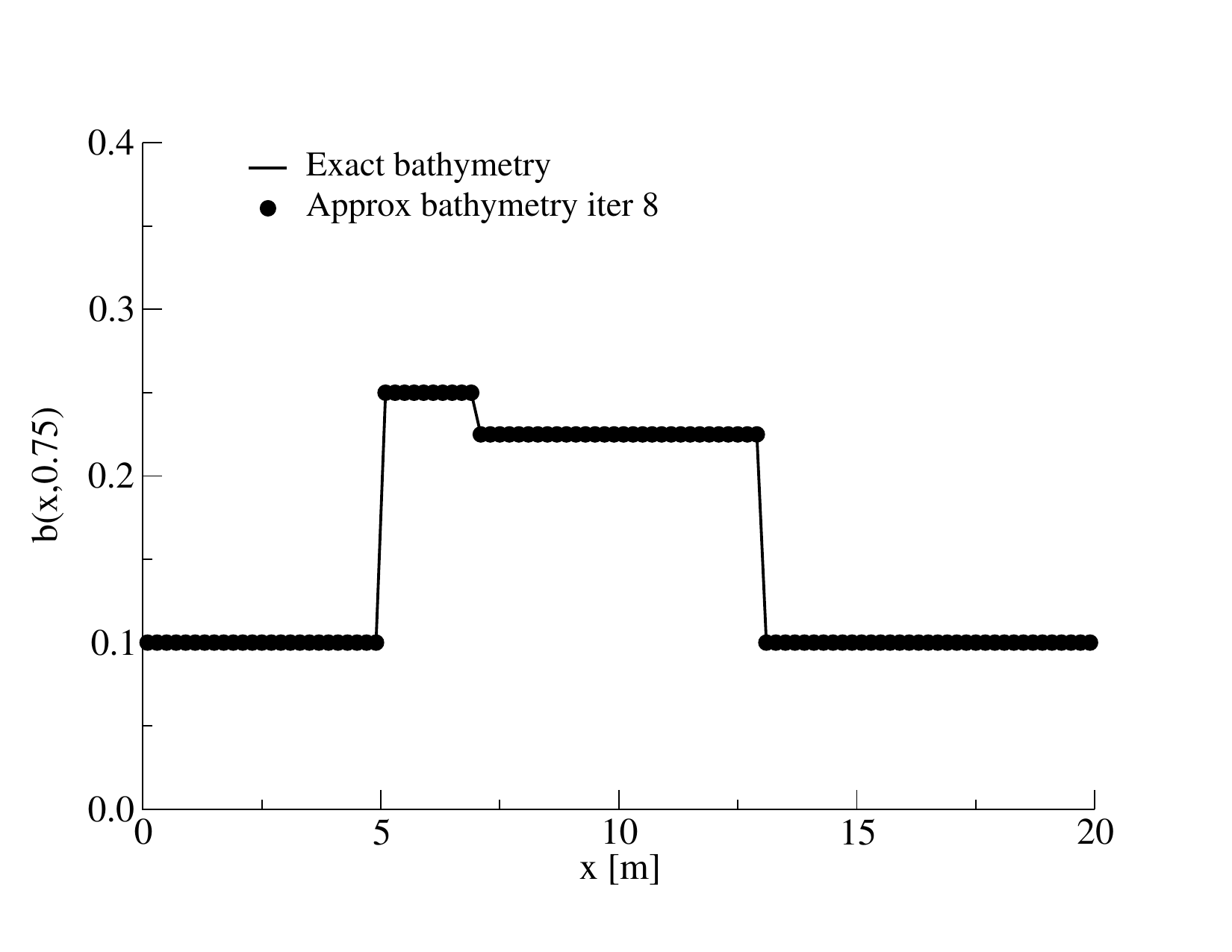} \\
	
	\caption{Discontinuous bottom profile (\ref{eq:b-discontinuous}): Result for the {\bf FORCE-$\alpha$+CSF}. Parameters $\Delta t = 0.01$,  $\alpha_F = 2$, $\varepsilon = 0.001$, $\lambda_b = 0.71$, $100$ cells.
		{\bf Feft:} $t=0.25$, {\bf centered} $t=0.5$, {\bf right:} $t=0.75$.}
	\label{fig:b-for-iter-and-times:test-1:NonCons-Force-alpha}
\end{figure}
\FloatBarrier

\subsection{Smooth bottom profile with large gradients}

This test aims at recovering the smooth bottom profile with a large gradient
\begin{eqnarray}
	\label{eq:b-large-gradient}
	\begin{array}{c}
		\bar{b}(t,x) =  0.15  t\left( 1 + \sin^4\left(  \frac{ \pi  ( x-10 - 4.5  t)}{5}  \right) \right).
	\end{array}
\end{eqnarray}

Systems (\ref{eq:direct-system}) and (\ref{eq:dual-system}) are solved with periodic boundary conditions. The profile consists of a triggered of pulses that moves periodically to the right, elevates with respect to an equilibrium $b=0$ and increments its amplitude as the time advances.

These type of tests are a challenge for the balance law (\ref{eq:direct-system}). They introduce stiffness on the source term and may induce large gradient on the other variables.    

Figure \ref{fig:b-for-iter-and-times:test-2-ConsRusanov} shows the results for the {\bf Rusanov+FD} scheme. Although the overall procedure converges, a phase mismatch of the amplitudes and asymmetries are observed in the first few iterations. Asymmetry is still present in the $8th$ iteration at $t = 0.5$. Figure \ref{fig:b-for-iter-and-times:test-2:ForceAlphaCons} shows the results for the {\bf FORCE-$\alpha$+FD}. Asymmetries are observed in the initial iterations that disappear by the eighth iteration. Figure \ref{fig:b-for-iter-and-times:test-2:NonCons-Force-alpha} shows the results for the {\bf FORCE-$\alpha$+CSF} scheme. In the first iterations, errors appear on the boundaries. In addition, asymmetries appear in the first iterations, which are not perceived after the fourth iteration. Figure \ref{fig:comp-error-Test-2} shows the $L_\infty$ norm of $\nabla J$ against the number of iterations, to facilitate the visualization the plot is depicted in logarithmic scale. So, this measures the error between $b^k $ and $b^{k+1}$, which is the empirical convergence of the global algorithm.  We observe that for smooth bottom profiles with large gradients, all methods have oscillations during the iterative process and asymmetries in the first iterations.  By comparing with reference schemes, {\bf FORCE-$\alpha$+CSF} depicts the best convergence. This test evidences that the methodology also applies to other types of boundary conditions than transmissive.

\begin{figure}[h]
	\begin{center}
		\includegraphics[scale=0.45]{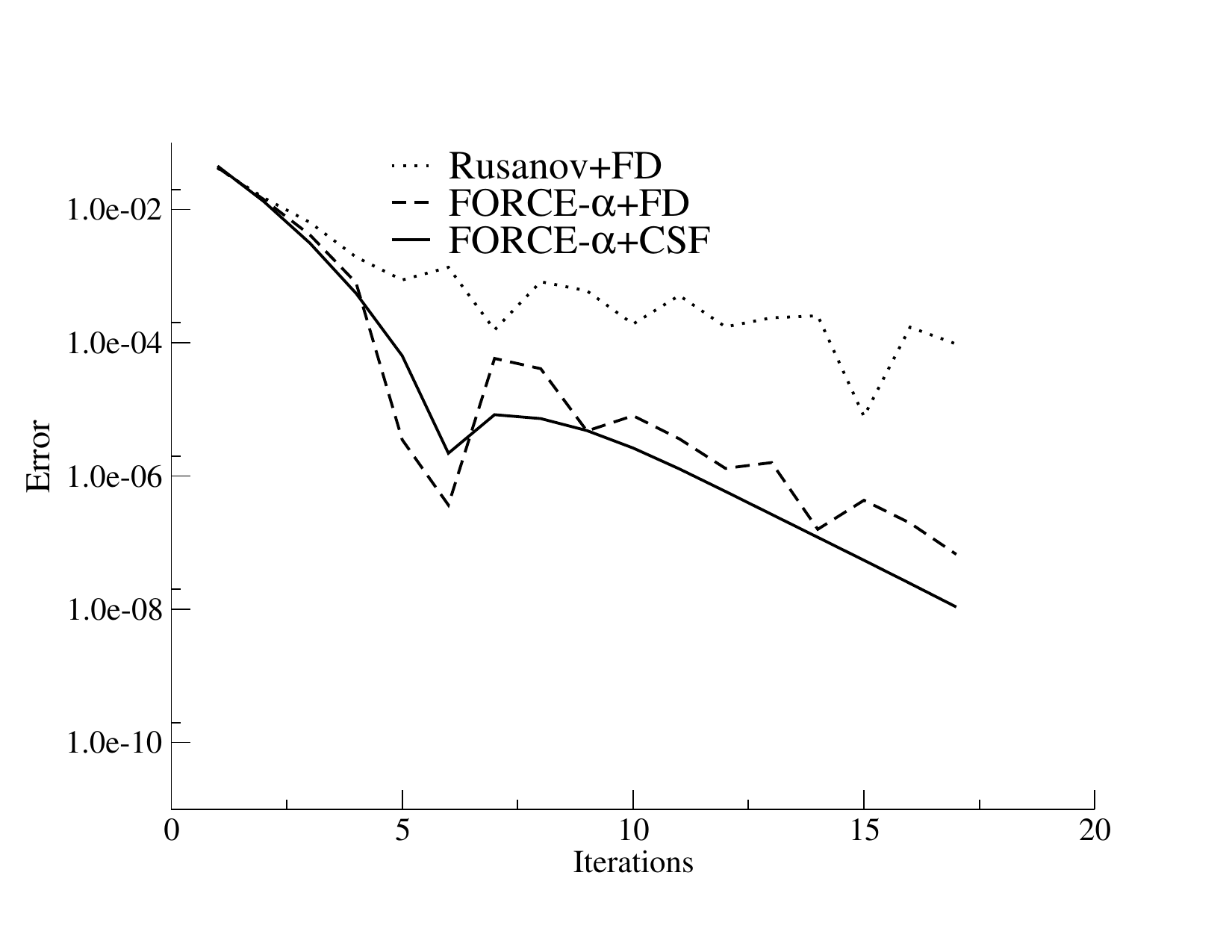}
	\end{center}
	\caption{Smooth bottom profile with large gradient (\ref{eq:b-large-gradient}): The $L_\infty$ norm of $\nabla J $ for $\varepsilon = 0.001$, $\lambda_b = 0.71$, $100$ cells at $t = 1$, $\alpha_F = 2$. 
		(Dot line) {\bf Rusanov+FD} scheme.
		(Dash line) {\bf FORCE-$\alpha$+FD} scheme. 
		(Full line) {\bf FORCE-$\alpha$+CSF} scheme.
	}\label{fig:comp-error-Test-2}
\end{figure}

\begin{figure}   
	\centering
	\includegraphics[width=0.3\textwidth]{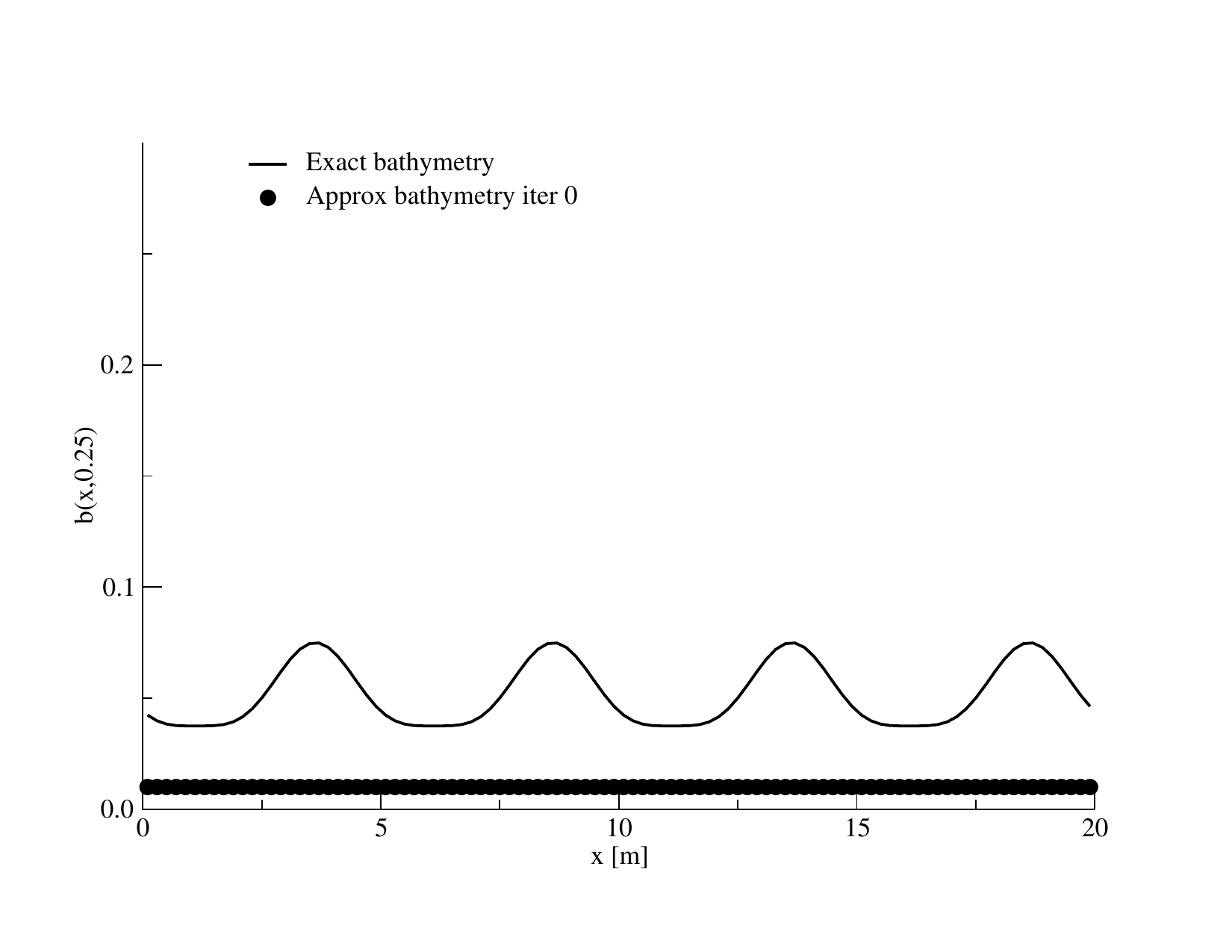} \quad
	\includegraphics[width=0.3\textwidth]{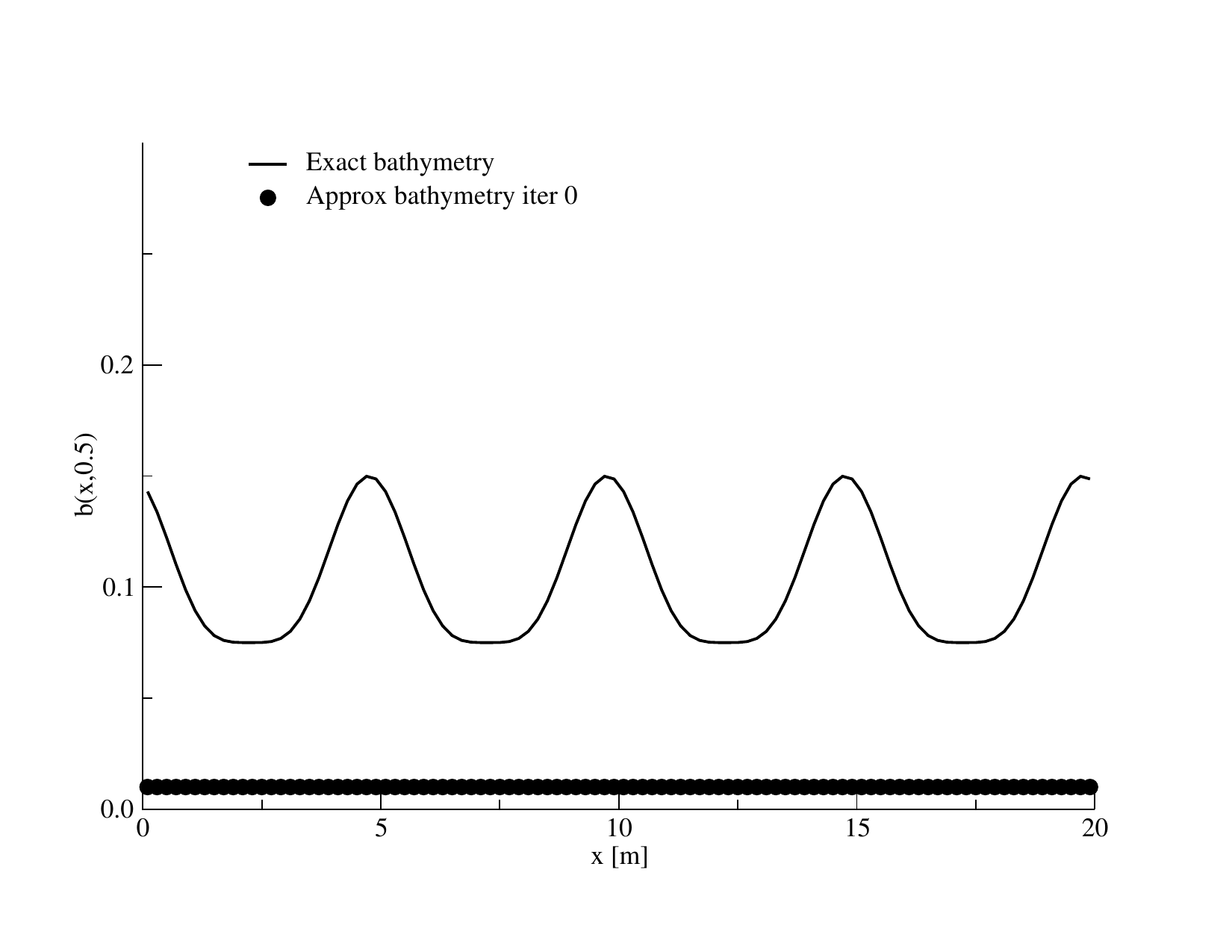} \quad
	\includegraphics[width=0.3\textwidth]{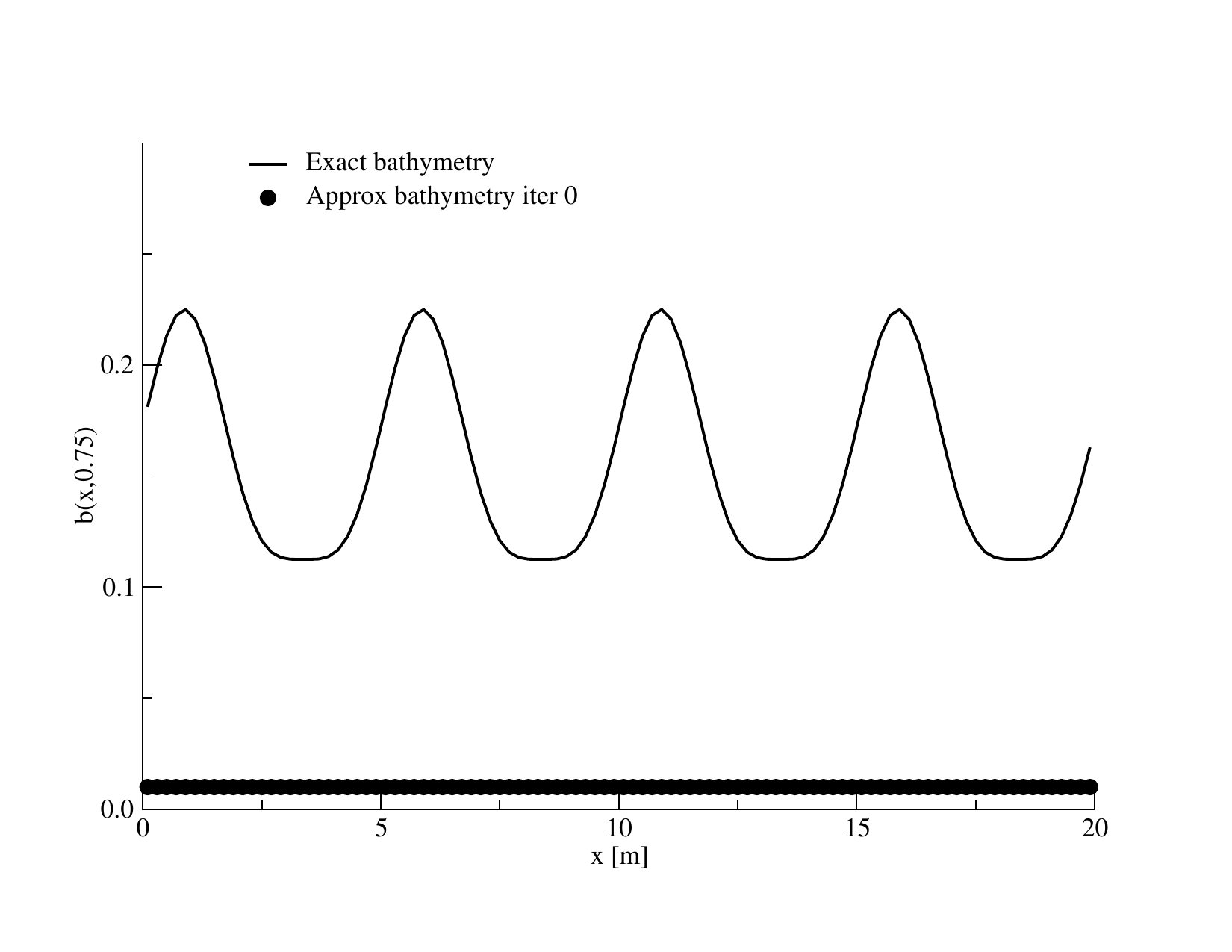} 
	\\
	
	\includegraphics[width=0.3\textwidth]{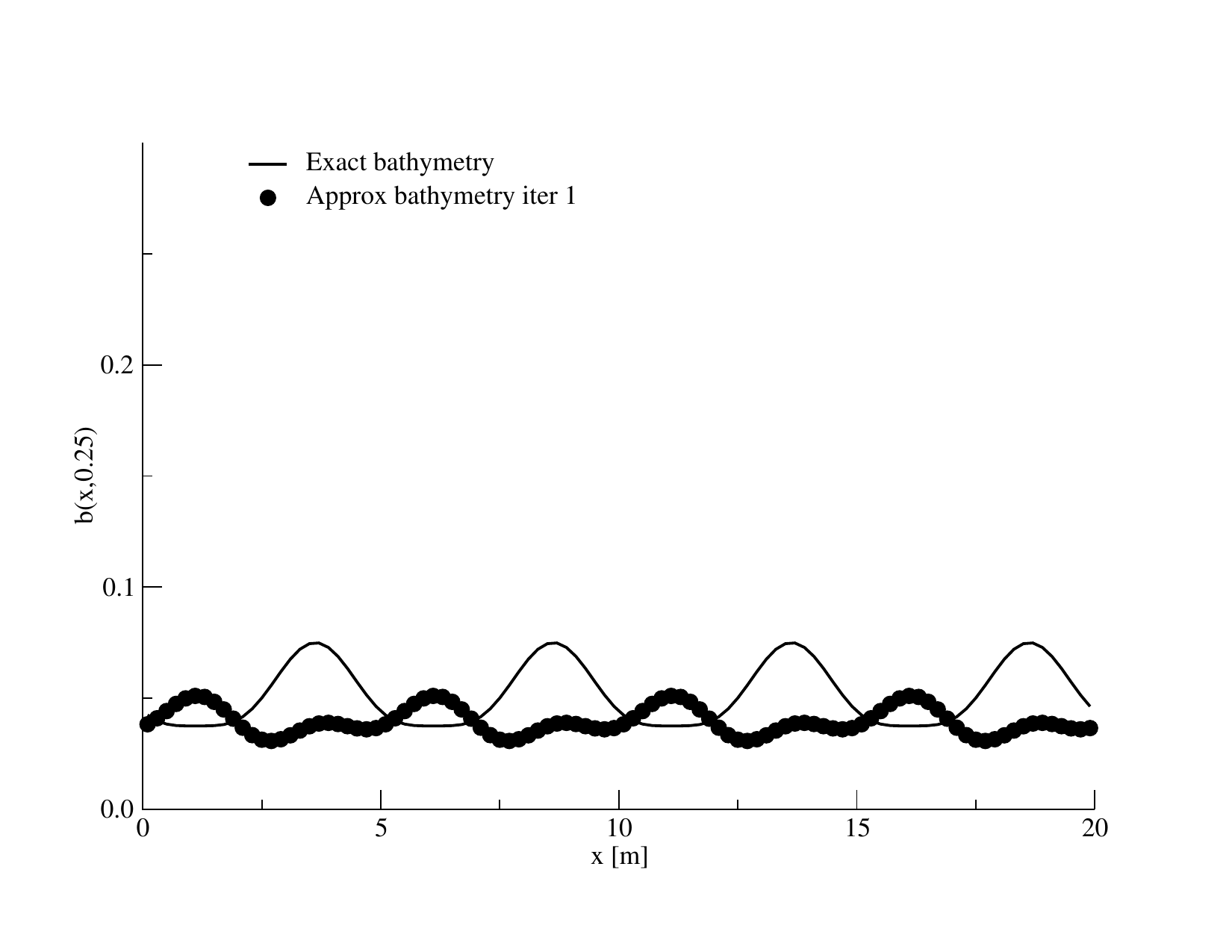} \quad
	\includegraphics[width=0.3\textwidth]{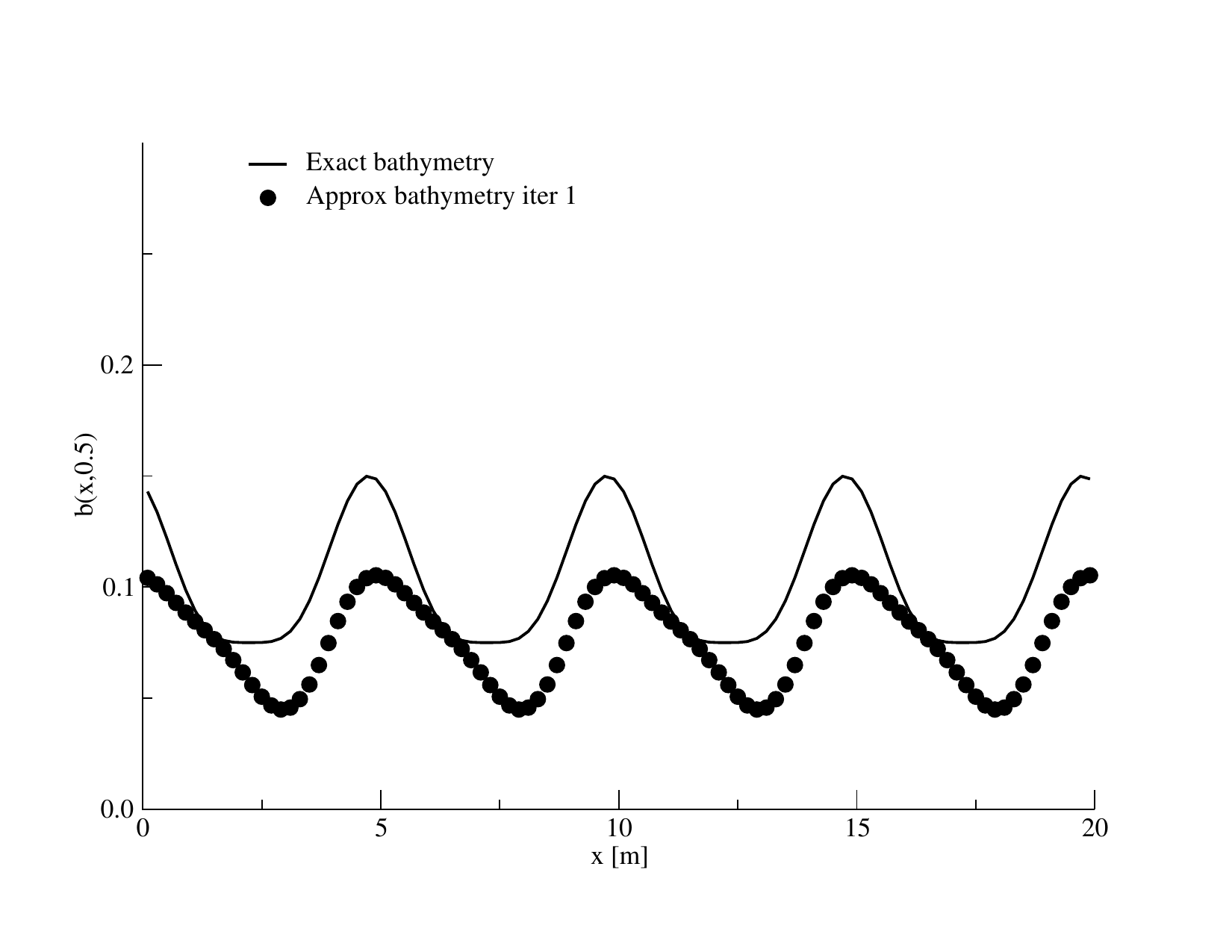} \quad
	\includegraphics[width=0.3\textwidth]{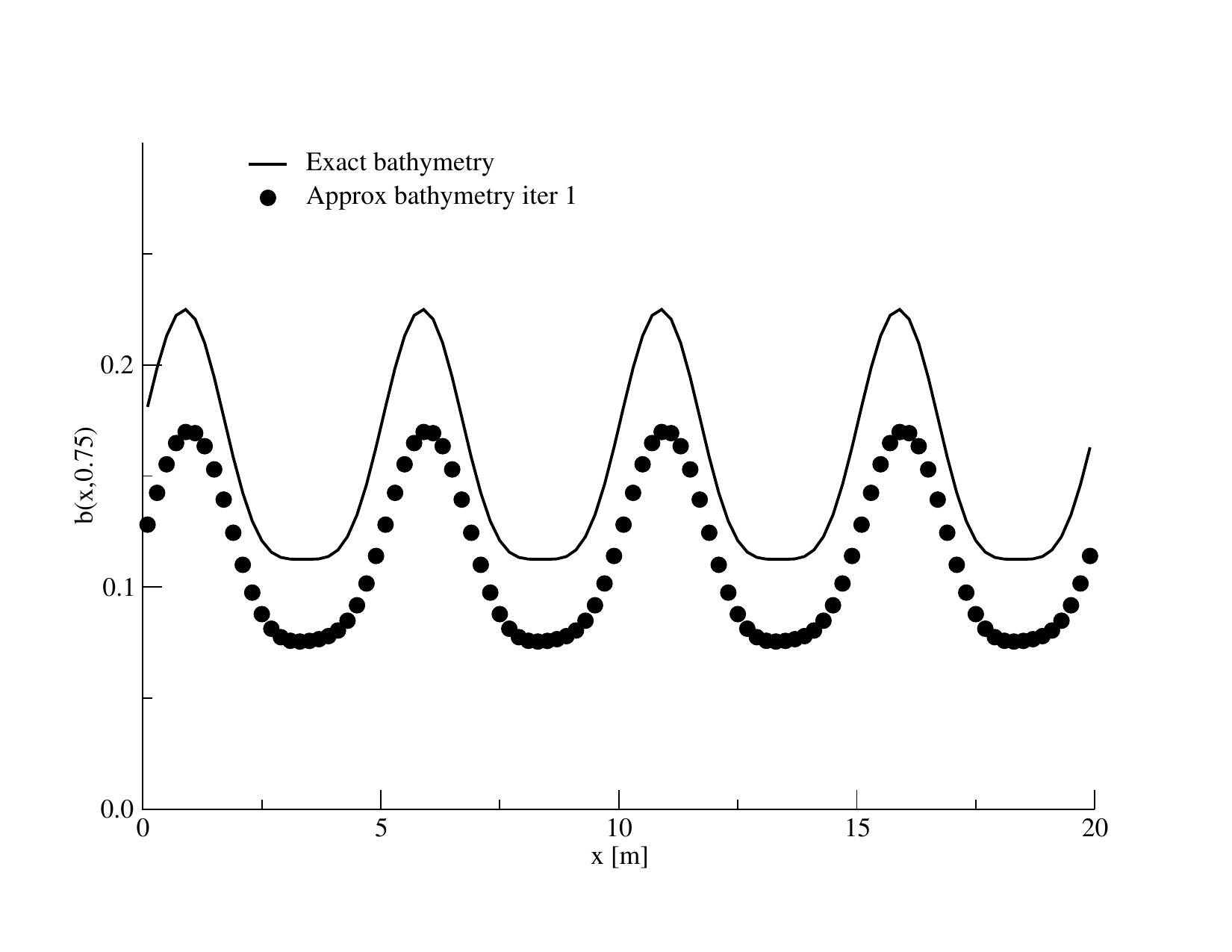} \\
	
	\includegraphics[width=0.3\textwidth]{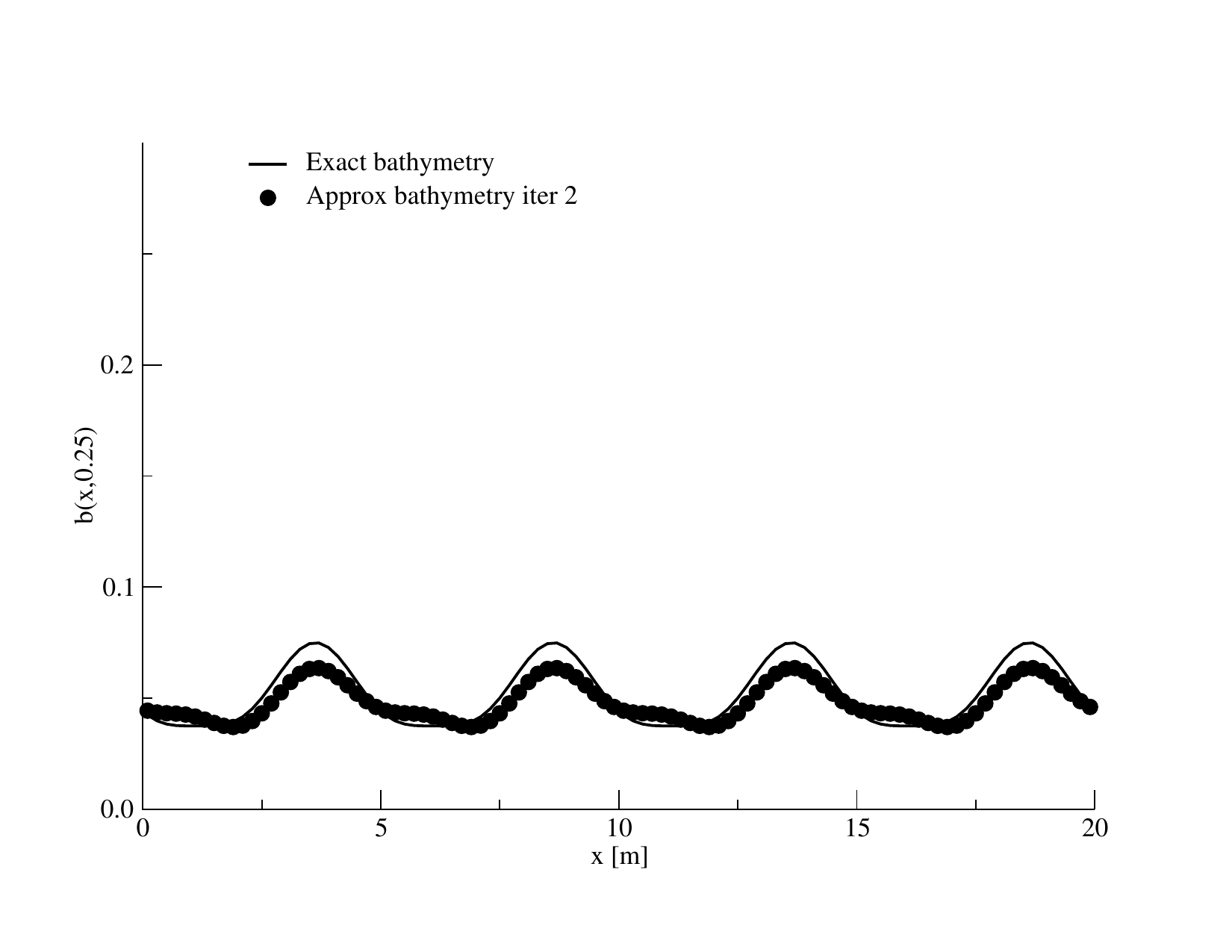} \quad
	\includegraphics[width=0.3\textwidth]{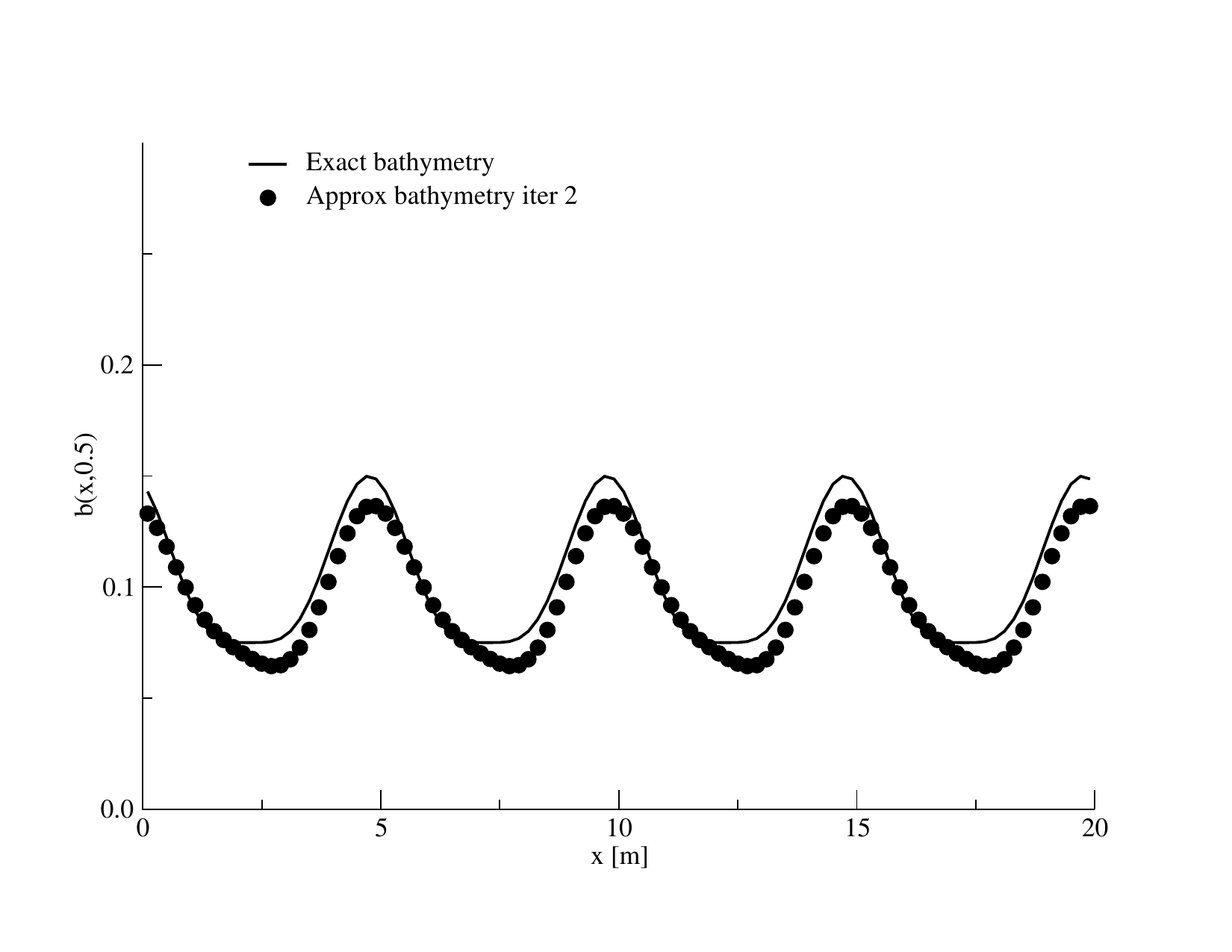} \quad
	\includegraphics[width=0.3\textwidth]{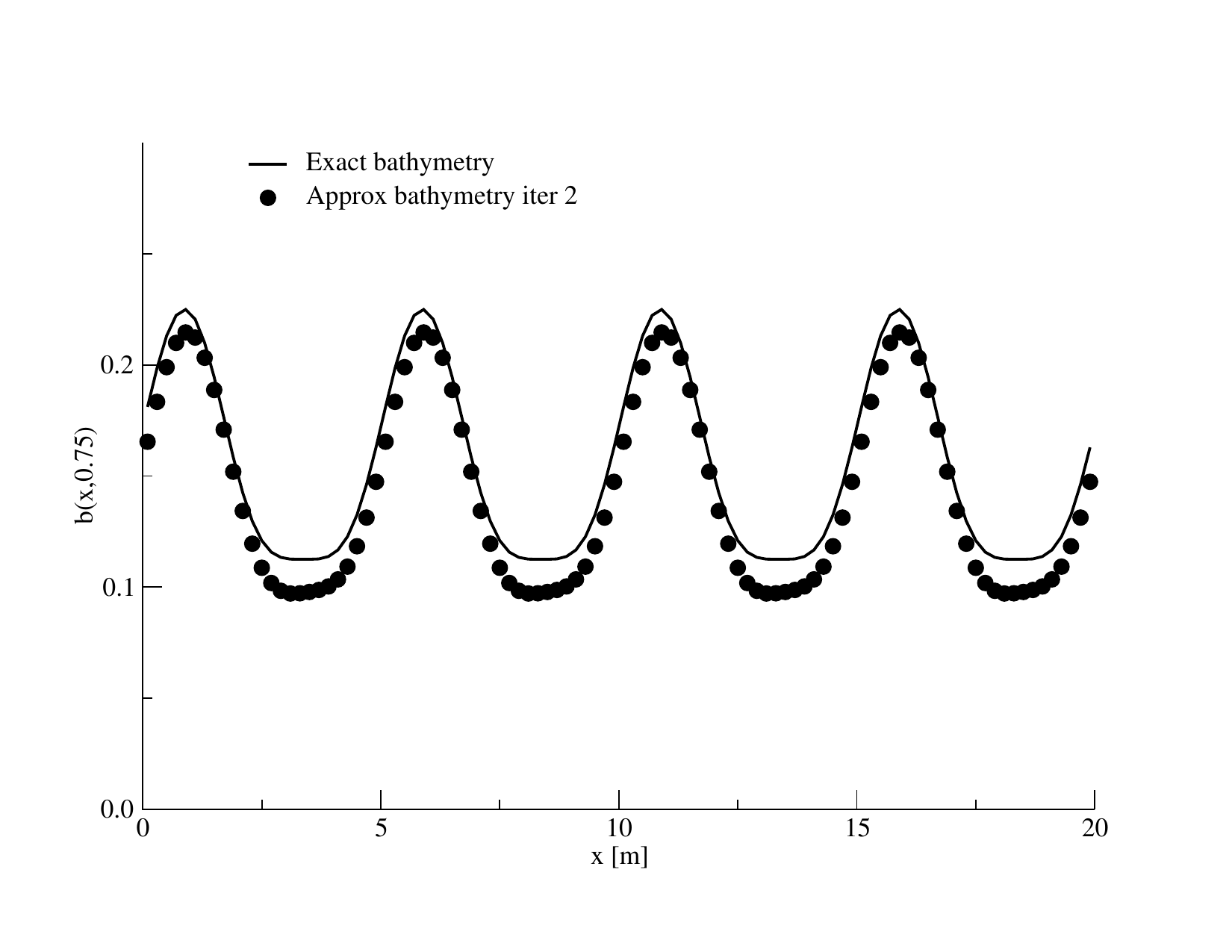} \\

	\includegraphics[width=0.3\textwidth]{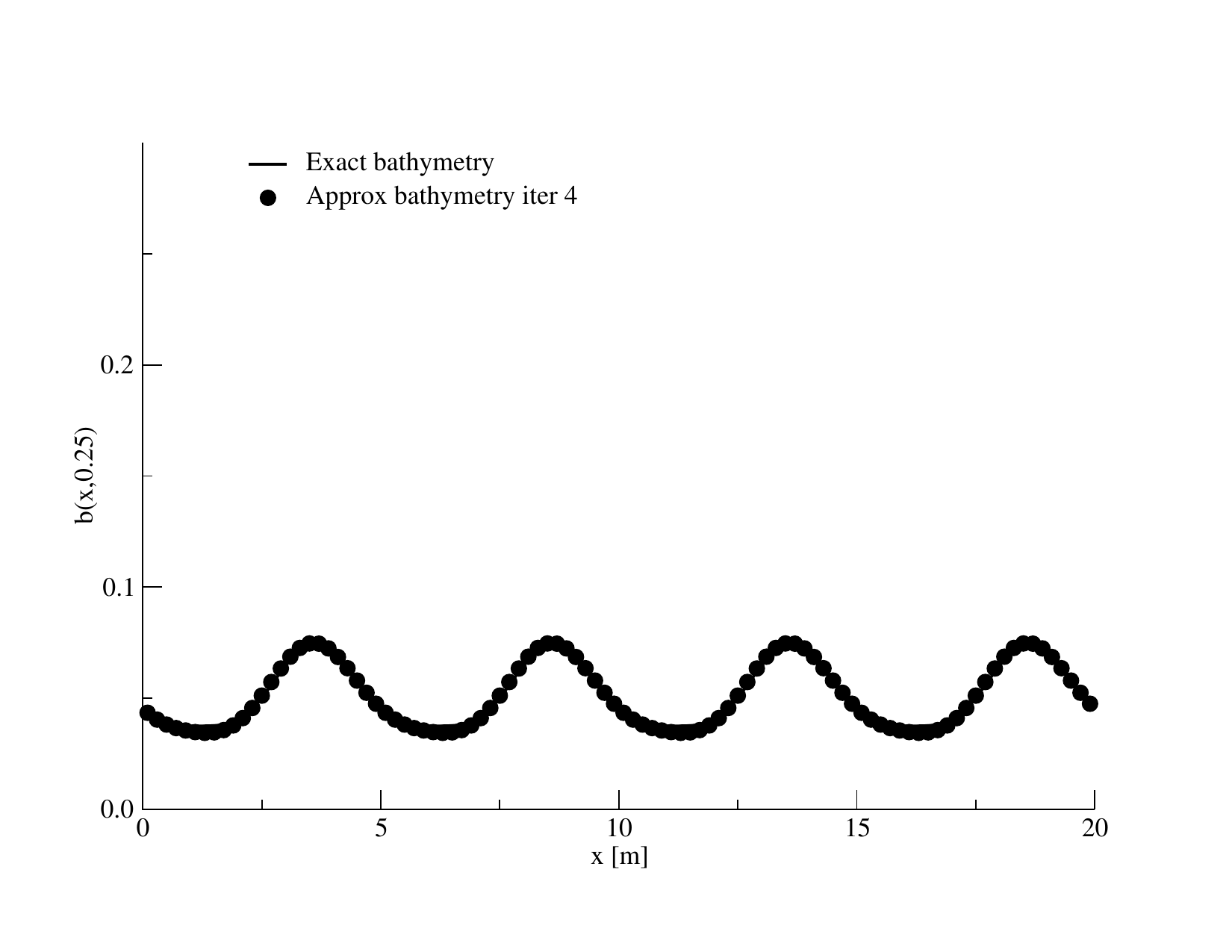} \quad
	\includegraphics[width=0.3\textwidth]{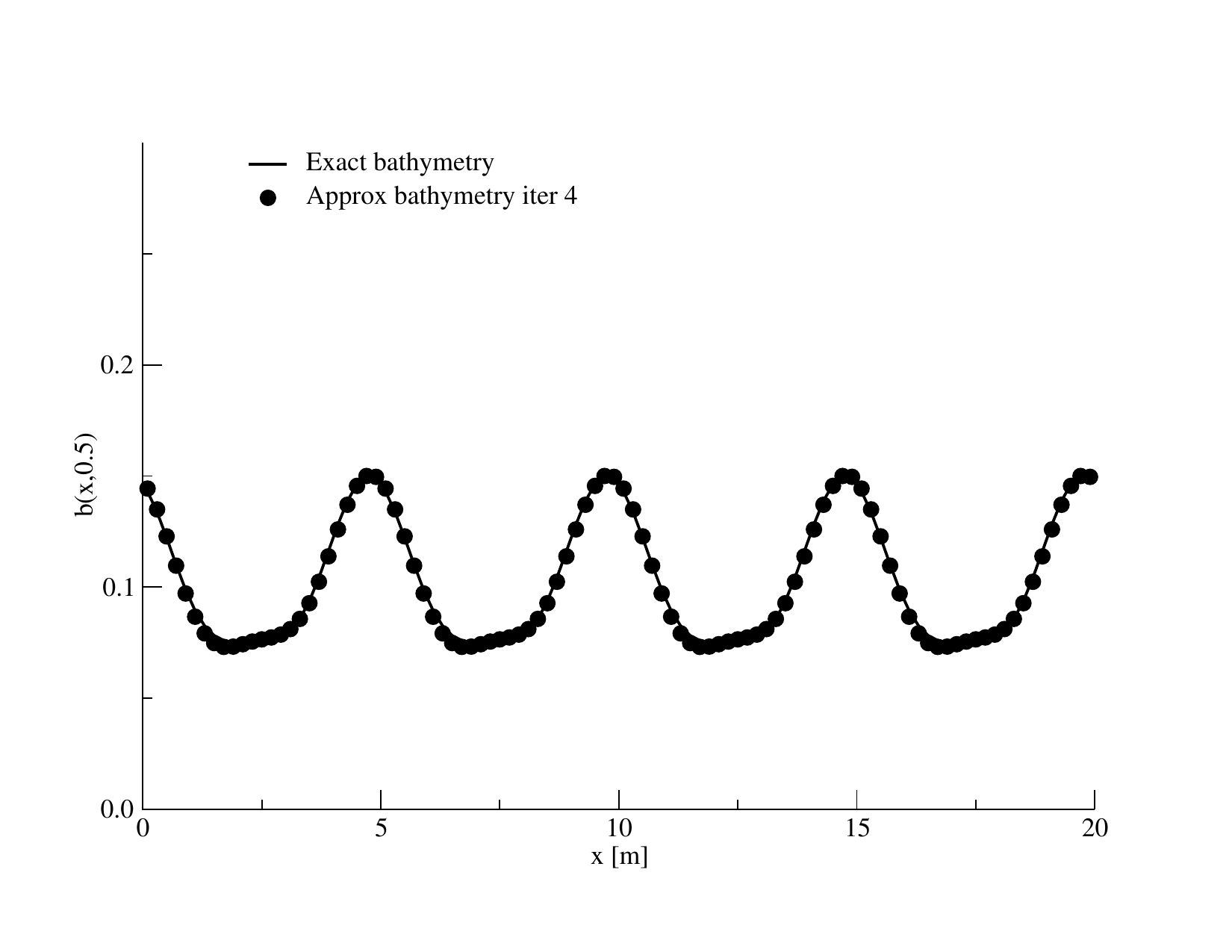} \quad
	\includegraphics[width=0.3\textwidth]{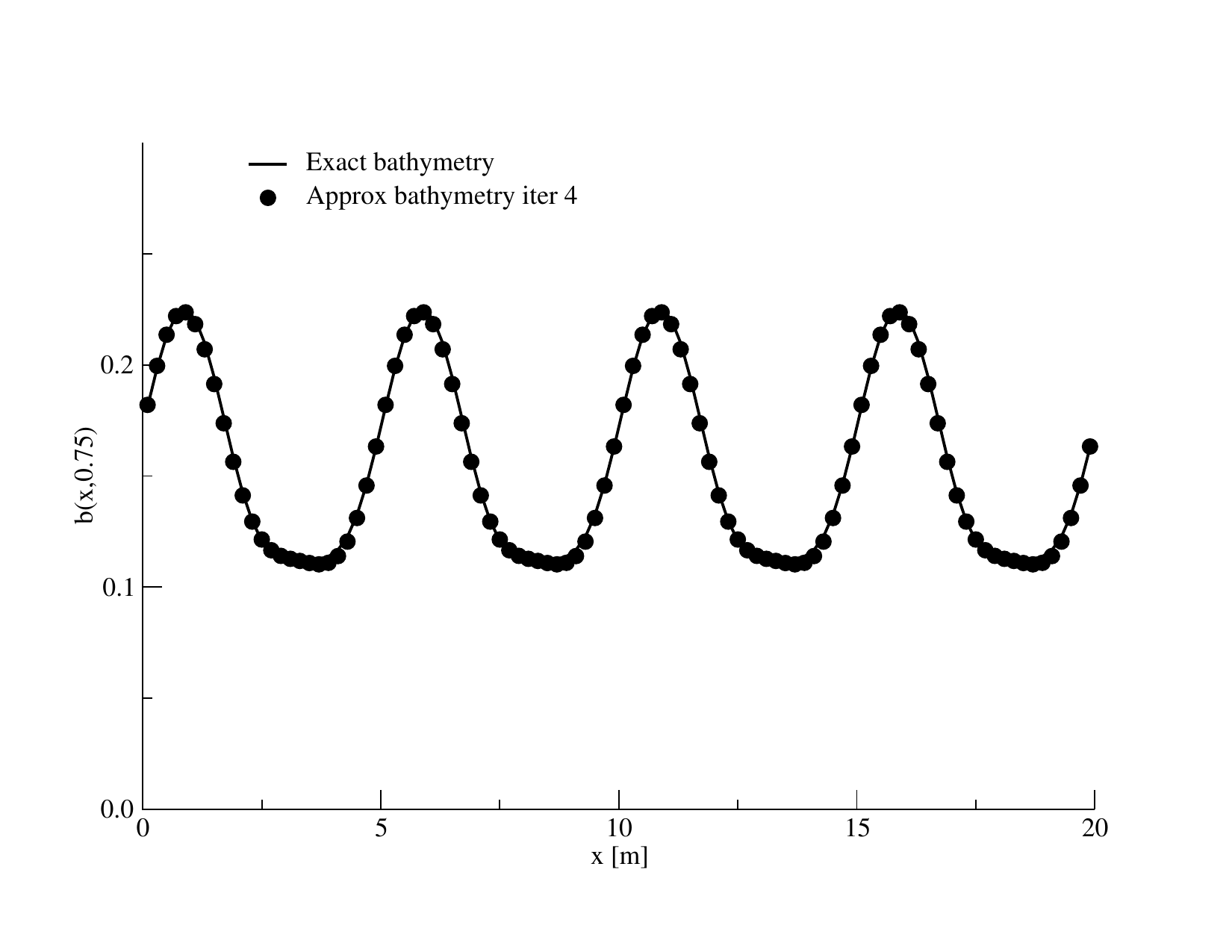} \\
	
	\includegraphics[width=0.3\textwidth]{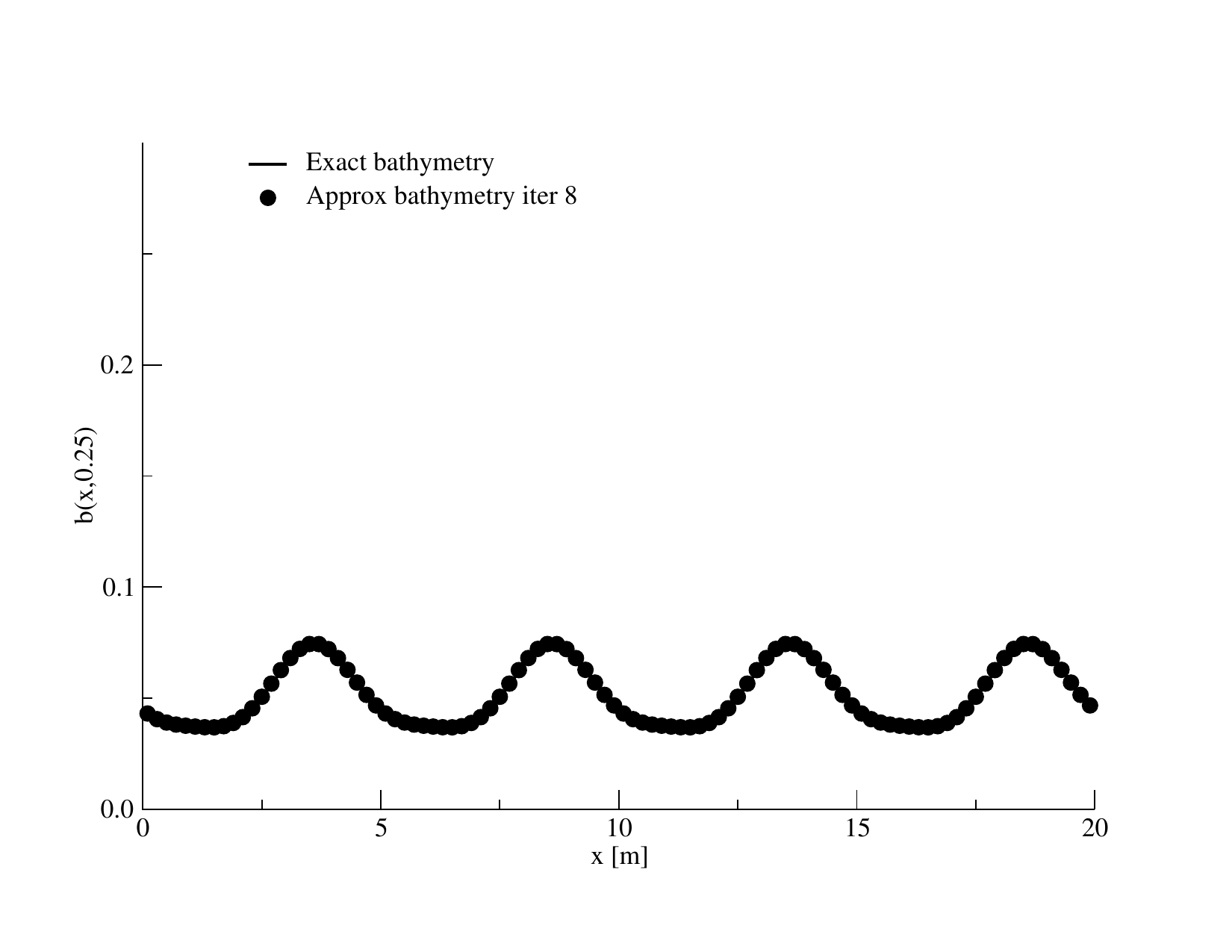} \quad
	\includegraphics[width=0.3\textwidth]{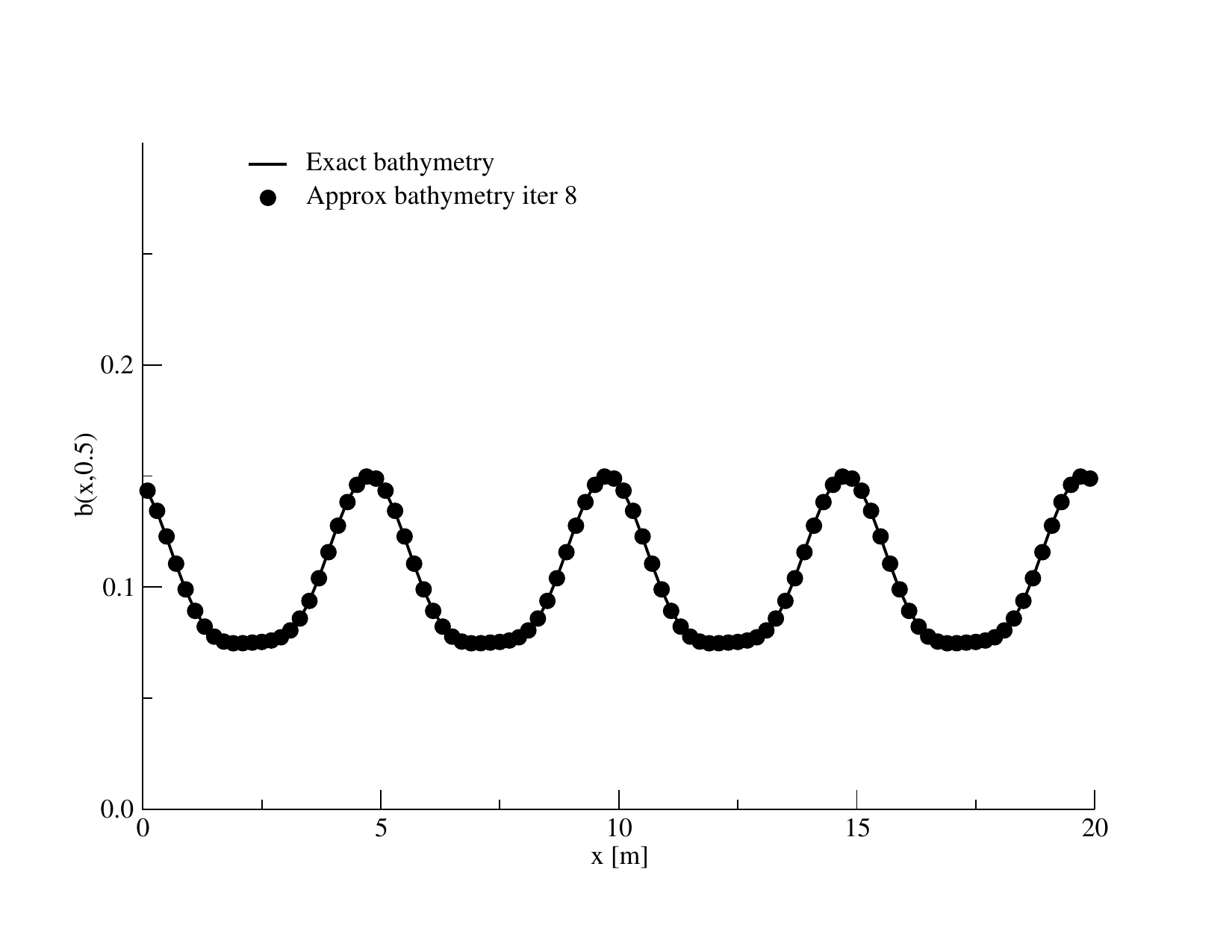} \quad
	\includegraphics[width=0.3\textwidth]{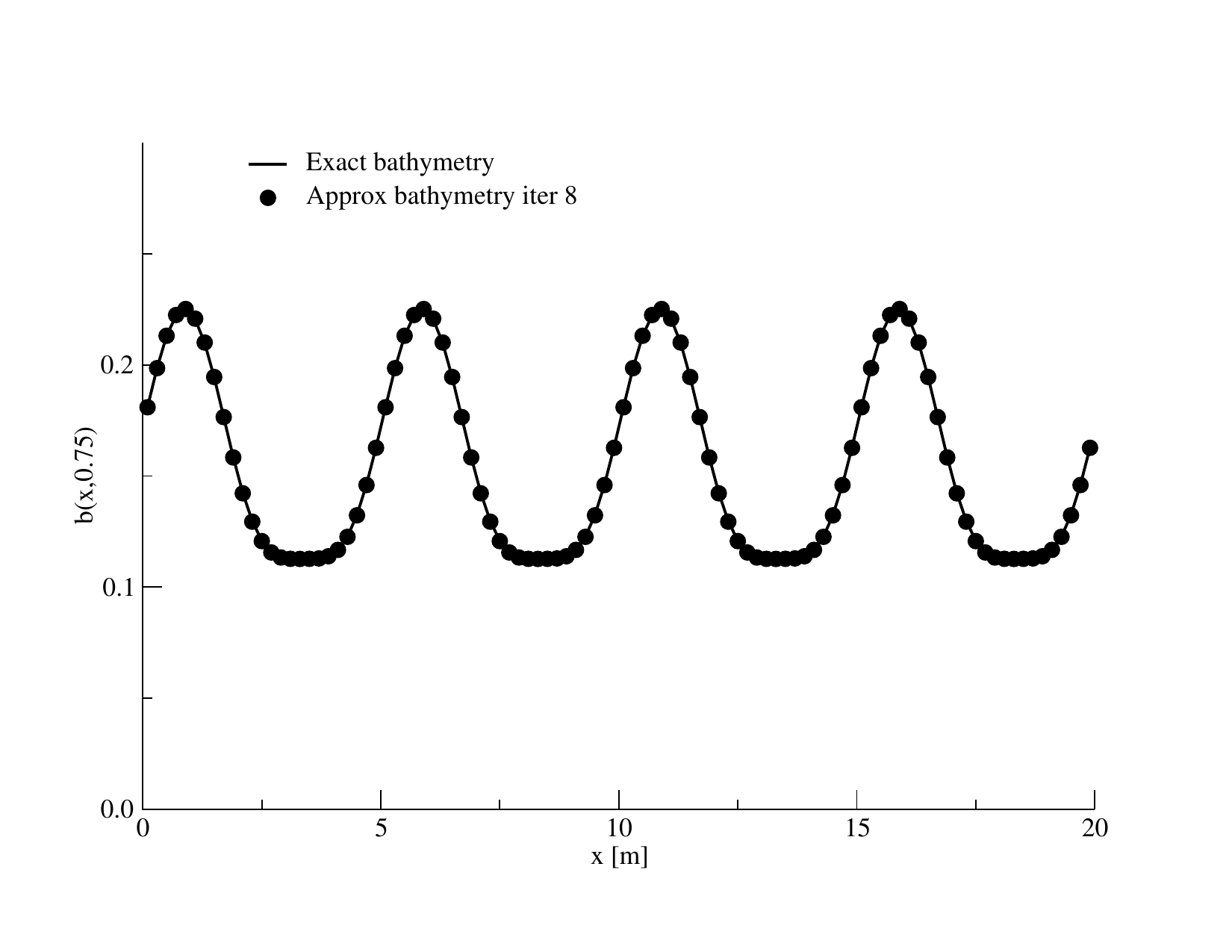} \\
	
	\caption{Smooth bottom profile with large gradient (\ref{eq:b-large-gradient}): Result for the reconstruction procedure resulting from {\bf Rusanov+FD}. Parameters $\Delta t = 0.01$,  $\alpha_F = 2$, $\varepsilon = 0.001$, $\lambda_b = 0.71$, $100$ cells.
		{\bf Feft:} $t=0.25$, {\bf centered} $t=0.5$, {\bf right:} $t=0.75$.}
	\label{fig:b-for-iter-and-times:test-2-ConsRusanov}
\end{figure}
\FloatBarrier

\begin{figure}   
	\centering
	\includegraphics[width=0.3\textwidth]{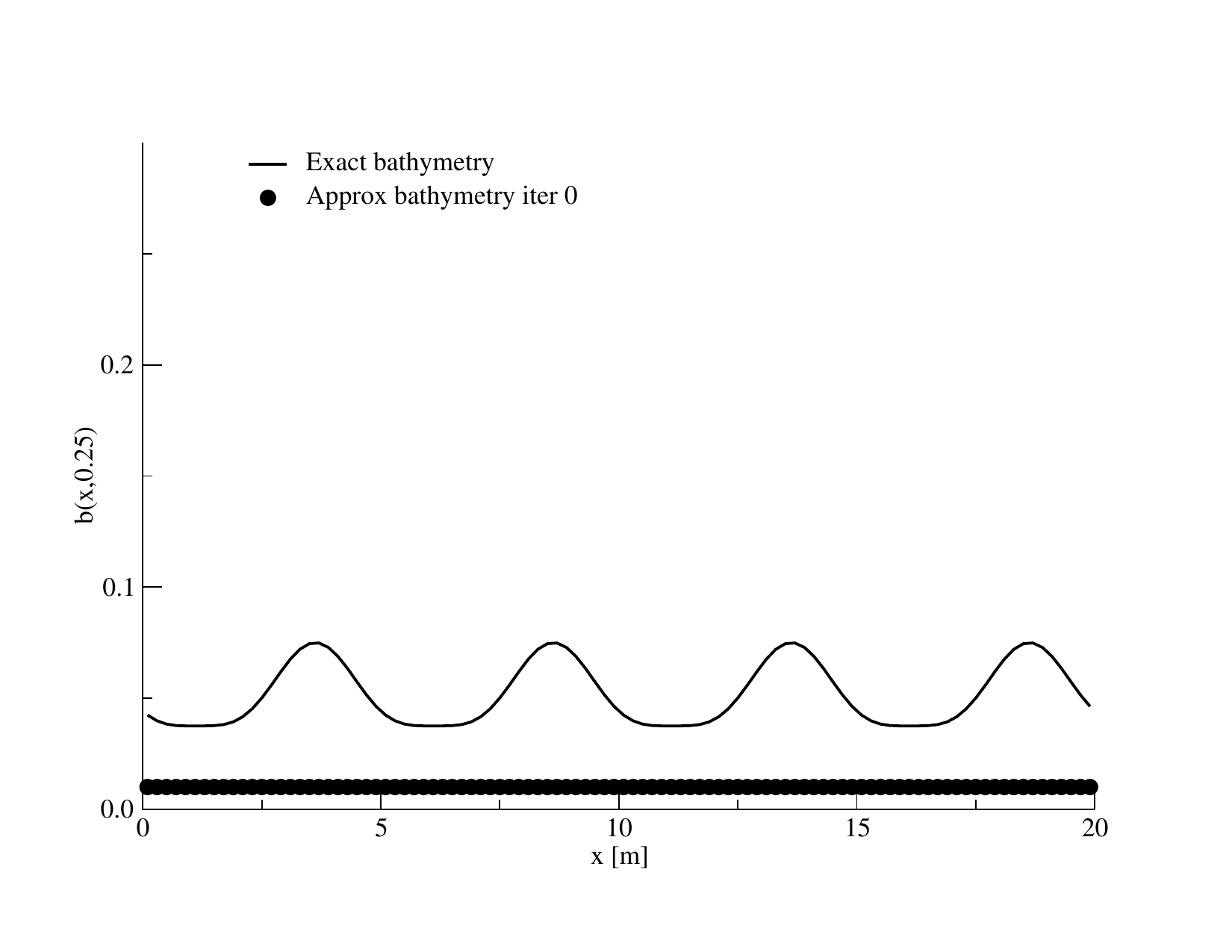} \quad
	\includegraphics[width=0.3\textwidth]{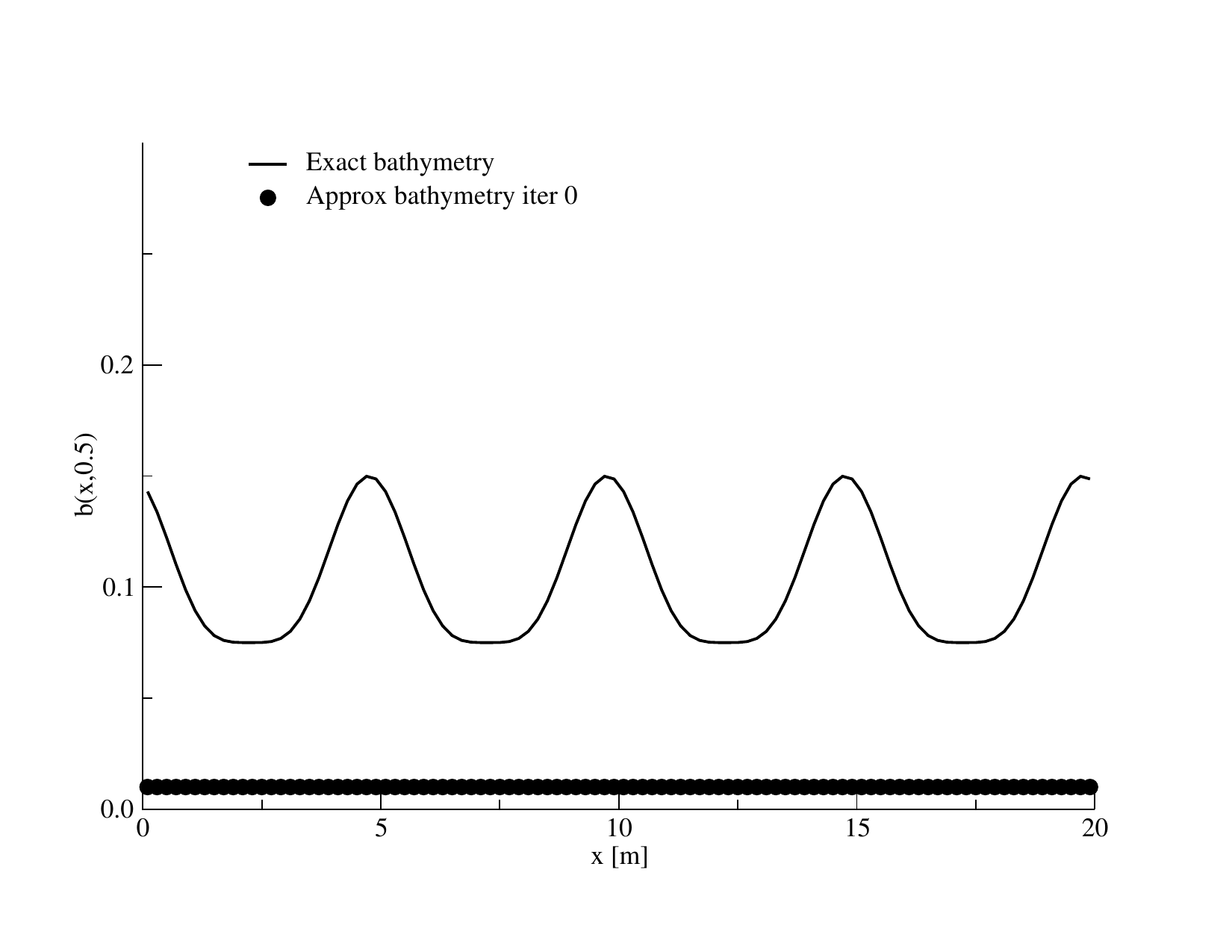} \quad
	\includegraphics[width=0.3\textwidth]{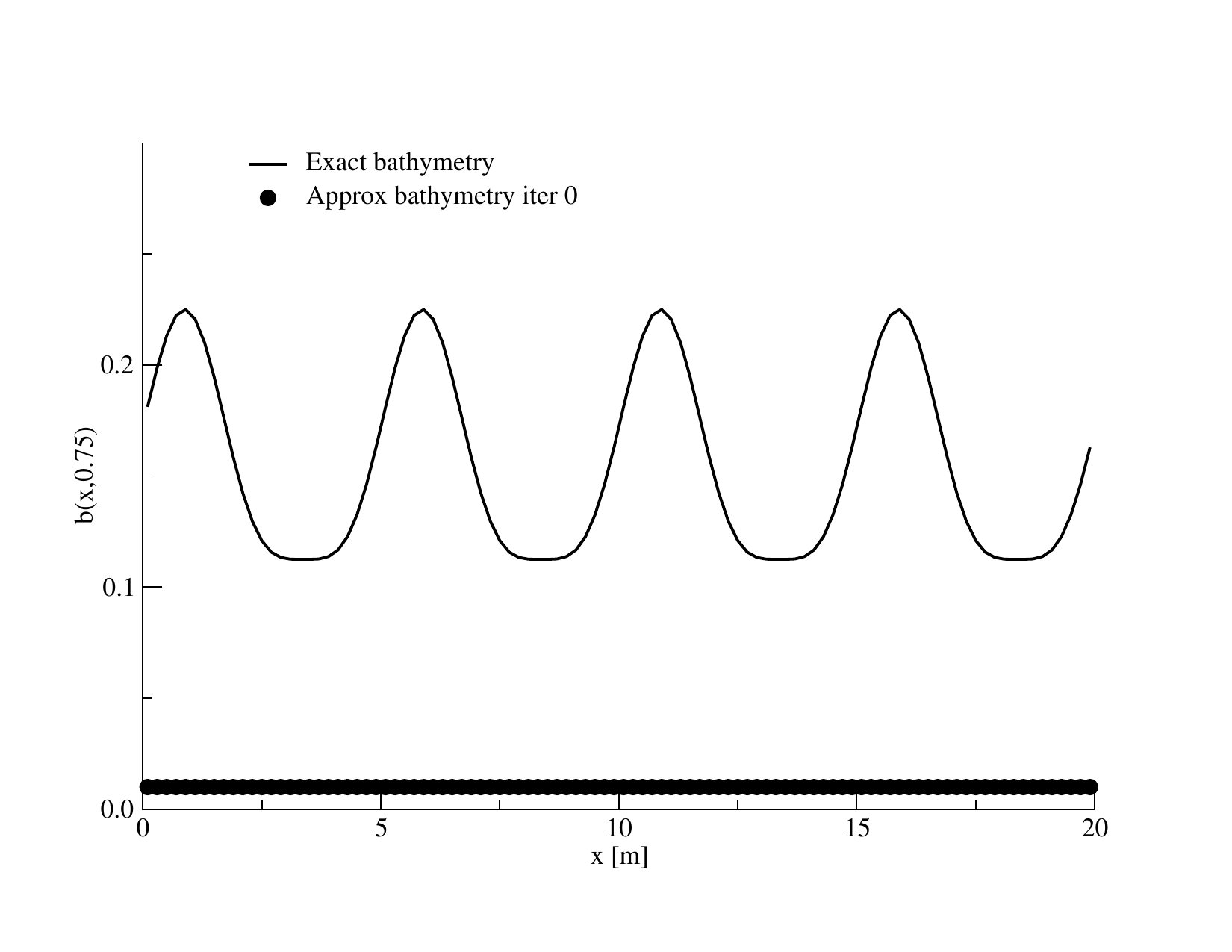} 
	\\
	
	\includegraphics[width=0.3\textwidth]{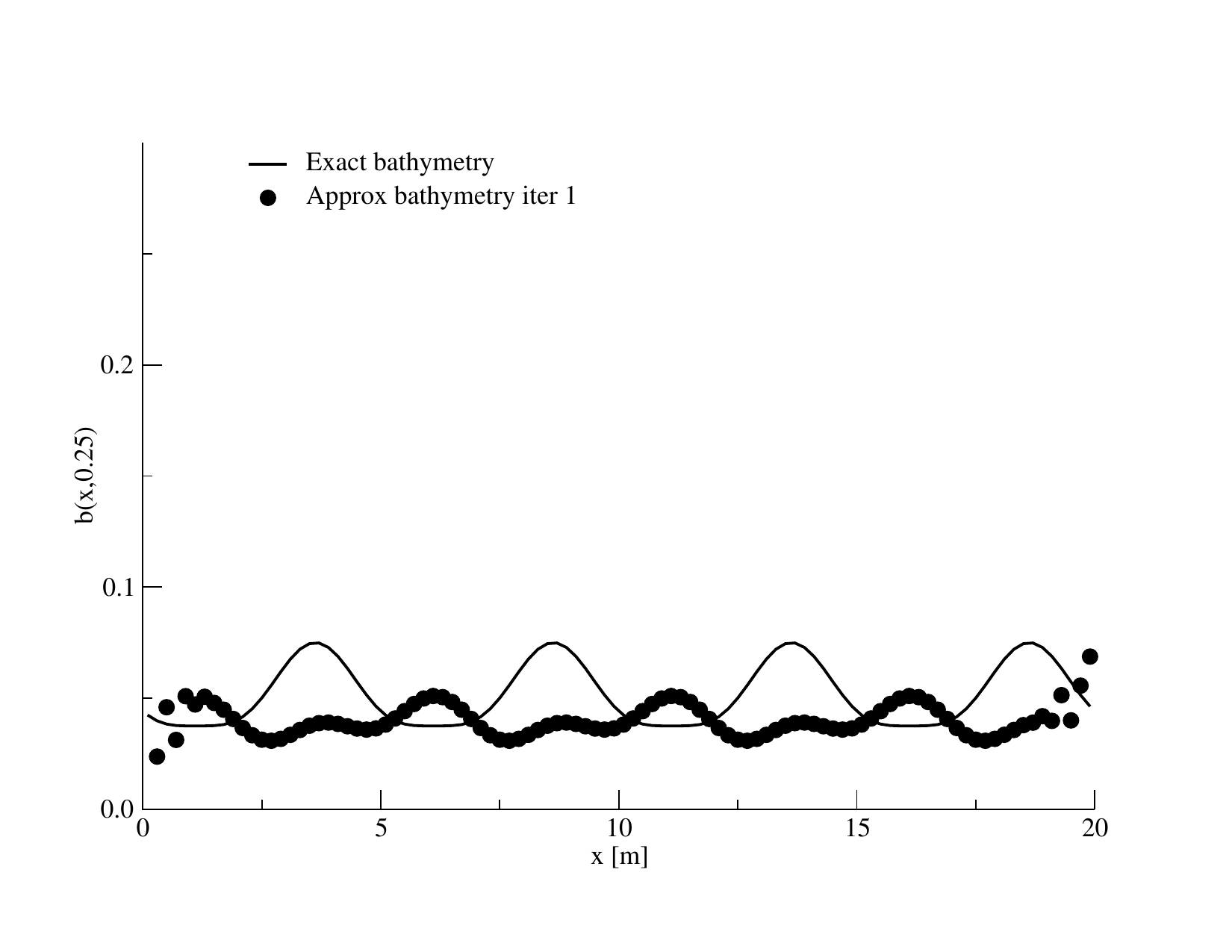} \quad
	\includegraphics[width=0.3\textwidth]{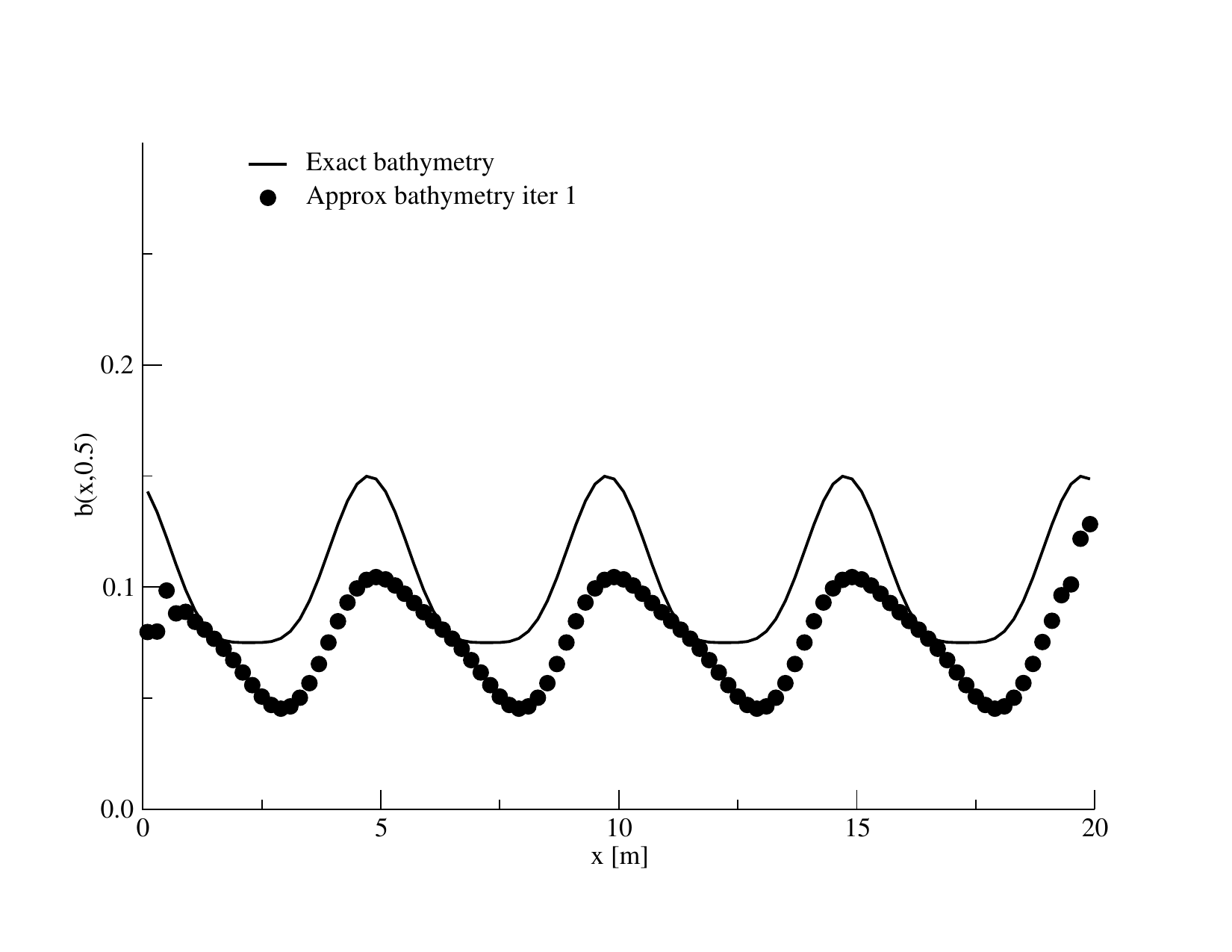} \quad
	\includegraphics[width=0.3\textwidth]{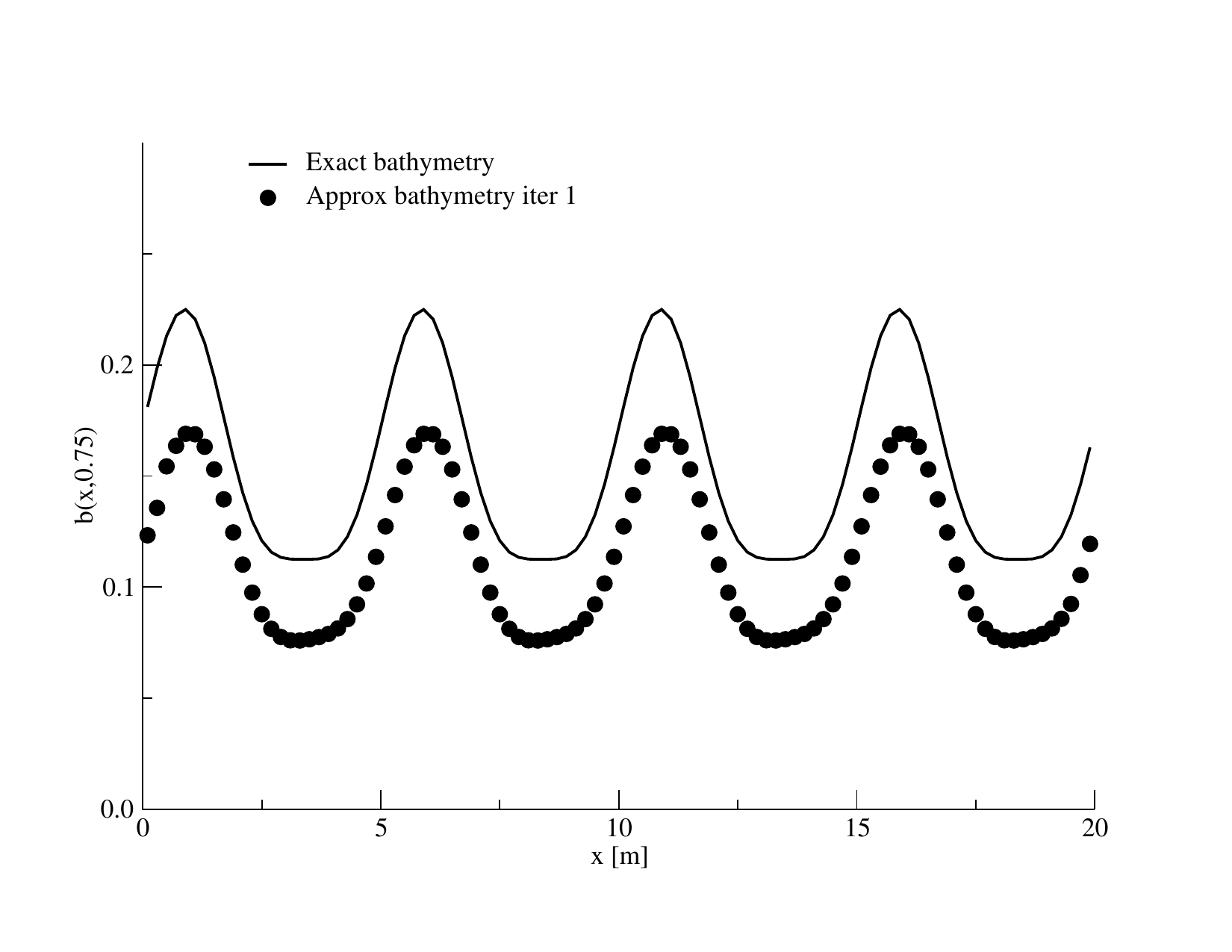} \\
	
	\includegraphics[width=0.3\textwidth]{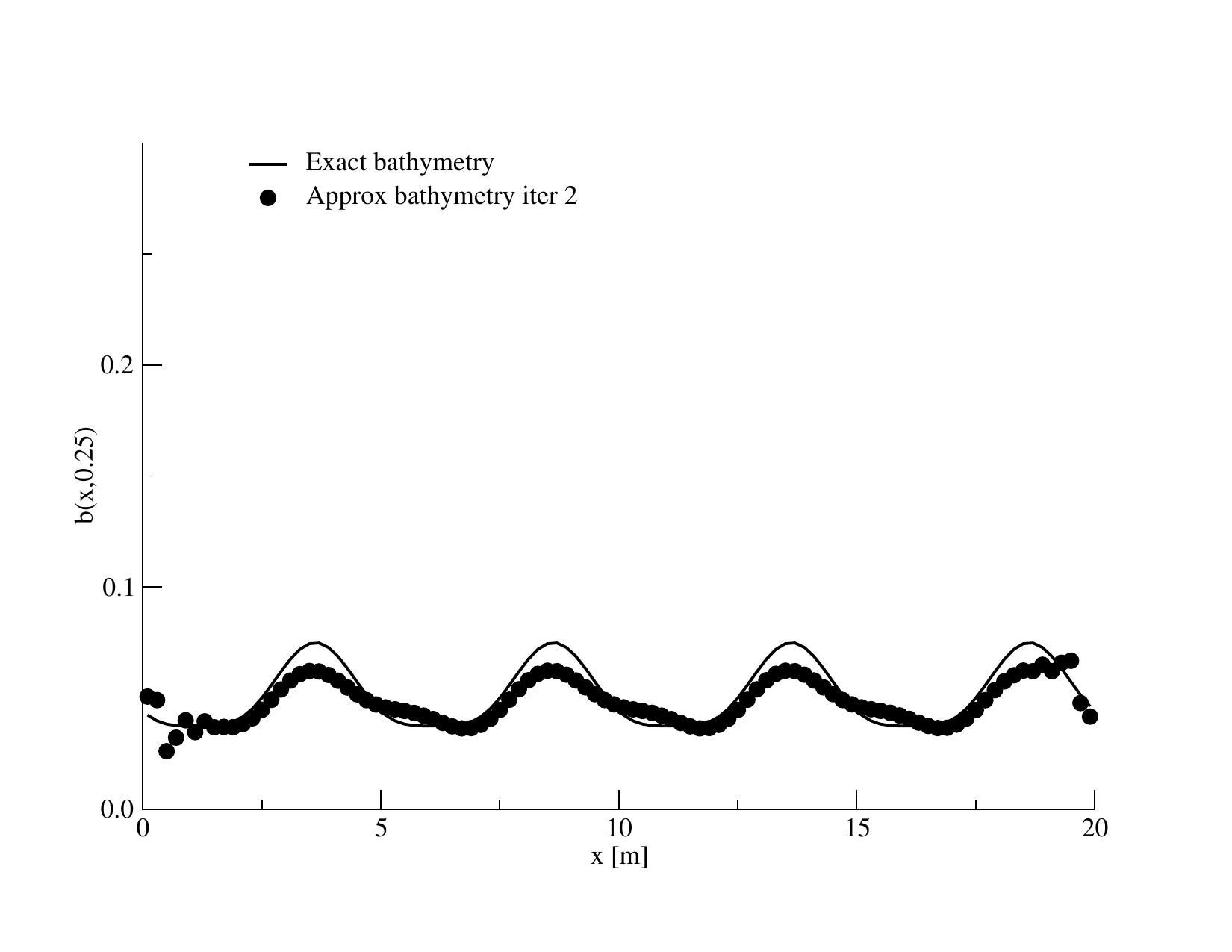} \quad
	\includegraphics[width=0.3\textwidth]{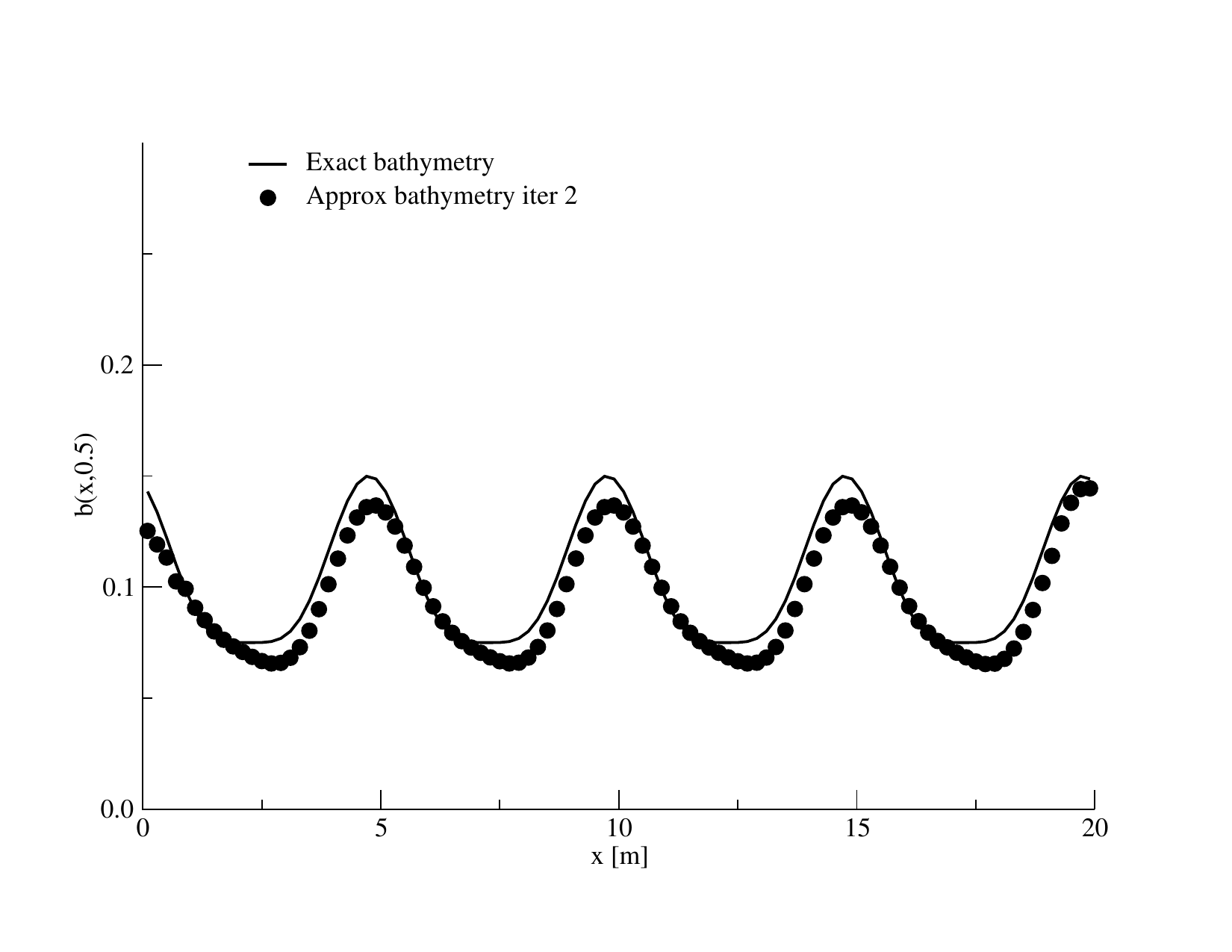} \quad
	\includegraphics[width=0.3\textwidth]{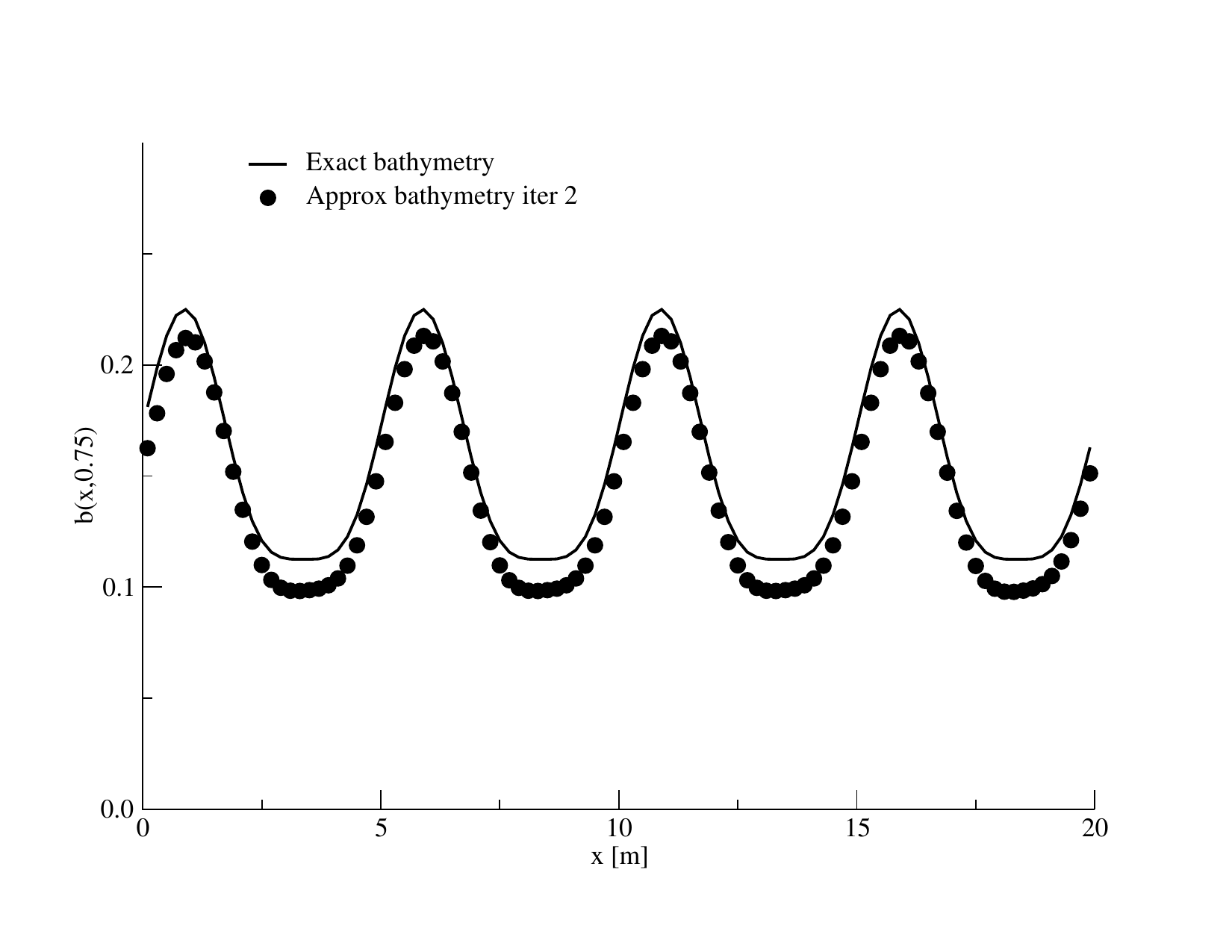} \\

	\includegraphics[width=0.3\textwidth]{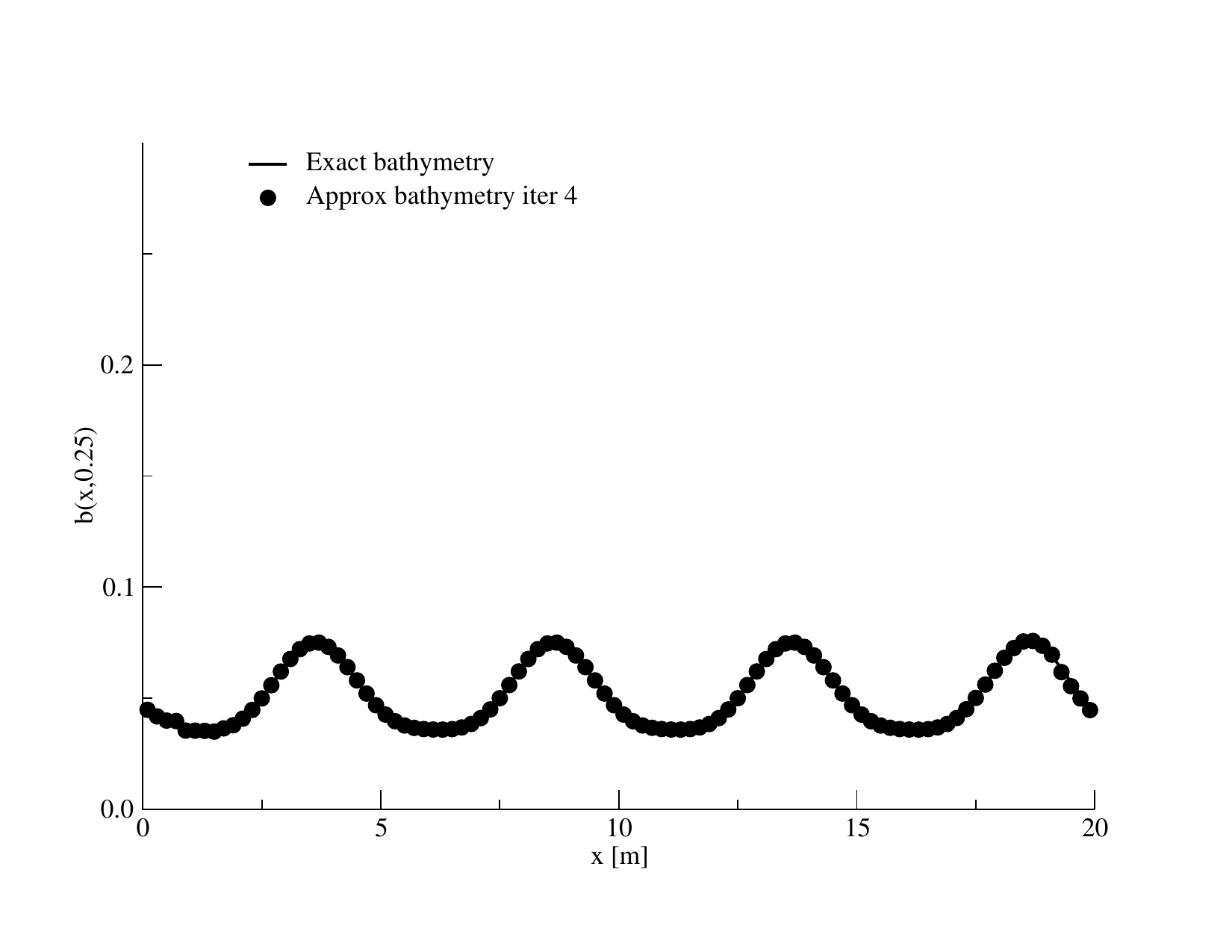} \quad
	\includegraphics[width=0.3\textwidth]{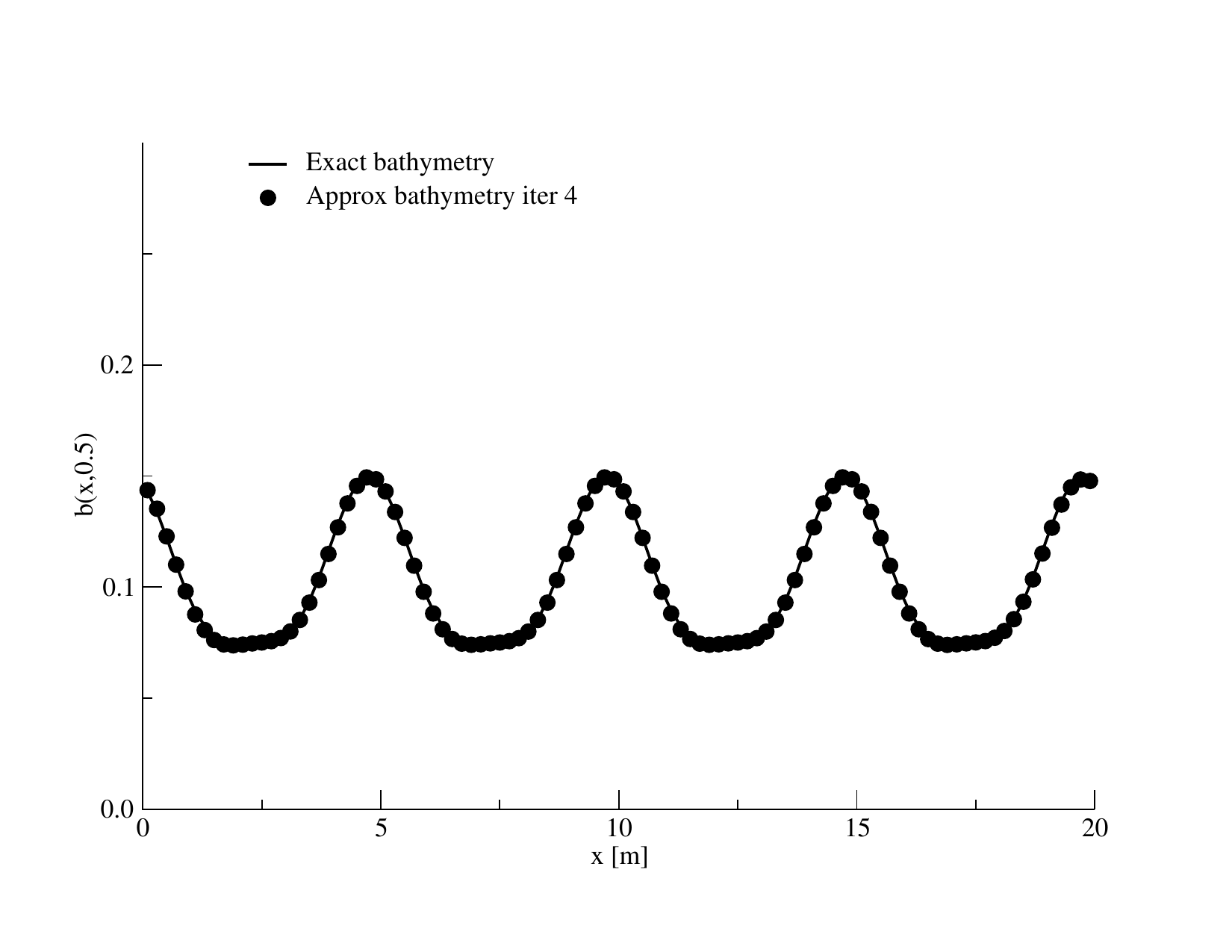} \quad
	\includegraphics[width=0.3\textwidth]{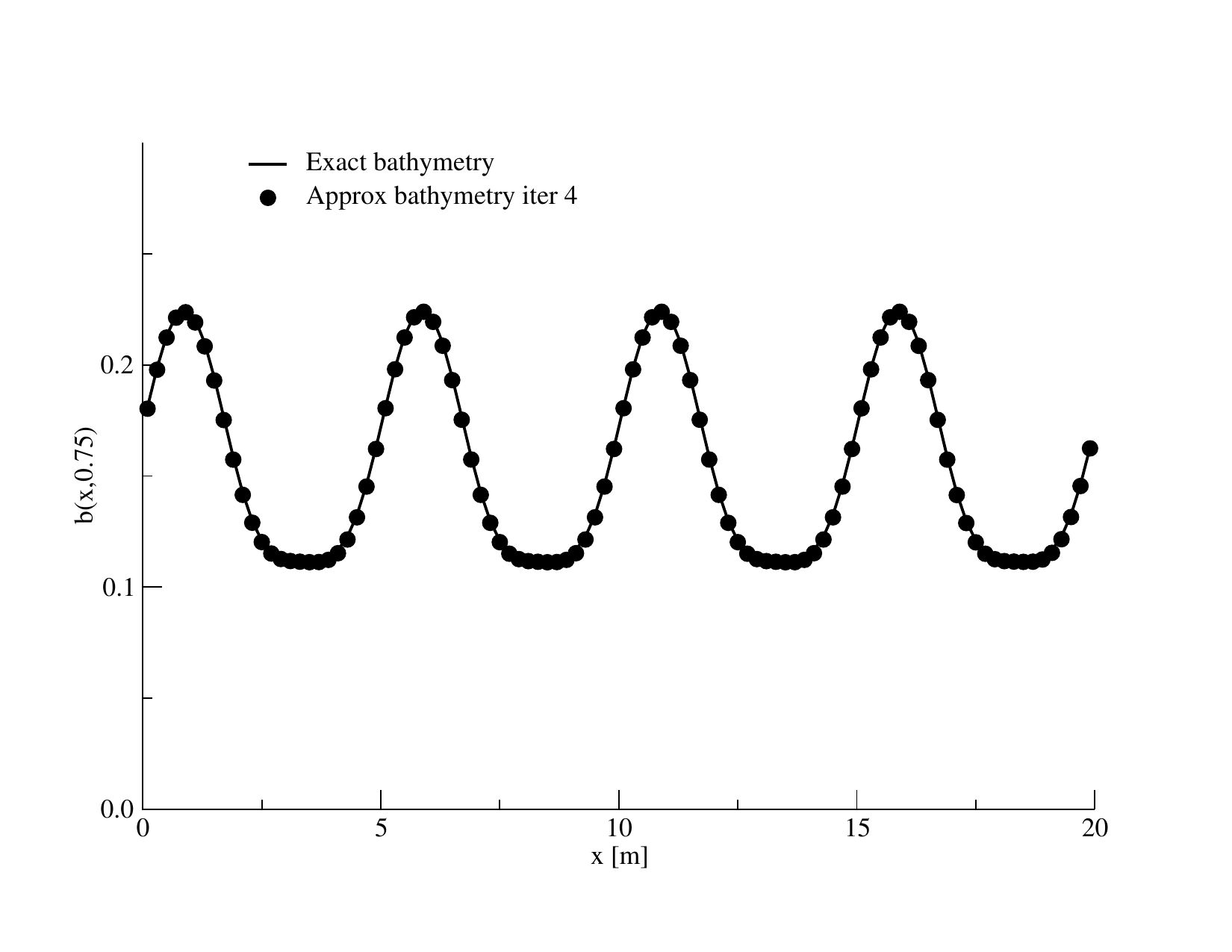} \\
	
	\includegraphics[width=0.3\textwidth]{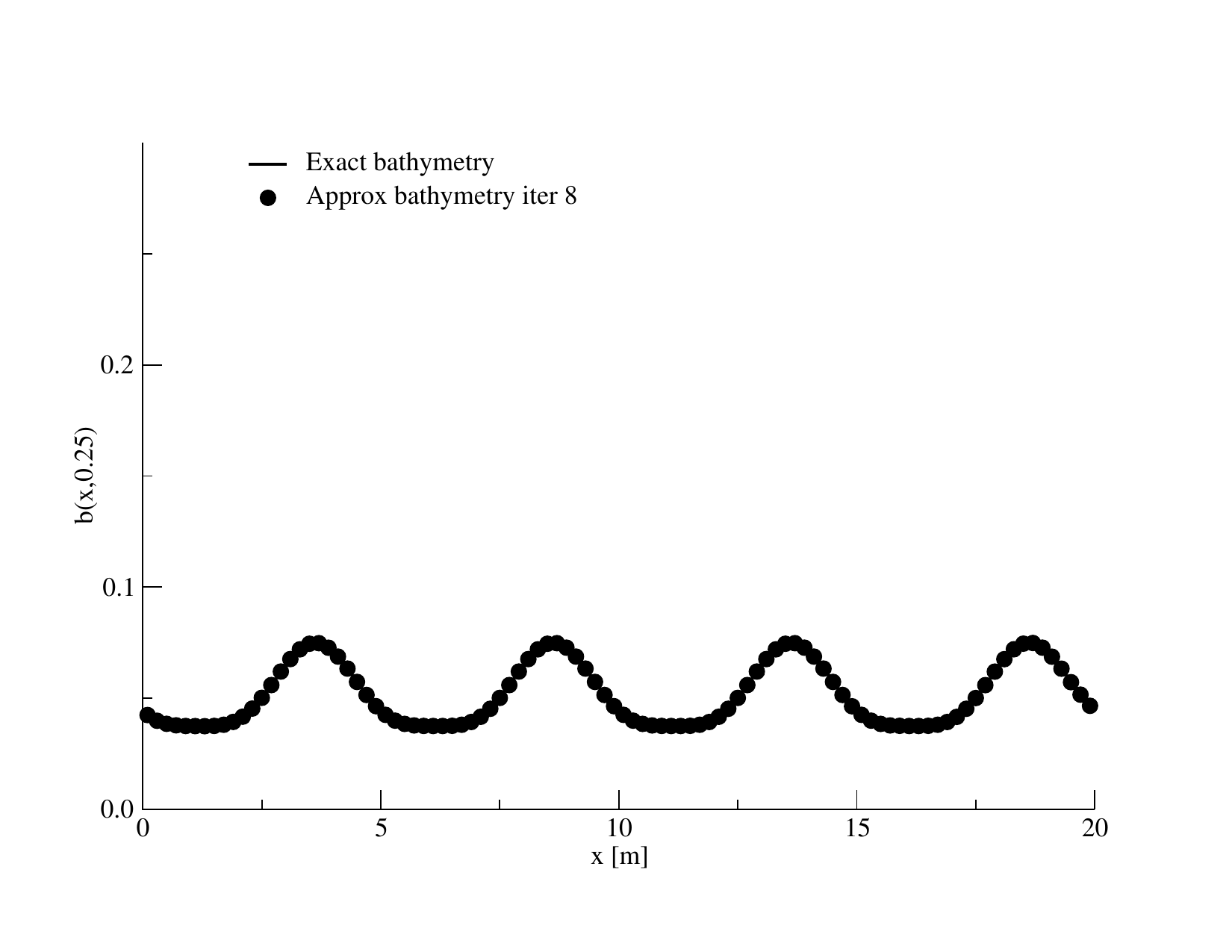} \quad
	\includegraphics[width=0.3\textwidth]{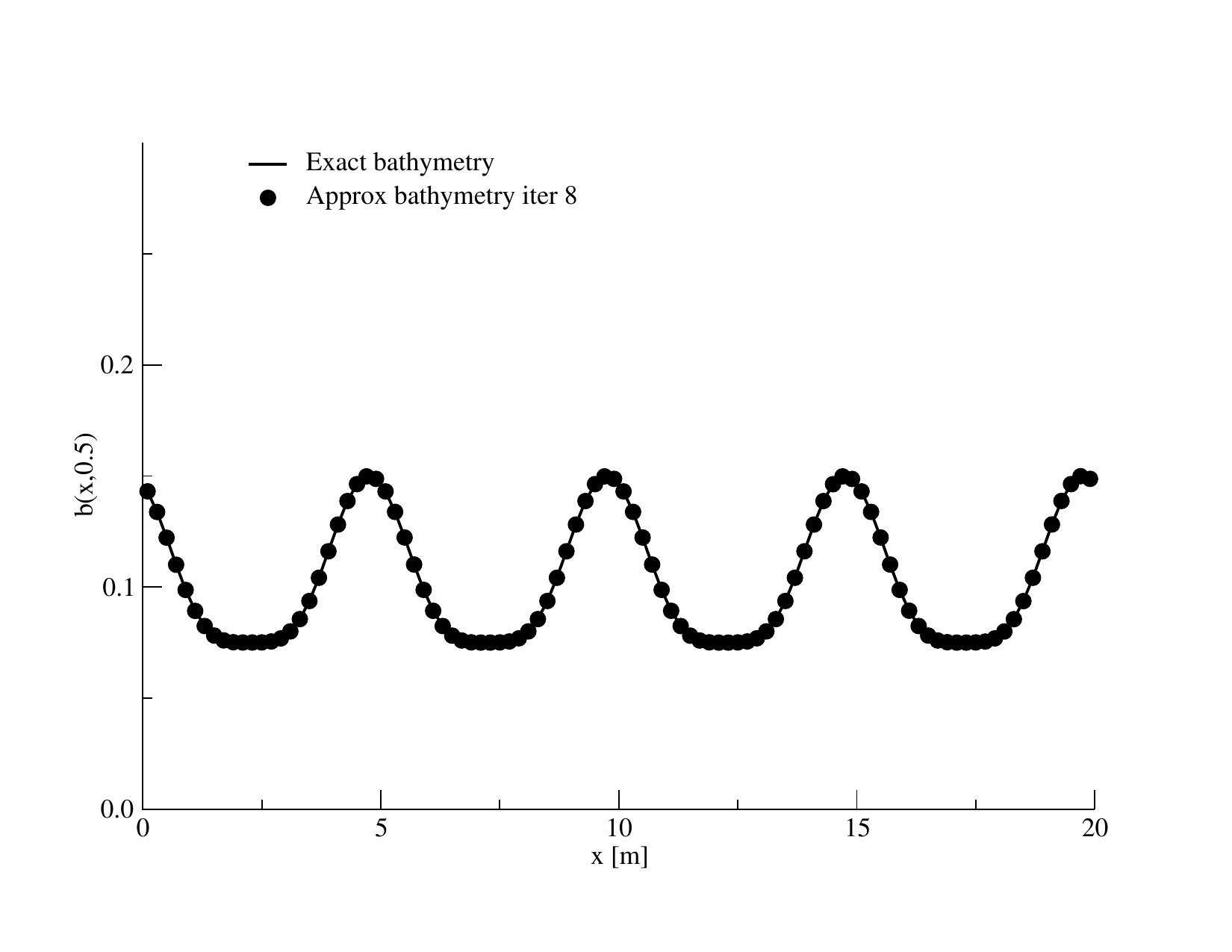} \quad
	\includegraphics[width=0.3\textwidth]{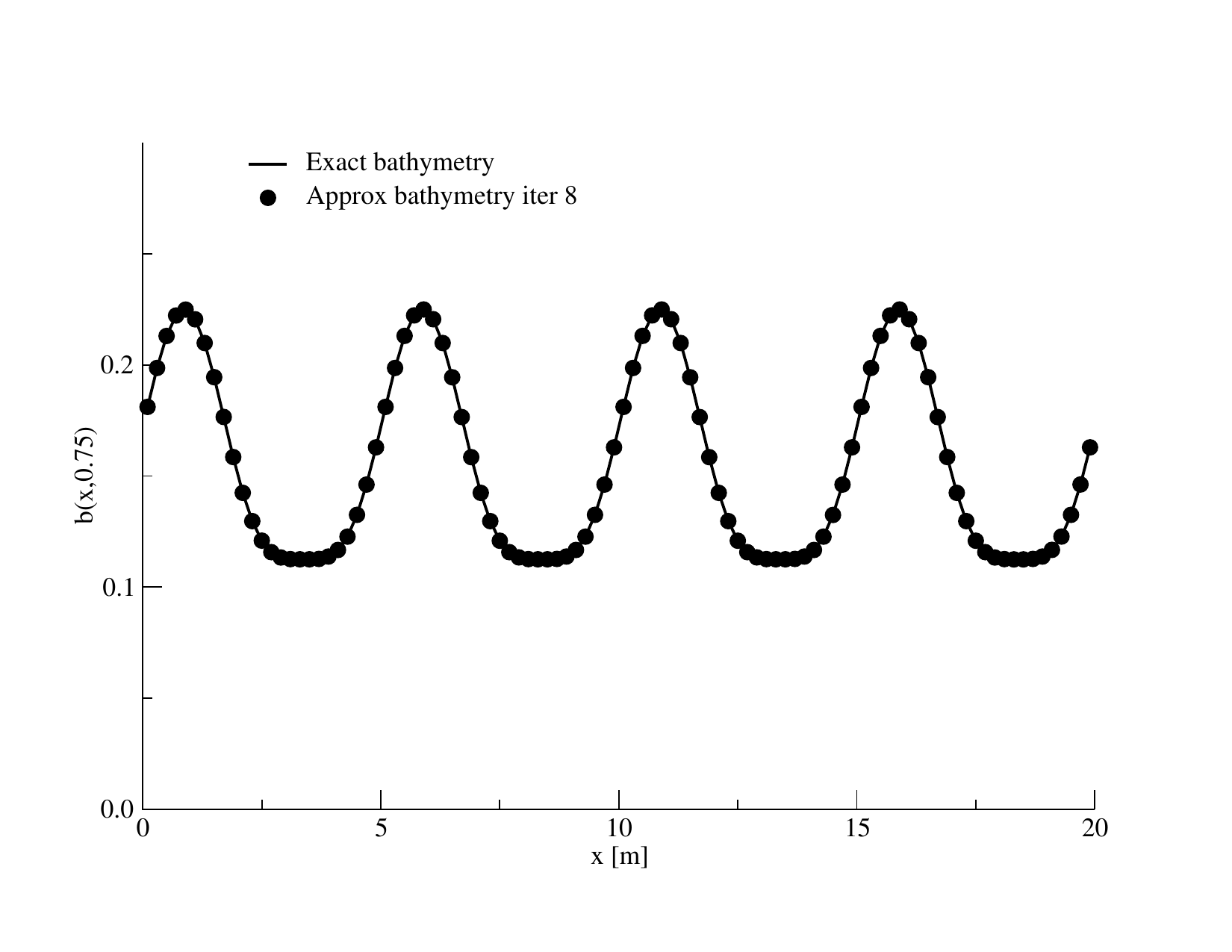} \\
	
	\caption{Smooth bottom profile with large gradient (\ref{eq:b-large-gradient}): Result for the reconstruction procedure resulting from {\bf FORCE-$\alpha$+FD}. Parameters $\Delta t = 0.01$,  $\alpha_F = 2$, $\varepsilon = 0.001$, $\lambda_b = 0.71$, $100$ cells.
		{\bf Feft:} $t=0.25$, {\bf centered} $t=0.5$, {\bf right:} $t=0.75$.}
	\label{fig:b-for-iter-and-times:test-2:ForceAlphaCons}
\end{figure}
\FloatBarrier

\begin{figure}   
	\centering
	\includegraphics[width=0.3\textwidth]{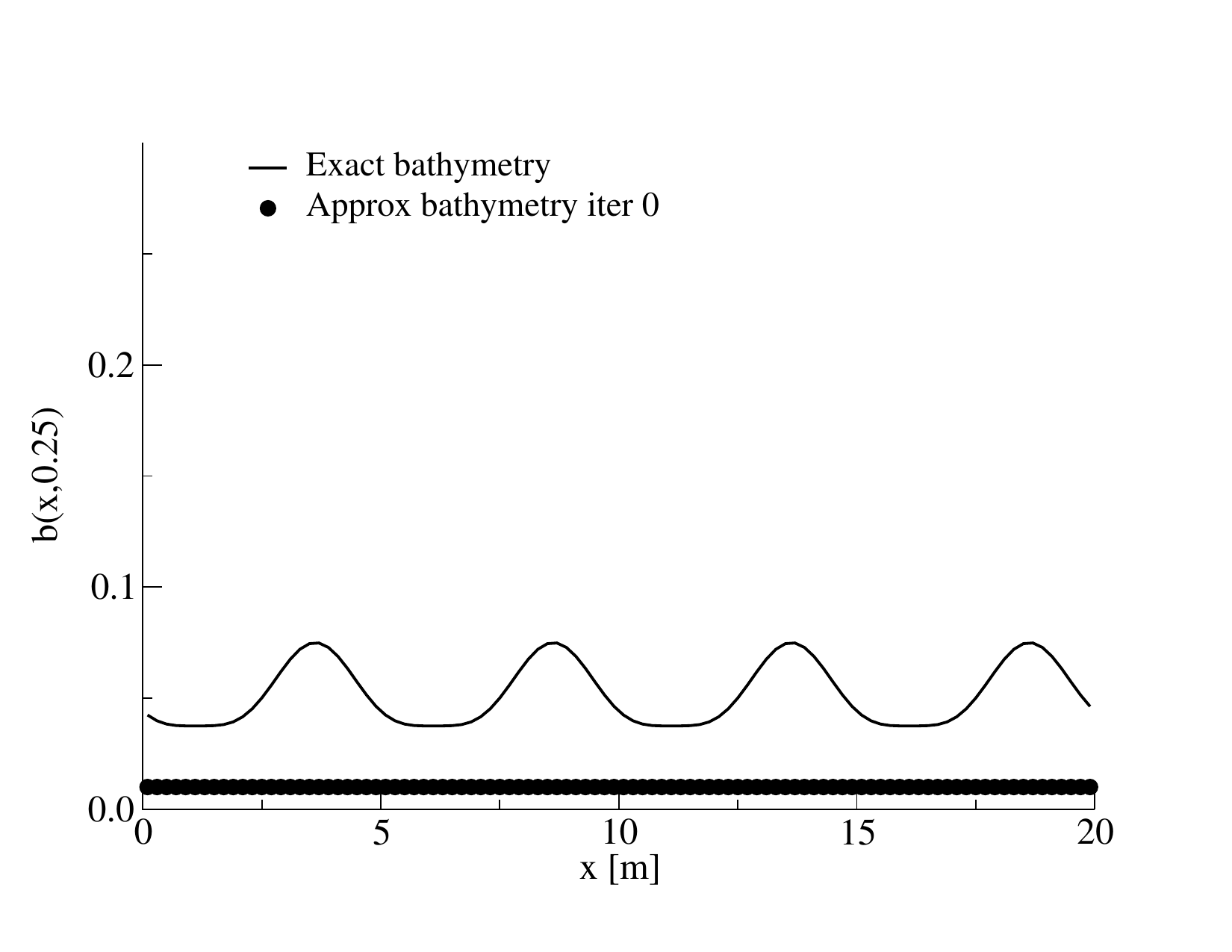} \quad
	\includegraphics[width=0.3\textwidth]{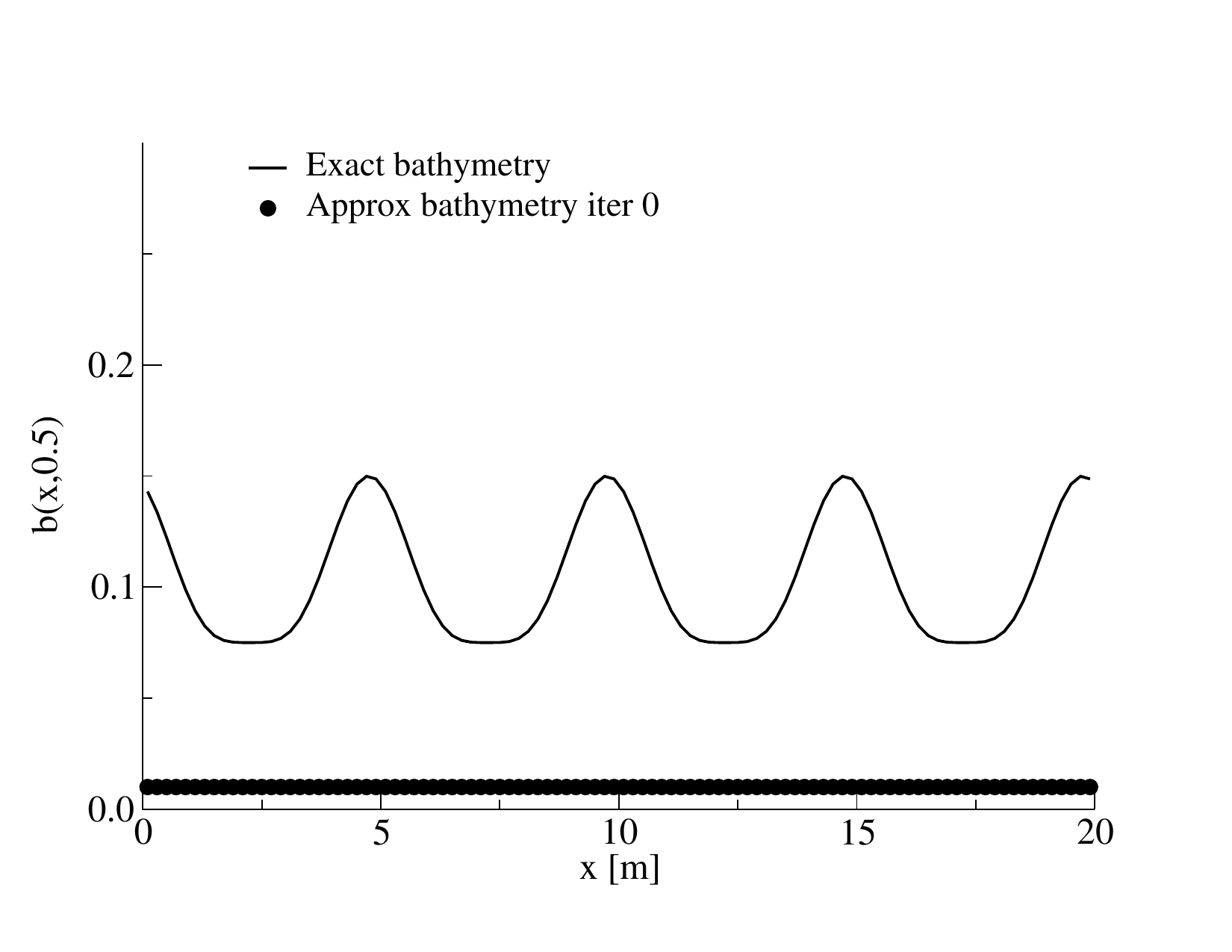} \quad
	\includegraphics[width=0.3\textwidth]{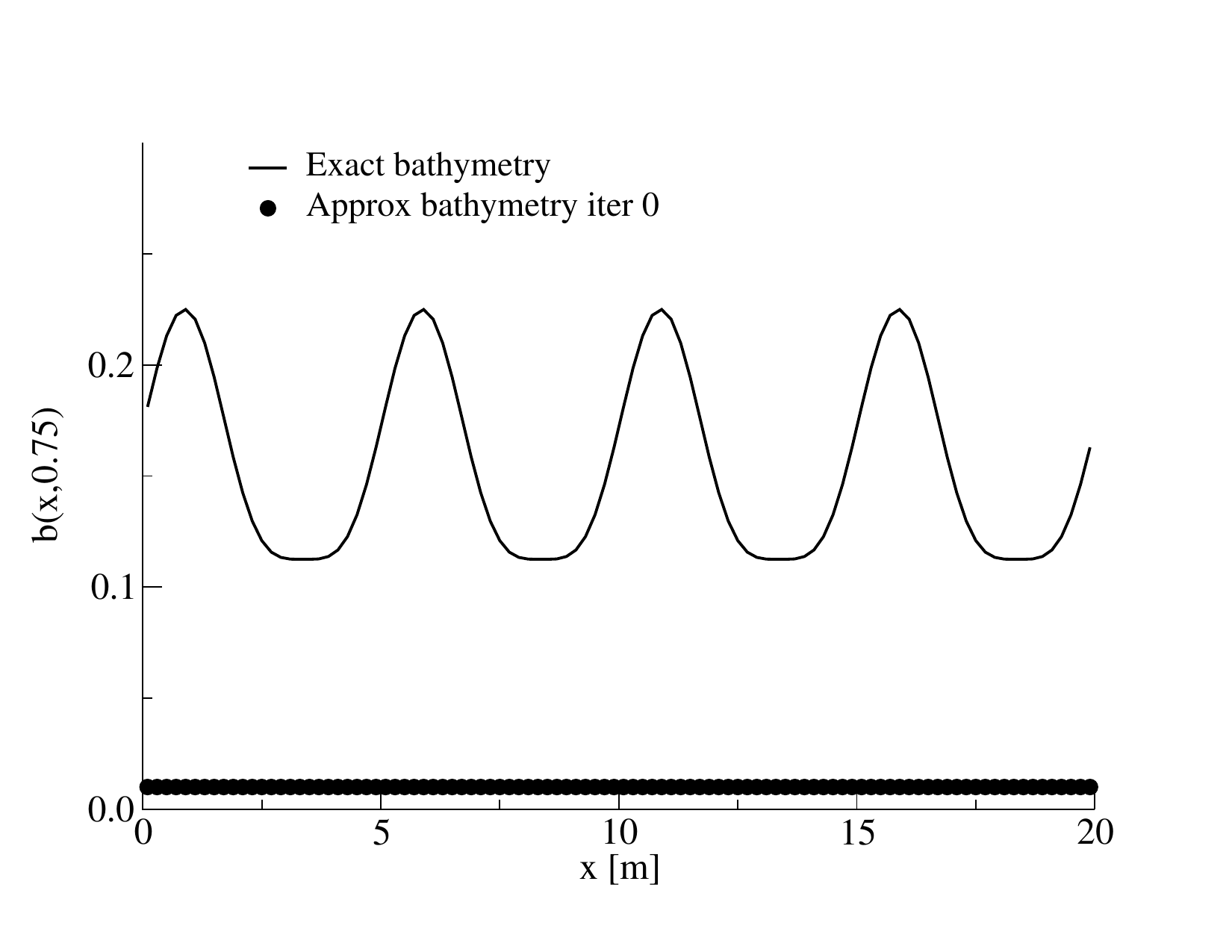} 
	\\
	
	\includegraphics[width=0.3\textwidth]{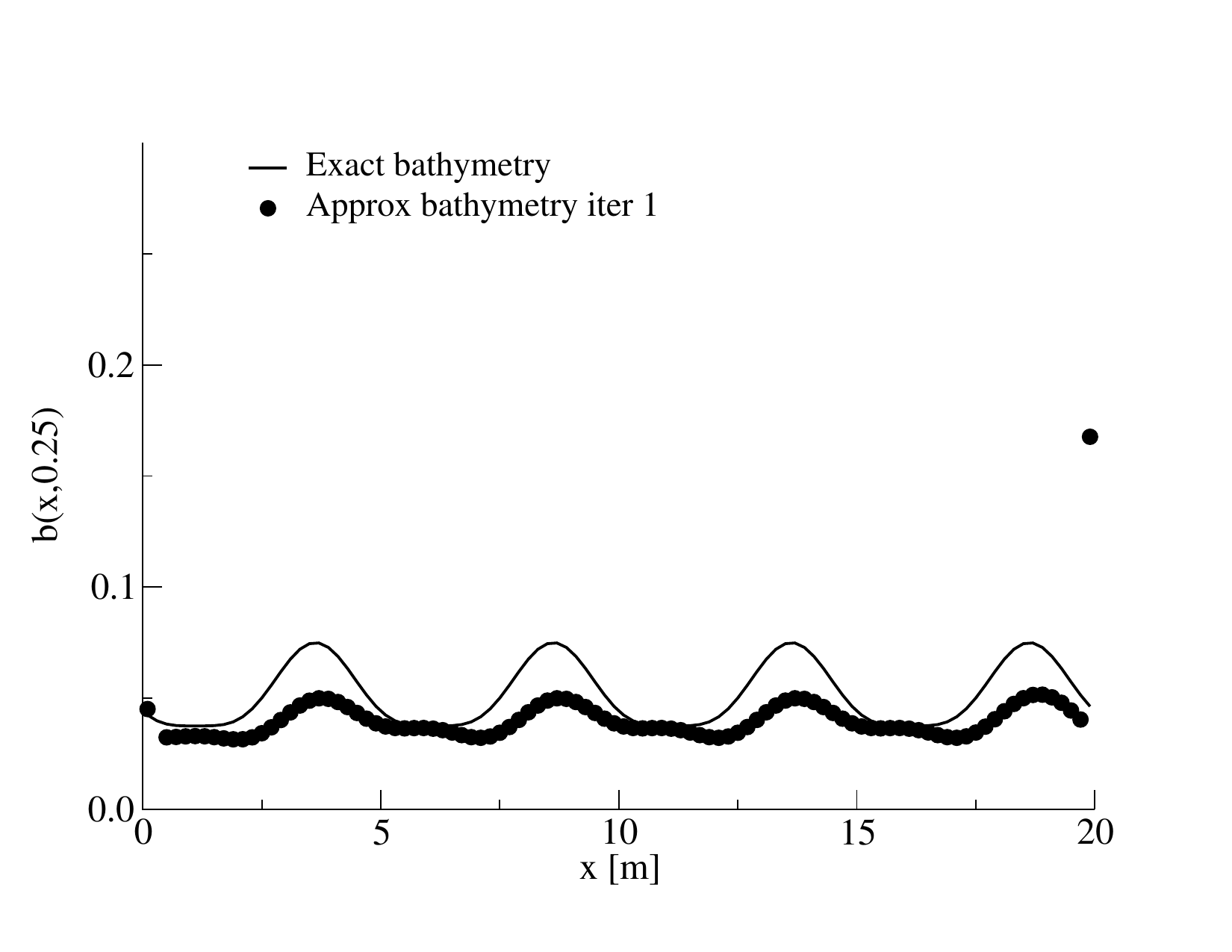} \quad
	\includegraphics[width=0.3\textwidth]{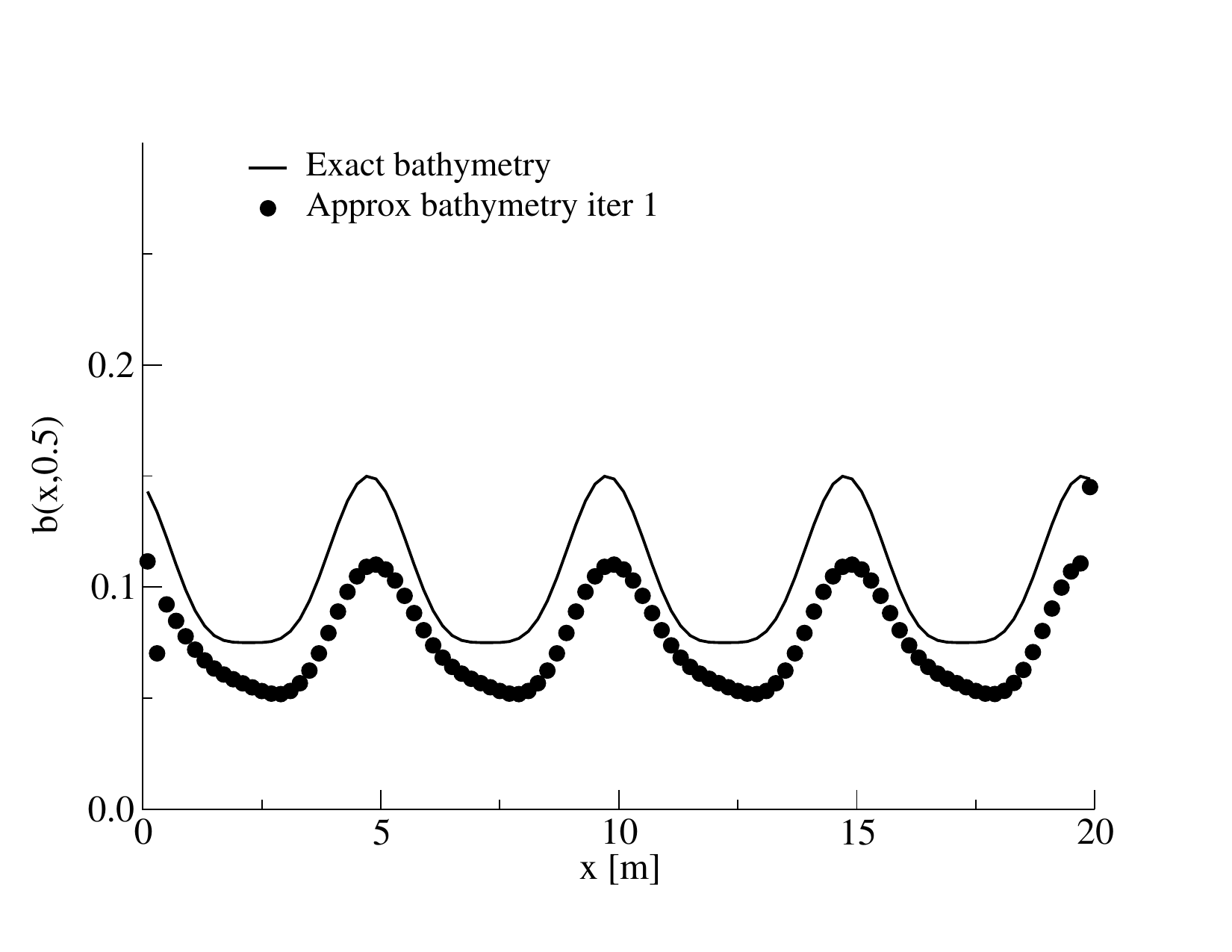} \quad
	\includegraphics[width=0.3\textwidth]{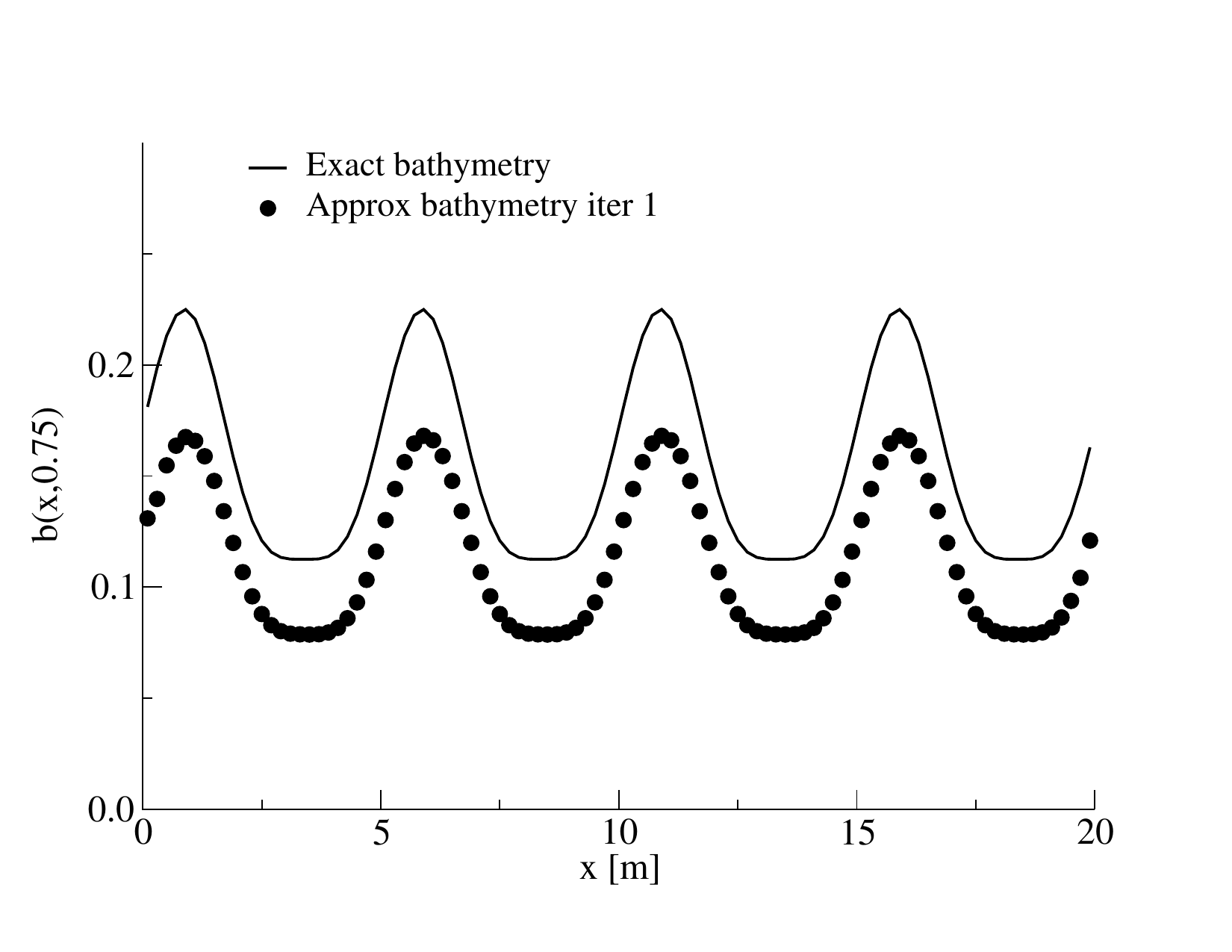} \\
	
	\includegraphics[width=0.3\textwidth]{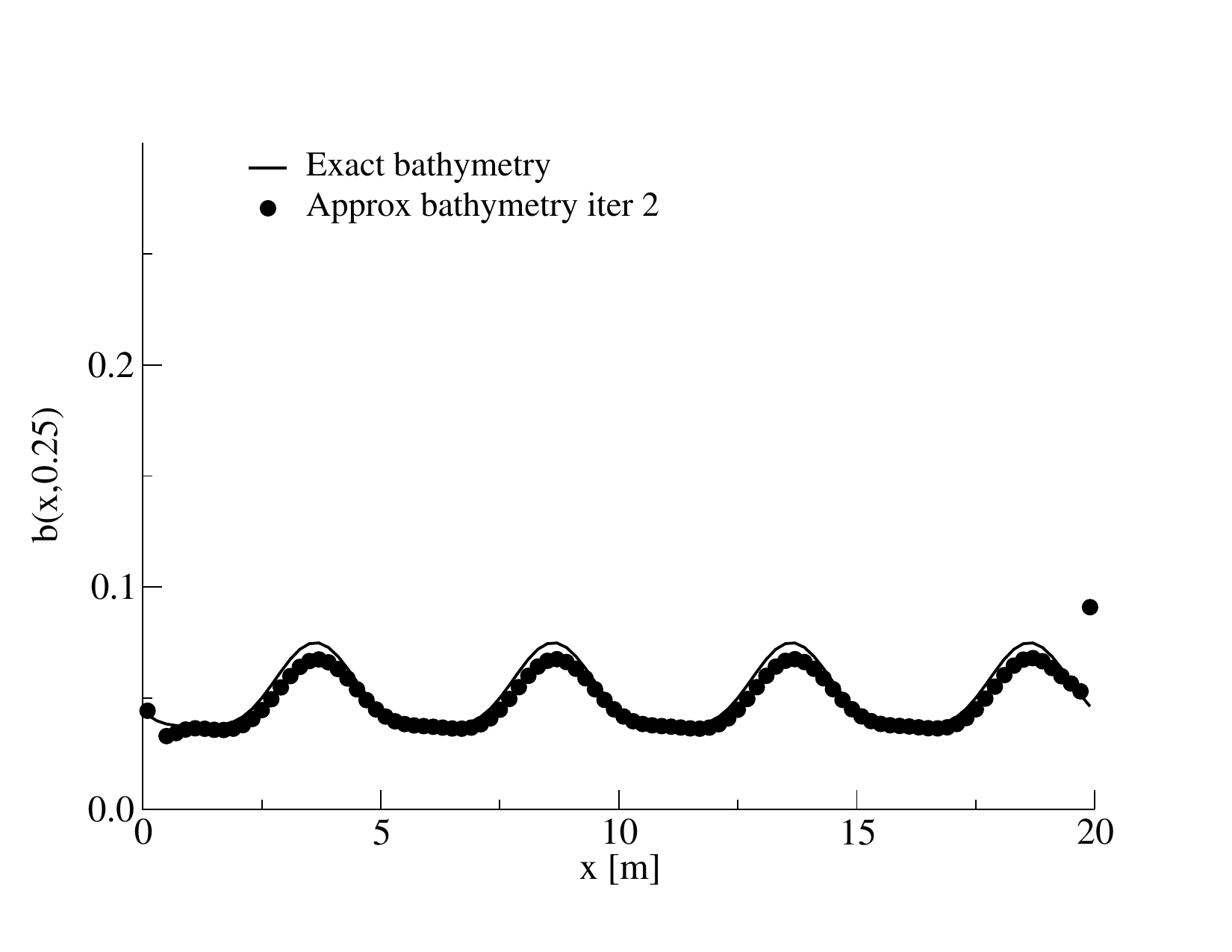} \quad
	\includegraphics[width=0.3\textwidth]{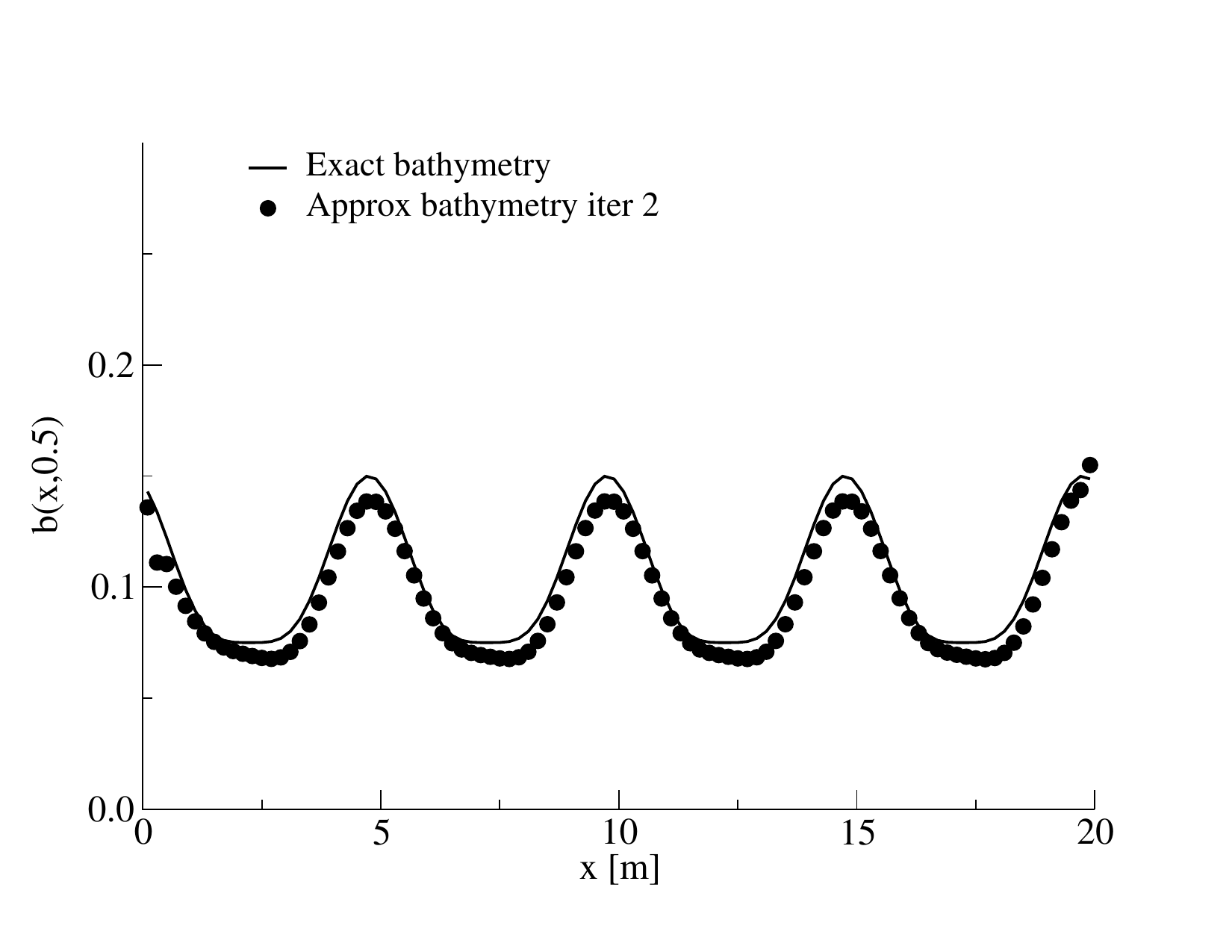} \quad
	\includegraphics[width=0.3\textwidth]{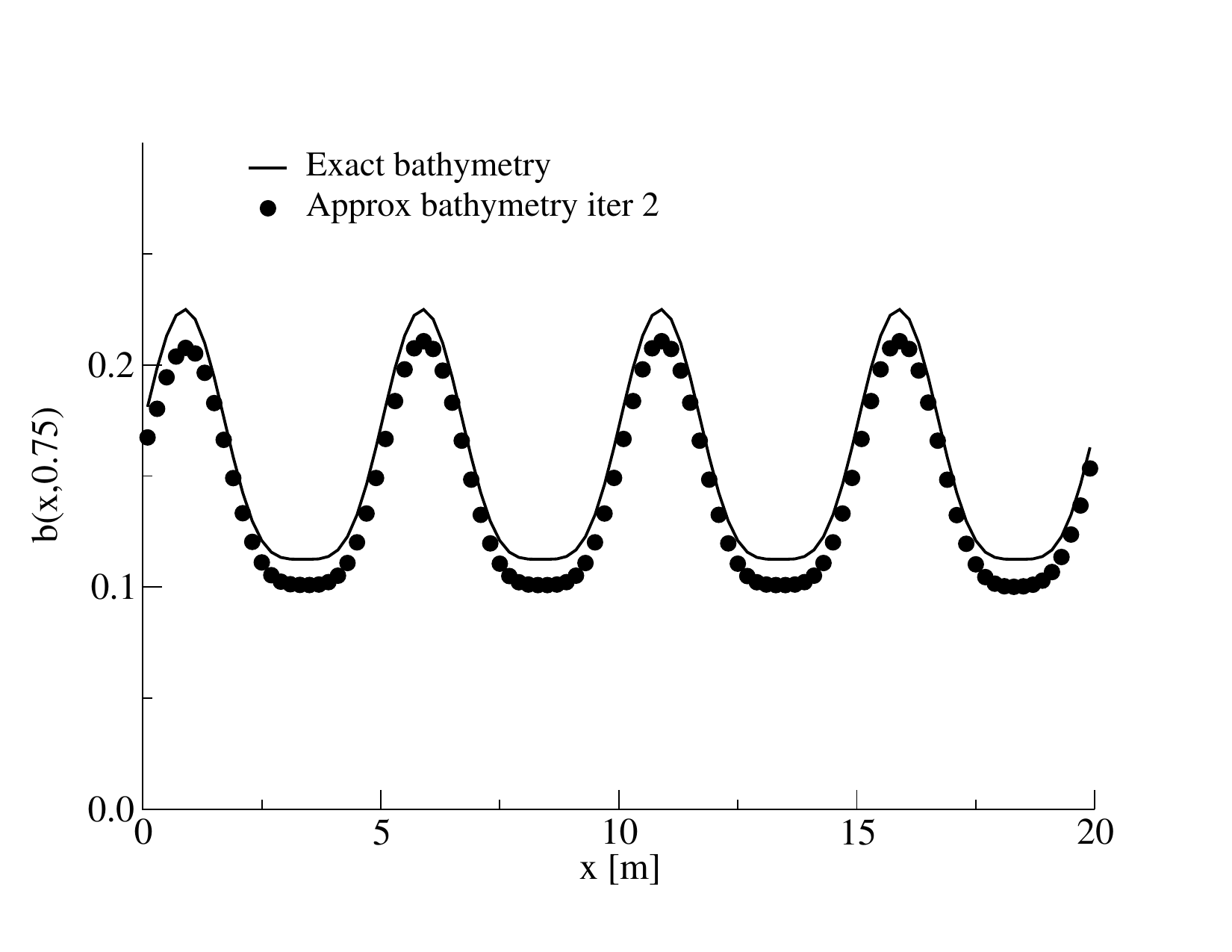} \\

	\includegraphics[width=0.3\textwidth]{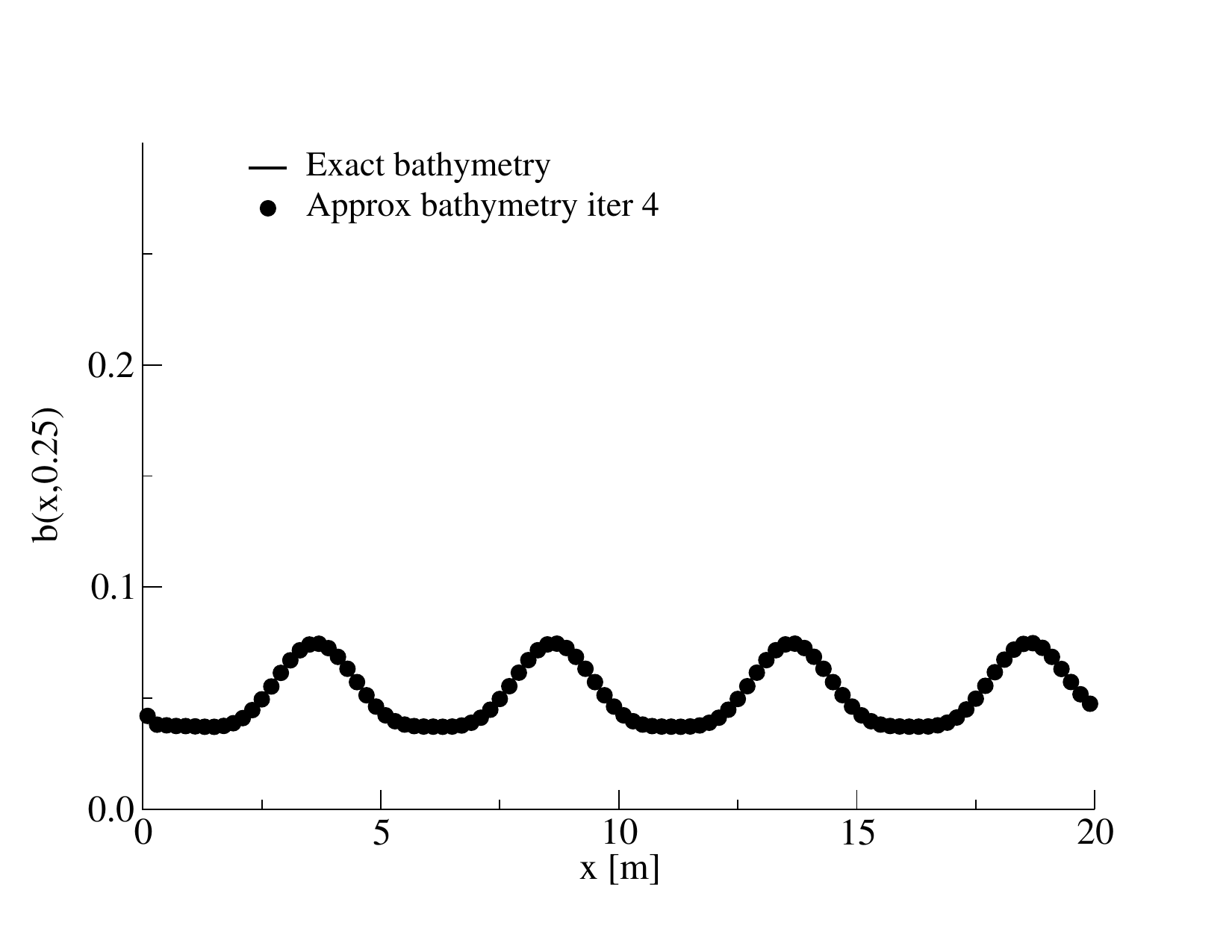} \quad
	\includegraphics[width=0.3\textwidth]{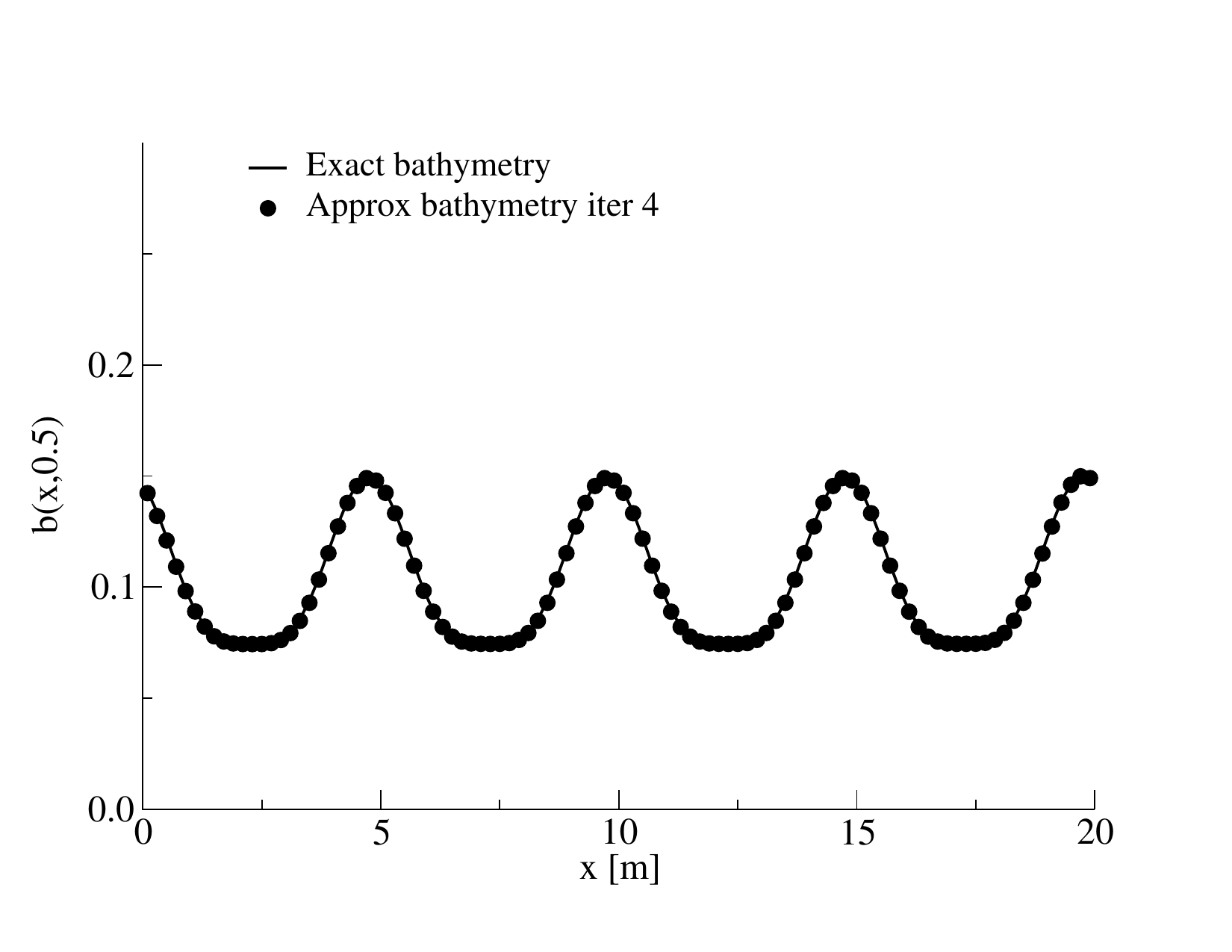} \quad
	\includegraphics[width=0.3\textwidth]{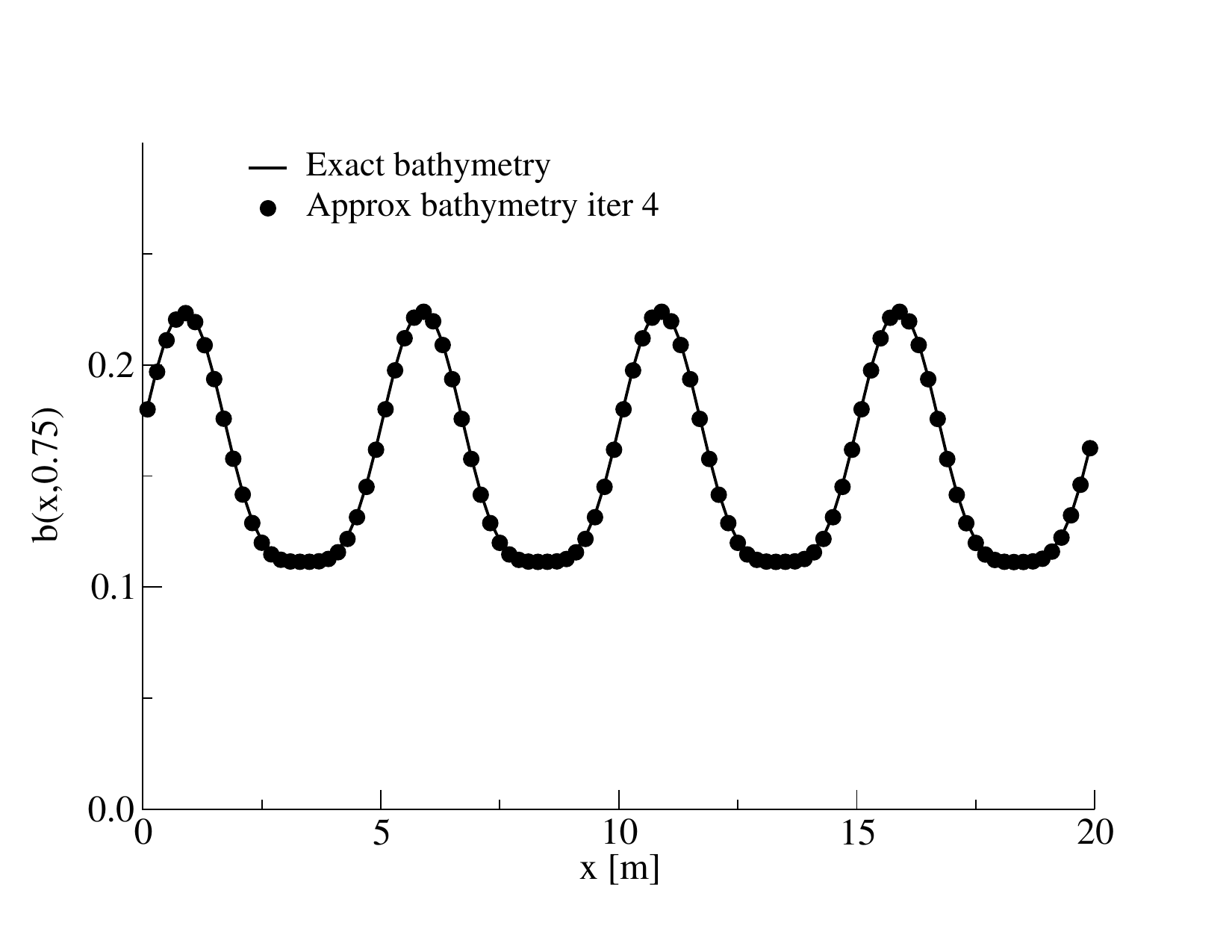} \\
	
	\includegraphics[width=0.3\textwidth]{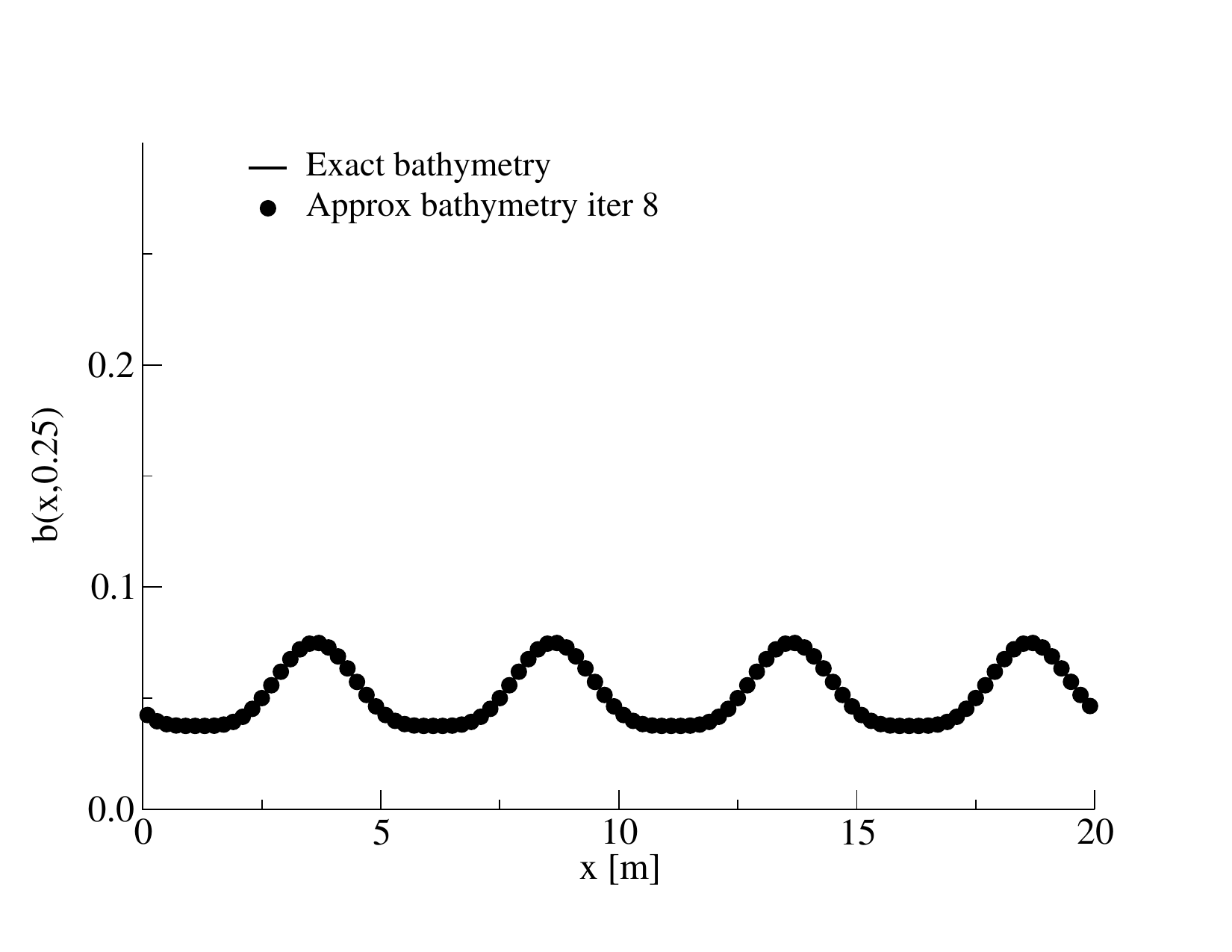} \quad
	\includegraphics[width=0.3\textwidth]{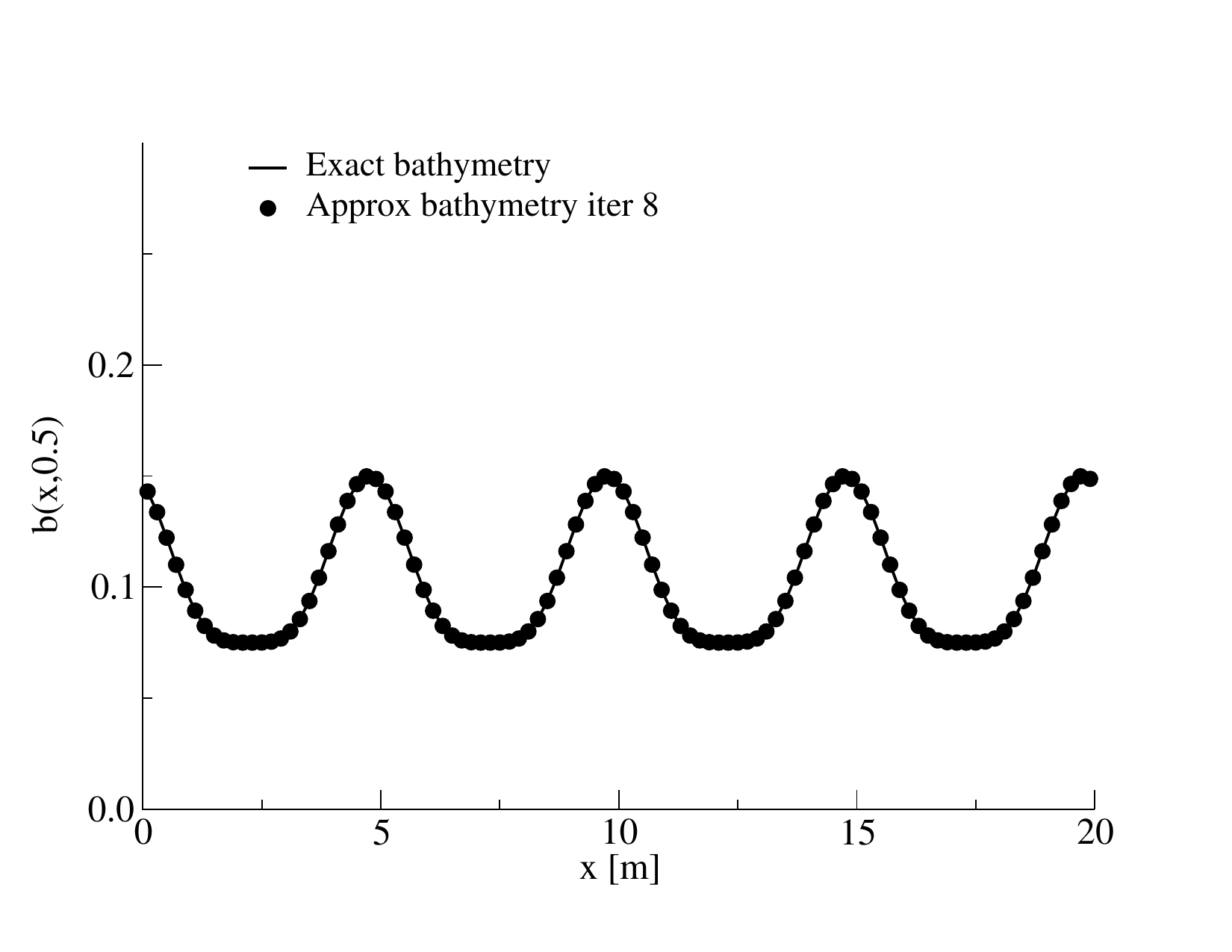} \quad
	\includegraphics[width=0.3\textwidth]{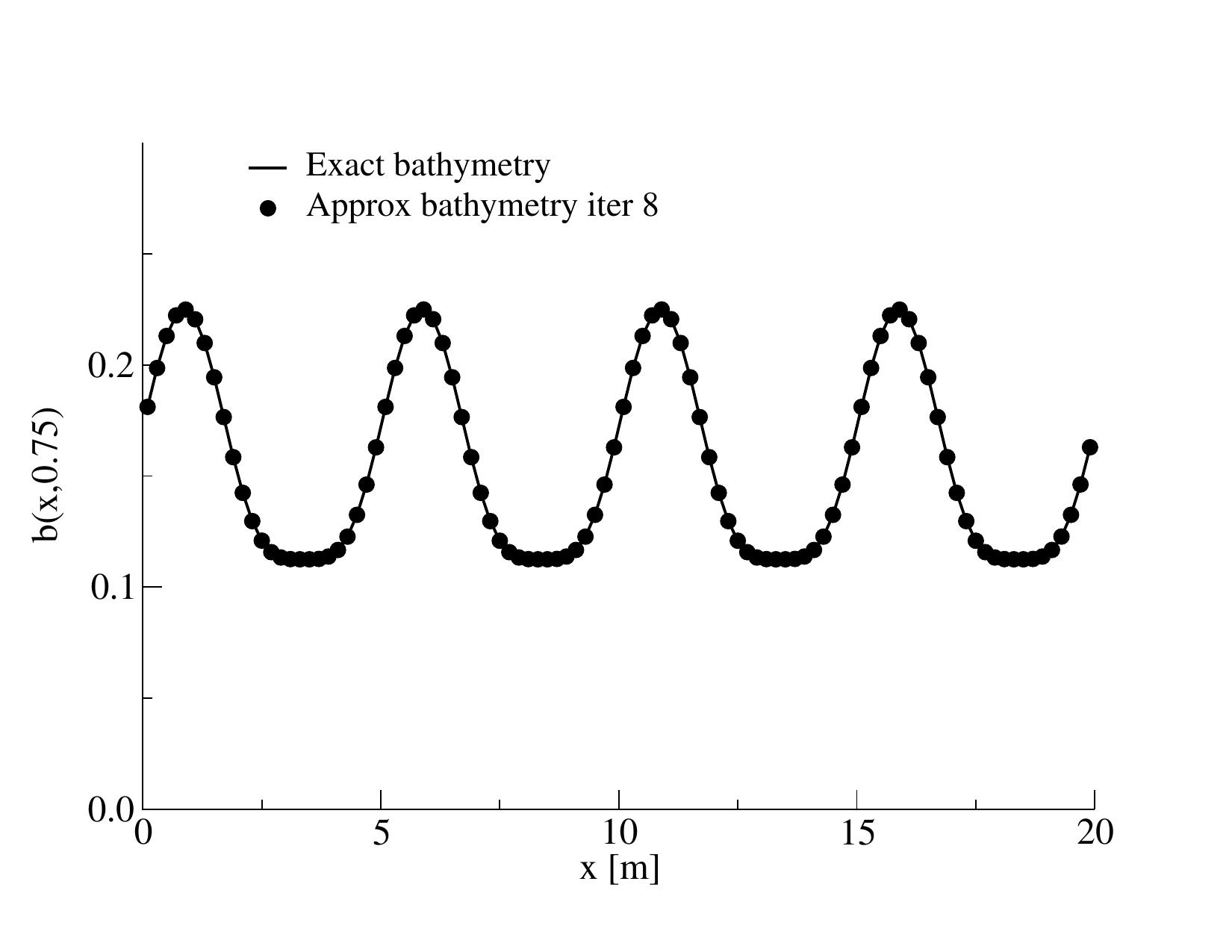} \\
	
	\caption{Smooth bottom profile with large gradient (\ref{eq:b-large-gradient}): Result for the {\bf FORCE-$\alpha$+CSF}. Parameters $\Delta t = 0.01$,  $\alpha_F = 2$, $\varepsilon = 0.001$, $\lambda_b = 0.71$, $100$ cells.
		{\bf Feft:} $t=0.25$, {\bf centered} $t=0.5$, {\bf right:} $t=0.75$.}
	\label{fig:b-for-iter-and-times:test-2:NonCons-Force-alpha}
\end{figure}
\FloatBarrier

\section{Conclusions}\label{S8}

In this paper we have considered the inverse problem of detecting a time-varying bottom through free surface measurements. This problem has applications in oceanography, in the study of tsunami generation by vertical and horizontal displacements of underwater plates. Also in man-made facilities to design surf pools.

To study the problem we used the system of conservation laws for an incompressible, non-viscous fluid, in the shallow water regime with time-varying bottom, known as the Boussinesq-Peregrine system (\ref{BP}) (see \cite{lannes2013water}). The solution to (\ref{BP}) is obtained through an iterative Picard scheme that generates a sequence of linearized systems. For each of them, energy estimates are obtained, where the symmetrization of (\ref{BP}) is fundamental. Well-posedness follows classical ideas from energy estimates as in \cite{israwi2011large,taylor1997partial}.

The identification of the bottom was obtained as the minimum of a functional involving the $L^2$ norm of the solution of (\ref{BP}). Existence of an optimal solution and formulation of a first order necessary condition, involving the adjoint system, were obtained.

The numerical identification of the bottom followed the methodology described above, through a descent method. Both, the state and adjoint equations, were discretized by using a unified finite volume scheme for non-conservative systems. Namely, we implemented the FORCE-$\alpha$ a universal, low-dissipation scheme applied to the coupled system (\ref{eq:unified-system}), in the classical fashion of forward and backward evolution that we called {\bf FORCE-$\alpha$+CSF}, and contrasted with two reference solutions, namely {\bf Rusanov+FD} and {\bf FORCE-$\alpha$+FD}, assumed as standard discretizations. 

Let us mention a few things in this regard. First, for a stable computation of (\ref{BP}) and its adjoint system, small CFL coefficients are required. Second, the reference scheme {\bf Rusanov+FD} uses the Rusanov method (see \cite{rusanov1961}) to solve BP and it is the one with the worst results. Possibly due to the fact that Rusanov scheme introduces the largest numerical diffusion among the three methods. Third, the universal, low-dissipation scheme for both the state and adjoint systems yields the best results, making clear the importance of discretizing the adjoint system in an appropriate manner.

Finally, test problems presented in this paper show that the bottoms have been successfully recovered, and therefore, low-dissipation schemes work as a correct numerical methodology in this type of inverse problems.

In the future, we will implement this strategy in more sophisticated models, such as the Green-Naghdi system (see \cite{israwi2011large}). Also, it would be interesting to understand how much of this strategy can be replicated for the class of equations coming from the water-waves theory in \cite{lannes2013water}. Finally, the current strategy leads us to the question of the exact or approximate controllability of these systems through a source term in the equation of state.

\appendix

\section{Existence and uniqueness of solutions}
\label{S3}

In this section we prove a well-posedness theorem for the Boussinesq--Peregrine system \eqref{BP}. The idea is to write \eqref{BP} as a symmetrizable hyperbolic system (see \cite{taylor1997partial}). Then, an iterative Picard scheme is carried out. For the resulting linear systems, an energy estimate is constructed which yields a solution to (\ref{BP}). This procedure has been done for the Nonlinear Shallow Water \cite{lannes2013water} and Green--Naghdi equations \cite{israwi2011large}, in the case of a stationary bottom.

After multiplying second equation in \eqref{BP} by $h$, the system can be written as
\begin{equation}
	\label{ES31}
	\left\{
	\begin{aligned}
		&\zeta_t+\epsilon(\zeta-b)_xV+hV_x=b_t, \\
		&\left(h+\mu h\left(-\frac{1}{3h}\partial_x(h^3\partial_x\cdot)\right)\right)(V_t+\epsilon VV_x)+h\zeta_x=-\frac{\epsilon h}{2}b_{ttx}.
	\end{aligned}
	\right.
\end{equation}

Then, if one defines the operator
\begin{equation}
	\label{Tgeneral}
	\mt w=hw+\mu h\left(-\frac{1}{3h}\partial_x(h^3\partial_xw)\right),
\end{equation}
system \eqref{ES31} becomes

\begin{equation*}
	\begin{pmatrix}
		1 & 0 \\ 0 & \mt\cdot
	\end{pmatrix}
	\begin{pmatrix}
		\zeta_t \\ V_t
	\end{pmatrix}
	+
	\begin{pmatrix}
		\epsilon V & h \\ h & \mt(\epsilon V\cdot)
	\end{pmatrix}
	\begin{pmatrix}
		\zeta_x \\ V_x
	\end{pmatrix}
	=
	\begin{pmatrix}
		b_t+\epsilon b_xV \\ -\frac{\epsilon}{2}hb_{ttx}
	\end{pmatrix}.
\end{equation*}

Namely,
\begin{equation}
	\label{ES32}
	\mathbf{S}(\mathbf{U})\partial_t\mathbf{U}+\mathbf{A}(\mathbf{U})\partial_x\mathbf{U}=\mathbf{B}(\mathbf{U}),
\end{equation}
with $\mathbf{U}=(\zeta,V)^T$,
\begin{equation*}
	\mathbf{B}(\mathbf{U})=
	\begin{pmatrix}
		b_t+\epsilon b_xV \\ -\frac{\epsilon}{2}hb_{ttx}
	\end{pmatrix}
\end{equation*}
and
\begin{equation}
	\label{ES33}
	\mathbf{S}(\mathbf{U})=
	\begin{pmatrix}
		1 & 0 \\ 0 & \mt\cdot
	\end{pmatrix},
	\quad
	\mathbf{A}(\mathbf{U})=
	\begin{pmatrix}
		\epsilon V & h \\ h & \mt(\epsilon V\cdot)
	\end{pmatrix},
\end{equation}
be two symmetric operators. Existence and uniqueness of a solution for system (\ref{ES32}) is guaranteed by the following theorem.

\begin{theorem}
	\label{Theorem Existencia}
	Let $s>3/2$ and $(\zeta_0,V_0)\in \mathbf{X}^s$. Let $b\in W^{2,\infty}([0,\infty);H^{s+1})$ and assume \eqref{ES34} is valid. Then, there exists $T_{BP}>0$, uniformly bounded from below with respect to $\epsilon$, such that system \eqref{ES32} admits a unique solution $(\zeta,V)^T\in \mathbf{X}_{T_{BP}}^s$ with initial condition $(\zeta_0,V_0)^T$.
\end{theorem}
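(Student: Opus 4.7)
The plan is to follow the classical symmetrizable quasilinear strategy that the authors hint at in the appendix, applied to the system \eqref{ES32}. The key observation is that $\mathbf{S}(\mathbf{U})$ and $\mathbf{A}(\mathbf{U})$ are both symmetric with respect to the $L^2$ pairing (using the self-adjointness of $\mt$, with $h=1+\epsilon(\zeta-b)>\hm$), and $\mathbf{S}(\mathbf{U})$ is coercive on $\mathbf{X}^s$ precisely because of the extra $\mu h^3 \partial_x$ block in $\mt$; this gives the $\mu|V_x|_{H^s}^2$ contribution in the norm. First I would construct a Picard iteration: set $(\zeta^0,V^0)=(\zeta_0,V_0)$ and, given $\mathbf{U}^n=(\zeta^n,V^n)^T$ with $h^n=1+\epsilon(\zeta^n-b)$ satisfying $h^n\ge\hm/2$, define $\mathbf{U}^{n+1}$ as the solution of the linear problem
\begin{equation*}
\mathbf{S}(\mathbf{U}^n)\partial_t\mathbf{U}^{n+1}+\mathbf{A}(\mathbf{U}^n)\partial_x\mathbf{U}^{n+1}=\mathbf{B}(\mathbf{U}^n), \qquad \mathbf{U}^{n+1}(0,\cdot)=(\zeta_0,V_0)^T.
\end{equation*}
Well-posedness of each linear step follows from standard theory for symmetric hyperbolic systems with a coercive weight $\mathbf{S}$.

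Next, I would derive a uniform energy estimate in $\mathbf{X}^s$. Applying $\ls=(1-\partial_x^2)^{s/2}$ to the linearized equation, pairing with $\ls\mathbf{U}^{n+1}$ in $L^2$, and using the symmetry of $\mathbf{S}(\mathbf{U}^n)$ and $\mathbf{A}(\mathbf{U}^n)$, one gets
\begin{equation*}
\tfrac{1}{2}\tfrac{d}{dt}\bigl(\mathbf{S}(\mathbf{U}^n)\ls\mathbf{U}^{n+1},\ls\mathbf{U}^{n+1}\bigr)_{L^2} = \tfrac{1}{2}\bigl([\partial_t,\mathbf{S}(\mathbf{U}^n)]\ls\mathbf{U}^{n+1},\ls\mathbf{U}^{n+1}\bigr) - (\text{commutators}) + (\ls\mathbf{B}(\mathbf{U}^n),\ls\mathbf{U}^{n+1}).
\end{equation*}
The commutators $[\ls,\mathbf{A}(\mathbf{U}^n)]\partial_x$ and $[\partial_t,\mathbf{S}(\mathbf{U}^n)]$ are controlled by Kato--Ponce / Moser-type estimates in $H^s$ for $s>3/2$, producing an $\epsilon$ prefactor on all nonlinear terms because $\mathbf{A}$ and $\partial_t\mathbf{S}$ depend on $\mathbf{U}^n$ only through $\epsilon V^n$ and $\epsilon\partial_t h^n$. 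Combined with the coercivity $(\mathbf{S}(\mathbf{U}^n)\mathbf{W},\mathbf{W})\gtrsim |\mathbf{W}|_{\mathbf{X}^s}^2$ when $h^n\ge\hm/2$, and the bound $|\mathbf{B}(\mathbf{U}^n)|_{\mathbf{X}^s}\lesssim \|b\|_{W^{2,\infty}(H^{s+1})}(1+|\mathbf{U}^n|_{\mathbf{X}^s})$, this yields a differential inequality of the form
\begin{equation*}
\tfrac{d}{dt}|\mathbf{U}^{n+1}|_{\mathbf{X}^s}^2 \le C_0\bigl(1+|\mathbf{U}^n|_{\mathbf{X}^s}\bigr) + \epsilon\, C_1\bigl(|\mathbf{U}^n|_{\mathbf{X}^s}\bigr)\,|\mathbf{U}^{n+1}|_{\mathbf{X}^s}^2.
\end{equation*}
Grönwall then gives, for $\epsilon$ small, a uniform bound $\sup_n\sup_{[0,T_{BP}/\epsilon]}|\mathbf{U}^n|_{\mathbf{X}^s}\le R$, with $T_{BP}$ depending only on $|(\zeta_0,V_0)|_{\mathbf{X}^s}$, $\hm$ and $\|b\|_{W^{2,\infty}(H^{s+1})}$, and independent of $\epsilon$. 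Along the way one must check that $h^{n+1}\ge\hm/2$ is propagated, which follows from the first equation, the bound on $b_t$, and the smallness of $T_{BP}$.

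Convergence is obtained by showing that $\mathbf{U}^{n+1}-\mathbf{U}^n$ is Cauchy in the lower-regularity space $C([0,T_{BP}/\epsilon];\mathbf{X}^{s-1})$: subtracting two consecutive Picard equations produces a linear system for the difference with the same symmetric structure but with source depending on $\mathbf{U}^n-\mathbf{U}^{n-1}$ through Lipschitz estimates of $\mathbf{S},\mathbf{A},\mathbf{B}$. The $\epsilon$ factors in front of the nonlinear terms, combined with the uniform $\mathbf{X}^s$ bound, give a contractive Grönwall estimate on $[0,T_{BP}/\epsilon]$. The limit $\mathbf{U}$ lies in $L^\infty([0,T_{BP}/\epsilon];\mathbf{X}^s)$ by weak-$*$ compactness, is continuous in time by a Bona--Smith-type argument, and solves \eqref{ES32}. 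Uniqueness follows from the same energy estimate applied to the difference of two solutions.

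The main obstacle will be the commutator and product estimates for the operator $\mt(\epsilon V\cdot)$ appearing in $\mathbf{A}(\mathbf{U})$: since $\mt$ contains $\mu\partial_x(h^3\partial_x\cdot)$, applying $\ls$ generates terms of order $s+1$ in $V$ that must be absorbed by the $\mu|V_x|_{H^s}^2$ part of the norm, and at the same time the dependence of $\mt$ on $h$ — hence on $b(t,\cdot)$ — forces one to carefully estimate $\partial_t\mathbf{S}$ using $b_t\in L^\infty(H^{s+1})$ and $\partial_t h^n$. Together with maintaining $h\ge\hm/2$ for all iterates and a $T_{BP}$ uniform in $\epsilon$, these are the technically delicate points; the rest follows the Israwi--Lannes symmetrizable hyperbolic template.
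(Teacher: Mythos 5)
Your proposal is correct and follows essentially the same route as the paper: symmetrize \eqref{BP} via the operator $\mt$ into $\mathbf{S}(\mathbf{U})\partial_t\mathbf{U}+\mathbf{A}(\mathbf{U})\partial_x\mathbf{U}=\mathbf{B}(\mathbf{U})$, run a Picard iteration on the linearized problem, and close with a $\Lambda^s$-energy estimate using the symmetry and coercivity of $\mathbf{S}$, commutator bounds for $s>3/2$, and Gronwall, exactly as in the paper's Proposition \ref{Energy estimate} and the subsequent appeal to the Taylor/Israwi--Lannes template. You also correctly single out the genuinely delicate points the paper handles, namely the commutators involving $\mt(\epsilon\vu\cdot)$ absorbed by the $\mu|V_x|_{H^s}^2$ part of the norm and the extra terms coming from $b_t$, $b_{tt}$ due to the moving bottom.
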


This result is a consequence of applying a strategy similar to the one described in \cite{taylor1997partial}, Chapter 16. Namely, the solution is built as the limit of the following Picard iterative scheme
\begin{equation}\label{picard}
	\left\{
	\begin{aligned}
		& \mathbf{S}(\mathbf{U}^n)\partial_t\mathbf{U}^{n+1}+\mathbf{A}(\mathbf{U}^n)\partial_x\mathbf{U}^{n+1}=\mathbf{B}(\mathbf{U}^n), \\
		& \mathbf{U}^{n+1}|_{t=0}=\mathbf{U}_0.
	\end{aligned}
	\right.
\end{equation}

Now, to study (\ref{picard}), we consider the initial value problem
	\begin{equation}
		\label{ES35}
		\left\{
		\begin{aligned}
			& \mathbf{S}(\mathbf{\uu})\partial_t\mathbf{U}+\mathbf{A}(\mathbf{\uu})\partial_x\mathbf{U}=\mathbf{B}(\textbf{\uu}), \\
			& \mathbf{U}|_{t=0}=\mathbf{U}_0,
		\end{aligned}
		\right.
	\end{equation}
where $\mathbf{\uu}=(\zetau,\vu)^T$ is a reference state. For this system we provide an energy estimate. Before giving the proof, let us recall two lemmas describing some important properties of the operator $\mt$. Proofs can be found in \cite{israwi2011large}.


\begin{lemma}
	Let $\zeta,b\in W^{1,\infty}(\R)$ such that \eqref{ES34} is satisfied. Then the operator
	\[ \mt:H^2(\R)\rightarrow L^2(\R) \]
	is well defined, one-to-one and onto.
\end{lemma}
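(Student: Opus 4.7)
The plan is to treat $\mt$ as a second order elliptic operator in divergence form with bounded, uniformly positive coefficients, and to handle the three assertions separately.

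For well-definedness, I would just check the mapping property directly. Expanding, $\mt w = hw - \frac{\mu}{3}\partial_x(h^3 \partial_x w)$. Since $h\in W^{1,\infty}$ we have $h^3\in W^{1,\infty}$, so for $w\in H^2$ the product $h w$ lies in $L^2$ and $h^3\partial_x w\in H^1$, hence $\partial_x(h^3\partial_x w)\in L^2$. Thus $\mt w\in L^2$.

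For injectivity, I would use an integration-by-parts/energy identity. If $\mt w=0$ with $w\in H^2$, pair against $w$ in $L^2$ to obtain
\[
\int_{\R} h\,w^2\,dx \;+\; \frac{\mu}{3}\int_{\R} h^3\,(\partial_x w)^2\,dx \;=\; 0.
\]
The boundary terms vanish because $w,\partial_x w\in L^2$ and $h^3\in L^{\infty}$ force $h^3 w\,\partial_x w\to 0$ along an appropriate sequence of intervals. Since $h\ge \hm>0$ by \eqref{ES34}, both integrals are nonnegative, forcing $w\equiv 0$.

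For surjectivity, I would pass through Lax--Milgram at the $H^1$ level and then bootstrap. Consider, for $f\in L^2(\R)$, the bilinear form
\[
a(w,\varphi) \;=\; \int_{\R} h\,w\,\varphi\,dx \;+\; \frac{\mu}{3}\int_{\R} h^3\,\partial_x w\,\partial_x \varphi\,dx
\]
on $H^1(\R)\times H^1(\R)$. Continuity follows from $h\in L^{\infty}$, and coercivity from the uniform bound $h\ge \hm$, which yields $a(w,w)\ge \min\{\hm,\frac{\mu}{3}\hm^3\}\|w\|_{H^1}^2$. The Riesz-type functional $\varphi\mapsto \int f\varphi$ is continuous on $H^1$, so Lax--Milgram produces a unique $w\in H^1$ with $a(w,\varphi)=\int f\varphi$ for all $\varphi\in H^1$. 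Testing against $\varphi\in C_c^{\infty}$ gives $-\frac{\mu}{3}\partial_x(h^3\partial_x w)=f-hw$ in the sense of distributions, and the right-hand side lies in $L^2$. Hence $h^3\partial_x w\in H^1$, and dividing by $h^3\ge \hm^3$ (which is Lipschitz in $x$ since $h\in W^{1,\infty}$) gives $\partial_x w\in H^1$, so $w\in H^2$, as required.

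The main subtlety I expect is the last regularity step: one must exploit both that $h^3$ is bounded below away from zero and that it lies in $W^{1,\infty}$, in order to pull the derivative out of $\partial_x(h^3\partial_x w)\in L^2$ and recover $\partial_{xx}w\in L^2$ without losing regularity. Everything else is a direct application of standard tools.
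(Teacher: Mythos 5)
Your proof is correct, and it is essentially the standard argument: the paper itself omits the proof and defers to \cite{israwi2011large}, where the analogous invertibility statement is established by exactly this route (direct verification of the mapping property, an energy identity for injectivity, and Lax--Milgram on $H^1$ followed by elliptic bootstrapping using $h\ge\hm>0$ and $h\in W^{1,\infty}$ to recover $w\in H^2$). The only point worth making explicit is that coercivity of your bilinear form, with constant $\min\{\hm,\tfrac{\mu}{3}\hm^3\}$, requires $\mu>0$, which is implicit in the paper's setting.
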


\begin{lemma}
	\label{Lemma 2Prop T}
	Let $d_0>\frac{1}{2}$ and $\zeta,b\in H^{d_0+1}(\R)$ be such that \eqref{ES34} is satisfied. 
	
	Then
	\begin{enumerate}
		\item $\forall s\ge0$, $|\mt^{-1}f|_{H^s}+\sqrt{\mu}|\partial_x\mt^{-1}f|_{H^s}\le C(\frac{1}{h_{min}},|\zeta,b|_{H^{s\vee d_0+1}})|f|_{H^s}$.
		\item $\forall s\ge0$, $\sqrt{\mu}|\mt^{-1}\partial_xf|_{H^s}\le C(\frac{1}{h_{min}},|\zeta,b|_{H^{s\vee d_0+1}})|f|_{H^s}$.
	\end{enumerate}
\end{lemma}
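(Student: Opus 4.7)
The plan is to exploit the self-adjoint, coercive structure of $\mt$. Writing $\mt w = hw - \frac{\mu}{3}\partial_x(h^3\partial_x w)$, one immediately gets, by integration by parts against a test function $v$,
\[
(\mt w, v)_{L^2}=\int_\R h w v\, dx + \frac{\mu}{3}\int_\R h^3 \partial_x w\,\partial_x v\, dx,
\]
which is symmetric. Combined with the lower bound $h\ge h_{min}$ from \eqref{ES34}, this gives the coercivity estimate
\[
(\mt w,w)_{L^2}\ge h_{min}\,|w|_{L^2}^2+\tfrac{\mu}{3}h_{min}^3\,|\partial_x w|_{L^2}^2.
\]
The first lemma (invertibility on $H^2\to L^2$) is then standard Lax--Milgram in the Hilbert space $H^1(\R)$ equipped with the $\mu$-weighted norm, and $\mt^{-1}$ maps $L^2$ into the completion of $H^1$ in that norm. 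The coefficients $h, h^3$ are controlled in $W^{1,\infty}$ by $|\zeta,b|_{H^{d_0+1}}$ via the Sobolev embedding $H^{d_0+1}\hookrightarrow W^{1,\infty}$ (which requires $d_0>1/2$), so all constants depend only on $h_{min}^{-1}$ and $|\zeta,b|_{H^{d_0+1}}$.

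For the base case $s=0$ of (1), I would set $w=\mt^{-1}f$ and test $\mt w=f$ against $w$. The coercivity bound and Cauchy--Schwarz immediately yield
\[
h_{min}|w|_{L^2}^2+\tfrac{\mu}{3}h_{min}^3|\partial_x w|_{L^2}^2\le |f|_{L^2}|w|_{L^2},
\]
whence $|w|_{L^2}+\sqrt{\mu}\,|\partial_x w|_{L^2}\le C(h_{min}^{-1})|f|_{L^2}$. For the base case of (2), set $w=\mt^{-1}\partial_x f$, test $\mt w=\partial_x f$ against $w$ and integrate by parts on the right to obtain $-\int f\,\partial_x w\,dx$. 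Young's inequality with parameter $\sqrt{\mu}$ absorbs $\tfrac{\mu}{6}h_{min}^3|\partial_x w|_{L^2}^2$ from the left, leaving $h_{min}|w|_{L^2}^2\le C(h_{min}^{-1})\,\mu^{-1}|f|_{L^2}^2$, i.e.\ $\sqrt{\mu}\,|w|_{L^2}\le C|f|_{L^2}$. The crucial point is that the $\mu$ weighting the second-order term is precisely what turns a ``borrowed derivative'' into a factor $\mu^{-1/2}$, matched exactly by the $\sqrt{\mu}$ on the left of the stated estimate.

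For general $s\ge 0$ I would apply $\Lambda^s=(1-\partial_x^2)^{s/2}$ to $\mt w=f$ (resp.\ $\mt w=\partial_x f$) and write
\[
\mt(\Lambda^s w)=\Lambda^s f+[\mt,\Lambda^s]w,\qquad [\mt,\Lambda^s]w=[h,\Lambda^s]w-\tfrac{\mu}{3}\bigl[\partial_x(h^3\partial_x\cdot),\Lambda^s\bigr]w.
\]
Testing against $\Lambda^s w$ and repeating the argument above, the result will follow provided the commutator is controlled by
\[
\bigl|[\mt,\Lambda^s]w\bigr|_{L^2}\le C\bigl(h_{min}^{-1},|\zeta,b|_{H^{s\vee d_0+1}}\bigr)\bigl(|w|_{H^{s}}+\sqrt{\mu}|\partial_x w|_{H^{s}}\bigr)\cdot \text{(subcritical factor)}.
\]
This is where the Kato--Ponce commutator estimate and Moser product estimates enter: for the zeroth-order piece one has the standard bound $|[h,\Lambda^s]w|_{L^2}\lesssim |\partial_x h|_{L^\infty}|w|_{H^{s-1}}+|h|_{H^s}|w|_{L^\infty}$, and for the second-order piece one distributes one $\partial_x$ onto $\Lambda^s w$ (noting $\partial_x$ commutes with $\Lambda^s$) and applies Kato--Ponce to $[h^3,\Lambda^s]$, which produces a term of the form $\tfrac{\mu}{3}|h^3|_{H^s\vee W^{1,\infty}}\cdot \sqrt{\mu}|\partial_x w|_{H^s}\cdot\sqrt{\mu}$; the two $\sqrt{\mu}$ factors combine with the remaining $\sqrt{\mu}|\partial_x(\Lambda^s w)|_{L^2}$ from the coercivity to be absorbed on the left.

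The main obstacle will be this second commutator: one must track the $\mu$-scaling carefully so that every instance of $\partial_x$ produced is paired with a $\sqrt{\mu}$, otherwise the estimate would degenerate as $\mu\to 0$. A clean way around this is to conjugate by $h^{1/2}$ (or more generally to work in the Hilbert space $H^1$ endowed with the inner product $\int h u v + \tfrac{\mu}{3}\int h^3 \partial_x u\partial_x v$), so that the commutator estimates have no loss in $\mu$ and produce tame dependence on $|\zeta,b|_{H^{s\vee d_0+1}}$ through standard nonlinear Moser estimates on the coefficients $h$ and $h^3$. Once the commutator bound is in hand, both (1) and (2) for general $s$ follow by the same energy argument used in the base case.
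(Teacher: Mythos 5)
Your proposal is correct and is essentially the argument behind this lemma: the paper itself does not prove it but quotes it from \cite{israwi2011large}, where exactly your route is taken --- coercivity of the $\mu$-weighted bilinear form $(\mathcal{T}w,v)=\int hwv+\frac{\mu}{3}\int h^3\partial_xw\,\partial_xv$ together with Lax--Milgram for invertibility and the $s=0$ bounds (including your duality trick for $\mathcal{T}^{-1}\partial_x$), then conjugation by $\Lambda^s$ with Kato--Ponce-type commutator estimates, tracking one $\sqrt{\mu}$ per derivative, for general $s\ge0$. The only cosmetic fix needed is to replace $h$ by $h-1$ (and $h^3$ by $h^3-1$) inside the $H^s$ norms in your commutator bounds, since $h\to1$ at infinity and only $h-1=\epsilon(\zeta-b)$ belongs to $H^s$.
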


Finally, we present the energy estimate for the linearized system (\ref{ES35}). The proof follows the ideas presented in \cite{lannes2013water} for the Saint-Venant equation, but in our case, the bottom also depends on the time variable.

\begin{proposition}\label{pe}
	\label{Energy estimate}
	Let $s>3/2$, $T>0$, and $b\in W^{2,\infty}((0,\infty);H^{s+1})$. Let also $\mathbf{\uu}=(\zetau,\vu)^T\in \mathbf{X}_{T}^s$ be such that $\partial_t\mathbf{\uu}\in \mathbf{X}_{T}^{s-1}$ satisfying the condition \eqref{ES34} on $[0,T/\epsilon]$. Then for any $\mathbf{U}_0\in \mathbf{X}^s$, there exists a unique solution $\mathbf{U}=(\zeta,V)^T\in \mathbf{X}_{T}^s$ to \eqref{ES35} such that for any $t\in[0,T/\epsilon]$
	\begin{align}
		\label{ES35_1}
		|\mathbf{U}|_{\mathbf{X}^s}^2\le e^{\epsilon tC(\mathbf{\uu})}|\mathbf{U}_0|_{\mathbf{X}^s}^2+\epsilon C(\mathbf{\uu},T)\int_0^t(|b_t|_{H^s}^2+|b_{tt}|_{H^{s+1}}^2)d\tau.
	\end{align}
\end{proposition}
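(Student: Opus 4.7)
The plan is to prove the a priori estimate \eqref{ES35_1} for smooth solutions of the linear system \eqref{ES35}, and then to obtain existence via a standard Friedrichs mollification / Galerkin approximation, with uniqueness deduced from the same estimate applied at the $L^2$ level to the difference of two solutions. The natural energy associated with the symmetric structure \eqref{ES33} is
\begin{equation*}
E_s(\mathbf{U})^2 := (\Lambda^s \mathbf{U},\, \mathbf{S}(\uu)\Lambda^s \mathbf{U})_{L^2} = |\Lambda^s \zeta|_{L^2}^2 + (\Lambda^s V,\, \mt(\uu)\Lambda^s V)_{L^2},
\end{equation*}
with $\Lambda=(1-\partial_x^2)^{1/2}$. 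Integrating by parts in the definition of $\mt$ gives $(\Lambda^s V,\mt \Lambda^s V) = \int \hu (\Lambda^s V)^2 + (\mu/3)\int \hu^3 (\partial_x\Lambda^s V)^2$, so that under the lower bound \eqref{ES34} and the regularity of $\uu$, $E_s$ is equivalent to $|\mathbf{U}|_{\mathbf{X}^s}$ with constants depending only on $\hm$ and on Sobolev norms of $\zetau,\vu,b$.

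\textbf{Differentiating the energy.} The main computation is to apply $\Lambda^s$ to \eqref{ES35}, test with $\Lambda^s \mathbf{U}$, and use the symmetry of $\mathbf{S}(\uu)$ and $\mathbf{A}(\uu)$ to integrate by parts in $t$ and in $x$. This yields, schematically,
\begin{equation*}
\tfrac{1}{2}\tfrac{d}{dt} E_s(\mathbf{U})^2 = \tfrac{1}{2}(\Lambda^s \mathbf{U},(\partial_t \mathbf{S}+\partial_x \mathbf{A})\Lambda^s \mathbf{U}) - ([\Lambda^s,\mathbf{S}]\partial_t \mathbf{U},\Lambda^s \mathbf{U}) - ([\Lambda^s,\mathbf{A}]\partial_x \mathbf{U},\Lambda^s \mathbf{U}) + (\Lambda^s \mathbf{B},\Lambda^s \mathbf{U}).
\end{equation*}
The explicit form \eqref{ES33} shows that $\partial_t \mathbf{S}$, $\partial_x \mathbf{A}$, and all commutators carry an $\epsilon$ factor once the parts not involving $\epsilon$ (namely the off-diagonal blocks $h$ and $\mt$ acting on $\zeta$-$V$ coupling) are treated by symmetry, since only the transport coefficient is $\epsilon V$. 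Kato--Ponce commutator estimates together with Lemma \ref{Lemma 2Prop T}, applied to $[\Lambda^s,\mt(\uu)]$ and $\mt(\uu)^{-1}\partial_x$, bound all commutators by $\epsilon\, C(\uu)\, E_s^2$, where the extra $\sqrt{\mu}$--weighted half-derivative recovered from the $\mt$-part is precisely what feeds the $\mu |V_x|_{H^s}^2$ piece of $|\mathbf{U}|_{\mathbf{X}^s}$. The source contribution splits as $(\Lambda^s \mathbf{B},\Lambda^s \mathbf{U}) \lesssim \epsilon\, C(\uu,T)\bigl(|b_t|_{H^s} + |b_{tt}|_{H^{s+1}}\bigr)\, E_s$ by Cauchy--Schwarz, the tame product estimate in $H^s$ ($s>3/2$), and Lemma \ref{Lemma 2Prop T} to invert the $\mt$ appearing in the second line of $\mathbf{S}$ (so that $\mt^{-1}(h\, b_{ttx})$ is controlled in $H^s$ by $|b_{tt}|_{H^{s+1}}$, with $\sqrt{\mu}$ gain).

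\textbf{Gronwall, existence and uniqueness.} Assembling the bounds gives the differential inequality $\tfrac{d}{dt} E_s^2 \le \epsilon C(\uu) E_s^2 + \epsilon C(\uu,T)(|b_t|_{H^s}^2+|b_{tt}|_{H^{s+1}}^2)$, and Gronwall over $[0,T/\epsilon]$ produces \eqref{ES35_1}, with $T_{BP}$ bounded below independently of $\epsilon$ because all time-growth constants are $\epsilon$-linear. For existence I would mollify the coefficients and the source by Friedrichs mollifiers; the mollified system becomes an ODE in $H^s$ (the zeroth-order block) coupled with an elliptic inversion of $\mt$ (uniformly invertible by Lemma \ref{Lemma 2Prop T}), hence locally well-posed in time, and the above energy estimate is uniform in the mollification parameter, so a weak$-\ast$ compactness/strong convergence argument produces a limit solving \eqref{ES35} in $\mathbf{X}^s_T$. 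Uniqueness is obtained by writing the equation for the difference of two solutions, which is of the same symmetric form with zero source, and applying an $L^2$ version of the estimate. The main technical obstacle is the careful handling of the commutator $[\Lambda^s,\mt(\uu)]$: because $\mt$ couples $V$ with $h^3 \partial_x V$, standard commutator lemmas must be combined with the mapping properties in Lemma \ref{Lemma 2Prop T} so that no derivative is lost and the $\mu$-weighted part of the $\mathbf{X}^s$ norm is correctly absorbed.
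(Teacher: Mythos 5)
Your proposal is correct and follows essentially the same route as the paper: the symmetrizer energy $(\mathbf{S}(\uu)\ls\mathbf{U},\ls\mathbf{U})$, its equivalence with $|\mathbf{U}|_{\mathbf{X}^s}^2$ via \eqref{ES34}, commutator estimates combined with Lemma \ref{Lemma 2Prop T} for the $\mt$-blocks, a Gronwall argument with $\epsilon$-linear growth, and regularization plus the $L^2$ difference estimate for existence and uniqueness. The only point the paper makes explicit that you leave implicit is the substitution $\partial_t\mathbf{U}=\mathbf{S}^{-1}(\uu)(\mathbf{B}(\uu)-\mathbf{A}(\uu)\partial_x\mathbf{U})$ inside the commutator $[\ls,\mathbf{S}(\uu)]\partial_t\mathbf{U}$, which is needed because $\partial_t\mathbf{U}$ is not itself controlled by the energy.
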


%

\begin{remark}\label{r2}
Due to the approach of the inverse problem, as an optimal control problem, it is important to note that no assumption has been made on the size of $\epsilon$. On the contrary, thanks to the fact that $T_{BP}$ is uniformly bounded from below with respect to $\epsilon$, it allows one to conclude that if any smallness assumption is made on $\epsilon$, the existence time is large, of order $O(1/\epsilon)$.
\end{remark}

\begin{proof}[Proof of Proposition \ref{pe}]
	Let
	\[ \mathbf{\uu}\in C([0,T/\epsilon];\mathbf{X}^s), \ \partial_t\mathbf{\uu}\in C([0,T/\epsilon];\mathbf{X}^{s-1}). \]
	
	Let 
	\[ \Lambda=(1-\partial_x^2)^{1/2}=(1+|D|^2)^{1/2} \]
	be the fractional derivative where $D=\frac{1}{i}\partial_x$ stands for the Fourier multiplier. We use notation $\tu=\mathcal{T}(\mathbf{\uu})$.
	
	By taking the $\Lambda^s$ operator in \eqref{ES35} and using the definition of the commutator we obtain
	\begin{align}
		\label{ES36}
		S(\mathbf{\uu})\partial_t\ls \mathbf{U}+\mathbf{A}(\mathbf{\uu})\partial_x\ls \mathbf{U}&=\ls B(\mathbf{\uu})-[\ls,\mathbf{S}(\mathbf{\uu})]\partial_t\mathbf{U}-[\ls,\mathbf{A}(\mathbf{\uu})]\partial_x\mathbf{U} \\
		&=\ls \mathbf{B}(\mathbf{\uu})-[\ls,\mathbf{S}(\mathbf{\uu})]\mathbf{S}^{-1}(\mathbf{\uu})\mathbf{B}(\mathbf{\uu}) \notag\\
		&\phantom{=}+[\ls,\mathbf{S}(\mathbf{\uu})]\mathbf{S}^{-1}(\mathbf{\uu})\mathbf{A}(\mathbf{\uu})\partial_x\mathbf{U}-[\ls,\mathbf{A}(\mathbf{\uu})]\partial_x\mathbf{U}. \notag
	\end{align}
	
	Since $\mathbf{S}$ is symmetric, we have
	\begin{align*}
		\frac{1}{2}\frac{d}{dt}(\mathbf{S}(\mathbf{\uu})\ls \mathbf{U},\ls \mathbf{U})&=\frac{1}{2}([\partial_t,\mathbf{S}(\mathbf{\uu})]\ls \mathbf{U},\ls \mathbf{U})+(\mathbf{S}(\mathbf{\uu})\partial_t\ls\mathbf{U},\ls \mathbf{U}) \\
		&=\frac{1}{2}([\partial_t,\tu]\ls V,\ls V)+(\mathbf{S}(\mathbf{\uu})\partial_t\ls \mathbf{U},\ls \mathbf{U}).
	\end{align*}
	
	Then, by replacing \eqref{ES36}, from last equality we obtain
	\begin{multline*}
		\frac{1}{2}\frac{d}{dt}(\mathbf{S}(\mathbf{\uu})\ls \mathbf{U},\ls \mathbf{U})=\frac{1}{2}([\partial_t,\tu]\ls V,\ls V)+(\ls \mathbf{B}(\mathbf{\uu}),\ls \mathbf{U}) \\
		-([\ls,\mathbf{S}(\mathbf{\uu})]\mathbf{S}^{-1}(\mathbf{\uu})\mathbf{B}(\mathbf{\uu}),\ls \mathbf{U})+([\ls,\mathbf{S}(\mathbf{\uu})]\mathbf{S}^{-1}(\mathbf{\uu})\mathbf{A}(\mathbf{\uu})\partial_x\mathbf{U},\ls \mathbf{U}) \\
		-([\ls,\mathbf{A}(\mathbf{\uu})]\partial_x\mathbf{U},\ls \mathbf{U})-(\mathbf{A}(\mathbf{\uu})\partial_x\ls \mathbf{U},\ls \mathbf{U}).
	\end{multline*}
	
	Thus, by the symmetry of $\mathbf{A}$,
	\begin{multline*}
		\frac{1}{2}\frac{d}{dt}(\mathbf{S}(\mathbf{\uu})\ls \mathbf{U},\ls \mathbf{U})=\frac{1}{2}([\partial_t,\tu]\ls V,\ls V)+(\ls \mathbf{B}(\mathbf{\uu}),\ls \mathbf{U}) \\
		-([\ls,\mathbf{S}(\mathbf{\uu})]\mathbf{S}^{-1}(\mathbf{\uu})\mathbf{B}(\mathbf{\uu}),\ls \mathbf{U})+([\ls,\mathbf{S}(\mathbf{\uu})]\mathbf{S}^{-1}(\mathbf{\uu})\mathbf{A}(\mathbf{\uu})\partial_x\mathbf{U},\ls \mathbf{U}) \\
		-([\ls,\mathbf{A}(\mathbf{\uu})]\partial_x\mathbf{U},\ls \mathbf{U})+\frac{1}{2}([\partial_x,\mathbf{A}(\mathbf{\uu})]\ls \mathbf{U},\ls \mathbf{U}).
	\end{multline*}
	
	We are going to estimate now each term of the right--hand side of this last equality in terms of $|\mathbf{U}|_{\mathbf{X}^s}$, taking into account the expressions for $\mathbf{S}$ and $\mathbf{A}$ from \eqref{ES33}.
	
	\subsection*{Estimate of $([\partial_x,\mathbf{A}(\mathbf{\uu})]\ls \mathbf{U},\ls \mathbf{U})$}
	
	\begin{align*}
		([\partial_x,\mathbf{A}(\mathbf{\uu})]\ls \mathbf{U},\ls \mathbf{U})&=(\epsilon\vu_x\ls\zeta,\ls\zeta)+(\hu_x\ls\zeta,\ls\zeta) \\
		&\phantom{=}+(\hu_x\ls V,\ls V)+([\partial_x,\tu(\epsilon\vu\cdot)]\ls V,\ls V),
	\end{align*}
	with $\hu=1+\epsilon(\zetau-b)$. 
	
	From the definition of the operator $\mt$, one has
	\begin{align*}
		([\partial_x,\tu(\epsilon\vu\cdot)]\ls V,\ls V)&=([\partial_x,(\epsilon\vu\hu)]\ls V,\ls V)-\frac{\mu}{3}([\partial_x,\partial_x(\hu^3\partial_x(\epsilon\vu\cdot))]\ls V,\ls V) \\
		&=([\partial_x,(\epsilon\vu\hu)]\ls V,\ls V)+\frac{\mu}{3}([\partial_x,\hu^3]\partial_x(\epsilon\vu\ls V),\ls V_x).
	\end{align*}
	
	Therefore
	\begin{equation*}
		([\partial_x,\mathbf{A}(\mathbf{\uu})]\ls \mathbf{U},\ls \mathbf{U})\le \epsilon C(|\mathbf{\uu},b|_{H^{d_0}})|\mathbf{U}|_{\mathbf{X}^s}^2.
	\end{equation*}
	
	\subsection*{Estimate of $([\ls,\mathbf{A}(\mathbf{\uu})]\partial_x\mathbf{U},\ls \mathbf{U})$}
	
	\begin{align*}
		([\ls,\mathbf{A}(\mathbf{\uu})]\partial_x\mathbf{U},\ls \mathbf{U})&=([\ls,\epsilon\vu]\zeta_x,\ls\zeta)+([\ls,\hu]V_x,\ls\zeta) \\
		&\phantom{=}+([\ls,\hu]\zeta_x,\ls V)+([\ls,\tu(\epsilon\vu)]V_x,\ls V).
	\end{align*}
	
	From the definition of $\mt$
	\begin{align*}
		([\ls,\tu(\epsilon\vu)]V_x,\ls V)&=([\ls,\epsilon\hu\vu]V_x,\ls V)-\frac{\mu}{3}([\ls,\partial_x(\hu^3\partial_x(\epsilon\vu\cdot))]V_x,\ls V).
	\end{align*}
	
	Since that
	\begin{align*}
		[\ls,\partial_x(\hu^3\partial_x(\epsilon\vu\cdot))]V_x&=\ls(\partial_x(\hu^3\partial_x(\epsilon\vu V_x)))-\partial_x(\hu^3\partial_x(\epsilon\vu\ls V_x)) \\
		&=\partial_x\left(\ls(\hu^3\partial_x(\epsilon\vu V_x))-\hu^3\partial_x\ls(\epsilon\vu V_x)+\hu^3\partial_x([\ls,\epsilon\vu]V_x)\right) \\
		&=\partial_x\left([\ls,\hu^3]\partial_x(\epsilon\vu V_x)+\hu^3\partial_x([\ls,\epsilon\vu]V_x)\right),
	\end{align*}
	we have
	\begin{align*}
		-\frac{\mu}{3}([\ls,\partial_x(\hu^3\partial_x&(\epsilon\vu\cdot))]V_x,\ls V) \\
		&=\frac{\mu}{3}([\ls,\hu^3]\partial_x(\epsilon\vu V_x),\ls V_x)+\frac{\mu}{3}(\partial_x([\ls,\epsilon\vu]V_x),\hu^3\ls V_x) \\
		&=\frac{\mu}{3}([\ls,\hu^3]\partial_x(\epsilon\vu V_x),\ls V_x)+\frac{\mu}{3}([\ls,\epsilon\vu_x]V_x,\hu^3\ls V_x) \\
		&\phantom{=}+\frac{\mu}{3}([\ls,\epsilon\vu]V_{xx},\hu^3\ls V_x).
	\end{align*}
	
	Then one finally has
	\begin{equation*}
		([\ls,\mathbf{A}(\mathbf{\uu})]\partial_x\mathbf{U},\ls\mathbf{U})\le \epsilon C(|\mathbf{\uu},b|_{H^{s\vee d_0+1}},\mu|\vu_x|_{H^s})|\mathbf{U}|_{\mathbf{X}^s}^2.
	\end{equation*}
	
	\subsection*{Estimate of $([\ls,\mathbf{S}(\mathbf{\uu})]\mathbf{S}^{-1}(\mathbf{\uu})\mathbf{A}(\mathbf{\uu})\partial_x\mathbf{U},\ls \mathbf{U})$}
	
	We have that
	\[ 
	\mathbf{S}^{-1}
	=
	\begin{pmatrix}
		1 & 0 \\ 0 & \mt^{-1}
	\end{pmatrix}.
	\]
	
	Then using the definition of $\mathbf{A}$ given in \eqref{ES33}
	\begin{align*}
		([\ls,\mathbf{S}(\mathbf{\uu})]\mathbf{S}^{-1}(\mathbf{\uu})\mathbf{A}(\mathbf{\uu})\partial_x\mathbf{U},\ls \mathbf{U})&=([\ls,\tu]\tu^{-1}(\hu\zeta_x),\ls V) \\
		&\phantom{=}+([\ls,\tu]\epsilon \vu V_x,\ls V).
	\end{align*}
	
	By the definition of the operator $\mt$
	\begin{align*}
		([\ls,\tu]\tu^{-1}\hu\zeta_x,\ls V)&=([\ls,\hu]\tu^{-1}\hu\zeta_x,\ls V)-\frac{\mu}{3}([\ls,\partial_x(\hu^3\partial_x\cdot)]\tu^{-1}\hu\zeta_x,\ls V) \\
		&=([\ls,\hu]\tu^{-1}\hu\zeta_x,\ls V)-\frac{\mu}{3}(\partial_x[\ls,\hu^3]\partial_x\tu^{-1}\hu\zeta_x,\ls V) \\
		&=([\ls,\hu]\tu^{-1}\hu\zeta_x,\ls V)+\frac{\mu}{3}([\ls,\hu^3]\partial_x\tu^{-1}\hu\zeta_x,\ls V_x).
	\end{align*}
	
	On the other hand
	\begin{align*}
		([\ls,\tu]\epsilon \vu V_x,\ls V)&=([\ls,\hu]\epsilon \vu V_x,\ls V)-\frac{\mu}{3}([\ls,\partial_x(\hu^3\partial_x\cdot)]\epsilon \vu V_x,\ls V) \\
		&=([\ls,\hu]\epsilon \vu V_x,\ls V)+\frac{\mu}{3}([\ls,\hu^3]\partial_x(\epsilon \vu V_x),\ls V_x).
	\end{align*}
	
	Therefore by Lemma \ref{Lemma 2Prop T}
	\begin{equation*} ([\ls,\mathbf{S}(\mathbf{\uu})]\mathbf{S}^{-1}(\mathbf{\uu})\mathbf{A}(\mathbf{\uu})\partial_x\mathbf{U},\ls \mathbf{U})\le \epsilon C(|\mathbf{\uu},b|_{H^{s\vee d_0+1}})|\mathbf{U}|_{\mathbf{X}^s}^2.
	\end{equation*}
	
	\subsection*{Estimate of $([\ls,\mathbf{S}(\mathbf{\uu})]\mathbf{S}^{-1}(\mathbf{\uu})\mathbf{B}(\mathbf{\uu}),\ls \mathbf{U})$}
	
	\begin{align*}
		([\ls,\mathbf{S}(\mathbf{\uu})]\mathbf{S}^{-1}(\mathbf{\uu})\mathbf{B}(\mathbf{\uu}),\ls \mathbf{U})=-\frac{\epsilon}{2}([\ls,\tu]\tu^{-1}(\hu b_{ttx}),\ls V).
	\end{align*}
	
	Then, by the definition of the operator $\mt$
	\begin{align*}
		-\frac{\epsilon}{2}([\ls,\tu]\tu^{-1}(\hu b_{ttx}),\ls V)&=-\frac{\epsilon}{2}([\ls,\hu]\tu^{-1}(\hu b_{ttx}),\ls V) \\
		&\phantom{=}+\frac{\epsilon\mu}{6}([\ls,\partial_x(\hu^3\partial_x\cdot)]\tu^{-1}(\hu b_{ttx}),\ls V) \\
		&=-\frac{\epsilon}{2}([\ls,\hu]\tu^{-1}(\hu b_{ttx}),\ls V) \\
		&\phantom{=}-\frac{\epsilon\mu}{6}([\ls,\hu^3]\partial_x\tu^{-1}(\hu b_{ttx}),\ls V_x).
	\end{align*}
	
	Therefore from Lemma \ref{Lemma 2Prop T} and the commutator properties (\cite[Lemma 4.6]{alvarez2008nash})
	\begin{align*}
		|[\ls,\hu^3]\partial_x\tu^{-1}(\hu b_{ttx})|_{L^2}&\le C|\hu-1|_{H^s}|\partial_x\tu^{-1}(\hu b_{ttx})|_{H^{s-1}} \\
		&\le C(|\hu-1|_{H^{s\vee d_0+1}})|\hu b_{ttx}|_{H^s} \\
		&\le C(|\uu,b|_{H^{s\vee d_0+1}})|b_{tt}|_{H^{s+1}},
	\end{align*}
	namely
	\begin{equation*}
		([\ls,\mathbf{S}(\mathbf{\uu})]\mathbf{S}^{-1}(\mathbf{\uu})\mathbf{B}(\mathbf{\uu}),\ls \mathbf{U})\le \epsilon C(|\mathbf{\uu},b|_{H^{s\vee d_0+1}})(|\mathbf{U}|_{\mathbf{X}^s}^2+|b_{tt}|_{H^{s+1}}^2).
	\end{equation*}
	
	\subsection*{Estimate of $(\ls \mathbf{B}(\mathbf{\uu}),\ls \mathbf{U})$}
	
	\begin{align*}
		(\ls \mathbf{B}(\mathbf{\uu}),\ls \mathbf{U})&=([\ls,\epsilon b_x]\vu+\epsilon b_x\ls\vu,\ls\zeta)+(\ls b_t,\ls\zeta) \\
		&\phantom{=}-\frac{\epsilon}{2}([\ls,\hu]b_{ttx}+\hu\ls b_{ttx},\ls V).
	\end{align*}
	
	Then
	\begin{equation*}
		(\ls \mathbf{B}(\mathbf{\uu}),\ls \mathbf{U})\le \epsilon C(|\mathbf{\uu}|_{H^{s\vee d_0+1}},|b|_{H^{s+1}})(|\mathbf{U}|_{\mathbf{X}^s}^2+|b_{tt}|_{H^{s+1}}^2+\frac{1}{\epsilon}|b_t|_{H^s}^2).
	\end{equation*}
	
	\subsection*{Estimate of $([\partial_t,\tu]\ls V,\ls V)$}
	
	By the definition of the operator $\mt$
	\begin{align*}
		([\partial_t,\tu]\ls V,\ls V)&=(\partial_t\hu\ls V,\ls V)-\frac{\mu}{3}(\partial_x(\partial_t\hu^3\ls V_x),\ls V) \\
		&=(\partial_t\hu\ls V,\ls V)+\frac{\mu}{3}(\partial_t\hu^3\ls V_x,\ls V_x).
	\end{align*}
	
	Therefore
	\begin{equation*}
		([\partial_t,\tu]\ls V,\ls V)\le \epsilon C(|\mathbf{\uu},b|_{W^{1,\infty}(H^{d_0})})|\mathbf{U}|_{\mathbf{X}^s}^2.
	\end{equation*}
	
	From the definition of $\mathbf{S}$ given in \eqref{ES33} and by \eqref{ES34}
	\begin{equation}
		\label{ES37}
		|\mathbf{U}|_{\mathbf{X}^s}^2\le C\left(\frac{1}{\hm}\right)(\mathbf{S}(\mathbf{\uu})\ls \mathbf{U},\ls \mathbf{U})
	\end{equation}
	and
	\begin{equation}
		\label{ES38}
		(\mathbf{S}(\mathbf{\uu})\ls \mathbf{U},\ls \mathbf{U})\le C(|\hu|_{L^\infty},|\hu_x|_{L^\infty})|\mathbf{U}|_{\mathbf{X}^s}^2.
	\end{equation}
	
	Then, summarizing all the estimates above together with \eqref{ES37} 
	\begin{equation}
		\label{ES39}
		\frac{d}{dt}(\mathbf{S}(\mathbf{\uu})\ls \mathbf{U},\ls \mathbf{U})\le\epsilon c_1(\mathbf{\uu})((\mathbf{S}\ls \mathbf{U},\ls \mathbf{U})+\frac{1}{\epsilon}|b_t|_{H^s}^2+|b_{tt}|_{H^{s+1}}^2),
	\end{equation}
	with $c_1(\mathbf{\uu})=C(\frac{1}{\hm},|\mathbf{\uu}|_{\mathbf{X}_{T}^s},|\partial_t\mathbf{\uu}|_{\mathbf{X}_{T}^{s-1}},|b|_{L^\infty(H^{s+1})},|b|_{W^{1,\infty}(H^{d_0})})$.
	
	By \eqref{ES39}, for any $\lambda\in\R$
	\begin{align*}
		e^{\epsilon\lambda t}&\frac{d}{dt}(e^{-\epsilon\lambda t}(\mathbf{S}(\mathbf{\uu})\ls \mathbf{U},\ls \mathbf{U})) \\
		&=-\epsilon\lambda(\mathbf{S}(\mathbf{\uu})\ls \mathbf{U},\ls \mathbf{U})+\frac{d}{dt}(\mathbf{S}(\mathbf{\uu})\ls \mathbf{U},\ls \mathbf{U}) \\
		&\le (\epsilon c_1(\mathbf{\uu})-\epsilon\lambda)(\mathbf{S}(\mathbf{\uu})\ls \mathbf{U},\ls \mathbf{U})+\epsilon c_1(\mathbf{\uu})(\frac{1}{\epsilon}|b_t|_{H^s}^2+|b_{tt}|_{H^{s+1}}^2).
	\end{align*}
	
	Then, if $\lambda=\lambda(c_1(\mathbf{\uu}))$ is sufficiently large, by \eqref{ES38} and the last inequality
	\begin{align*}
		e^{\epsilon\lambda t}\frac{d}{dt}(e^{-\epsilon\lambda t}(\mathbf{S}(\mathbf{\uu})\ls \mathbf{U},\ls \mathbf{U}))\le\epsilon c_1(\mathbf{\uu})(\frac{1}{\epsilon}|b_t|_{H^s}^2+|b_{tt}|_{H^{s+1}}^2).
	\end{align*}
	
	Integrating in $0\le t\le T/\epsilon$
	\begin{equation*}
		(\mathbf{S}(\mathbf{\uu})\ls \mathbf{U},\ls \mathbf{U})
		\le e^{\epsilon\lambda t}(\mathbf{S}(\mathbf{\uu})\ls \mathbf{U},\ls \mathbf{U})|_{t=0}+\epsilon c_1(\mathbf{\uu})\int_0^te^{\epsilon\lambda(t-\tau)}(\frac{1}{\epsilon}|b_t|_{H^s}^2+|b_{tt}|_{H^{s+1}}^2)d\tau,
	\end{equation*}
	or equivalently, by \eqref{ES37}, \eqref{ES38}
	\begin{align}
		\label{ES310}
		|\mathbf{U}|_{\mathbf{X}^s}^2\le e^{\epsilon\lambda t}|\mathbf{U}_0|_{\mathbf{X}^s}^2+\epsilon c_2(\mathbf{\uu})\int_0^t(\frac{1}{\epsilon}|b_t|_{H^s}^2+|b_{tt}|_{H^{s+1}}^2)d\tau,
	\end{align}
	with $c_2(\mathbf{\uu})=C(c_1(\mathbf{\uu}),T)$.
	
The fact that $T_{BP}$ is bounded from below by some $T>0$ independent of $\epsilon\in(0,1)$, follows from the analysis above (see \cite{alinhac2022operateurs}).
\end{proof}

As was mentioned at the beginning of this section, existence and uniqueness of solutions for system \eqref{BP}, follow classical ideas based on the energy estimate provided in Proposition \ref{Energy estimate}. Indeed, to prove existence of solutions for system \eqref{ES35} one regularizes the operators $\mathbf{S}$ and $\mathbf{A}$, to apply the Cauchy--Lipschitz Theorem for ODE. To prove existence of solutions for \eqref{BP} one uses the above energy estimate and proceed as in \cite{taylor1997partial}, chapter 16 (see also \cite{israwi2011large}).

\section{First order necessary condition and optimality system}\label{a1}

We now proceed to derive a system of first order necessary optimality conditions for problem \eqref{P}. This is done in a straightforward manner by studying the Gateaux derivative of the functional $J(b)$. From \eqref{BP}, we are considering system:
\begin{equation}
	\label{s1}
	\left\{
	\begin{aligned}
		&r_t+(hV)_x=0, \\
		& V_t-\frac{\mu}{3h}\partial_x(h^3V_{tx})+r_x+\epsilon VV_x=-b_x-\frac{\epsilon}{2}b_{ttx},
	\end{aligned}
	\right.
\end{equation}
with $r=\zeta-b$, $h=1+\epsilon r$.

We denote $\Omega=\{(x,y)\in\R^2:\epsilon b\le y\le\epsilon\zeta\}$. Given $\lambda>0$ and $b\in C^1(W^{1,\infty}(\R))$, we define the domain $\Omega_\lambda=\Omega+\lambda b$. For $U:=(\zeta,V)^T$ we use notation $\delta U=\lim_{\lambda\rightarrow0}\frac{U^\lambda-U}{\lambda}$ where $U^\lambda$ is the solution of \eqref{s1} in the domain $\Omega_\lambda$.

Differentiating formally in system \eqref{s1} with respect to bottom variations, we obtain
\begin{equation}
	\left\{
	\label{s2}
	\begin{aligned}
		&\delta r_t+\partial_x(\delta hV+h\delta V)=0, \\
		& \delta V_t-\frac{\mu}{3}\delta\left(\frac{1}{h}\partial_x(h^3V_{tx})\right)+\delta r_x+\epsilon\delta(VV_x)=-\delta b_x-\frac{\epsilon}{2}\delta b_{ttx}.
	\end{aligned}
	\right.
\end{equation}

Multiplying first and second equations in \eqref{s2} by test functions $p=p(t,x)$ and $q=q(t,x)$ respectively, such that $\lim_{|x|\rightarrow\infty}p(t,x)=0=\lim_{|x|\rightarrow\infty}q(t,x)$, and integrating in $x$
\begin{equation}
	\left\{
	\label{s3}
	\begin{aligned}
		&\int_0^T\int_\R\delta r_tp-\int_0^T\int_\R(\delta hV+h\delta V)p_x=0, \\
		&\int_0^T\int_\R\delta V_tq-\frac{\mu}{3}\int_0^T\int_\R\delta\left(\frac{1}{h}\partial_x(h^3V_{tx})\right)q-\int_0^T\int_\R\delta r q_x \\
		&+\epsilon\int_0^T\int_\R\delta VV_xq-\epsilon\int_0^T\int_\R\delta V(Vq)_x=\int_0^T\int_\R\delta bq_x+\frac{\epsilon}{2}\int_0^T\int_\R\delta b_{tt}q_x.
	\end{aligned}
	\right.
\end{equation}

Since it is possible to drop the $O(\mu^2)$ terms, under the extra assumptions $\lim_{|x|\rightarrow\infty}q_x(t,x)=0$ and $q_{xx}(T,x)=0$, one has
\begin{align*}
	-\frac{\mu}{3}\int_0^T\int_\R\delta\left(\frac{1}{h}\partial_x(h^3V_{tx})\right)q=\frac{\mu}{3}\int_0^T\int_\R\delta V\partial_x(h^2q_{tx}).
\end{align*}

We have that $\delta r(0,x)=\delta V(0,x)=0$. On the control $b$ we are going to assume $\delta b(0,x)=\delta b(T,0)=0$ and $\delta b_t(0,x)=\delta b_t(T,x)=0$. Then \eqref{s3} becomes into

\begin{equation}
	\label{s4}
	\left\{
	\begin{aligned}
		&-\int_0^T\int_\R\delta r p_t+\int_\R\delta r(T)p(T)-\int_0^T\int_\R\delta hVp_x-\int_0^T\int_\R\delta Vhp_x=0, \\
		&-\int_0^T\int_\R\delta Vq_t+\int_\R\delta V(T)q(T)+\int_0^T\int_\R\delta V\frac{\mu}{3}\partial_x(h^2q_{tx})-\int_0^T\int_\R\delta r q_x \\
		&-\epsilon\int_0^T\int_\R\delta VVq_x=\int_0^T\int_\R\delta bq_x+\frac{\epsilon}{2}\int_0^T\int_\R\delta bq_{ttx}.
	\end{aligned}
	\right.
\end{equation}

Summing both equations in \eqref{s4} and having into account that $\delta h=\epsilon\delta r$, we have
\begin{multline}
	\label{s5}
	\int_0^T\int_\R\delta r(-p_t-\epsilon Vp_x-q_x) \\ 
	+\int_0^T\int_\R\delta V(-hp_x-q_t+\frac{\mu}{3}\partial_x(h^2q_{tx})-\epsilon Vq_x)+\int_\R\delta r(T)p(T)+\int_\R\delta V(T)q(T) \\
	=\int_0^T\int_\R\delta b(q_x+\frac{\epsilon}{2}q_{ttx}).
\end{multline}

\begin{remark}
	Note that we have made some assumptions on the behavior of the bottom and its first time derivative on $0$ and $T$. This is not an issue since we are thinking in an underwater device that is at rest at the beginning and end of the physical experiment.
\end{remark}

Considering the functional $J$ defined in \eqref{P}, with $\alpha=0$, we obtain
\begin{align}
	\label{s6}
	\delta J(b)[\delta b]&=\int_0^T\int_\R(\zeta-\bar{\zeta})\delta\zeta+\int_0^T\int_\R(V-\bar{V})\delta V \\
	&=\int_0^T\int_\R(\zeta-\bar{\zeta})(\delta r+\delta b)+\int_0^T\int_\R(V-\bar{V})\delta V, \notag
\end{align}
as long as $b(\cdot,x)$ be compact supported in $[0,T]$. 

Therefore, assuming
\begin{equation}
	\label{s7}
	\left\{
	\begin{aligned}
		&p_t+\epsilon Vp_x+q_x=\bar{\zeta}-\zeta, \\
		&q_t-\frac{\mu}{3}\partial_x(h^2q_{tx})+hp_x+\epsilon Vq_x=\bar{V}-V, \\
		&p(T)=0, \quad q(T)=0,
	\end{aligned}
	\right.
\end{equation}
from \eqref{s5} and \eqref{s6} we have
\begin{equation*}
	\delta J(b)[\delta b]=
	\int_0^T\int_\R\delta b(q_x+\frac{\epsilon}{2}q_{ttx}+(\zeta-\bar{\zeta}))dxdt.
\end{equation*}

On the other hand, by Theorem \ref{Theorem Existencia Minimo} we know that there exists an optimal pair $(U^*,b^*)$ of \eqref{P}. Then, necessarily we have $\nabla J(b^*)=0$. Namely, if we consider $\delta U=U+U^*$ and $\delta b=b+b^*$ then
\[ 0=\nabla J(b^*)=q_x^*+\frac{\epsilon}{2}q_{ttx}^*+(\zeta^*-\bar{\zeta}). \]


Observe that \eqref{s7} can be written as
\begin{equation*}
	\left(
	\begin{pmatrix}
		h & 0 \\ 0 & 1
	\end{pmatrix}
	+\mu
	\begin{pmatrix}
		0 \\ -\frac{1}{3}\partial_x(h^2\partial_x\cdot)
	\end{pmatrix}
	\right)
	\begin{pmatrix}
		p_t \\ q_t
	\end{pmatrix}
	+
	\begin{pmatrix}
		h\epsilon V & h \\ h & \epsilon V
	\end{pmatrix}
	\begin{pmatrix}
		p_x \\ q_x
	\end{pmatrix}
	=
	\begin{pmatrix}
		h(\bar{\zeta}-\zeta) \\ \bar{V}-V
	\end{pmatrix}.
\end{equation*}

From this last expression and proceeding as we did to prove Theorem \ref{Theorem Existencia}, it can be proved that there exists a $T>0$ such that $(p^*,q^*)\in C([0,T/\epsilon];X^s)$. Therefore, we can state now a theorem giving a system of first order necessary optimality conditions for problem \eqref{P}.
\begin{theorem}
	\label{t3}
	Let $(U^*,b^*)$ be an optimal solution for \eqref{P}. Then there exists $(p^*,q^*)\in C([0,T/\epsilon];X^s)$ satisfying the following optimality system, in a variational sense:
	\begin{equation*}
		\left\{
		\begin{aligned}
			&r_t^*+(h^*V^*)_x=0, \\
			& V_t^*-\frac{\mu}{3h^*}\partial_x((h^*)^3V_{tx}^*)+r_x^*+\epsilon V^*V_x^*=-b_x^*-\frac{\epsilon}{2}b_{ttx}^*, \\
			&\zeta(0)=\zeta_0, \ V(0)=V_0, \\
			&p_t^*+\epsilon V^*p_x^*+q_x^*=\bar{\zeta}-\zeta^*, \\
			& q_t^*-\frac{\mu}{3}\partial_x((h^*)^2\partial_xq_t^*)+h^*p_x^*+\epsilon V^*q_x^*=\bar{V}-V^*, \\
			&p^*(T)=0, \ q^*(T)=0, \\
			&q_x^*+\frac{\epsilon}{2}q_{ttx}^*+(\zeta^*-\bar{\zeta})=0.
		\end{aligned}
		\right.
	\end{equation*}
\end{theorem}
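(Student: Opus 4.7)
The plan is to make rigorous the formal Lagrangian calculation already displayed at the start of Appendix \ref{a1}, which is essentially the proof. Three ingredients must be supplied: Gâteaux differentiability of the control-to-state map, well-posedness of the adjoint system backward in time, and the first-order optimality argument.

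For the first ingredient, I would show that $b \mapsto (\zeta,V)$ is Gâteaux differentiable from $\mathcal{F}$ into $\mathbf{X}_T^s$, with derivative $(\delta\zeta,\delta V)$ satisfying the linearized system (s2) with zero initial data. Subtracting the equations for $U^\lambda$ and $U^*$ and dividing by $\lambda$ produces a linear hyperbolic system for the difference quotient of exactly the form treated in Proposition \ref{pe}; the uniform energy bound there permits passage to the limit $\lambda \to 0$.

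For the second ingredient, the time reversal $t \mapsto T-t$ turns \eqref{s7} into a forward Cauchy problem with zero initial data. After multiplying the $q$-equation by $h$ (as was done to reach \eqref{ES32}) the resulting system is symmetrizable hyperbolic with the same principal structure as \eqref{ES32}, but with coefficients now frozen at the known optimal state $(\zeta^*,V^*)\in\mathbf{X}_T^s$ and lower-order source $(\bar\zeta-\zeta^*,\bar V-V^*)\in L^2$. The energy estimate strategy of Proposition \ref{pe} therefore yields a unique solution $(p^*,q^*)\in C([0,T/\epsilon];\mathbf{X}^s)$.

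For the third ingredient, I would multiply the linearized system by $(p^*,q^*)$ and integrate by parts as in (s3)--(s5): boundary terms in $x$ vanish by Sobolev decay, terms at $t=0$ by $\delta U(0)=0$, and terms at $t=T$ by $p^*(T)=q^*(T)=0$. Combined with the direct computation of $\delta J$ and the identity $\delta\zeta=\delta r+\delta b$, this gives
\[ \delta J(b^*)[\delta b] = \int_0^T\!\!\int_\R \delta b\,\Big(q^*_x+\tfrac{\epsilon}{2}q^*_{ttx}+(\zeta^*-\bar\zeta)\Big)\,dx\,dt \]
for every admissible perturbation $\delta b$ with $\delta b(0)=\delta b(T)=0$ and $\delta b_t(0)=\delta b_t(T)=0$. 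Optimality of $b^*$, interior to $\mathcal{F}$, forces the bracket to vanish pointwise, which closes the optimality system. The main obstacle is the first ingredient, since \eqref{BP} is quasilinear and $b$ enters the principal operator $\mt$ through $h^3$; uniform $\mathbf{X}^s$ bounds on $(U^\lambda-U^*)/\lambda$ require adapting the commutator estimates of Proposition \ref{pe} to the linearized operator about $U^*$, provided $s$ is large enough for $\mathbf{X}^s\hookrightarrow W^{1,\infty}$.
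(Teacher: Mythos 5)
Your proposal follows essentially the same route as the paper's Appendix \ref{a1}: formal linearization of the state system, duality/integration by parts against $(p,q)$ to identify the adjoint system \eqref{s7} and the representation of $\delta J(b)[\delta b]$, well-posedness of the adjoint by the same symmetrization and energy-estimate argument used for \eqref{ES32}, and vanishing of the gradient at the optimum. Your only addition is the (unexecuted) plan to justify G\^ateaux differentiability of the control-to-state map via difference quotients and Proposition \ref{pe} --- a step the paper carries out only formally --- and both treatments equally gloss over the fact that for $b^*$ on the boundary of the convex set $\mathcal{F}$ one strictly obtains only a variational inequality rather than $\nabla J(b^*)=0$.
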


\section{The reference scheme {\bf Rusanov+FD}}\label{sec:ref-scheme}

In this section we build a solver for the PDE-constraint optimization problem in which system (\ref{eq:direct-system}) is solved with a conservative finite volume scheme, the Rusanov method, and system (\ref{eq:dual-system}) is discretized by a finite difference approach. Here, we propose the solver in a general approach, so we describe the method for (\ref{eq:general-state}) and (\ref{eq:general-dual}) which are more general than (\ref{eq:direct-system}) and (\ref{eq:dual-system}), respectively.

System (\ref{eq:general-state}) is solved by the finite volume formula

\begin{eqnarray}
	\label{eq:one-step-formula}
	\begin{array}{c}
		\mathbf{U}_i^{n+1}
		=
		\mathbf{U}_i^{n} - \frac{\Delta t}{\Delta x} ( \mathbf{F}_{i+\frac{1}{2}}-\mathbf{F}_{i-\frac{1}{2}}) + \Delta t \mathbf{B}_i \;,
	\end{array}
\end{eqnarray}
where $\mathbf{F}_{i+\frac{1}{2}}$ is a numerical flux, in this case the Rusanov flux is implemented, that corresponds to
\begin{eqnarray}
	\begin{array}{c}
		\mathbf{F}_{i+\frac{1}{2}}
		=\frac{1}{2} ( \mathbf{F}( \mathbf{U}_{i+1}^{n}) +  \mathbf{F}( \mathbf{U}_i^{n}))
		-\frac{\lambda_{i+\frac{1}{2}}}{2} ( \mathbf{U}_{i+1}^{n} - \mathbf{U}_{i}^{n} ) \;;
	\end{array}
\end{eqnarray}
here $\lambda_{i+\frac{1}{2}} = max(\lambda_L, \lambda_R)$ with 
$$ \lambda_L = max\{   | \epsilon V_i^n - \sqrt{r_i^n}   |, | \epsilon V_i^n + \sqrt{r_i^n}   | \} $$ 
and 
$$ \lambda_R  = max\{   | \epsilon V_{i+1}^n - \sqrt{r_{i+1}^n}   |, | \epsilon V_{i+1}^n + \sqrt{r_{i+1}^n}   | \} \;.$$

For the source term we use central finite difference scheme for approximating spatial and time derivatives, that is, a similar approach to that used in (\ref{eq:source-term-unified}).

To solve the adjoint system (\ref{eq:general-dual}) we use 
\begin{eqnarray}
	\label{eq:discretize-adjoint}
	\begin{array}{c}
		- \mathbf{P}_i^{n+1} + \mathbf{P}_i^{n} - \Delta t \mathbf{A}(\mathbf{U}_i^{n+1}, \mathbf{P}_i^{n+1}) \frac{ ( \mathbf{P}_{i+1}^{n+1} - \mathbf{P}_{i-1}^{n+1}   )}{2 \Delta x}  = \Delta t \mathbf{R}( x_{i}, \mathbf{P}_i^{n+1}, \mathbf{U}_i^{n+1} ) \;.
	\end{array}
\end{eqnarray}
Notice that the adjoint system is solved back from $ \mathbf{P}_i^{n_T} = \mathbf{0}$\;, where $ n_T$ is the number of time iterations to reach the output time, $T$, of the simulation.

As commented above, these methods are functional and simple enough for problems in this paper. To extend them to other systems only expressions for $\mathbf{B}(x,\mathbf{U})$, $\mathbf{F}(\mathbf{U})$, $\mathbf{A}(\mathbf{U}, \mathbf{P})$ and $\mathbf{R}(x,\mathbf{U}, \mathbf{P})$ must be provided.

This complete the description of the solvers for the state and adjoint systems. They need to be combined to generate the minimizer sequence for a cost functional $J(b)$. Algorithm \ref{alg:cap} summarizes the global procedure to obtain $b$, iteratively. The iterations are stopped by fixing the maximum number iterations to $IterTotal = 17$.

\begin{algorithm}
	\caption{Global algorithm for obtaining $b(t,x)$. Provide $b^0$, $it = 1$, $\lambda_b$, $IterTotal$ and $Tol$. }\label{alg:cap}
	\begin{algorithmic}
		\Ensure $b$ at each $x_i$ and $t^n$.
		
		
		\While{$it \leq IterTotal$}
		
		\State {\bf Step 1:} Solve eq. (\ref{eq:general-state}) using (\ref{eq:one-step-formula}).
		This generates $\{ \mathbf{U}_{i}^{n} \}_{n=1}^{n_T}$, $i=1,...,N_x$, where $N_x$ is the number of cells.

		\State {\bf Step 2:} Solve (\ref{eq:general-dual}) using (\ref{eq:discretize-adjoint}), in this step  $\{ \mathbf{P}_{i}^{n} \}_{n=1}^{n_T}$ is generated.

		\State {\bf Step 3:} Update $b^{k+1} = b^k -\lambda_b\nabla(b^k)$, where
		$  \nabla J = q_x +\frac{1}{2}\epsilon q_{xtt} - (\bar{\zeta} - \zeta) \;.$

		\State {\bf Step 4:}  Stop the algorithm once $ | \nabla (b^k,\mathbf{U}_i, \mathbf{P}_i)| < Tol$, where $Tol$ is some prescribed tolerance.

		\EndWhile
	\end{algorithmic}
\end{algorithm}

\bibliographystyle{abbrv}
\bibliography{MLLZ-preprint}

\end{document}